\documentclass[11pt,letterpaper]{article}

\usepackage[utf8]{inputenc}
\usepackage{comment}
\usepackage{microtype}
\usepackage{graphicx}
\usepackage{subfigure}
\usepackage{booktabs} %
\usepackage{scalerel}
\usepackage[margin=1in]{geometry}

\usepackage[style=alphabetic, maxbibnames=15, giveninits=true, maxcitenames=15, natbib=true,
  maxalphanames=10, backend=biber, sorting=nty, backref=true, uniquename=false]{biblatex}
\addbibresource{ref.bib}
\DefineBibliographyStrings{english}{backrefpage = {page},backrefpages = {pages},}
\DeclareNameAlias{sortname}{given-family}
\renewbibmacro{in:}{%
  \ifentrytype{article}{}{\printtext{\bibstring{in}\intitlepunct}}}

\usepackage{tikz}
\usetikzlibrary{decorations.pathreplacing,calc}
\newcommand{\tikzmark}[1]{\tikz[overlay,remember picture] \node (#1) {};}

\newcommand*{\AddNote}[4]{%
    \begin{tikzpicture}[overlay, remember picture]
        \draw [decoration={brace,amplitude=0.3em},decorate,thick,black]
            ($(#3.east)!(#1.north)!($(#3.east)-(0,1)$)$) --  
            ($(#3.east)!(#2.south)!($(#3.east)-(0,1)$)$)
                node [align=left, text width=4cm, pos=0.5, anchor=west] {#4};
    \end{tikzpicture}
}
\usepackage{bm}
\usepackage{amssymb}
\usepackage{amsthm}
\usepackage{amsmath}
\usepackage{mathtools}

\usepackage{nicefrac}

\usepackage{algorithmic}
\usepackage{algorithm}
\usepackage[shortlabels]{enumitem}
\usepackage{eqparbox}
\definecolor{comment}{RGB}{2,128, 9}

\floatstyle{ruled}
\newfloat{subroutine}{htbp}{lok}
\floatname{subroutine}{Subroutine}

\usepackage{url}
\usepackage{float}

\setlength{\parskip}{0.5em}
\setlength{\parindent}{0em}

\usepackage{pifont}
\usepackage{hhline}
\usepackage{multirow}

\newcommand{\myalert}[1]{{\vspace{2mm}\noindent\textbf{#1}}}

\newcommand\Vector[1]{\bm{#1}}

\newcommand\va{{\Vector{a}}}
\newcommand\vb{{\Vector{b}}}

\newcommand\vg{{\Vector{g}}}

\newcommand\vp{{\Vector{p}}}
\newcommand\vq{{\Vector{q}}}
\newcommand\vr{{\Vector{r}}}
\newcommand\vs{{\Vector{s}}}

\newcommand\vu{{\Vector{u}}}
\newcommand\vv{{\Vector{v}}}
\newcommand\vw{{\Vector{w}}}
\newcommand\vx{{\Vector{x}}}
\newcommand\vy{{\Vector{y}}}
\newcommand\vz{{\Vector{z}}}

\newcommand\MATRIX[1]{\mathbf{#1}}

\newcommand\mA{{\MATRIX{A}}}
\newcommand\mB{{\MATRIX{B}}}

\newcommand\mF{{\MATRIX{F}}}
\newcommand\mG{{\MATRIX{G}}}
\newcommand\mH{{\MATRIX{H}}}
\newcommand\mI{{\MATRIX{I}}}
\newcommand\mJ{{\MATRIX{J}}}

\newcommand\mM{{\MATRIX{M}}}

\newcommand\mP{{\MATRIX{P}}}

\newcommand\mS{{\MATRIX{S}}}
\newcommand\mT{{\MATRIX{T}}}

\newcommand\mW{{\MATRIX{W}}}

\newcommand\sS{{\mathbb{S}}}

\newcommand\bigO{\mathcal{O}}

\newcommand\reals{\mathbb{R}}
\newcommand\op{{\mathrm{op}}}

\DeclareMathOperator*{\argmax}{arg\,max}

\DeclareMathOperator{\sign}{sign}

\newcommand{\calB}{\mathcal{B}}
\newcommand{\calC}{\mathcal{C}}

\newcommand{\calD}{\mathcal{D}}
\newcommand{\calL}{\mathcal{L}}
\newcommand{\calX}{\mathcal{X}}
\newcommand{\calY}{\mathcal{Y}}
\newcommand{\calZ}{\mathcal{Z}}
\newcommand{\mydef}{\triangleq}
\newcommand{\gap}{\mathrm{Gap}}
\newcommand{\reg}{\mathrm{Reg}}
\newcommand{\dreg}{\textrm{D-Reg}}
\newcommand{\vF}{\Vector{F}}

\newcommand{\blue}[1]{{\textcolor{blue}{#1}}}

\renewcommand{\blue}[1]{#1}

\newcommand{\vspan}{\mathrm{span}}
\newcommand{\SEP}{\mathsf{SEP}}

\newtheorem{assumption}{Assumption}
\newtheorem{definition}{Definition}
\newtheorem{theorem}{Theorem}
\newtheorem{lemma}[theorem]{Lemma}
\newtheorem{proposition}[theorem]{Proposition}
\newtheorem{corollary}[theorem]{Corollary}
\newtheorem{remark}{Remark}
\numberwithin{assumption}{section}
\numberwithin{definition}{section}
\numberwithin{theorem}{section}
\numberwithin{remark}{section}

\let\origtheassumption\theassumption

\usepackage{hyperref}
\hypersetup{colorlinks,citecolor=blue,linktocpage,breaklinks=true}

\usepackage{cancel}

\title{Online Learning Guided Quasi-Newton Methods with Global Non-Asymptotic Convergence\thanks{A preliminary version of part of this work appeared in \cite{jiang2023online}.}
}

\author{Ruichen Jiang\thanks{Department of Electrical and Computer Engineering, The University of Texas at Austin, Austin, TX, USA\qquad\qquad\{rjiang@utexas.edu, mokhtari@austin.utexas.edu\}} \and
        Aryan Mokhtari$^\dagger$
}

\date{}

\begin{document}

\maketitle

\begin{abstract}%
In this paper, we propose a quasi-Newton method for solving smooth and monotone nonlinear equations, including unconstrained minimization and minimax optimization as special cases. For the strongly monotone setting, we establish two global convergence bounds: (i) a linear convergence rate that matches the rate of the celebrated extragradient method, and (ii) an explicit global superlinear convergence rate that provably surpasses the linear convergence rate after at most $\mathcal{O}(d)$ iterations, where~$d$ is the problem's dimension. 
In addition, for the case where the operator is only monotone, we prove a global convergence rate of $\mathcal{O}(\min\{\frac{1}{k},\frac{\sqrt{d}}{k^{1.25}}\})$ in terms of the duality gap. This matches the rate of the extragradient method when $k = \mathcal{O}(d^2)$ and is faster when $k = \Omega(d^2)$. 
These results are the first global convergence results to demonstrate a provable advantage of a quasi-Newton method over the extragradient method, without querying the Jacobian of the operator. Unlike classical quasi-Newton methods, we achieve this by using the hybrid proximal extragradient framework and a novel online learning approach for updating the Jacobian approximation matrices. Specifically, guided by the convergence analysis, we formulate the Jacobian approximation update as an online convex optimization problem over non-symmetric matrices, relating the regret of the online problem to the convergence rate of our method. To facilitate efficient implementation, we further develop a tailored online learning algorithm based on an approximate separation oracle, which preserves structures such as symmetry and sparsity in the Jacobian matrices. 
\end{abstract}

\newpage
\section{Introduction}

In this paper, we consider the problem of solving a system of nonlinear equations:
\begin{equation}\label{eq:monotone}
  \vF(\vz) = 0,
\end{equation}
where $\vF : \reals^d \rightarrow \reals^d$ is a continuously differentiable operator.
This class of problems emerges from finite-dimensional approximations to nonlinear differential and integral equations and has applications in diverse fields of mathematical physics \cite{more1990computational,averick1992minpack,Nocedal2006}. Problem~\eqref{eq:monotone} is also closely related to variational inequalities (VIs), as it can be regarded as a VI without constraints. Moreover, this formulation captures the optimality conditions of unconstrained minimization and minimax optimization problems \cite{Facchinei2004}. 

One well-known method for solving the problem in~\eqref{eq:monotone}  is Newton's method, noted for its fast convergence. When the Jacobian $\nabla \vF(\vz)$ is non-singular, Newton's method achieves quadratic convergence near the optimal solution~\cite{Nocedal2006}. However, a major challenge of its implementation is computing the Jacobian $\nabla \vF$, especially in high-dimensional settings. Hence, various modifications to Newton's method have been considered to improve computational efficiency, and among them quasi-Newton methods are the most popular.
They were first introduced in~\cite{davidon1959variable}, for minimization problems, and in~\cite{broyden1965class}, for systems of equations. They aim to emulate Newton's method while approximating the Jacobian $\nabla \vF$ solely through the operator information $\vF$, and they require $\mathcal{O}(d^2)$ arithmetic operations per iteration.  
For minimization problems, several popular update rules include the Davidon-Fletcher-Powell (DFP) method \cite{davidon1959variable,fletcher1963rapidly}, the Broyden-Fletcher-Goldfarb-Shanno (BFGS) method \cite{broyden1970convergence,fletcher1970new,goldfarb1970family,shanno1970conditioning}, and the symmetric rank-one (SR1) method \cite{conn1991convergence,khalfan1993theoretical}. For solving systems of equation, Broyden's methods~\cite{broyden1965class} remain the most popular choices. In the following paragraphs, we summarize the known theoretical guarantees for these methods.

\myalert{Asymptotic superlinear convergence.}
The classical analysis of quasi-Newton methods aims to establish  
their Q-superlinear convergence under suitable conditions, meaning $\lim_{k \rightarrow +\infty}\frac{\|\vz_{k+1}-\vz^*\|}{\|\vz_k-\vz^*\|} =0$ where $\vz^*$ is the optimal solution of \eqref{eq:monotone}.  
For the special case of minimization problems, when the function is locally smooth and strongly convex, it was established 
in~\cite{broyden1973local,dennis1974characterization} that DFP and BFGS are locally and Q-superlinearly convergent. 
To ensure global convergence, it is necessary to incorporate quasi-Newton updates with a line search or a trust-region method. When the function is smooth and strongly convex, it was shown in~\cite{powell1971convergence,dixon1972variable} that DFP and BFGS with an exact line search converge globally and Q-superlinearly. 
Subsequently, it was shown in~\cite{powell1976some} that BFGS with an inexact line search retains global and superlinear convergence, and this result was later extended to the restricted Broyden class except for DFP~\cite{byrd1987global}. Along another line of research, the SR1 method with trust region techniques was studied in~\cite{conn1991convergence,khalfan1993theoretical,byrd1996analysis} and they also proved its global and superlinear convergence.
For general nonlinear equations, it was shown in \cite{broyden1973local} that when the Jacobian $\nabla \vF$ is non-singular and Lipschitz continuous, Broyden's method converges locally and superlinearly.   
Subsequent work in~\cite{more1976global} established the local superlinear convergence of a modified Broyden's method by Powell~\cite{powell1970hybrid}. Moreover, these superlinear convergence results have also been extended to Hilbert spaces~\cite{griewank1987local,kelley1991new}.  In addition, several works proposed line search strategies for Broyden's method~\cite{griewank1986global,li2000derivative} to ensure global and superlinear convergence. However, these results  
are all \textit{asymptotic} and they do not provide an explicit convergence rate.

\myalert{Local non-asymptotic superlinear convergence.}
Recent work has attempted to establish non-asymptotic guarantees for quasi-Newton methods when applied to solving Problem~\eqref{eq:monotone} and the special case of minimization problems.
\blue{For smooth and strongly convex minimization problems, 
this line of research was initiated by \cite{rodomanov2021greedy}, which studied a greedy variant of quasi-Newton methods
and demonstrated a \textit{local non-asymptotic} superlinear convergence rate of $(1-\frac{\mu}{d L_1})^{\nicefrac{k^2}{2}} (\frac{dL_1}{\mu})^k$. Here, $d$ is the problem's dimension, and $L_1$ and $\mu$ are the smoothness and strong convexity parameters, respectively.   
Later, two concurrent works~\cite{rodomanov2021new,jin2022non} examined local non-asymptotic superlinear convergence rates of \emph{classical} quasi-Newton methods. 
Specifically, the authors in~\cite{rodomanov2021new} proved that in a local neighborhood of the optimal solution, if the initial Hessian approximation is set as $L_1\mI$, BFGS with unit step size converges at a superlinear rate of  
$\left(\frac{d}{k}\log\frac{L_1}{\mu}\right)^{\nicefrac{k}{2}}$. 
Concurrently, the authors in~\cite{jin2022non} showed that if the initial Hessian approximation is {close to the Hessian at the optimal solution or selected as the Hessian at the initial point,}
BFGS with unit step size achieves a local superlinear convergence rate of $\left(\frac{1}{k}\right)^{\nicefrac{k}{2}}$. Further details and follow-up works can be found in \cite{jin2022sharpened,lin2021greedy,ye2022towards}.} 
Subsequently, similar results have been established for solving general nonlinear equations. 
When the Jacobian $\nabla \vF$ is Lipschitz continuous and $\nabla \vF(\vz^*)$ is non-singular, under the condition that the initial point and the initial Jacobian approximation matrix are sufficiently close to $\vz^*$ and $\nabla \vF(\vz^*)$, respectively, the authors in~\cite{lin2021explicit} demonstrated that Broyden's method achieves an explicit local non-asymptotic superlinear rate of~$\left(\frac{1}{k}\right)^{\nicefrac{k}{2}}$.
Moreover, under similar assumptions and a stronger initial condition, the greedy and random variants of Broyden's method were proposed in~\cite{ye2021greedy} and shown to achieve a local superlinear rate of $(1-\frac{1}{d})^{\nicefrac{k^2}{4}}$. Building upon this, the block Broyden's method was later presented in~\cite{liu2023block}. 
However, these non-asymptotic results are limited to \textit{local neighborhoods of the solution}, and several of them, especially those for solving nonlinear equations, also require the initial Hessian/Jacobian approximation matrix to be sufficiently close to the actual one at $\vz^*$~\cite{jin2022non,lin2021explicit,ye2021greedy,liu2023block}. 
Thus, they do not lead to a global convergence guarantee.

\myalert{\blue{Convergence without strong convexity/non-singularity.}} \blue{Note that all the results above only apply under the restrictive assumption that the objective is strongly convex in the minimization setting, or that the operator is non-singular in the nonlinear equation setting. There are a few works that study quasi-Newton methods when the objective is merely convex or when the operator is possibly singular. However, to the best of our knowledge, no theoretical result demonstrates the advantage of quasi-Newton methods in these more general settings. Specifically, for minimizing smooth convex functions, it has been shown that classical quasi-Newton methods such as BFGS converge asymptotically~\cite{byrd1987global,powell1972some} but no explicit rates have been provided. Several recent works~\cite{scheinberg2016practical,ghanbari2018proximal,kamzolov2023accelerated,scieur2024adaptive} have combined quasi-Newton updates with variable metric proximal gradient or cubic regularization techniques and proved sublinear rates. However, their results are no better than the first-order counterparts such as gradient descent or accelerated gradient descent.}  %

\myalert{\blue{Contributions.}} In summary, 
\blue{for the strongly monotone case,}
while quasi-Newton methods have the potential to achieve a faster rate, the current theory is either asymptotic or applicable only in a local neighborhood of the solution, with a stringent requirement on the initial Jacobian. \blue{Furthermore, for the monotone case, no theoretical advantage of quasi-Newton methods has been demonstrated in the literature.  
In this paper, we aim to close these gaps and present a novel quasi-Newton proximal extragradient (QNPE) method.}
When the operator is $\mu$-strongly monotone,  
it attains the following global convergence bounds:
\begin{equation}\label{eq:rate}
    \frac{\|\vz_k-\vz^*\|^2}{\|\vz_0-\vz^*\|^2} \leq  \min\left\{ \left(1+\frac{\mu}{30L_1}\right)^{-k}, \left(1+\frac{\mu}{16L_1}\sqrt{\frac{k}{M}}\right)^{-k}\right\},
    \vspace{-1mm}
\end{equation}
where $M = \bigO(\frac{\|\mB_0-\nabla \vF(\vz^*)\|^2_F}{L_1^2}+\frac{L_2^2\|\vz_0-\vz^*\|^2}{\mu L_1}) = {\bigO ( d + \frac{L_2^2\|\vz_0-\vz^*\|^2}{\mu L_1} )}$ \blue{with $L_2$ being the Jacobian's Lipschitz constant}. 
As we observe from \eqref{eq:rate}, when $k = \bigO(d)$, the first upper bound in \eqref{eq:rate} implies that the convergence rate of our method matches that of the classical extragradient (EG) method~\cite{Korpelevich1976,nemirovski2004prox}. This is the best rate we can hope for in this regime, since there are lower bound results~\cite{Nemirovsky1983Problem,zhang2022lower} showing that the complexity bound of $\bigO(\frac{L_1}{\mu}\log\frac{1}{\epsilon})$ achieved by EG is optimal up to constants when the number of iterations $k$ is $\mathcal{O}(d)$. Further, based on the second term in the upper bound, it provably outperforms EG once the number of iterations satisfies $k\geq M= \mathcal{O}(d)$ and attains a superlinear rate of the form $\left(\frac{L_1^2 M}{\mu^2 k}\right)^{k/2}$.
\blue{In addition, when the operator is only monotone, our method achieves a global convergence rate of 
\begin{equation}
  \mathcal{O}\left(\min\left\{\frac{1}{k},\frac{\sqrt{d}}{k^{1.25}}\right\}\right).
\end{equation}
In particular, this implies that our method matches the $\mathcal{O}(\frac{1}{k})$ rate by EG, known to be optimal in the regime where $k = \bigO(d)$ \cite{ouyang2021lower}. Moreover, it converges at a faster rate of $\bigO\left(\frac{\sqrt{d}}{k^{1.25}}\right)$ when $k = \Omega(d^2)$. Finally, for both strongly monotone and monotone settings, we fully characterize the computational cost in terms of the total number of operator evaluations and matrix-vector products (see Theorems~\ref{thm:line_search},~\ref{thm:computational_cost}, and~\ref{thm:computational_cost_monotone}).
}

Our proposed method is 
distinct from classical quasi-Newton methods, such as BFGS and Broyden's method, in two key aspects. 
First, our method adopts the hybrid proximal extragradient (HPE) framework~\cite{solodov1999hybrid}, resembling a quasi-Newton approximation of the Newton proximal extragradient method for solving monotone variational inequalities~\cite{monteiro2010complexity}. 
Another notable distinction lies in the update of the Jacobian approximation matrix. Classical quasi-Newton methods adhere to the secant condition while maintaining proximity to the previous approximation matrix. Conversely, our update rule is solely driven by our convergence analysis of the HPE framework. 
Specifically, according to our analysis, a better upper bound on 
the cumulative loss $ \sum_k  \ell_k(\mB_k)$ implies a faster convergence rate for our proposed QNPE method, 
where $\mB_k$ is the Jacobian approximation matrix and $\ell_k : \reals^{d\times d} \rightarrow \reals_+$ is a loss function that in some sense measures the approximation error. As a result, the update of $\mB_k$ boils down to running an online algorithm for solving an \emph{online convex optimization problem} in the space of matrices. 

\blue{Finally, we address the challenge of computational efficiency by presenting a tailored online learning algorithm for the update of $\mB_k$. Note that most online learning algorithms for constrained problems are based on a projection oracle, but in our specific setting, such projections either do not admit a closed-from solution or require expensive eigendecomposition. In contrast, our online learning algorithm utilizes an \textit{approximate separation oracle} that can be efficiently constructed using the classical Lanczos method. Additionally, when the Jacobian of $\vF$ possesses certain structures, such as symmetry or sparsity, our algorithm mirrors the same structure on the Jacobian approximation matrices with no additional cost. This allows us to fully leverage the structural information of the Jacobian matrices to reduce the computational cost. }

\vspace{-0.5mm}

\subsection{Overview}
\vspace{-0.5mm}

In this section, we provide a brief overview of our proposed algorithm. Our QNPE algorithm has a hierarchical structure of three levels. 
\begin{itemize}
\vspace{-1mm}
  \item At the highest level, it is based on the HPE framework~\cite{solodov1999hybrid,monteiro2010complexity}, which can be regarded as an inexact variant of the proximal point method~\cite{Rockafellar1976,Martinet1970}. 
  We fully describe the HPE framework in Section~\ref{sec:quasi_NPE}.
  \item At the intermediate level, the algorithm requires two subroutines in the HPE framework: a line search subroutine for selecting the step size (Section~\ref{subsec:ls}) and a Jacobian approximation update subroutine for selecting the matrix $\mB_k$ (Section~\ref{sec:jacobian_online_learning}). To implement the line search, we adopt the standard backtracking algorithm with a warm start strategy, where the initial step size at the $k$-th iteration is chosen as a multiple of the step size in the previous iteration. In addition, motivated by our convergence analysis, we formulate our Jacobian approximation update as an online learning problem (Section~\ref{subsec:convergence}). We first present a general online learning algorithm based on the approximate separation oracle in Section~\ref{subsec:projection_free_online_learning} and then instantiate it to our specific settings in Sections~\ref{subsec:nonlinear_eqs} and~\ref{subsec:special_cases}. 
  \item Furthermore, the line search subroutine requires implementing a $\mathsf{LinearSolver}$ oracle, while the Jacobian approximation update subroutine requires an approximate separation oracle. Their implementations are discussed in Section~\ref{sec:implementation}.
\end{itemize}

We present our complexity analysis in Section~\ref{sec:complexity}, which is divided into the strongly monotone setting~(Section~\ref {subsec:strongly_monotone}) and the monotone setting~(Section~\ref{subsec:monotone}). Finally, the concluding remarks are presented in Section~\ref{sec:conclusion}.

\vspace{-1mm}
\section{Preliminaries}
\label{sec:prelims}
\vspace{-1mm}

\edef\oldassumption{\the\numexpr\value{assumption}+1}

In this section, we present our assumptions, discuss three special cases of Problem~\eqref{eq:monotone} that are of interest, and introduce the considered measures of suboptimality for our convergence analyses.

\subsection{Assumptions}

\blue{In this paper, we focus on two specific classes of nonlinear equations: (i) strongly monotone and (ii) monotone, formally defined in Assumption~\ref{assum:strong_monotone} and Assumption~\ref{assum:monotone}.}  

\edef\oldassumption{\the\numexpr\value{assumption}+1}

\setcounter{assumption}{0}
\renewcommand{\theassumption}{\oldassumption.\alph{assumption}}

\begin{assumption}\label{assum:strong_monotone}  
The operator $\vF$ is $\mu$-strongly monotone, i.e., $\langle \vF(\vz)-\vF(\vz'), \vz-\vz' \rangle \geq \mu \|\vz-\vz'\|^2$,  $\forall \vz,\vz' \in \reals^d$. 
\end{assumption}

\begin{assumption}\label{assum:monotone}  
  $\vF$ is monotone, i.e., $\langle \vF(\vz)-\vF(\vz'), \vz-\vz' \rangle \geq 0$,  $\forall \vz,\vz' \in \reals^d$. 
\end{assumption}
\let\theassumption\origtheassumption
\addtocounter{assumption}{-1}

Note that under Assumption~\ref{assum:strong_monotone}, Problem \eqref{eq:monotone} has a unique solution, which we denote by $\vz^*$ throughout the paper.
Moreover, Assumption~\ref{assum:monotone} can be regarded as a special case of Assumption~\ref{assum:strong_monotone} by setting $\mu=0$. Therefore, we will sometimes use this observation to present our results in both settings in a united manner, avoiding unnecessary repetition. In addition, we make the following two assumptions that the operator $\vF$ and its Jacobian are Lipschitz. 

\begin{assumption}\label{assum:operator_lips}
The operator $\vF$ is $L_1$-Lipschitz, i.e., $\|\vF(\vz)-\vF(\vz')\| \leq L_1\|\vz-\vz'\|$, $\forall \vz,\vz' \in \reals^d$.  
\end{assumption}

\begin{assumption}\label{assum:jacobian_lips}
  The Jacobian of $\vF$ is $L_2$-Lipschitz, i.e., $\|\nabla \vF(\vz)-\nabla \vF(\vz')\|_{\op} \leq L_2\|\vz-\vz'\|$, $\forall \vz,\vz'\in \reals^d$, where $\|\mA\|_{\op} \mydef \sup_{\|\vz\| = 1} \|\mA \vz\|$. 
\end{assumption}

\begin{remark}
  In our convergence analysis for strongly monotone equations, we can relax Assumption~\ref{assum:jacobian_lips} and   $\nabla \vF$ requires to only satisfy a Lipschitz property at the optimal solution $\vz^*$, i.e., $\|\nabla \vF(\vz)-\nabla \vF(\vz^*)\|_{\op} \leq L_2\|\vz-\vz^*\|$ for any $\vz\in \reals^d$. 
\end{remark}

We remark that Assumptions~\ref{assum:strong_monotone}, ~\ref{assum:monotone} and~\ref{assum:operator_lips} are common in the convergence analysis of first-order methods for solving nonlinear equations and monotone variational inequalities. Moreover, Assumption~\ref{assum:jacobian_lips} is commonly used in the study of quasi-Newton methods~\cite{dennis1977quasi} and second-order methods~\cite{nesterov2006cubic,monteiro2010complexity}. It provides the necessary regularity condition on $\vF$ that enables us to prove a superlinear convergence rate for strongly monotone problems and a faster sublinear rate for monotone problems. 

\subsection{Jacobian structures}\label{subsec:structures}

Assumptions~\ref{assum:strong_monotone},~\ref{assum:monotone} and~\ref{assum:operator_lips} are formulated as properties of the operator $\vF$, but they can also be presented as conditions on the Jacobian $\nabla \vF$, as shown in the next lemma (see \cite[Section 2]{ryu2022large} for the proof).

\begin{lemma}\label{lem:jacobian}
Under Assumption~\ref{assum:strong_monotone}, we have $\frac{1}{2}(\nabla \vF(\vz) + \nabla \vF(\vz)^\top) \succeq \mu \mI$ for any $\vz\in \reals^d$, while under Assumption~\ref{assum:monotone}, we have $\frac{1}{2}(\nabla \vF(\vz) + \nabla \vF(\vz)^\top) \succeq 0$ for any $\vz\in \reals^d$. Further, under Assumption~\ref{assum:operator_lips}, we have $\|\nabla \vF(\vz)\|_{\op} \leq L_1$ for any $\vz\in \reals^d$. 
\end{lemma}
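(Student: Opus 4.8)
\textbf{Proof plan for Lemma~\ref{lem:jacobian}.}

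The plan is to derive each of the three Jacobian conditions directly from the corresponding operator-level assumption by a standard limiting (directional-derivative) argument. First I would recall that since $\vF$ is continuously differentiable, for any fixed $\vz \in \reals^d$ and any direction $\vu \in \reals^d$ we have the first-order expansion $\vF(\vz + t\vu) = \vF(\vz) + t\, \nabla\vF(\vz)\vu + o(t)$ as $t \to 0^+$. This identity is the only analytic tool needed; the rest is manipulating the assumed inequalities and passing to the limit.

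For the strong monotonicity claim, I would apply Assumption~\ref{assum:strong_monotone} with the pair $\vz' = \vz$ and $\vz$ replaced by $\vz + t\vu$ for $t > 0$, yielding $\langle \vF(\vz+t\vu) - \vF(\vz),\, t\vu\rangle \geq \mu t^2 \|\vu\|^2$. Substituting the first-order expansion gives $t^2 \langle \nabla\vF(\vz)\vu, \vu\rangle + o(t^2) \geq \mu t^2 \|\vu\|^2$; dividing by $t^2$ and letting $t \to 0^+$ yields $\langle \nabla\vF(\vz)\vu, \vu\rangle \geq \mu\|\vu\|^2$ for all $\vu$. Since $\langle \nabla\vF(\vz)\vu,\vu\rangle = \vu^\top \nabla\vF(\vz)\vu = \frac12 \vu^\top(\nabla\vF(\vz) + \nabla\vF(\vz)^\top)\vu$, this is precisely $\frac12(\nabla\vF(\vz) + \nabla\vF(\vz)^\top) \succeq \mu\mI$. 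The monotone case is identical with $\mu = 0$, consistent with the remark in the text that Assumption~\ref{assum:monotone} is the $\mu=0$ instance of Assumption~\ref{assum:strong_monotone}.

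For the operator-norm bound, I would similarly apply Assumption~\ref{assum:operator_lips} to the pair $\vz + t\vu$ and $\vz$: $\|\vF(\vz+t\vu) - \vF(\vz)\| \leq L_1 t\|\vu\|$. Inserting the expansion gives $\|t\,\nabla\vF(\vz)\vu + o(t)\| \leq L_1 t\|\vu\|$; dividing by $t$ and letting $t \to 0^+$ yields $\|\nabla\vF(\vz)\vu\| \leq L_1\|\vu\|$ for every $\vu$, i.e., $\|\nabla\vF(\vz)\|_{\op} \leq L_1$ by definition of the operator norm. I do not anticipate a genuine obstacle here — the only point requiring a modicum of care is the handling of the $o(t)$ remainder when passing to the limit, and the fact that one must take the one-sided limit $t \to 0^+$ (which suffices since $\vu$ ranges over all of $\reals^d$, so $-\vu$ is also covered). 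Since the excerpt points to \cite[Section 2]{ryu2022large} for the proof, I would keep this brief and may simply cite that reference rather than spelling out the three near-identical limiting arguments in full.
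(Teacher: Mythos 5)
Your argument is correct and is the standard directional-derivative limiting argument; the paper itself gives no proof and simply cites \cite[Section 2]{ryu2022large}, which proceeds in essentially the same way. Nothing is missing: the $C^1$ assumption justifies the first-order expansion, and dividing by the appropriate power of $t$ before taking $t\to 0^+$ cleanly eliminates the remainder in each of the three cases.
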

Lemma~\ref{lem:jacobian} provides some properties of $\nabla \vF(\vz)$, and as we discuss in Section~\ref{sec:jacobian_online_learning}, it helps us choose Jacobian approximation matrices. Moreover, in some cases, $\nabla \vF(\vz)$ has an additional specific structure that we want our Jacobian approximation matrix to mimic. This could improve the approximation accuracy and reduce storage requirements. 
We give three such examples below. 

\vspace{2mm}

\myalert{{Minimization.}} As a special instance of Problem \eqref{eq:monotone}, consider the unconstrained minimization problem 
\begin{equation}\label{eq:minimization}
  \min_{\vz \in \reals^d} f(\vz),
\end{equation}
where $f\!:\! \reals^d\! \rightarrow \!\reals$ is a (strongly) convex function. By the first-order optimality condition, the optimal solution of \eqref{eq:minimization} satisfies the nonlinear equation $\nabla f(\vz) = 0$. Thus, in this case, $\vF$ is  the gradient operator, i.e., $\vF(\vz) = \nabla f(\vz)$, and its Jacobian is the Hessian matrix of $f$, i.e., $\nabla \vF(\vz) = \nabla^2 f(\vz)$. In particular, we observe that the Jacobian is a symmetric matrix, which allows us to simplify the conditions in Lemma~\ref{lem:jacobian}. Specifically, under Assumptions~\ref{assum:strong_monotone} and~\ref{assum:operator_lips},  $\nabla \vF$ belongs to the set $\{\mA \in \mathbb{S}^d\!: \mu \mI \preceq\! \mA\! \preceq L\mI\}$. Meanwhile, under Assumptions~\ref{assum:monotone} and~\ref{assum:operator_lips}, it belongs to the set $\{\mA \in \mathbb{S}^d\!: 0 \preceq\! \mA\! \preceq L\mI\}$. As discussed in Section~\ref{subsec:nonlinear_eqs}, in our method, we ensure that the Jacobian approximation matrix stays symmetric and positive semidefinite, which is a key property for the efficient implementation of the subroutines.

\vspace{2mm}
\myalert{Minimax optimization.} Another important instance of Problem \eqref{eq:monotone} is the unconstrained minimax optimization problem 
\begin{equation}\label{eq:minimax}
  \min_{\vx \in \reals^m} \max_{\vy \in \reals^n} f(\vx,\vy),
\end{equation}
where $f$ is (strongly) convex with respect to $\vx$ and (strongly) concave with respect to $\vy$. To reformulate this problem as \eqref{eq:monotone}, we define the saddle point operator as $\vF(\vz) = (\nabla f_{\vx}(\vx,\vy),-\nabla f_{\vy}(\vx,\vy)) \in \reals^{m+n}$, where $\vz = (\vx,\vy) \in \reals^{m+n}$. Indeed, finding a solution of $\vF(\vz)$ is equivalent to finding a stationary point of the minimax problem, which is the optimal solution. In this case, the Jacobian is   
$
    \nabla \vF(\vz) = 
    \begin{bmatrix}
        \nabla^2 f_{\vx\vx}(\vx,\vy) & \nabla^2 f_{\vx \vy}(\vx,\vy) \\
        -\nabla^2 f_{\vx \vy}(\vx,\vy)^\top & -\nabla^2 f_{\vy\vy}(\vx,\vy)
    \end{bmatrix}$.
As noted in~\cite{asl2023j}, $\nabla \vF(\vz)$ exhibits $\mJ$-symmetric structure~\cite{mackey2003structured}, where it is symmetric on the main diagonal blocks and anti-symmetric on the off-diagonal blocks. 
Specifically, for  $
    \mJ = \begin{bmatrix}
        \mI_{m\times m} & 0 \\
        0 & - \mI_{n\times n}
    \end{bmatrix}$,  matrix $\mM$ is $\mJ$-symmetric if $ \mJ\mM$ is symmetric, i.e., $ \mJ \mM = \mM^\top \mJ$. 
Similarly, as discussed in Section~\ref{subsec:nonlinear_eqs}, we can ensure that our Jacobian approximation matrix in our method respects the $\mJ$-symmetry property, simplifying the implementation of some subroutines.  

\vspace{2mm}
\myalert{Sparse nonlinear equations.}
The last special structure that we consider is when the Jacobian $\nabla \vF(\vz)$ is sparse. This naturally arises in several applications of scientific computing~\cite{Broyden1971,chen1984solving} and several works have modified Broyden's methods to leverage this sparsity structure~\cite{Schubert1970,Broyden1971,Martinez2000}.   
Specifically, assume that we know the sparsity pattern of the Jacobian matrices, defined as the set
$
  \Omega = \{(i,j):\,\exists\, \vz \in \reals^{d}\,\mathrm{s.t.}\,[\nabla \vF(\vz)]_{ij} \neq 0, \;1\leq i \leq d,\; 1\leq j \leq d\}$,
where $[\nabla \vF(\vz)]_{ij}$ denotes the $(i,j)$-the entry of the matrix $\nabla \vF(\vz)$. 
Moreover, we say a matrix $\mM\in \reals^{d \times d}$ has the sparsity pattern $\Omega$ if $[\mM]_{ij} = 0$ for all $(i,j) \notin \Omega$ and $i \neq j$. 
As we discuss in Section~\ref{subsec:nonlinear_eqs}, we can incorporate these sparsity constraints on our Jacobian approximation matrix in a seamless way. 
This enables us to reduce the storage requirement as well as the computational cost of the subroutines. 

\subsection{Suboptimality measures}
\label{subsec:gap}
In this section, we discuss our choice for measuring suboptimality, which is needed to characterize the convergence rate of our proposed method. 
In the strongly monotone setting (under Assumption~\ref{assum:strong_monotone}), 
recall that there is a unique solution $\vz^*$ to Problem~\eqref{eq:monotone}. Hence, we measure the suboptimality in terms of distance to $\vz^*$. 

In the monotone setting (under Assumption~\ref{assum:monotone}), a common suboptimality measure is the \emph{weak gap function}~\cite{nesterov2007dual,monteiro2010complexity}, defined as $\gap_{w}(\vz) = \max_{\vz' \in \reals^d } \langle \vF(\vz'),   \vz - \vz' \rangle$. Since $\vF$ is monotone and Lipschitz, it holds that $\gap_w(\vz) \geq 0$ for all $\vz\in \reals^d$ and $\gap_w(\vz) = 0$ if and only if $\vz$ solves the nonlinear equation in~\eqref{eq:monotone}. However, in some cases, the weak gap function could always be infinite except at the solutions of \eqref{eq:monotone}, rendering it vacuous. To remedy this issue, we use the \emph{restricted gap function} 
\begin{equation}
  \gap_{\text{w}}(\vz; \mathcal{D}) = \max_{\vz' \in \mathcal{D}} \langle \vF(\vz'),   \vz - \vz' \rangle,
\end{equation}
where $\mathcal{D}$ is a given compact set in $\reals^d$. It is shown in \cite{nesterov2007dual} that: (i) $\gap_w(\vz; \mathcal{D})\geq 0$ for all $\vz \in \mathcal{D}$. (ii) If $\vz^*$ is a solution of \eqref{eq:monotone} and $\vz^* \in \mathcal{D}$, then $\gap_w(\vz^*; \mathcal{D}) = 0$. (ii) Conversely, if $\gap_w(\vz; \mathcal{D}) = 0$ and $\vz$ is in the interior of $\mathcal{D}$, then $\vz$ is a solution of~\eqref{eq:monotone}. Thus, $\gap_w(\vz; \mathcal{D})$ is a valid merit function when  $\mathcal{D}$ is  sufficiently large. 

In addition, we can use a more customized measure of suboptimality for the special instances discussed in Section~\ref{subsec:structures}. Specifically, for the minimization problem in~\eqref{eq:minimization}, we can consider the function value gap:  
\begin{equation}
  \gap_f(\vz; \vz^*) = f(\vz) - f(\vz^*). 
\end{equation} 
For the minimax problem in~\eqref{eq:minimax}, we consider the restricted primal-dual gap: 
\begin{equation}
  \gap_{\text{pd}}(\vz;\calX\times \calY) = \max_{\vy' \in \calY} f(\vx,\vy') - \min_{\vx'\in \calX} f(\vx',\vy),
\end{equation}
where $\vz = (\vx,\vy)$ and $\calX$ and $\calY$ are given compact sets in $\reals^m$ and $\reals^n$, respectively. 
The following classical lemma plays a key role in our analysis, since it provides an upper bound on the gap at the averaged iterate. 
\begin{lemma}\label{lem:averaging}
  Suppose Assumption~\ref{assum:monotone} holds. 
    Let $\theta_0, \dots, \theta_{T-1} \geq 0$ with $\sum_{t=0}^{T-1} \theta_t = 1$ and let $\vz_0,\dots,\vz_{T-1}\in \reals^d$. Define the averaged iterates as $\bar{\vz}_T = \sum_{t=0}^{T-1} \theta_t \vz_t$. Then: 
    \begin{enumerate}[(i)]
      \item For Problem~\eqref{eq:monotone}, $\gap_{\mathrm{w}}(\vz; \mathcal{D}) \leq \max_{\vz \in \calD}\sum_{t=0}^{T-1} \theta_t \langle \vF(\vz_t), \vz_t-\vz\rangle$. 
      \item For Problem~\eqref{eq:minimization}, $\gap_{f}(\vz; \vz^*) \leq \sum_{t=0}^{T-1} \theta_t \langle \vF(\vz_t), \vz_t-\vz^*\rangle$. 
      \item For Problem~\eqref{eq:minimax}, $\gap_{\mathrm{pd}}(\vz;\calX\times \calY) \leq \max_{\vz \in \calX \times \calY}\sum_{t=0}^{T-1} \theta_t \langle \vF(\vz_t), \vz_t-\vz\rangle$. 
    \end{enumerate} 
\end{lemma}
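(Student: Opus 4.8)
The plan is to prove all three parts of Lemma~\ref{lem:averaging} in a uniform way, starting from the single observation that monotonicity of $\vF$ lets us replace $\vF(\vz_t)$ by $\vF(\vz)$ inside each inner product at the cost of a nonnegative term. Concretely, for any fixed comparator $\vz$, monotonicity (Assumption~\ref{assum:monotone}) gives $\langle \vF(\vz_t) - \vF(\vz), \vz_t - \vz\rangle \geq 0$, hence $\langle \vF(\vz_t), \vz_t - \vz\rangle \geq \langle \vF(\vz), \vz_t - \vz\rangle$. Multiplying by $\theta_t \geq 0$ and summing over $t = 0,\dots,T-1$, and using $\sum_t \theta_t = 1$ together with linearity, I get
\begin{equation*}
  \sum_{t=0}^{T-1} \theta_t \langle \vF(\vz_t), \vz_t - \vz\rangle \;\geq\; \sum_{t=0}^{T-1}\theta_t \langle \vF(\vz), \vz_t - \vz\rangle \;=\; \Big\langle \vF(\vz),\, \textstyle\sum_{t=0}^{T-1}\theta_t \vz_t - \vz\Big\rangle \;=\; \langle \vF(\vz), \bar{\vz}_T - \vz\rangle.
\end{equation*}
This is the master inequality; each of the three parts follows by choosing $\vz$ appropriately and taking a maximum.

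For part (i), I would take the maximum over $\vz \in \calD$ on both sides: the right-hand side becomes $\max_{\vz \in \calD} \langle \vF(\vz), \bar{\vz}_T - \vz\rangle = \gap_{\mathrm{w}}(\bar{\vz}_T; \calD)$ by definition of the restricted gap function, while the left-hand side is bounded above by $\max_{\vz\in\calD}\sum_t \theta_t \langle \vF(\vz_t), \vz_t - \vz\rangle$ (one has to be slightly careful that the max on the left passes through correctly — but since the bound $\sum_t \theta_t\langle \vF(\vz_t),\vz_t-\vz\rangle \ge \langle \vF(\vz),\bar\vz_T-\vz\rangle$ holds for every $\vz\in\calD$, taking $\max_{\vz\in\calD}$ of the left expression is $\ge \max_{\vz\in\calD}$ of the right, which is exactly $\gap_w(\bar\vz_T;\calD)$). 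Part (iii) is identical with $\calD$ replaced by $\calX\times\calY$, plus the known fact (cited from \cite{nesterov2007dual}, or a short convexity argument) that for the saddle-point operator $\gap_{\mathrm{pd}}(\bar\vz_T;\calX\times\calY) \le \max_{\vz\in\calX\times\calY}\langle\vF(\vz),\bar\vz_T-\vz\rangle$ — this last step uses convexity-concavity of $f$ to bound $f(\bar\vx_T,\vy') - f(\vx',\bar\vy_T)$ by the linearized quantity. For part (ii), I simply set $\vz = \vz^*$ in the master inequality; then the right-hand side is $\langle \vF(\vz^*), \bar\vz_T - \vz^*\rangle = \langle \nabla f(\vz^*), \bar\vz_T - \vz^*\rangle = 0$ since $\vz^*$ is the minimizer — but that only gives nonnegativity, not the gap bound. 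Instead I use convexity of $f$ directly: $f(\vz_t) - f(\vz^*) \le \langle \nabla f(\vz_t), \vz_t - \vz^*\rangle = \langle \vF(\vz_t), \vz_t - \vz^*\rangle$, then multiply by $\theta_t$, sum, and apply Jensen's inequality $f(\bar\vz_T) \le \sum_t \theta_t f(\vz_t)$ on the left to conclude $\gap_f(\bar\vz_T;\vz^*) = f(\bar\vz_T) - f(\vz^*) \le \sum_t \theta_t \langle \vF(\vz_t), \vz_t - \vz^*\rangle$.

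I don't anticipate a genuine obstacle here — this is a classical averaging argument. The only point requiring care is the interchange of maximum and summation in parts (i) and (iii): the inequality must be established pointwise in the comparator $\vz$ first and only then have the maximum applied, so that the left-hand side keeps its comparator-dependent form under the max. A secondary subtlety is part (iii), where one must invoke convexity-concavity of $f$ (not just monotonicity of $\vF$) to relate the primal-dual gap to the linearized expression; I would either cite \cite{nesterov2007dual} for this reduction or include the two-line argument $\max_{\vy'}f(\bar\vx_T,\vy') - \min_{\vx'}f(\vx',\bar\vy_T) \le \max_{\vz=(\vx',\vy')}\big(\langle\nabla f_{\vx}(\bar\vz_T),\bar\vx_T-\vx'\rangle + \langle -\nabla f_{\vy}(\bar\vz_T),\bar\vy_T-\vy'\rangle\big)$ wait — more precisely one uses convexity at each averaged point and then the monotonicity step; I would defer the cleanest packaging to the write-up and simply reference the standard result.
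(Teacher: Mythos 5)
Your argument for parts (i) and (ii) is correct, and the ``master inequality'' you derive from monotonicity and linearity is the right tool for (i). For (ii) you correctly noticed that the master inequality with $\vz = \vz^*$ only gives nonnegativity and switched to direct convexity plus Jensen's inequality; that is the standard proof.

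Your treatment of (iii), however, has a genuine error. You claim $\gap_{\mathrm{pd}}(\bar{\vz}_T;\calX\times\calY) \le \max_{\vz\in\calX\times\calY}\langle\vF(\vz),\bar{\vz}_T-\vz\rangle$, i.e.\ $\gap_{\mathrm{pd}} \le \gap_{\mathrm{w}}$, and want to chain this with the master inequality. But this inequality runs the wrong way: for any $\vz'=(\vx',\vy')$, convexity of $f(\cdot,\vy')$ gives $\langle\nabla_{\vx}f(\vz'),\bar{\vx}_T-\vx'\rangle \le f(\bar{\vx}_T,\vy')-f(\vx',\vy')$ and concavity of $f(\vx',\cdot)$ gives $-\langle\nabla_{\vy}f(\vz'),\bar{\vy}_T-\vy'\rangle \le f(\vx',\vy')-f(\vx',\bar{\vy}_T)$, so adding produces $\langle\vF(\vz'),\bar{\vz}_T-\vz'\rangle \le f(\bar{\vx}_T,\vy')-f(\vx',\bar{\vy}_T)$, hence $\gap_{\mathrm{w}} \le \gap_{\mathrm{pd}}$, not the reverse. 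Your fallback $\gap_{\mathrm{pd}}(\bar{\vz}_T) \le \max_{\vz'}\langle\vF(\bar{\vz}_T),\bar{\vz}_T-\vz'\rangle$ is true, but the linear term there involves $\vF(\bar{\vz}_T)$ while your master inequality produces $\vF(\vz')$, so the two do not chain either --- which is exactly the difficulty you sensed but did not resolve.

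The fix is to mirror your part (ii) argument termwise rather than routing through $\gap_{\mathrm{w}}$: for each $t$ and each $\vz'=(\vx',\vy')$, convexity in $\vx$ at $\vz_t$ gives $f(\vx_t,\vy_t)-f(\vx',\vy_t) \le \langle\nabla_{\vx}f(\vz_t),\vx_t-\vx'\rangle$ and concavity in $\vy$ at $\vz_t$ gives $f(\vx_t,\vy')-f(\vx_t,\vy_t) \le \langle-\nabla_{\vy}f(\vz_t),\vy_t-\vy'\rangle$, so $f(\vx_t,\vy')-f(\vx',\vy_t) \le \langle\vF(\vz_t),\vz_t-\vz'\rangle$. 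Weighting by $\theta_t$, summing, and applying Jensen to the convex map $\vx\mapsto f(\vx,\vy')$ and the concave map $\vy\mapsto f(\vx',\vy)$ yields $f(\bar{\vx}_T,\vy')-f(\vx',\bar{\vy}_T) \le \sum_{t}\theta_t\langle\vF(\vz_t),\vz_t-\vz'\rangle$, and maximizing over $\vz'\in\calX\times\calY$ finishes (iii). Note that the monotonicity inequality $\langle\vF(\vz_t)-\vF(\vz),\vz_t-\vz\rangle\ge 0$ is used only in part (i); parts (ii) and (iii) follow directly from convexity and convexity--concavity of $f$.
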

In Lemma~\ref{lem:averaging}, we observe that the gap functions in the three cases can all be upper bounded by a similar quantity. Thus, to unify these different notions, we define  
\begin{equation}\label{eq:gap}
  \gap(\vz; \mathcal{D}) = \begin{cases}
    \gap_{\mathrm{w}}(\vz; \mathcal{D}), & \text{for Problem~\eqref{eq:monotone};}\\
    \max_{\vz' \in \mathcal{D}}\{f(\vz) - f(\vz')\}, & \text{for Problem~\eqref{eq:minimization}};\\
    \max_{(\vx',\vy') \in \calD} \{f(\vx,\vy') - f(\vx',\vy)\}, & \text{for Problem~\eqref{eq:minimax}}.
  \end{cases}
\end{equation}
Specifically, note that $\gap(\vz; \{\vz^*\})$ reduces to $\gap_f(\vz;\vz^*)$ for Problem~\eqref{eq:minimization} and $\gap(\vz; \calX\times \calY)$ reduces to $\gap_{\text{pd}}(\vz;\calX\times \calY)$ for Problem~\eqref{eq:minimax}. 
Hence, for ease of exposition, in our main theorem, we report our results in terms of $\gap(\vz; \calD)$.  
\section{Quasi-Newton proximal extragradient algorithm}\label{sec:quasi_NPE}

In this section, we present our quasi-Newton proximal extragradient (QNPE) method, which is based on the hybrid proximal extragradient (HPE) framework  \cite{solodov1999hybrid,monteiro2010complexity}. Thus, to lay the groundwork, we first briefly recap the HPE framework. 

\vspace{1mm}
\myalert{HPE framework.} The HPE framework is a principled scheme to approximate the proximal point method with a fast convergence rate for solving variational inequality problems. Its update rule consists of two steps in each iteration. In HPE, given the current iterate $\vz_k$ and the step size $\eta_k$, we first take an \emph{approximate proximal point step} as $\hat{\vz}_{k} \approx \vz_k - \eta_k \vF(\hat{\vz}_{k})$. Specifically, we require $\hat{\vz}_k$ to approximately solve the proximal subproblem $\vz - \vz_k + \eta_k \vF(\vz) = 0$ satisfying the error criterion
\begin{equation}\label{eq:HPE_error_condition}
    \|\hat{\vz}_k - \vz_k + \eta_k \vF(\hat{\vz}_k)\| \leq \alpha \|\hat{\vz}_k-\vz_k\|,%
\end{equation}
where $\alpha \in (0,1)$. After computing $\hat{\vz}_k$, we take an \emph{extragradient step} and compute
\begin{equation}\label{eq:extragradient}
    \vz_{k+1} = \vz_k - \eta_k \vF(\hat{\vz}_k).
\end{equation}
The iteration complexity of HPE was first analyzed in \cite{monteiro2010complexity} in terms of finding a \emph{$(\rho,\epsilon)$-weak solution} of \eqref{eq:monotone}. Specifically,  $\vz$ is a $(\rho,\epsilon)$-weak solution if there exists a vector $\vr \in \reals^d$ such that $\|\vr\| \leq \rho $ and $\sup_{\vz' \in \reals^d}\langle \vF(\vz')-\vr, {\vz} - \vz' \rangle \leq {\epsilon}$; see \cite[Section 3]{monteiro2010complexity} for more discussions on this notion. Under monotonicity of $\vF$, it was shown that the averaged iterate $ \bar{\vz}_k = \sum_{i=0}^{k-1} \hat{\vz}_i / \sum_{i=0}^{k-1} \eta_i$ is a $(\rho,\epsilon)$-weak solution with $\max\{\rho,\epsilon\} = \bigO({1}/{\sum_{i=0}^{k-1} \eta_i})$.

\vspace{1mm}
\myalert{Newton Proximal Extragradient.}
The HPE method should be regarded as a conceptual algorithmic framework. Indeed, to turn it into an implementable algorithm, we need to specify how to compute $\hat{\vz}_k$ such that the condition in \eqref{eq:HPE_error_condition} is satisfied. Assuming access of the operator $\vF$ and the Jacobian $\nabla \vF$, \cite{solodov1999hybrid} proposed using a single iteration of Newton's method to solve the proximal subproblem. A variant of this method, known as Newton proximal extragradient (NPE) method, was analyzed by~\cite{monteiro2010complexity,monteiro2012iteration} for monotone settings. 
Specifically, in the first step, the NPE method chooses a step size $\eta_k$ and an iterate $\hat{\vz}_k$ such that 
\begin{gather}
    \hat{\vz}_k = \vz_k -\eta_k(\mI + \eta_k \nabla \vF({\vz}_k))^{-1} \vF(\vz_k), \label{eq:Newton_step}
    \\
    \alpha' \leq \frac{L_2}{2}\eta_k \|\hat{\vz}_k-\vz_k\| \leq  \alpha ,\label{eq:large_step_size_condition}
\end{gather} 
 where  $0 < \alpha' < \alpha < 1$. Note that \eqref{eq:Newton_step} corresponds to applying one iteration of Newton's method on the proximal subproblem $\vz - \vz_k + \eta_k \vF(\vz) = 0$, while \eqref{eq:large_step_size_condition} 
imposes that the step size $\eta_k$ should be on the same order as $\frac{2}{L_2 \|\hat{\vz}_k-\vz_k\|}$. 
Together, the conditions in \eqref{eq:Newton_step} and \eqref{eq:large_step_size_condition} ensure that the error criterion \eqref{eq:HPE_error_condition} is satisfied, and thus NPE is an instance of HPE. %
Using the convergence theory developed for HPE,
\cite{monteiro2010complexity} showed  a convergence rate of $\bigO(1/k^{1.5})$ for NPE when $\vF$ is monotone. %

\begin{algorithm}[t!]\small
  \caption{Quasi-Newton Proximal Extragradient (QNPE) Method (informal)}\label{alg:Full_Equasi-Newton}
    \begin{algorithmic}[1]
        \STATE \textbf{Input:} strong monotonicity parameter $\mu \geq 0$, Jacobian feasible set $\mathcal{Z}$, line search parameters $\alpha_1\geq 0$ and $\alpha_2>0$ such that $\alpha_1+\alpha_2 < 1$,  and initial trial step size $\sigma_0>0$ 
        \STATE \textbf{Initialization:} initial point $\vz_0\in \mathbb{R}^d$ and initial Jacobian approximation $\mB_0\in \mathcal{Z}$ %
        \FOR{iteration $k=0,\ldots,N-1$}
        \STATE Let $\eta_k$ be the largest possible step size in $\{\sigma_k\beta^i:i\geq 0\}$ such that  \tikzmark{top} 
        \begin{gather*}
          \hspace{-8em}\|{\hat{\vz}_k}-\vz_k+{\eta_k}(\vF(\vz_k)+\mB_k({\hat{\vz}_k}-\vz_k))\| \leq \alpha_1 \sqrt{1+\eta_k \mu} \|\hat{\vz}_k-\vz_k\|, \tikzmark{right} %
          \\
          \hspace{-8em}\|\hat{\vz}_k - \vz_k + \eta_k \vF(\hat{\vz}_k)\| \leq { (\alpha_1+\alpha_2)\sqrt{1+\eta_k \mu} \|{\hat{\vz}_k}-\vz_k\|}
      \end{gather*}%

        \STATE Set $\sigma_{k+1} \leftarrow \eta_{k}/\beta$ \tikzmark{bottom}
        \STATE Update 
        $
          \vz_{k+1} \leftarrow \theta_k (\vz_k - \eta_k \vF(\hat{\vz}_k))+(1-\theta_k) \hat{\vz}_k
        $, where $\theta_k = \frac{1}{1+2\eta_k\mu}$
        \IF[Line search accepted the initial trial step size]{$\eta_k = \sigma_k$ \tikzmark{top2}}
        \STATE Set $\mB_{k+1} \leftarrow \mB_k$ \label{line:Hessian_approx_unchanged}
        \ELSE[Line search bactracked]
        \STATE Let $\tilde{\vz}_k$ be the last rejected iterate in the line search  %
        \STATE Set $\vu_k \leftarrow \vF(\tilde{\vz}_k) -\vF({\vz_k})$, $\vs_k \leftarrow \tilde{\vz}_k-\vz_k$
        \STATE Define the loss function $\ell_k(\mB) = \frac{\|\vu_k-\mB\vs_k\|^2}{\|\vs_k\|^2}$ 
        \STATE Feed $\ell_k(\mB)$ to an online learning algorithm $\mathcal{A}$ and obtain $\mB_{k+1}$ \quad \tikzmark{right2}
        \ENDIF \tikzmark{bottom2}
        \ENDFOR
                 
    \end{algorithmic}
    \AddNote{top}{bottom}{right}{\hspace{-.5em}\color{comment}\textit{\quad Line search subroutine;\\
             \hspace{.2em} see Section~\ref{subsec:ls}}}
    \AddNote{top2}{bottom2}{right2}{\hspace{-.5em}\color{comment}\textit{\quad Jacobian approximation\\ \hspace{.2em} update subroutine; see \\
    \hspace{.2em} Section~\ref{sec:jacobian_online_learning}}}
  \end{algorithm}

  \vspace{1mm}
\myalert{Quasi-Newton Proximal Extragradient.} 
In our setting of interest, computing $\nabla \vF(\cdot)$ is not feasible, preventing us from deploying NPE. Instead, we introduce our quasi-Newton proximal extragradient (QNPE) method, which, unlike NPE, does not rely on access to $\nabla \vF(\cdot)$. Specifically, in the initial step of HPE, we simply perform a single \emph{quasi-Newton} iteration on the proximal subproblem. 
Besides the fact that QNPE does not require access to $\nabla \vF(\cdot)$ and only relies on $\vF(\cdot)$, we should emphasize two other key differences from NPE. First, the analysis of NPE in \cite{monteiro2010complexity,monteiro2012iteration} is done for the setting where $\vF$ is merely monotone. When we introduce our QNPE method for the strongly monotone setting, we not only replace the $\nabla \vF(\cdot)$ with its quasi-Newton approximation but also modify the update rules in both the first and second stages. These changes are made to leverage the strong monotonicity of $\vF$. 
Second, the Newton update in \eqref{eq:Newton_step} requires the inverse of a $d \times d$ matrix, with a complexity of $\bigO(d^3)$. In comparison, we allow the linear system of equations arising from the quasi-Newton step to be solved inexactly up to some prescribed accuracy, thus reducing the computational cost and leading to an algorithm with a cost per iteration of $\bigO(d^2)$. 

Now we formally describe the procedure of our QNPE method, consisting of three stages. %
In the \textbf{first stage}, given a Jacobian approximation matrix $\mB_k$ and the iterate~$\vz_k$, we select a step size $\eta_k$ and a point $\hat{\vz}_k$ that satisfy the following conditions:
\begin{align}
    \|{\hat{\vz}_k}-\vz_k+{\eta_k}(\vF(\vz_k)+\mB_k({\hat{\vz}_k}-\vz_k))\| &\leq {\alpha_1} \sqrt{1+\eta_k \mu} \|\hat{\vz}_k-\vz_k\|, \label{eq:inexact_linear_solver} \\
    \|\hat{\vz}_k - \vz_k + \eta_k \vF(\hat{\vz}_k)\| &\leq {(\alpha_1+\alpha_2) \sqrt{1+\eta_k \mu} \|{\hat{\vz}_k}-\vz_k\|} \label{eq:inexact_proximal_point}, 
\end{align}
where $\alpha_1\in [0,1)$ and $\alpha_2 \in (0,1)$ are user-specified parameters with $\alpha_1+\alpha_2<1$.  
When $\alpha_1=0$, the condition in \eqref{eq:inexact_linear_solver} reduces to $\hat{\vz}_k = \vz_k -\eta_k(\mI+\eta_k  \mB_k)^{-1} \vF(\vz_k)$, which corresponds to one iteration of quasi-Newton update on the proximal subproblem. In general, this quasi-Newton step can be performed inexactly and $\alpha_1$ determines the accuracy of solving the resulting linear system of equations. Moreover, unlike NPE that constrains the step size in terms of the displacement $\|\hat{\vz}_k-\vz_k\|$ in \eqref{eq:large_step_size_condition}, we directly impose the condition in \eqref{eq:inexact_proximal_point} to ensure a small error of solving the proximal subproblem. Compared with the error criterion in \eqref{eq:HPE_error_condition} in HPE, we observe that the condition is relaxed by a factor of $\sqrt{1+\eta_k\mu}$. This relaxation, which is inspired by~\cite{barre2022note}, allows us to take a potentially larger step size. 

To find a pair of $(\eta_k, \hat{\vz}_k)$ that satisfies both conditions in \eqref{eq:inexact_linear_solver} and \eqref{eq:inexact_proximal_point}, we propose a backtracking line search scheme. Specifically, given a parameter $\beta \in (0,1)$, we iteratively test the step size from the set $\{\sigma_k \beta^i:i \geq 0\}$, where the initial trial step size $\sigma_k$ is chosen as $\sigma_k = \eta_{k-1}/\beta$ for $k\geq 1$. 
We elaborate on the implementation of our line search scheme in Section~\ref{subsec:ls}. 

In the \textbf{second stage}, %
we compute  $\vz_{k+1}$ via
\begin{equation}\label{eq:extragradient_mixing}
  \vz_{k+1} = \theta_k(\vz_k - \eta_k \vF(\hat{\vz}_k))+(1-\theta_k)\hat{\vz}_k,
\end{equation}
where $\theta_k \in [0,1]$ is chosen based on our convergence analysis. Specifically, in the monotone setting (under Assumption~\ref{assum:monotone}), we choose $\theta_k = 1$ and \eqref{eq:extragradient_mixing} reduces to the extragradient step in \eqref{eq:extragradient}. Moreover, in the strongly monotone setting (under Assumption~\ref{assum:strong_monotone}), we choose $\theta_k  = \frac{1}{1+2\eta_k\mu}$.

Finally, in the \textbf{third stage}, we update $\mB_{k}$, the most important module of our algorithm. Rather than following classical quasi-Newton methods {such as Broyden's method or BFGS}, our update rule of $\mB_k$ is purely motivated by our convergence analysis. 
 Specifically, as we explain in Section~\ref{sec:jacobian_online_learning}, we need to maintain the Jacobian approximation matrix in a certain feasible set $\mathcal{Z} \subset \reals^{d\times d}$, to ensure that $\mB_k$ is properly conditioned. Moreover, the convergence rate of our method is related to the cumulative loss $\sum_{k \in \calB} \ell_k(\mB_k)$, and a smaller cumulative loss implies a faster convergence rate. Here,  $\calB = \{k: \eta_k <\sigma_k\}$ denotes the indices where the line search scheme backtracks and the loss function $\ell_k(\mB_k)$ is given by $\ell_k(\mB_k) = \frac{\|\vu_k-\mB_k\vs_k\|^2}{\|\vs_k\|^2}$, where $\vu_k = \vF(\tilde{\vz}_k) -\vF({\vz_k})$, $\vs_k = \tilde{\vz}_k-\vz_k$,  and $\tilde{\vz}_k$ is an auxiliary iterate returned by the line search scheme. 
Hence, this motivates us to use tools from online learning to minimize the cumulative loss. Specifically, when the line search scheme accepts the initial trial step size $\sigma_k$ (i.e., $k \notin \calB$), the Jacobian approximation matrix remains unchanged since it does not contribute to the cumulative loss. Otherwise, when the line search scheme backtracks, we use a projection-free online learning algorithm to update the matrix $\mB_{k+1}$. 
We will present the details of this procedure in  Section~\ref{sec:jacobian_online_learning}. 

\subsection{Backtracking line search}\label{subsec:ls}

\begin{subroutine}[!t]\small
  \caption{Backtracking line search}\label{alg:ls}
  \begin{algorithmic}[1]
      \STATE \textbf{Input:} iterate $\vz \in \mathbb{R}^d$,  operator $\vg\in \reals^d$,  Jacobian approximation $\mB$, initial trial step size $\sigma>0$
      \STATE \textbf{Parameters:} line search parameters $\beta\in (0,1)$, $\alpha_1\geq 0$ and $\alpha_2>0$ such that $\alpha_1+\alpha_2<1$
      \STATE Set ${\eta}_{+} \leftarrow \sigma$, {$\vs_{+} \leftarrow \mathsf{LinearSolver}(\mI+\eta_{+}\mB, -\eta_{+}\vg; {\alpha_1}\sqrt{1+\eta_+ \mu})$ and $\hat{\vz}_{+} \leftarrow \vz+\vs_{+}$}
      \WHILE{$\|\hat{\vz}_+ - \vz + \eta_+ \vF(\hat{\vz}_+)\| \geq (\alpha_1+\alpha_2)\sqrt{1+\eta_+ \mu} \|{\hat{\vz}_{+}}-\vz\|$\label{line:check_condition}}
        \STATE Set $\tilde{\vz} \leftarrow \hat{\vz}_{+}$ %
          and $\eta_{+} \leftarrow \beta\eta_+ $ \label{line:decreasing_stepsize}
      \STATE Compute $\vs_{+} \leftarrow \mathsf{LinearSolver}(\mI+\eta_{+}\mB, -\eta_{+}\vg; \alpha_1\sqrt{1+\eta_+ \mu})$ and $\hat{\vz}_{+} \leftarrow \vz+\vs_{+}$
      \ENDWHILE
      \IF{$\eta_+ = \sigma$}
      \STATE \textbf{Return} $\eta_{+}$ and  $\hat{\vz}_{+}$ \label{line:return_1}
      \ELSE 
      \STATE \textbf{Return} $\eta_{+}$, $\hat{\vz}_{+}$ and $\tilde{\vz}$ \label{line:return_2}
      \ENDIF
  \end{algorithmic}
\end{subroutine}

Next, we present the backtracking line search scheme for selecting $\eta_k$ and the iterate $\hat{\vz}_k$ in the first stage of QNPE. For brevity, we denote $\vF(\vz_k)$ by $\vg$ and omit the iteration subscript $k$ from both $\vz_k$ and $\mB_k$. %
In light of \eqref{eq:inexact_linear_solver} and \eqref{eq:inexact_proximal_point}, our goal at the $k$-th iteration is to identify a pair $(\eta_{+}, \hat{\vz}_{+})$ satisfying  
\begin{align}
  \|\hat{\vz}_{+}-\vz+{\eta_{+}}(\vg+\mB({\hat{\vz}_{+}}-\vz))\| &\leq  {\alpha_1}\sqrt{1+\eta_+ \mu} \|{\hat{\vz}_{+}}-\vz\|,
  \label{eq:x_plus_update} \\
  \|\hat{\vz}_+ - \vz + \eta_+ \vF(\hat{\vz}_+)\| &\leq (\alpha_1+\alpha_2)\sqrt{1+\eta_+ \mu} \|{\hat{\vz}_{+}}-\vz\|  \label{eq:step size_condition}. 
\end{align}
As previously discussed, with $\eta_{+}$ fixed, the first condition in \eqref{eq:x_plus_update} can be met by solving the linear system $(\mI+\eta_{+}\mB)(\hat{\vz}_+-\vz) = -\eta_+\vg$ to a desired accuracy.
To formalize,  we let 
\begin{equation}\label{eq:linear_solver_update}
  \vs_{+} = \mathsf{LinearSolver}(\mI+\eta_{+}\mB, -\eta_{+}\vg; {\alpha_1}\sqrt{1+\eta_+ \mu}) \quad \text{and} \quad \hat{\vz}_{+} = \vz+\vs_{+},
\end{equation}
where the $\mathsf{LinearSolver}$ oracle is defined as follows. %

\begin{definition}\label{def:linear_solver}
  The oracle $\mathsf{LinearSolver}(\mA,\vb; \rho)$ takes  a matrix $\mA \in \reals^{d\times d}$, a vector $\vb\in \reals^d$ and $\rho>0$ as input, and returns an approximate solution $\vs_{+}$ 
  satisfying $\|\mA\vs_{+}-\vb\| \leq \rho \|\vs_{+}\|$. 
\end{definition}

By Definition~\ref{def:linear_solver}, the pair $(\eta_+,\hat{\vz}_+)$ is guaranteed to satisfy \eqref{eq:x_plus_update} when $\hat{\vz}_+$ is computed from~\eqref{eq:linear_solver_update}. 
Moreover, to implement the  $\mathsf{LinearSolver}(\mA,\vb; \rho)$ oracle, a direct way is to compute the exact solution $\vs_{+} = \mA^{-1}\vb$, but this has a cost of $\bigO(d^3)$. To avoid this, we rely on conjugate gradient-type methods
to inexactly solve the linear system, which only requires computing matrix-vector products. The detailed implementation will be further discussed in Section~\ref{sec:implementation}. 

With the $\mathsf{LinearSolver}$ oracle, we introduce our backtracking line search scheme, detailed in Subroutine~\ref{alg:ls}. We first set $\eta_{+}$ to be the initial trial step size $\sigma$ and then compute $\hat{\vz}_{+}$ from \eqref{eq:linear_solver_update}. If the pair $(\eta_+,\hat{\vz}_+)$ satisfies  \eqref{eq:step size_condition}, we accept $\eta_+$ and $\hat{\vz}_+$ as the final step size and iterate, respectively. Note that both conditions in \eqref{eq:x_plus_update} and \eqref{eq:step size_condition} are indeed satisfied. Otherwise, we multiply the step size by $\beta \in (0,1)$ and repeat the process, until $\eta_+$ and $\hat{\vz}_+$ satisfy \eqref{eq:step size_condition}. We show that this procedure terminates after finite steps (see Appendix~\ref{appen:terminate}), and we characterize the overall computational cost in Theorem~\ref{thm:line_search}. Moreover, in this case, we also return an auxiliary iterate $\tilde{\vz}$, which is computed from \eqref{eq:linear_solver_update} using the step size $\eta_+/\beta$. In other words, $\tilde{\vz}$ is the last point rejected by our line search scheme before accepting $(\eta_+,\hat{\vz}_+)$. This iterate $\tilde{\vz}$ will be used to establish a lower bound on $\eta_k$, which is further used to define the loss function $\ell_k$ for our online learning algorithm.

\section{Jacobian approximation update via online learning}\label{sec:jacobian_online_learning}

Next, we discuss the update for $\{\mB_k\}_{k\geq 0}$. Our update rule deviates from classical quasi-Newton methods and is guided by our convergence analysis. 
In Section~\ref{subsec:convergence}, we discuss key convergence results of QNPE for monotone and strongly monotone operators. We quantify how the choice of Jacobian approximation matrices impacts our method's convergence rate, turning the update of these matrices into an \textit{online convex optimization problem} over a feasible set of matrices.
Given the complexity of the feasible set and the computational intractability of computing its projection, most common projection-based online learning algorithms are precluded from use. In Section~\ref{subsec:projection_free_online_learning}, we tackle this issue by proposing a projection-free online learning approach inspired by \cite{mhammedi2022efficient}, laying the groundwork for our subsequent Jacobian approximation update.
Later in Sections~\ref{subsec:nonlinear_eqs} and~\ref{subsec:special_cases}, we specialize the online learning algorithm in Section~\ref{subsec:projection_free_online_learning} to different settings, including nonlinear monotone equations, minimization, minimax optimization, and monotone equations with sparse Jacobians. 
\subsection{From convergence rate to online learning}
\label{subsec:convergence}
As the first step, we relate the convergence rate of QNPE to the step size $\eta_k$. Before presenting this result in Proposition~\ref{prop:HPE}, we first introduce the following key lemma, which plays a crucial role in the convergence analysis.  

\begin{lemma}\label{lem:one_step}
  Suppose that the operator $\vF$ in \eqref{eq:monotone} is $\mu$-strongly monotone with $\mu \geq 0$.  
  Let $\{\vz_k\}_{k\geq 0}$ and $\{\hat{\vz}_k\}_{k\geq 0}$ be the iterates generated by Algorithm~\ref{alg:Full_Equasi-Newton}, where $\alpha_1\in [0,1)$, $\alpha_2 \in (0,1)$ and $\alpha_1+\alpha_2 <1$. Then for any $\vz \in \reals^d$ and $k \geq 0$, it holds that
  \begin{equation}\label{eq:one_step}
    \begin{aligned}
      \eta_k \langle \vF(\hat{\vz}_k),\hat{\vz}_k - \vz \rangle &\leq \frac{\|\vz_k\!-\! \vz\|^2}{2} - \frac{1+2\eta_k\mu}{2}\|\vz_{k+1} - \vz\|^2 + \eta_k\mu \|\hat{\vz}_k - \vz\|^2 \\
      & \phantom{{}\leq{}} 
      - \frac{1-\alpha_1-\alpha_2}{2}(\|\hat{\vz}_k - \vz_k\|^2 + \|\hat{\vz}_k-\vz_{k+1}\|^2).
    \end{aligned}
  \end{equation} 

\end{lemma}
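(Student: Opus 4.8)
The plan is to turn \eqref{eq:one_step} into an exact identity plus a bookkeeping estimate on the residual of the approximate proximal step; note that only the second line-search condition \eqref{eq:inexact_proximal_point} is needed (the condition \eqref{eq:inexact_linear_solver} involving $\mB_k$ plays no role here). First I would set $\vr_k = \hat{\vz}_k - \vz_k + \eta_k \vF(\hat{\vz}_k)$, so that \eqref{eq:inexact_proximal_point} reads $\|\vr_k\| \le (\alpha_1+\alpha_2)\sqrt{1+\eta_k\mu}\,\|\hat{\vz}_k-\vz_k\|$. Two elementary consequences drive everything: $\eta_k \vF(\hat{\vz}_k) = \vr_k + \vz_k - \hat{\vz}_k$, and --- substituting $\vz_k - \eta_k\vF(\hat{\vz}_k) = \hat{\vz}_k - \vr_k$ into the second-stage update \eqref{eq:extragradient_mixing} --- the identity $\vz_{k+1} = \hat{\vz}_k - \theta_k \vr_k$ with $\theta_k = \tfrac{1}{1+2\eta_k\mu}$. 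Only $\mu \ge 0$ is used here, which guarantees $\theta_k \in (0,1]$; the full strength of Assumption~\ref{assum:strong_monotone} is not needed for this lemma (it enters only when the lemma is later applied with $\vz = \vz^*$).

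Next I would write $\eta_k\langle \vF(\hat{\vz}_k), \hat{\vz}_k - \vz\rangle = \langle \vr_k, \hat{\vz}_k - \vz\rangle + \langle \vz_k - \hat{\vz}_k, \hat{\vz}_k - \vz\rangle$, expand the last term by the three-point identity $2\langle \vz_k - \hat{\vz}_k, \hat{\vz}_k - \vz\rangle = \|\vz_k - \vz\|^2 - \|\hat{\vz}_k - \vz_k\|^2 - \|\hat{\vz}_k - \vz\|^2$, and eliminate $\langle \vr_k, \hat{\vz}_k - \vz\rangle$ by expanding $\|\vz_{k+1}-\vz\|^2 = \|\hat{\vz}_k - \vz\|^2 - 2\theta_k\langle \vr_k, \hat{\vz}_k - \vz\rangle + \theta_k^2\|\vr_k\|^2$ (from $\vz_{k+1} = \hat{\vz}_k - \theta_k\vr_k$) and solving for the inner product. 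Using $(1+2\eta_k\mu)\theta_k = 1$, $\tfrac{1+2\eta_k\mu}{2} - \tfrac12 = \eta_k\mu$, and $\|\hat{\vz}_k - \vz_{k+1}\|^2 = \theta_k^2\|\vr_k\|^2$, this collapses to the exact identity
\begin{equation*}
\eta_k\langle \vF(\hat{\vz}_k), \hat{\vz}_k - \vz\rangle = \frac{\|\vz_k-\vz\|^2}{2} - \frac{1+2\eta_k\mu}{2}\|\vz_{k+1}-\vz\|^2 + \eta_k\mu\|\hat{\vz}_k-\vz\|^2 - \frac12\|\hat{\vz}_k - \vz_k\|^2 + \frac{\theta_k}{2}\|\vr_k\|^2 .
\end{equation*}

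It then remains to absorb the leftover $-\tfrac12\|\hat{\vz}_k - \vz_k\|^2 + \tfrac{\theta_k}{2}\|\vr_k\|^2$ into the negative term on the right-hand side of \eqref{eq:one_step}; equivalently, using $\|\hat{\vz}_k - \vz_{k+1}\|^2 = \theta_k^2\|\vr_k\|^2$, I need $\theta_k\bigl(1+(1-\alpha_1-\alpha_2)\theta_k\bigr)\|\vr_k\|^2 \le (\alpha_1+\alpha_2)\|\hat{\vz}_k - \vz_k\|^2$. Plugging in the residual bound $\|\vr_k\|^2 \le (\alpha_1+\alpha_2)^2(1+2\eta_k\mu)\|\hat{\vz}_k - \vz_k\|^2$ and cancelling via $\theta_k(1+2\eta_k\mu) = 1$, this reduces to the scalar inequality $(\alpha_1+\alpha_2)\bigl(1+(1-\alpha_1-\alpha_2)\theta_k\bigr) \le 1$, which holds because $\theta_k \le 1$ bounds the left side by $(\alpha_1+\alpha_2)(2-\alpha_1-\alpha_2) = 1 - (1-\alpha_1-\alpha_2)^2 \le 1$. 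The only real care needed is tracking the $\theta_k$-weights through the substitution --- that is where the relations $(1+2\eta_k\mu)\theta_k = 1$ and $(1+2\eta_k\mu)\theta_k^2 = \theta_k$ do the work; the rest is the standard proximal-point / HPE telescoping computation.
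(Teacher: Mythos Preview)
Your proof is correct and takes a somewhat different (and cleaner) route than the paper's. The paper decomposes $\eta_k\langle \vF(\hat{\vz}_k),\hat{\vz}_k-\vz\rangle$ the same way you do, but then immediately bounds the residual term $\langle \vr_k,\hat{\vz}_k-\vz\rangle$ via Cauchy--Schwarz and Young's inequality to obtain an intermediate inequality \eqref{eq:intermediate_1}; it then writes $\eta_k\vF(\hat{\vz}_k)=\vz_k-\vz_{k+1}+2\eta_k\mu(\hat{\vz}_k-\vz_{k+1})$ from the update rule, derives a second identity \eqref{eq:intermediate_2} for $\eta_k\langle \vF(\hat{\vz}_k),\vz_{k+1}-\vz\rangle$, and adds \eqref{eq:intermediate_1} applied at $\vz=\vz_{k+1}$ to \eqref{eq:intermediate_2}. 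Your observation $\vz_{k+1}=\hat{\vz}_k-\theta_k\vr_k$ (which is equivalent to the paper's rewriting of $\eta_k\vF(\hat{\vz}_k)$ but never stated in this compact form) lets you expand $\|\vz_{k+1}-\vz\|^2$ directly and reach the exact identity in one shot, deferring the single use of the residual bound to the very end. This avoids the Cauchy--Schwarz/Young step and the two-equation splice, at the price of the small scalar check $(\alpha_1+\alpha_2)\bigl(1+(1-\alpha_1-\alpha_2)\theta_k\bigr)\le 1$. One minor remark: the residual bound you quote has $(1+2\eta_k\mu)$ rather than the actual $(1+\eta_k\mu)$ from \eqref{eq:inexact_proximal_point}; this is a harmless weakening that makes the cancellation $\theta_k(1+2\eta_k\mu)=1$ clean, and the final scalar inequality still holds.
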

\begin{proof}
  To simplify the notation, let $\alpha \triangleq \alpha_1+\alpha_2\in (0,1)$. For any $\vz\in \reals^d$, we have 
\begin{equation}\label{eq:inexact_pp_decomp}
  \eta_k \langle \vF(\hat{\vz}_k), \hat{\vz}_k-\vz \rangle =  \langle \hat{\vz}_k-\vz_k+\eta_k \vF(\hat{\vz}_k), \hat{\vz}_k-\vz \rangle
  +\langle \vz_k-\hat{\vz}_k, \hat{\vz}_k-\vz\rangle. %
\end{equation}
For the first term in \eqref{eq:inexact_pp_decomp}, we can bound it by
\begin{align}
  \langle \hat{\vz}_k-\vz_k+\eta_k \vF(\hat{\vz}_k), \hat{\vz}_k-\vz \rangle &\leq \|\hat{\vz}_k-\vz_k+\eta_k\vF(\hat{\vz}_k)\|\|\hat{\vz}_k-\vz\| \nonumber\\
  &\leq \alpha\sqrt{1+\eta_k\mu}\|\hat{\vz}_k-\vz_k\|\|\hat{\vz}_k-\vz\| \nonumber\\
  &\leq {\frac{\alpha}{2}\|\hat{\vz}_k-\vz_k\|^2+\frac{\alpha(1+\eta_k \mu)}{2}\|\hat{\vz}_k-\vz\|^2}.\label{eq:triangle_ipp}
\end{align}
Here, the first inequality is due to the Cauchy-Schwarz inequality, the second inequality comes from \eqref{eq:inexact_proximal_point}, and the last inequality is a result of Young's inequality. 
Additionally, to handle the second term in \eqref{eq:inexact_pp_decomp}, we apply the three-point equality, yielding:
\begin{equation}\label{eq:three_point}
  \langle \vz_k-\hat{\vz}_k, \hat{\vz}_k-\vz\rangle = \frac{1}{2}\|\vz_k-\vz\|^2-\frac{1}{2}\|\hat{\vz}_k - \vz_k\|^2-\frac{1}{2}\|\hat{\vz}_k-\vz\|^2.
\end{equation}
By combining \eqref{eq:inexact_pp_decomp}, \eqref{eq:triangle_ipp} and \eqref{eq:three_point}, we obtain that    
\begin{equation}\label{eq:intermediate_1}
  \eta_k \langle \vF(\hat{\vz}_k), \hat{\vz}_k-\vz \rangle \leq \frac{\|\vz_k-\vz\|^2 }{2}-\frac{1-\alpha}{2}(\|\hat{\vz}_k - \vz_k\|^2 + \|\hat{\vz}_k-\vz\|^2) + \frac{\alpha\eta_k \mu}{2}\|\hat{\vz}_k-\vz\|^2.
\end{equation}
Furthermore, it follows from the update rule in \eqref{eq:extragradient_mixing} that $\eta_k\vF(\hat{\vz}_k) = \vz_k-\vz_{k+1}+2\eta_k\mu (\hat{\vz}_k-\vz_{k+1})$. Thus, for any $\vz\in \reals^d$, it also holds that
\begin{equation}
  \begin{aligned}\label{eq:intermediate_2}
    \phantom{{}={}}\eta_k \langle \vF(\hat{\vz}_k), \vz_{k+1}-\vz \rangle &= \langle \vz_k-\vz_{k+1}, \vz_{k+1}-\vz \rangle+2\eta_k\mu \langle \hat{\vz}_k-\vz_{k+1}, \vz_{k+1}-\vz\rangle %
    \\
    & = \frac{\|\vz_{k}-\vz\|^2}{2}-\frac{\|\vz_k-\vz_{k+1}\|^2}{2}-\frac{1+2\eta_k\mu}{2}\|\vz_{k+1}-\vz\|^2\\
    & \phantom{{}={}}+ {\eta_k\mu}\|\hat{\vz}_{k}-\vz\|^2-{\eta_k\mu}\|\hat{\vz}_k-\vz_{k+1}\|^2, 
  \end{aligned}
\end{equation}
where we applied the three-point equality twice in the last equality. Hence, by adding the inequality in \eqref{eq:intermediate_1} with $\vz = \vz_{k+1}$ to the inequality in \eqref{eq:intermediate_2}, we obtain 
\begin{equation}\label{eq:linearized_loss}
  \begin{aligned}
    \eta_k \langle \vF(\hat{\vz}_k),\hat{\vz}_k - \vz \rangle 
    &= \eta_k \langle \vF(\hat{\vz}_k),{\vz}_{k+1} - \vz \rangle 
    +\eta_k \langle \vF(\hat{\vz}_k), \hat{\vz}_{k} - \vz_{k+1} \rangle \\
    &\leq \frac{\|\vz_{k}-\vz\|^2}{2}-\bcancel{\frac{\|\vz_k-\vz_{k+1}\|^2}{2}}-\frac{1+2\eta_k\mu}{2}\|\vz_{k+1}-\vz\|^2 \\
    &\phantom{{}={}} + {\eta_k\mu}\|\hat{\vz}_{k}-\vz\|^2-{{\eta_k\mu}\|\hat{\vz}_k-\vz_{k+1}\|^2} + \bcancel{\frac{\|\vz_k-\vz_{k+1}\|^2}{2}}\\
    &\phantom{{}={}} -\frac{1\!-\!\alpha}{2}(\|\hat{\vz}_k \!-\! \vz_k\|^2 \!+\! \|\hat{\vz}_k\!-\!\vz_{k+1}\|^2 )+{\frac{\alpha \eta_k\mu}{2}\|\hat{\vz}_k\!-\!\vz_{k+1}\|^2} \!. %
  \end{aligned}
\end{equation} 
Moreover, since $\alpha <1$, we further have $-{{\eta_k\mu}\|\hat{\vz}_k-\vz_{k+1}\|^2} + \frac{\alpha \eta_k \mu}{2}\|\hat{\vz}_k-\vz_{k+1}\|^2 \leq -\frac{\eta_k\mu}{2}\|\hat{\vz}_k-\vz_{k+1}\|^2 \leq 0$.  Combining this with \eqref{eq:linearized_loss}  and rearranging the terms, we arrive at the desired result in~\eqref{eq:one_step}. 
\end{proof}

Building on Lemma~\ref{lem:one_step}, we obtain the following convergence result for Algorithm~\ref{alg:Full_Equasi-Newton}. 

\begin{proposition}\label{prop:HPE}
  Let $\{\vz_k\}_{k\geq 0}$ and $\{\hat{\vz}_k\}_{k\geq 0}$ be the iterates generated by Algorithm~\ref{alg:Full_Equasi-Newton}, where $\alpha_1\in [0,1)$, $\alpha_2 \in (0,1)$ and $\alpha_1+\alpha_2 <1$. 
  \begin{enumerate}[(a)]
    \item Under Assumption~\ref{assum:strong_monotone}, we have $\|\vz_{k+1}-\vz^*\|^2 \leq \|\vz_k-\vz^*\|^2 (1+2\eta_k \mu)^{-1}$ for any $k \geq 0$. 
    \item Under Assumption~\ref{assum:monotone}, we have  $\|\vz_{k+1}-\vz^*\| \leq \|\vz_k-\vz^*\|$ for any $k \geq 0$. Moreover, define the averaged iterate $\bar{\vz}_N$ by $\bar{\vz}_N = \frac{\sum_{k=0}^{N-1} \eta_k \hat{\vz}_k}{\sum_{k=0}^{N-1} \eta_k}$. Then for any compact set $\calD \subset \reals^d$, we have 
    $\gap(\bar{\vz}_N; \calD) \leq \frac{\max_{\vz \in \calD}\,\|\vz_0-\vz\|^2}{2 \sum_{k=0}^{N-1} \eta_k}$.  
  \end{enumerate}
\end{proposition}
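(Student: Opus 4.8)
The plan is to derive Proposition~\ref{prop:HPE} directly from Lemma~\ref{lem:one_step}, which already packages the hard one-step work. The key observation is that in Lemma~\ref{lem:one_step} the free point $\vz$ can be chosen cleverly in each part, and that the strong monotonicity of $\vF$ is what converts the annoying positive term $\eta_k\mu\|\hat{\vz}_k-\vz\|^2$ into something harmless.

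\textbf{Part (a), strongly monotone.} Here I would set $\vz = \vz^*$ in \eqref{eq:one_step}. Since $\vF(\vz^*) = 0$ and $\vF$ is $\mu$-strongly monotone, we have $\langle \vF(\hat{\vz}_k), \hat{\vz}_k - \vz^*\rangle = \langle \vF(\hat{\vz}_k) - \vF(\vz^*), \hat{\vz}_k - \vz^*\rangle \geq \mu\|\hat{\vz}_k - \vz^*\|^2$. Multiplying by $\eta_k \geq 0$ gives $\eta_k\langle \vF(\hat{\vz}_k), \hat{\vz}_k - \vz^*\rangle \geq \eta_k\mu\|\hat{\vz}_k - \vz^*\|^2$. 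Plugging this lower bound into the left side of \eqref{eq:one_step}, the $\eta_k\mu\|\hat{\vz}_k-\vz^*\|^2$ terms cancel from both sides, and since $\alpha_1+\alpha_2 < 1$ the remaining negative term $-\frac{1-\alpha_1-\alpha_2}{2}(\|\hat{\vz}_k-\vz_k\|^2 + \|\hat{\vz}_k-\vz_{k+1}\|^2)$ is $\leq 0$ and can be dropped. What survives is $0 \leq \frac{1}{2}\|\vz_k-\vz^*\|^2 - \frac{1+2\eta_k\mu}{2}\|\vz_{k+1}-\vz^*\|^2$, which rearranges to the claimed $\|\vz_{k+1}-\vz^*\|^2 \leq (1+2\eta_k\mu)^{-1}\|\vz_k-\vz^*\|^2$.

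\textbf{Part (b), monotone.} Setting $\mu = 0$, the monotonicity bound $\eta_k\langle \vF(\hat{\vz}_k), \hat{\vz}_k - \vz^*\rangle \geq 0$ still holds, and the same argument as in part (a) (now with no $\mu$-terms) gives $\|\vz_{k+1}-\vz^*\| \leq \|\vz_k-\vz^*\|$, the non-expansiveness claim. For the gap bound, I would instead keep $\vz$ arbitrary in \eqref{eq:one_step} with $\mu = 0$: this reads $\eta_k\langle \vF(\hat{\vz}_k), \hat{\vz}_k - \vz\rangle \leq \frac{1}{2}\|\vz_k-\vz\|^2 - \frac{1}{2}\|\vz_{k+1}-\vz\|^2 - \frac{1-\alpha_1-\alpha_2}{2}(\cdots) \leq \frac{1}{2}\|\vz_k-\vz\|^2 - \frac{1}{2}\|\vz_{k+1}-\vz\|^2$. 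Summing this telescoping inequality over $k = 0, \dots, N-1$ yields $\sum_{k=0}^{N-1}\eta_k\langle \vF(\hat{\vz}_k), \hat{\vz}_k - \vz\rangle \leq \frac{1}{2}\|\vz_0-\vz\|^2 - \frac{1}{2}\|\vz_N-\vz\|^2 \leq \frac{1}{2}\|\vz_0-\vz\|^2$. Dividing by $S_N := \sum_{k=0}^{N-1}\eta_k$ and writing $\theta_k = \eta_k/S_N$ (so $\sum_k \theta_k = 1$), this says $\sum_{k=0}^{N-1}\theta_k\langle \vF(\hat{\vz}_k), \hat{\vz}_k - \vz\rangle \leq \frac{\|\vz_0-\vz\|^2}{2S_N}$. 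Now apply Lemma~\ref{lem:averaging} with this choice of weights and points $\hat{\vz}_k$ (and $\bar{\vz}_N = \sum_k \theta_k \hat{\vz}_k$): taking the max over $\vz \in \calD$ on both sides, $\gap(\bar{\vz}_N; \calD) \leq \max_{\vz\in\calD}\sum_{k=0}^{N-1}\theta_k\langle \vF(\hat{\vz}_k), \hat{\vz}_k - \vz\rangle \leq \frac{\max_{\vz\in\calD}\|\vz_0-\vz\|^2}{2S_N}$, which is exactly the claim.

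I do not anticipate a real obstacle here — Lemma~\ref{lem:one_step} does all the heavy lifting, and the proof is essentially a matter of (i) the right substitution $\vz = \vz^*$ combined with strong monotonicity, and (ii) a telescoping sum plus an invocation of Lemma~\ref{lem:averaging}. The one point requiring a little care is making sure the weights in the averaging step match what Lemma~\ref{lem:averaging} expects ($\theta_k \geq 0$ summing to $1$, applied to the points $\hat{\vz}_k$ rather than $\vz_k$), and checking that the discarded terms in \eqref{eq:one_step} genuinely have the right sign under the standing assumption $\alpha_1 + \alpha_2 < 1$.
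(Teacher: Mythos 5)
Your proposal is correct and follows essentially the same route as the paper's proof: both set $\vz = \vz^*$ in \eqref{eq:one_step} and use (strong) monotonicity to absorb the $\eta_k\mu\|\hat{\vz}_k-\vz^*\|^2$ term, then telescope the $\mu=0$ inequality over $k$ and invoke Lemma~\ref{lem:averaging} for the gap bound. The handling of the weights $\theta_k = \eta_k / \sum_i \eta_i$ and the sign check on the discarded $-\frac{1-\alpha_1-\alpha_2}{2}(\cdots)$ term are exactly as in the paper.
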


\begin{proof}
Let $\alpha = \alpha_1 + \alpha_2$. First, we prove Part (a). Since $\mF(\vz^*) = 0$ and $\vF$ is $\mu$-strongly monotone by Assumption~\ref{assum:strong_monotone}, it holds that  
\begin{equation}\label{eq:strong_monotone}
\langle \vF(\hat{\vz}_k),\hat{\vz}_k - \vz^* \rangle = \langle \vF(\hat{\vz}_k)-\vF({\vz^*}),\hat{\vz}_k - \vz^* \rangle \geq \mu\|\hat{\vz}_k - \vz^*\|^2.
\end{equation}
By setting $\vz = \vz^*$ in \eqref{eq:one_step} and applying \eqref{eq:strong_monotone}, it yields 
\begin{equation}\label{eq:one_step_strongly}
    0 \leq \frac{1}{2}\|\vz_k- \vz^*\|^2 - \frac{1+2\eta_k\mu}{2}\|\vz_{k+1} - \vz^*\|^2  - \frac{1-\alpha}{2}(\|\hat{\vz}_k - \vz_k\|^2 + \|\hat{\vz}_k-\vz_{k+1}\|^2).
\end{equation} 
Since $\alpha = \alpha_1 + \alpha_2 <1$, we can drop the non-positive last term in \eqref{eq:one_step_strongly} and obtain $0 \leq \frac{1}{2}\|\vz_k- \vz^*\|^2 - \frac{1+2\eta_k\mu}{2}\|\vz_{k+1} - \vz^*\|^2$, which is equivalent to $\|\vz_{k+1} - \vz^*\|^2 \leq \|\vz_k- \vz^*\|^2 (1+2\eta_k\mu)^{-1}$. This completes the proof of Part (a). 

Next, we prove Part (b). Since $\vF$ is monotone by Assumption~\ref{assum:monotone}, we have
  \begin{equation}\label{eq:monotone_star}
    \langle \vF(\hat{\vz}_k),\hat{\vz}_k - \vz^* \rangle = \langle \vF(\hat{\vz}_k)-\vF({\vz^*}),\hat{\vz}_k - \vz^* \rangle \geq 0.
    \end{equation}
    By setting $\vz = \vz^*$ in \eqref{eq:one_step} with $\mu = 0$ and applying \eqref{eq:monotone_star}, it yields 
    \begin{equation}\label{eq:one_step_monotone}
      0 \leq \frac{1}{2}\|\vz_k- \vz^*\|^2 - \frac{1}{2}\|\vz_{k+1} - \vz^*\|^2  - \frac{1-\alpha}{2}(\|\hat{\vz}_k - \vz_k\|^2 + \|\hat{\vz}_k-\vz_{k+1}\|^2).
    \end{equation}
    Again, since the last term in \eqref{eq:one_step_monotone} is non-positive, this immediately implies $\frac{1}{2}\|\vz_{k+1} - \vz^*\|^2 \leq \frac{1}{2}\|\vz_{k} - \vz^*\|^2$, which is equivalent to $\|\vz_{k+1} - \vz^*\| \leq \|\vz_{k} - \vz^*\|$. Additionally, by dropping the non-positive last term in \eqref{eq:one_step} and noting that $\mu = 0$, we also have 
    $\eta_k \langle \vF(\hat{\vz}_k),\hat{\vz}_k - \vz \rangle \leq \frac{1}{2}\|\vz_k- \vz\|^2 - \frac{1}{2}\|\vz_{k+1} - \vz\|^2$.
    By summing this inequality from $k=0$ to $k=N-1$, we further get 
    $\sum_{k=0}^{N-1} \eta_k \langle \vF(\hat{\vz}_k),\hat{\vz}_k - \vz \rangle \leq \frac{1}{2}\|\vz_0- \vz\|^2 - \frac{1}{2}\|\vz_{N} - \vz\|^2 \leq \frac{1}{2}\|\vz_0- \vz\|^2$. 
    Hence, it follows from Lemma~\ref{lem:averaging} that $\gap(\bar{\vz}_N; \calD) \leq \frac{\max_{\vz \in \calD}\,\|\vz_0-\vz\|^2}{2 \sum_{k=0}^{N-1} \eta_k}$, where $\bar{\vz}_N$ is the averaged iterate given by $\bar{\vz}_N = \frac{\sum_{k=0}^{N-1} \eta_k \hat{\vz}_k}{\sum_{k=0}^{N-1} \eta_k}$.  
\end{proof}

Proposition~\ref{prop:HPE} highlights the role of  $\eta_k$ in the convergence analysis. In particular, in the strongly monotone setting, if~$\eta_k$ tends to infinity as the number of iterations $k$ increases, then $\lim_{k\rightarrow \infty} \frac{\|\vz_{k+1}-\vz^*\|}{\|\vz_{k}-\vz^*\|} = 0$ by Proposition~\ref{prop:HPE}(a), i.e., QNPE converges superlinearly. Similarly, in the monotone setting, a larger step size results in a faster convergence rate according to Proposition~\ref{prop:HPE}(b). That said, note that the step size $\eta_k$ cannot be arbitrarily selected, since it is constrained by the line search conditions in \eqref{eq:inexact_linear_solver} and~\eqref{eq:inexact_proximal_point}. \textit{Intuitively, $\eta_k$ depends on how well our Jacobian approximation matrix $\mB_k$ captures the local curvature of the operator $\vF$}. This intuition is made precise in the following lemma, where we present a lower bound on $\eta_k$. 
Recall that $\calB$ is the set of indices where the line search subroutine backtracks, i.e., $\calB = \{k: \eta_k < \sigma_k\}$. 

\begin{lemma}\label{lem:step size_lb}
    Recall that ${\tilde{\vz}_k}$ is the auxiliary iterate returned by our line search scheme in Subroutine~\ref{alg:ls}. For $k\notin \mathcal{B}$ we have $\eta_k = \sigma_k$, while for $k\in \mathcal{B}$ we have 
     $ \eta_k \!> \!
      {\frac{{\alpha_2} \beta\|{\tilde{\vz}_k}-\vz_k\|}{\|\vF({\tilde{\vz}_k})-\vF(\vz_k)-\mB_k({\tilde{\vz}_k}-\vz_k)\|}}$.
    Moreover, if $\frac{1}{2}(\mB_k + \mB_k^\top) \succeq \frac{1}{2}\mu \mI$, we have $\|{\tilde{\vz}_k}-\vz_k\| \leq \frac{1+\alpha_1}{\beta(1-\alpha_1)} \|{\hat{\vz}_k}-\vz_k\|$. 
\end{lemma}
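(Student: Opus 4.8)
The claim for $k\notin\mathcal{B}$ is just the definition $\mathcal{B}=\{k:\eta_k<\sigma_k\}$, so assume $k\in\mathcal{B}$ and write $\bar\eta_k\mydef\eta_k/\beta$, the (rejected) trial step size in Subroutine~\ref{alg:ls} immediately preceding the accepted one. By construction $\tilde{\vz}_k$ is the point returned by $\mathsf{LinearSolver}$ at step size $\bar\eta_k$, hence it satisfies the first line-search condition at $\bar\eta_k$, i.e.\ $\|\tilde{\vz}_k-\vz_k+\bar\eta_k(\vF(\vz_k)+\mB_k(\tilde{\vz}_k-\vz_k))\|\le\alpha_1\sqrt{1+\bar\eta_k\mu}\,\|\tilde{\vz}_k-\vz_k\|$, but, being rejected, it violates the second one: $\|\tilde{\vz}_k-\vz_k+\bar\eta_k\vF(\tilde{\vz}_k)\|>(\alpha_1+\alpha_2)\sqrt{1+\bar\eta_k\mu}\,\|\tilde{\vz}_k-\vz_k\|$. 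I would then split $\tilde{\vz}_k-\vz_k+\bar\eta_k\vF(\tilde{\vz}_k)$ as the sum of $\tilde{\vz}_k-\vz_k+\bar\eta_k(\vF(\vz_k)+\mB_k(\tilde{\vz}_k-\vz_k))$ and $\bar\eta_k(\vF(\tilde{\vz}_k)-\vF(\vz_k)-\mB_k(\tilde{\vz}_k-\vz_k))$; the triangle inequality, the first condition, and the negation of the second give $\alpha_2\sqrt{1+\bar\eta_k\mu}\,\|\tilde{\vz}_k-\vz_k\|<\bar\eta_k\|\vF(\tilde{\vz}_k)-\vF(\vz_k)-\mB_k(\tilde{\vz}_k-\vz_k)\|$. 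Using $\sqrt{1+\bar\eta_k\mu}\ge 1$ and $\eta_k=\beta\bar\eta_k$ yields the stated lower bound. (If $\tilde{\vz}_k=\vz_k$, the first condition forces $\vF(\vz_k)=\0$, so $\vz_k=\vz^*$ and there is nothing to prove; otherwise both sides above are strictly positive and the division is legitimate.)

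For the second claim, set $\vs\mydef\hat{\vz}_k-\vz_k$ and $\tilde\vs\mydef\tilde{\vz}_k-\vz_k$, obtained by $\mathsf{LinearSolver}$ at step sizes $\eta_k$ and $\bar\eta_k=\eta_k/\beta$ respectively. Both satisfy the first line-search condition, so the residuals $\vr\mydef(\mI+\eta_k\mB_k)\vs+\eta_k\vF(\vz_k)$ and $\tilde\vr\mydef(\mI+\bar\eta_k\mB_k)\tilde\vs+\bar\eta_k\vF(\vz_k)$ obey $\|\vr\|\le\alpha_1\sqrt{1+\eta_k\mu}\,\|\vs\|$ and $\|\tilde\vr\|\le\alpha_1\sqrt{1+\bar\eta_k\mu}\,\|\tilde\vs\|$. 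The crucial step is to divide these two identities by $\eta_k$ and $\bar\eta_k$ respectively (so that $\mB_k$ appears identically in both) and subtract to eliminate $\vF(\vz_k)$, which gives $\bigl(\tfrac{1}{\bar\eta_k}\mI+\mB_k\bigr)(\vs-\tilde\vs)=\tfrac{1}{\eta_k}\vr-\tfrac{1}{\bar\eta_k}\tilde\vr-\bigl(\tfrac{1}{\eta_k}-\tfrac{1}{\bar\eta_k}\bigr)\vs$. The hypothesis $\tfrac12(\mB_k+\mB_k^\top)\succeq\tfrac12\mu\mI$, via Cauchy--Schwarz, gives the coercivity bound $\|(\tfrac{1}{\bar\eta_k}\mI+\mB_k)\vx\|\ge(\tfrac{1}{\bar\eta_k}+\tfrac\mu2)\|\vx\|$ for all $\vx$. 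Taking norms, using the reverse triangle inequality $\|\vs-\tilde\vs\|\ge\|\tilde\vs\|-\|\vs\|$, the residual bounds together with the elementary fact $\sqrt{1+t\mu}\le 1+\tfrac{t\mu}{2}$ (so $\tfrac{1}{t}\sqrt{1+t\mu}\le\tfrac1t+\tfrac\mu2$), and the identity $\tfrac{1}{\eta_k}-\tfrac{1}{\bar\eta_k}=\tfrac{1-\beta}{\eta_k}$, I obtain an inequality of the form $(1-\alpha_1)a\,\|\tilde\vs\|\le\bigl(a+\alpha_1 b+(b-a)\bigr)\|\vs\|$ with $a\mydef\tfrac{1}{\bar\eta_k}+\tfrac\mu2$ and $b\mydef\tfrac{1}{\eta_k}+\tfrac\mu2$, where I have used $\tfrac{1-\beta}{\eta_k}=b-a$. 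The right-hand bracket collapses to $(1+\alpha_1)b$, and since $\tfrac{b}{a}=\tfrac{1+\eta_k\mu/2}{\beta+\eta_k\mu/2}\le\tfrac1\beta$ (because $\beta<1$), this gives exactly $\|\tilde\vs\|\le\tfrac{1+\alpha_1}{\beta(1-\alpha_1)}\|\vs\|$.

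The routine parts are the triangle-inequality bookkeeping in the first claim and the Cauchy--Schwarz/reverse-triangle estimates in the second. The step that needs care is the accounting after eliminating $\vF(\vz_k)$: the residual terms carry factors $\tfrac{1}{\eta_k}\sqrt{1+\eta_k\mu}$ and $\tfrac{1}{\bar\eta_k}\sqrt{1+\bar\eta_k\mu}$, and it is precisely the substitution $\sqrt{1+t\mu}\le1+\tfrac{t\mu}{2}$ that converts them into $b$ and $a$ and makes the numerator telescope to $(1+\alpha_1)b$; likewise, dividing the two residual identities by the step sizes — rather than cross-multiplying — is what makes the difference $\vs-\tilde\vs$ governed by the coercive operator $\tfrac{1}{\bar\eta_k}\mI+\mB_k$. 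It is worth noting that no bound on $\|\mB_k\|_{\op}$ is required anywhere; only the lower curvature condition on the symmetric part of $\mB_k$ is used.
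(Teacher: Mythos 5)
Your argument for the first claim and for $k\notin\mathcal{B}$ coincides with the paper's: decompose $\tilde{\vz}_k-\vz_k+\tilde\eta_k\vF(\tilde{\vz}_k)$ as the linear-solver residual plus $\tilde\eta_k\bigl(\vF(\tilde\vz_k)-\vF(\vz_k)-\mB_k(\tilde\vz_k-\vz_k)\bigr)$, apply the triangle inequality with \eqref{eq:x_plus_update} and the negation of \eqref{eq:step size_condition}, and discard the factor $\sqrt{1+\tilde\eta_k\mu}\ge 1$. For the second claim, however, your route is genuinely different and is correct. The paper introduces the \emph{exact} quasi-Newton solutions $\hat{\vz}_k^*=\vz_k-\eta_k(\mI+\eta_k\mB_k)^{-1}\vF(\vz_k)$ and $\tilde{\vz}_k^*=\vz_k-\tilde\eta_k(\mI+\tilde\eta_k\mB_k)^{-1}\vF(\vz_k)$, proves a two-sided comparison $(1-\alpha_1)\|\hat\vz_k-\vz_k\|\le\|\hat\vz_k^*-\vz_k\|\le(1+\alpha_1)\|\hat\vz_k-\vz_k\|$ (and the analogue for $\tilde\vz_k$) by the coercivity of $\mI+\eta_k\mB_k$, and then invokes a Monteiro--Svaiter-type monotone-operator lemma to get $\|\tilde\vz_k^*-\vz_k\|\le\tfrac1\beta\|\hat\vz_k^*-\vz_k\|$, finally chaining the three bounds. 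You instead bypass the exact solutions entirely: you divide the two residual identities by their respective step sizes to cancel $\vF(\vz_k)$, obtain $(\tfrac1{\tilde\eta_k}\mI+\mB_k)(\vs-\tilde\vs)=\tfrac1{\eta_k}\vr-\tfrac1{\tilde\eta_k}\tilde\vr-(\tfrac1{\eta_k}-\tfrac1{\tilde\eta_k})\vs$, apply the coercivity bound $\|(\tfrac1{\tilde\eta_k}\mI+\mB_k)\vx\|\ge(\tfrac1{\tilde\eta_k}+\tfrac\mu2)\|\vx\|$ together with the reverse triangle inequality and the elementary bound $\sqrt{1+t\mu}\le1+\tfrac{t\mu}2$, and observe that the right-hand coefficients telescope to $(1+\alpha_1)b$ with $b/a\le 1/\beta$. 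I checked the algebra: the inequality $(1-\alpha_1)a\|\tilde\vs\|\le(1+\alpha_1)b\|\vs\|$ and the ratio bound $b/a=\tfrac{1+\eta_k\mu/2}{\beta+\eta_k\mu/2}\le1/\beta$ are both correct, and the degenerate case $\tilde\vz_k=\vz_k$ is handled cleanly. Your approach is more self-contained (no auxiliary exact iterates, no external monotone-operator lemma) and gets the same constant $\tfrac{1+\alpha_1}{\beta(1-\alpha_1)}$; the paper's route, by going through the exact proximal solutions, isolates the ``exact vs.\ inexact'' perturbation from the ``step-size ratio'' estimate, which is perhaps easier to generalize if one later changes the linear-solver tolerance. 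Both rely on exactly the same hypothesis $\tfrac12(\mB_k+\mB_k^\top)\succeq\tfrac\mu2\mI$ and nothing else about $\mB_k$.
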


\begin{proof}
  If $k\notin \mathcal{B}$, as per the definition, the line search scheme adopts the initial trial step size at the $k$-th iteration and thus $\eta_k = \sigma_k$. On the other hand, if $k \in \mathcal{B}$, we go through the backtracking procedure in Subroutine~\ref{alg:ls}.
  Let $\tilde{\vz}_k$ denote the last rejected point in the line search scheme, which is calculated from \eqref{eq:linear_solver_update} using the step size $\tilde{\eta}_k = \eta_k/\beta$.
  This means that the pair $(\tilde{\vz}_k,\tilde{\eta}_k)$ satisfies \eqref{eq:x_plus_update} but not  \eqref{eq:step size_condition}, i.e.,$\|\tilde{\vz}_k-\vz_k+\tilde{\eta}_k(\vF(\vz_k)+\mB_k(\tilde{\vz}_k-\vz))\| \leq  {\alpha_1}\sqrt{1+\tilde{\eta}_k \mu} \|\tilde{\vz}_k-\vz_k\|$ and $\|\tilde{\vz}_k - \vz_k + \tilde{\eta}_k \vF(\tilde{\vz}_k)\| > (\alpha_1+\alpha_2)\sqrt{1+\tilde{\eta}_k \mu} \|{\tilde{\vz}_{k}}-\vz_k\|$. Moreover, note that $\tilde{\eta}_k (\vF(\tilde{\vz}_k) - \vF(\vz_k)-\mB_k({\tilde{\vz}_k}-\vz_k)) = \left(\tilde{\vz}_k - \vz_k + \tilde{\eta}_k \vF(\tilde{\vz}_k)\right) - ({\tilde{\vz}_k}-\vz_k+\tilde{\eta}_k(\vF(\vz_k)+\mB_k({\tilde{\vz}_k}-\vz_k)))$.  
  Thus, it follows from the triangle inequality that $\tilde{\eta}_k \|\vF(\tilde{\vz}_k) - \vF(\vz_k)-\mB_k({\tilde{\vz}_k}-\vz_k)\| > (\alpha_1+\alpha_2)\sqrt{1+\tilde{\eta}_k \mu} \|{\tilde{\vz}_{k}}-\vz_k\| - \alpha_1\sqrt{1+\tilde{\eta}_k \mu} \|{\tilde{\vz}_{k}}-\vz_k\| = \alpha_2\sqrt{1+\tilde{\eta}_k \mu} \|{\tilde{\vz}_{k}}-\vz_k\|$.
  This further implies that 
  $\tilde{\eta}_k > \frac{\alpha_2\sqrt{1+\tilde{\eta}_k \mu} \|{\tilde{\vz}_{k}}-\vz_k\|}{\|\vF(\tilde{\vz}_k) - \vF(\vz_k)-\mB_k({\tilde{\vz}_k}-\vz_k)\|}$. 
    Since $1 + \tilde{\eta}_k \mu \geq 1$, 
    we obtain the first result in Lemma~\ref{lem:step size_lb}.
    To prove the second result in Lemma~\ref{lem:step size_lb}, 
    recall from~\eqref{eq:x_plus_update} that $\hat{\vz}_k$ and $\tilde{\vz}_k$ are inexact solutions of the linear system of equations:  
    \begin{equation*}
      (\mI+{\eta_k}\mB_k)({\vz}-\vz_k) = -\eta_k\vF(\vz_k) \quad \text{and} \quad (\mI+\tilde{\eta}_k\mB_k)({\vz}-\vz_k) = -\tilde{\eta}_k\vF(\vz_k),
    \end{equation*}
    respectively. 
    Define $\hat{\vz}_k^* = \vz_k - {\eta}_k(\mI+ {\eta}_k\mB_k)^{-1} \vF(\vz_k)$ and $\tilde{\vz}_k^* = \vz_k - \tilde{\eta}_k(\mI+ \tilde{\eta}_k\mB_k)^{-1} \vF(\vz_k)$, i.e., the exact solutions of the above linear systems. Since $(\hat{\vz}_k,\eta_k)$ and $(\tilde{\vz}_k,\tilde{\eta}_k)$ satisfy the condition in \eqref{eq:x_plus_update}, we have 
    \begin{align}\label{eq:inexact_condition_linear}
      \|(\mI+\eta_k\mB_k)(\hat{\vz}_k-\hat{\vz}_k^*)\| &\leq \alpha_1 \sqrt{1+\eta_k \mu} \|\hat{\vz}_k - \vz_k\| \\
      \quad \text{and} \quad  \|(\mI+\tilde{\eta}_k\mB_k)(\tilde{\vz}_k-\tilde{\vz}_k^*)\| &\leq \alpha_1 \sqrt{1+\tilde{\eta}_k \mu}  \|\tilde{\vz}_k - \vz_k\|.
    \end{align} 
    We divide the proof of the second result {in Lemma~\ref{lem:step size_lb}} into the following three steps.
      First, we  show that 
        \begin{align}
                    (1-\alpha_1)\|\hat{\vz}_k - \vz_k\| &\leq \|\hat{\vz}_k^*-{\vz}_k\| \leq (1+\alpha_1)\|\hat{\vz}_k - \vz_k\|, \label{eq:relation_exact_inexact1}\\ (1-\alpha_1)\|\tilde{\vz}_k - \vz_k\| &\leq \|\tilde{\vz}_k^*-{\vz}_k\| \leq (1+\alpha_1)\|\tilde{\vz}_k - \vz_k\|.\label{eq:relation_exact_inexact2}
        \end{align}
      In the following, we will only prove \eqref{eq:relation_exact_inexact1}, since the proof of \eqref{eq:relation_exact_inexact2} follows similarly.
      Since we assume $\frac{1}{2}(\mB_k + \mB_k^\top) \succeq \frac{\mu}{2} \mI$, we have $\vv^\top \mB_k \vv \geq \frac{\mu}{2} \|\vv\|^2$ for any $\vv \in \reals^d$. Therefore, by using Cauchy-Schwarz inequality, we get   
      $\|\hat{\vz}_k-\hat{\vz}_k^*\| \|(\mI+\eta_k\mB_k)(\hat{\vz}_k-\hat{\vz}_k^*)\| \geq (\hat{\vz}_k-\hat{\vz}_k^*)^\top (\mI+\eta_k\mB_k)(\hat{\vz}_k-\hat{\vz}_k^*) \geq (1+ \frac{\eta_k \mu}{2})\|\hat{\vz}_k-\hat{\vz}_k^*\|^2$,
      which further implies that $\|(\mI+\eta_k\mB_k)(\hat{\vz}_k-\hat{\vz}_k^*)\| \geq (1+ \frac{1}{2}\eta_k \mu)\|\hat{\vz}_k-\hat{\vz}_k^*\|$. 
      Moreover, since $ \sqrt{1+\eta_k\mu} \leq 1+ \frac{1}{2} \eta_k\mu $, by~\eqref{eq:inexact_condition_linear} we also have $\|(\mI+\eta_k\mB_k)(\hat{\vz}_k-\hat{\vz}_k^*)\| \leq \alpha_1 {(1+ \frac{1}{2}\eta_k \mu)} \|\hat{\vz}_k - \vz_k\|$. 
      Hence, combining these two inequalities, we get $\|\hat{\vz}_k-\hat{\vz}_k^*\| \leq \alpha_1 \|\hat{\vz}_k-\vz_k\|$. It then follows from the triangle inequality that
      \begin{align*}
        \|\hat{\vz}_k^*-\vz_k\| \leq \|\hat{\vz}_k - \vz_k\| + \|\hat{\vz}_k^*-\hat{\vz}_k\| \leq (1+\alpha_1)\|\hat{\vz}_k - \vz_k\|, \\
        \|\hat{\vz}_k^*-\vz_k\| \geq \|\hat{\vz}_k - \vz_k\| -  \|\hat{\vz}_k^*-\hat{\vz}_k\| \geq (1-\alpha_1)\|\hat{\vz}_k - \vz_k\|,
      \end{align*}
      which proves \eqref{eq:relation_exact_inexact1}.
  Next, we show that 
     \begin{equation}\label{eq:relation_displacement}
      \|{\tilde{\vz}^*_k}-\vz_k\|\leq \frac{1}{\beta}\|{\hat{\vz}^*_k}-\vz_k\|.
     \end{equation}
     This follows from \cite[Lemma 7.8]{monteiro2010complexity}. For completeness, we present its proof below. Note that by definition, we have $(\mI+{\eta_k}\mB_k)(\hat{\vz}_k^*-\vz_k) = -\eta_k\vF(\vz_k)$ and $(\mI+\tilde{\eta}_k\mB_k)(\tilde{\vz}_k^*-\vz_k) = -\tilde{\eta}_k\vF(\vz_k)$.
     Hence, we further have 
     \begin{equation*}
      \mB_k(\hat{\vz}_k^* - \tilde{\vz}_k^*) = \frac{1}{\tilde{\eta}_k}(\tilde{\vz}_k^*-\vz_k) - \frac{1}{\eta_k}(\hat{\vz}_k^*-\vz_k) = \frac{1}{\tilde{\eta}_k} (\tilde{\vz}_k^* - \hat{\vz}_k^*) + \left(\frac{1}{\tilde{\eta}_k}-\frac{1}{\eta_k}\right) (\hat{\vz}_k^*-\vz_k).
     \end{equation*}
     Since $(\hat{\vz}_k^* - \tilde{\vz}_k^*)^\top \mB_k(\hat{\vz}_k^* - \tilde{\vz}_k^*) \geq 0$, by taking the inner product with $\hat{\vz}_k^* - \tilde{\vz}_k^*$ on both sides of the above inequality, we obtain
    $\left(\frac{1}{\tilde{\eta}_k}-\frac{1}{\eta_k}\right) (\hat{\vz}_k^*-\vz_k)^\top (\hat{\vz}_k^* - \tilde{\vz}_k^*)  \geq \frac{1}{\tilde{\eta}_k} \|\hat{\vz}_k^* - \tilde{\vz}_k^*\|^2$.
     Since ${\eta}_k = \beta \tilde{\eta}_k$, using the Cauchy-Schwarz inequality, this further leads to 
    $\|\hat{\vz}_k^* - \tilde{\vz}_k^*\|^2 \leq \left(\frac{1}{\beta}-1\right)(\vz_k - \hat{\vz}_k^*)^\top (\hat{\vz}_k^* - \tilde{\vz}_k^*) \leq  \left(\frac{1}{\beta}-1\right) \|\hat{\vz}_k^* - \vz_k\| \|\hat{\vz}_k^* - \tilde{\vz}_k^*\|$.
     Hence, we further have $\|\hat{\vz}_k^* - \tilde{\vz}_k^*\| \leq \left(\frac{1}{\beta}-1\right) \|\hat{\vz}_k^* - \vz_k\|$, which implies that $\|{\tilde{\vz}^*_k}-\vz_k\| \leq \|{\tilde{\vz}^*_k}-\hat{\vz}^*_k\|+\|\hat{\vz}^*_k-\vz_k\| \leq \frac{1}{\beta} \|\hat{\vz}^*_k-\vz_k\|$. 
    This completes the proof of \eqref{eq:relation_displacement}. Finally, by combining \eqref{eq:relation_exact_inexact1}, \eqref{eq:relation_exact_inexact2}, and \eqref{eq:relation_displacement}, it follows that $\|{\tilde{\vz}_k}-\vz_k\| \leq \frac{1}{1-\alpha_1}\|\tilde{\vz}_k^*-{\vz}_k\| \leq \frac{1}{(1-\alpha_1)\beta}\|{\hat{\vz}^*_k}-\vz_k\| \leq \frac{1+\alpha_1}{1-\alpha_1}\|\tilde{\vz}_k^*-{\vz}_k\| \leq \frac{1+\alpha_1}{(1-\alpha_1)\beta}\|{\hat{\vz}_k}-\vz_k\|$. This proves the second result {in Lemma~\ref{lem:step size_lb}}. 
  \end{proof}

Lemma~\ref{lem:step size_lb} demonstrates that the step size $\eta_k$ depends inversely on the relative approximation error $\frac{\|\vF({\tilde{\vz}_k})-\vF(\vz_k)-\mB_k({\tilde{\vz}_k}-\vz_k)\|}{\|\tilde{\vz}_k-\vz_k\|}$. Moreover, a smaller approximation error leads to a larger step size, which in turn implies faster convergence. %
Note that the lower bound in Lemma \ref{lem:step size_lb} for $\eta_k$ is expressed in terms of $\tilde{\vz}_k$, which is not accepted as the actual iterate. Hence, we use the second result in Lemma~\ref{lem:step size_lb} to relate $\|\tilde{\vz}_k-\vz_k\|$ with $\|\hat{\vz}_k-\vz_k\|$.   
Building on Proposition~\ref{prop:HPE} and Lemma~\ref{lem:step size_lb}, we are ready to quantify the relationship between the convergence rate of QNPE and the choice of the Jacobian approximation matrix $\mB_k$. 

To begin with, \blue{in the strongly monotone setting,} by repeatedly applying Proposition~\ref{prop:HPE}(a), we obtain that 
\begin{equation}\label{eq:after_jensen}
   \frac{\|\vz_{N}-\vz^*\|^2}{\|\vz_0-\vz^*\|^2} \leq \prod_{k=0}^{N-1} (1+2\eta_k\mu)^{-1} \leq \biggl(1+{\frac{2\mu N}{\sum_{k=0}^{N-1} 1/\eta_k}}\biggr)^{-N}, 
\end{equation}
where the last inequality is obtained by applying Jensen's inequality to the function $\log(1+\frac{1}{t})$. 
\blue{Similalry, in the monotone setting, it follows from Proposition~\ref{prop:HPE}(b) that  
\begin{equation}\label{eq:after_Cauchy_Schwarz}
  \mathrm{Gap}(\bar{\vz}_N; \vz) \leq \frac{\|\vz_0-\vz\|^2}{2 \sum_{k=0}^{N-1} \eta_k} \leq  \frac{\|\vz_0-\vz\|^2}{2 N^2}\sum_{k=0}^{N-1} \frac{1}{\eta_k}, 
\end{equation}
where the last inequality is due to Cauchy-Schwarz inequality. 
}
Thus, in light of both \eqref{eq:after_jensen} and \eqref{eq:after_Cauchy_Schwarz}, our goal is to establish an upper bound on $\sum_{k=0}^{N-1}1/\eta_k$ and this is achieved in the next lemma. %

\begin{lemma}\label{lem:stepsize_bnd}
Let $\{\eta_k\}_{k=0}^{N-1}$ be the step sizes in Algorithm~\ref{alg:Full_Equasi-Newton} obtained by the line search in Subroutine~\ref{alg:ls}.~Then,
\begin{equation}\label{eq:goal}
     \sum_{k=0}^{N-1}\frac{1}{\eta_k} \leq \frac{1}{(1-\beta)\sigma_0}+ \frac{1}{(1-\beta)\alpha_2\beta} 
     \sqrt{N \sum_{k\in \mathcal{B}} \frac{\|\vu_k-\mB_k\vs_k\|^2}{\|\vs_k\|^2}}, 
\end{equation}
where $\vu_k \triangleq \vF(\tilde{\vz}_k) -\vF({\vz_k})$, $\vs_k \triangleq \tilde{\vz}_k-\vz_k$, and $\tilde{\vz}_k$ is the auxiliary iterate in Subroutine~\ref{alg:ls}. 
\end{lemma}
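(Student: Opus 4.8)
The plan is to combine two facts about the step-size sequence $\{\eta_k\}$ generated by the backtracking scheme of Subroutine~\ref{alg:ls}. On one hand, whenever the line search accepts the initial trial step size, i.e.\ $k\notin\calB$, we have by construction the exact identity $\eta_k=\sigma_k=\eta_{k-1}/\beta$ for $k\ge 1$, so $1/\eta_k$ contracts by the factor $\beta$ at every non-backtracking step. On the other hand, for every backtracking index $k\in\calB$, Lemma~\ref{lem:step size_lb} gives $\eta_k > \frac{\alpha_2\beta\|\vs_k\|}{\|\vu_k-\mB_k\vs_k\|}$, which rearranges to $\frac{1}{\eta_k} < \frac{1}{\alpha_2\beta}\sqrt{\ell_k(\mB_k)}$ with $\ell_k(\mB_k)=\|\vu_k-\mB_k\vs_k\|^2/\|\vs_k\|^2$; note the proof of Lemma~\ref{lem:step size_lb} shows the denominator is strictly positive for $k\in\calB$, so this is well defined. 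Together these say that $\sum_k 1/\eta_k$ decomposes into geometric runs, each one seeded either by the initial trial step size $\sigma_0$ or by a backtracking index.

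To make this precise, I would, for each $k\in\{0,\dots,N-1\}$, set $\tau(k)=\max\{j\in\calB:j\le k\}$ when this set is nonempty and $\tau(k)=-1$ otherwise, and group the indices by the value of $\tau$. If $\tau(k)=j\ge 0$, then $j+1,\dots,k\notin\calB$, so iterating $\eta_i=\eta_{i-1}/\beta$ yields $1/\eta_k=\beta^{k-j}/\eta_j$; summing this geometric series over all $k$ with $\tau(k)=j$ gives at most $\frac{1}{(1-\beta)\eta_j}\le \frac{1}{(1-\beta)\alpha_2\beta}\sqrt{\ell_j(\mB_j)}$. If instead $\tau(k)=-1$, then $0,\dots,k\notin\calB$, so $\eta_k=\sigma_0\beta^{-k}$ and these terms sum to at most $\frac{1}{(1-\beta)\sigma_0}$. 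Adding the contribution of the (at most one) initial run and of the runs seeded by the indices in $\calB$ gives
\[
  \sum_{k=0}^{N-1}\frac{1}{\eta_k}\;\le\;\frac{1}{(1-\beta)\sigma_0}+\frac{1}{(1-\beta)\alpha_2\beta}\sum_{k\in\calB}\sqrt{\ell_k(\mB_k)}.
\]

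Finally, I would bound $\sum_{k\in\calB}\sqrt{\ell_k(\mB_k)}$ by the Cauchy--Schwarz inequality, using $|\calB|\le N$, to get $\sum_{k\in\calB}\sqrt{\ell_k(\mB_k)}\le\sqrt{N\sum_{k\in\calB}\ell_k(\mB_k)}$, and then substitute $\ell_k(\mB_k)=\|\vu_k-\mB_k\vs_k\|^2/\|\vs_k\|^2$ to recover \eqref{eq:goal}. The quantitative content of the argument is entirely supplied by the per-step lower bound of Lemma~\ref{lem:step size_lb}; what remains is bookkeeping, and the only points needing care are the base case $k=0$ (where $0$ may or may not lie in $\calB$, the former making the initial run empty) and checking that grouping by $\tau$ partitions $\{0,\dots,N-1\}$ with each run truncated before the next backtracking index, so that the geometric sums are valid. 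I therefore do not anticipate a substantive obstacle beyond this accounting.
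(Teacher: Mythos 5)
Your proof is correct and amounts to the same argument as the paper's: both rely on the identity $\eta_k=\eta_{k-1}/\beta$ for $k\notin\calB$, the lower bound $\eta_k > \alpha_2\beta\|\vs_k\|/\|\vu_k-\mB_k\vs_k\|$ from Lemma~\ref{lem:step size_lb} for $k\in\calB$, and a Cauchy--Schwarz step. The only difference is presentational: the paper obtains the geometric factor $1/(1-\beta)$ by a reindex-and-rearrange inequality ($\sum_{k\notin\calB,k\ge 1}1/\eta_{k-1}\le\sum_{k=0}^{N-1}1/\eta_k$, then solve for the sum), whereas you sum the geometric runs explicitly by grouping on the most recent backtracking index.
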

\begin{proof}
  Let $\vu_k \triangleq \vF(\tilde{\vz}_k) -\vF({\vz_k})$ and $\vs_k \triangleq \tilde{\vz}_k-\vz_k$. 
  In Lemma~\ref{lem:step size_lb}, we showed that $\eta_k = \sigma_k$ if $k\notin \mathcal{B}$ and $\eta_k> \frac{\alpha_2 \beta \|\vs_k\|}{\|\vu_k-\mB_k\vs_k\|}$ otherwise. %
Using the observations above, we can write 
\begin{equation}\label{eq:intermediate_bound}
    \sum_{k=0}^{N-1}\frac{1}{\eta_k} 
  = \sum_{k\notin \mathcal{B}}\frac{1}{\eta_k} + \sum_{k\in \mathcal{B}}\frac{1}{\eta_k} \leq
  \sum_{k\notin \mathcal{B}}\frac{1}{\sigma_k} + \sum_{k\in \mathcal{B}}\frac{1}{\eta_k}
  \leq \frac{1}{\sigma_0} + 
  \beta\sum_{k\notin \mathcal{B},k\geq 1}\frac{1}{\eta_{k-1}}+\sum_{k\in \mathcal{B}}\frac{1}{\eta_k},
\end{equation}
where we used $\sigma_k = \frac{\eta_{k-1}}{\beta}$ for $k\geq 1$ in the last equality. 
Since
$\sum_{k\notin \mathcal{B},k\geq 1}\frac{1}{\eta_{k-1}} 
\leq \sum_{k= 0}^{N-1}\frac{1}{\eta_{k}}$,
rearranging and simplifying the terms in \eqref{eq:intermediate_bound}, we arrive at 
\begin{equation}\label{eq:all_vs_B}
  \sum_{k=0}^{N-1}\frac{1}{\eta_k} \leq \frac{1}{(1-\beta)\sigma_0} + \frac{1}{1-\beta} \sum_{k\in \mathcal{B}}\frac{1}{\eta_k}. 
\end{equation}
Now using $\eta_k > \frac{\alpha_2 \beta \|\vs_k\|}{\|\vu_k-\mB_k\vs_k\|}$ for $k \in \calB$, we further have 
\begin{align}
  \sum_{k \in \calB} \frac{1}{\eta_k} \leq \sum_{k \in \calB} \frac{\|\vu_k-\mB_k\vs_k\|}{\alpha_2 \beta \|\vs_k\|} &\leq \frac{1}{\alpha_2\beta} \sqrt{|\calB| \sum_{k\in \mathcal{B}} \frac{\|\vu_k-\mB_k\vs_k\|^2}{\|\vs_k\|^2}} 
  \leq \frac{1}{\alpha_2\beta} 
  \sqrt{N \sum_{k\in \mathcal{B}} \frac{\|\vu_k-\mB_k\vs_k\|^2}{\|\vs_k\|^2}}, \label{eq:B_bound1}
\end{align}
where the second inequality is due to the generalized mean inequality. 
Finally, the inequality in \eqref{eq:goal} follows from \eqref{eq:all_vs_B} and \eqref{eq:B_bound1}. 
This completes the proof.
\end{proof}

 Now given the above results we observe the connection between the choice of $\{\mB_k\}$ and our method's convergence rate. Specifically, by combining the results in \eqref{eq:after_jensen} and \eqref{eq:after_Cauchy_Schwarz} with Lemma~\ref{lem:stepsize_bnd}, we observe that our goal is to update the Jacobian approximation matrices $\{\mB_k\}$ such that $ \sum_{k\in \mathcal{B}} \frac{\|\vu_k-\mB_k\vs_k\|^2}{\|\vs_k\|^2}$ is as small as possible to achieve the fastest possible convergence rate. 
\blue{Moreover, note that the variables $u_k = \vF(\tilde{\vz}_k) - \vF(\vz_k)$ and $\vs_k = \tilde{\vz}_k - \vz_k$ are determined by Subroutine~\ref{alg:ls} only \emph{after} the matrix $\mB_k$ has been chosen. This implies that the matrix $\mB_k$ must be selected first, after which we can compute the approximation error as $\frac{\|\vu_k - \mB_k \vs_k\|^2}{\|\vs_k\|^2}$. Our key insight is that this exactly fits into the framework of an \emph{online learning problem} by regarding the sum in \eqref{eq:goal} as the cumulative loss incurred by our choice of $\{\mB_k\}_{k \geq 0}$.}

Formally, define the loss function $\ell_k : \reals^{d \times d} \rightarrow \reals$ at iteration $k$ as
\begin{equation}\label{eq:loss_of_jacobian}
  \ell_k(\mB) \triangleq 
  \begin{cases}
    0, & \text{if } k\notin \mathcal{B}, \\
    \frac{\|\vu_k-\mB\vs_k\|^2}{\|\vs_k\|^2}, & \text{otherwise}.
  \end{cases}
\end{equation}
Then we consider an online learning problem as follows: (i) At the beginning of the $k$-th iteration, we commit to a Jacobian approximation matrix $\mB_k$ from a given convex feasible set $\mathcal{Z}$; (ii) We receive the loss function $\ell_k(\mB)$ defined in \eqref{eq:loss_of_jacobian}; (iii) We update our Jacobian approximation matrix to $\mB_{k+1}$.
Bounding the sum in \eqref{eq:goal} is equivalent to constraining the cumulative loss in the preceding online learning problem. Therefore, we are motivated to utilize an online learning algorithm to update~$\{\mB_k\}_{k\geq 0}$.

\textbf{The choice of feasible set} $\mathcal{Z}$.  Before discussing the online learning algorithm, let us address the choice of the feasible set $\mathcal{Z}$ for selecting $\mB_k$. We provide a high-level overview of why the feasible set structure is crucial in constraining $\sum_{k=0}^{N-1} \ell_k(\mB_k)$. 

Specifically, \blue{in the strongly monotone case}, we decompose the cumulative loss as 
$
    \sum_{k=0}^{N-1} \ell_k(\mB_k) = \sum_{k=0}^{N-1} \ell_k(\mH) +(\sum_{k=0}^{N-1} \ell_k(\mB_k) -\sum_{k=0}^{N-1} \ell_k(\mH)  )$,  
where the first part $\sum_{k=0}^{N-1} \ell_k(\mH)$ is the cumulative loss incurred by choosing a fixed matrix $\mH \in \calZ$, and the second part is known as the \emph{regret} with respect to $\mH$ in the online learning literature. By using tools from online learning, we prove a regret bound in the form of $\sum_{k=0}^{N-1} \ell_k(\mB_k) -\sum_{k=0}^{N-1} \ell_k(\mH)   \leq \reg_N(\mH; \calZ)$, \blue{where $\reg_N(\mH; \calZ)$ denotes a function depending on $N$, the matrix $\mH$ and the feasible set $\calZ$}. Thus, by choosing any $\mH \in \mathcal{Z}$, we obtain an upper bound as $\sum_{k=0}^{N-1} \ell_k(\mB_k) \leq \sum_{k=0}^{N-1} \ell_k(\mH) + \reg_N(\mH)$. Intuitively, a natural choice is setting $\mH = \nabla\mF(\vz^*)$, the true Jacobian matrix at the solution $\vz^*$. 
Thus, we require $\nabla \vF(\vz^*) \in \mathcal{Z}$. 
\blue{In the monotone setting, we use a similar approach but decompose the cumulative loss as $
    \sum_{k=0}^{N-1} \ell_k(\mB_k) = \sum_{k=0}^{N-1} \ell_k(\mH_k) +(\sum_{k=0}^{N-1} \ell_k(\mB_k) -\sum_{k=0}^{N-1} \ell_k(\mH_k)  )$, where $\{\mH_k\}_{k=0}^{N-1}$ is an arbitrary sequence of matrices in $\calZ$. In this decomposition, the second part is known as the \emph{dynamic regret} and we can again utilize tools from online learning to prove a regret bound in the form of $\sum_{k=0}^{N-1} \ell_k(\mB_k) -\sum_{k=0}^{N-1} \ell_k(\mH_k)   \leq \dreg_N(\mH_0,\dots,\mH_{N-1}; \calZ)$. It turns out that the proper choice for $\{\mH_k\}$ is to set $\mH_k = \nabla \vF(\vz_k)$ for $k=0,\dots,N-1$, and hence, we require that $\nabla \vF(\vz_k) \in \calZ$ for any $k \geq 0$.    }

Additionally, the set $\mathcal{Z}$ constrains our approximation matrices $\mB_k$, ensuring that their operator norm $\|\mB_k\|_{\op}$ remains bounded as $\bigO(L_1)$. This constraint, as demonstrated later in Lemma~\ref{lem:stepsize_const_bound}, is essential for achieving a convergence rate similar to EG.
 Moreover, we require $\mB_k$ to satisfy $\frac{1}{2}(\mB_k + \mB_k^\top) \succeq \Omega(\mu) \mI$ in the strongly monotone setting or $\frac{1}{2}(\mB_k + \mB_k^\top) \succeq 0$ in the  monotone setting. This condition is pivotal in Lemma~\ref{lem:step size_lb} and ensures our $\mathsf{LinearSolver}$ oracle can be implemented efficiently.

 Finally, when $\nabla \vF(\vz^*)$ possesses a certain structure, we \blue{can include additional constraints in $\mathcal{Z}$ to} enforce the same structure on $\mB_k$, making the subroutines more efficient and reducing storage requirements. For instance, as discussed in Section~\ref{sec:prelims}, \blue{if $\vF$ is derived from a minimization problem in~\eqref{eq:minimization} or a minimax optimization problem in~\eqref{eq:minimax}, then $\nabla \vF(\vz)$ is symmetric or $\mJ$-symmetric (i.e., $\mJ\nabla \vF(\vz) =  \nabla \vF(\vz)^\top \mJ$).} 
 In addition, if $\vF$ has a sparse Jacobian with sparsity pattern $\Omega$, then $[\nabla \vF(\vz)]_{ij} =0$ for all $(i,j)$ that satisfy $(i,j) \notin \Omega$ and $i\neq j$.

Given these points and the discussions in Section~\ref{subsec:structures}, we choose the feasible set~$\calZ$ as follows: 
\begin{enumerate}[(a)]
  \item For the minimization problem in \eqref{eq:minimization}, we choose 
  \begin{equation}\label{eq:feasible_set_min}
    \calZ = \{\mB \in \sS^d: \; \mu \mI \preceq \mB \preceq L_1 \mI,\; \mB \in \calL\},
  \end{equation}
  where $\calL \subset \sS^d$ is a linear subspace. in the general case, we set $\calL = \sS^d$. However, when the Hessians of $f$ exhibit the sparsity pattern $\Omega$, we define $\calL$ as $\calL=\{\mB\in \sS^{d}: [\mB]_{ij} = 0 \text{ for all } (i,j)\notin \Omega \text{ and }i\neq j\}$.
  \item For the minimax problem in \eqref{eq:minimax} and the nonlinear equation in \eqref{eq:monotone}, we choose 
  \begin{equation}\label{eq:feasible_set}
    \mathcal{Z} = \left\{ \mB \in \reals^{d\times d}:\mu \mI \preceq \frac{1}{2}(\mB + \mB^\top )\preceq L_1\mI , \;\|\mB\|_{\op} \leq L_1, \mB \in \mathcal{L} \right\},
  \end{equation}
  where $\mathcal{L} \subset \reals^{d\times d}$ is a linear subspace. For general nonlinear equations $\mathcal{L} = \reals^{d \times d}$, for minimax problems $\mathcal{L} = \{\mB\in \reals^{d\times d}: \mB = \mJ \mB^\top \mJ\}$, and for sparse nonlinear equations $\calL=\{\mB\in \reals^{d\times d}: [\mB]_{ij} = 0 \text{ for all } (i,j)\notin \Omega \text{ and }i\neq j\}$.
\end{enumerate}

Finally, we note that while the feasible set $\mathcal{Z}$ defined in \eqref{eq:feasible_set_min} or \eqref{eq:feasible_set} satisfies all desired properties, it poses a major computational challenge. Specifically, most online learning algorithms, such as projected online gradient descent~\cite{zinkevich2003online}, require performing Euclidean projection onto the feasible set $\mathcal{Z}$, which would be computationally costly due to the structure of $\mathcal{Z}$. To address this, we employ a projection-free online learning algorithm that is built on the approximate separation oracle in \cite{mhammedi2022efficient}, which is described in the next section.

\subsection{Online learning with an approximate separation oracle}
\label{subsec:projection_free_online_learning}

\begin{algorithm}[!t]\small
  \caption{Projection-Free Online Learning}\label{alg:projection_free_online_learning}
  \begin{algorithmic}[1]
      \STATE \textbf{Input:} Initial point $\vw_0 \in  \vspan(\calC)$, the orthogonal projection matrix $\mP$ associated with the subspace $\vspan(\calC)$, step size $\rho>0$, $\delta>0$, radius $R>0$
      \FOR{$t=0,1,\dots T-1$}
      \STATE Query the oracle $(\gamma_t,\vs_t) \leftarrow \mathsf{SEP}_{\calC}(\vw_t;\delta)$
      \IF[Case I: we have $\vw_t\in (1+\delta)\mathcal{C}$]{$\gamma_t \leq 1$}
        \STATE Set $\vx_t \leftarrow \begin{cases} \vw_t & (\textbf{Option I}) \\ \frac{\vw_t}{1+\delta} & (\textbf{Option II}) \end{cases}$ and play the action $\vx_t$ 
        \STATE Receive the loss $\ell_t(\vx_t)$ and the subspace gradient $\vg_t = \mP\nabla \ell_t(\vx_t)$
        \STATE Set $\tilde{\vg}_t \leftarrow \vg_t$
      \ELSE[Case II: we have $\vw_t/\gamma_t\in (1+\delta)\mathcal{C}$] 
        \STATE Set $\vx_t \leftarrow \begin{cases} \frac{\vw_t}{\gamma_t} & (\textbf{Option I}) \\ \frac{\vw_t}{(1+\delta)\gamma_t} & (\textbf{Option II}) \end{cases}$ and play the action $\vx_t$ 
      \STATE Receive the loss $\ell_t(\vx_t)$ and the subspace gradient $\vg_t = \mP\nabla \ell_t(\vx_t)$
      \STATE Set $\tilde{\vg}_t \leftarrow \vg_t+\max\left\{0, - \frac{1}{\gamma_t}\langle \vg_t, \vw_t \rangle\right\} \vs_t$
      \ENDIF
      \STATE Update $\vw_{t+1} \leftarrow \frac{R(\vw_t-\rho \tilde{\vg}_t)}{\max\{\|\vw_t-\rho \tilde{\vg}_t\|_2,R\}} $ 
      \COMMENT{Online projected gradient descent}
      \ENDFOR
  \end{algorithmic}
\end{algorithm}

To set the stage for our Jacobian approximation update algorithm, we take a detour and consider a general online learning problem over a compact feasible set $\calC \subset \reals^D$. For $T$ consecutive rounds $t=0,\dots,T-1$, a learner chooses an action $\vx_t \in \reals^D$ and then observes a convex loss function $\ell_t:\reals^D \rightarrow \reals$. 
\blue{The goal is to minimize either the \emph{static regret} defined by $\mathrm{Reg}_T(\vu; \calC) \triangleq \sum_{t=0}^{T-1} \ell_t(\vx_t) - \sum_{t=0}^{T-1} \ell_t(\vu)$ with a fixed competitor $\vx \in \calC$, or the \emph{dynamic regret} defined by $\dreg_T(\vu_0,\dots,\vu_{T-1}; \calC) \triangleq \sum_{t=0}^{T-1} \ell_t(\vx_t) - \sum_{t=0}^{T-1} \ell_t(\vu_t)$ with a sequence of competitors $\vu_t \in \calC$ for any $t \geq 0$.}
We assume $0\in \mathcal{C}$ without loss of generality. Moreover, we assume that the set $\calC$ is contained in the Euclidean ball $B_R(0):= \{\vw\in \reals^D: \|\vw\|\leq R\}$ for some radius $R>0$.
Most online learning algorithms require projection onto the feasible set $\calC$. However, the projection oracle is computationally intractable in our setting. To address this issue, 
inspired by \cite{mhammedi2022efficient}, we propose a projection-free algorithm that relies on an approximate separation oracle. 
To start, we will define the approximate separation oracle $\SEP_{\cal{C}}$ for a compact set $\calC$. We use $\vspan(\mathcal{C})$ to denote the linear span of $\mathcal{C}$, i.e., $\vspan(\calC) = \left\{\sum_{i=1}^{k}  \alpha_i \vx_i : k \in \mathbb{Z}_+,\;\vx_i \in \calC,\; \alpha_i \in \reals  \right\}$. Intuitively, $\vspan(\calC)$ is the smallest linear subspace that contains the set $\calC$. %

\begin{definition}\label{def:gauge}
  Let $\calC$ be a compact convex set in $\reals^D$ containing the origin. 
  The oracle $\mathsf{SEP}_{\mathcal{C}}(\vw; \delta)$ takes $\vw\!\in\! \vspan(\calC)$ and $\delta>0$ and returns $\gamma>0$ and a vector $\vs\in \vspan(\calC)$ with one of these outcomes:
    \begin{itemize}
      \item Case I: $\gamma \leq 1$, which implies that $\vw \in (1+\delta)\mathcal{C}$;
      \item Case II: $\gamma>1$, which implies that $\vw/\gamma \in (1+\delta)\mathcal{C} \ $ and $\ \langle \vs, \vw-\vx \rangle \geq {\gamma-1},$  $ \forall\vx\in \mathcal{C}$. 
    \end{itemize}
    \end{definition}

  By Definition~\ref{def:gauge}, given an input $\vw \in \vspan(\calC)$, the $\SEP_{\calC}(\vw;\delta)$ has two possible outcomes: either it certifies that $\vw$ is approximately feasible and lies in $(1\!+\!\delta) \calC$ (Case I), or it produces a scaled version of $\vw$ that is in $(1\!+\!\delta)\calC$ and provides a strict separating hyperplane between $\vw$ and $\calC$ (Case II). 
  We note that the requirement of $\vw \in \vspan(\calC)$ is necessary. Otherwise, we have $\vw/\gamma \notin (1+\delta)\calC$ for any $\gamma\in \reals$ and thus neither of the two cases in Definition~\ref{def:gauge} holds. 

Once equipped with the $\SEP_{\mathcal{C}}$ oracle, 
we are ready to present our projection-free online learning algorithm, detailed in Algorithm~\ref{alg:projection_free_online_learning}. We remark that Algorithm~\ref{alg:projection_free_online_learning} has two different options: in \textbf{Option I}, we slightly relax the feasibility requirement and allow the iterates $\{\vx_t\}_{t\geq 0}$ to be in a larger set $(1+\delta)\mathcal{C}$, whereas in \textbf{Option II}, the iterate satisfies $\vx_t \in \calC$ for all $t \geq 0$.  As we shall see later in Section~\ref{sec:complexity}, we will use \textbf{Option I} in the strongly monotone setting and \textbf{Option II} in the monotone setting.  

\vspace{-.2em}
\begin{remark}
  There are several differences between Algorithm 1 in \cite{mhammedi2022efficient} and our presentation here. First,  a standard online learning setup was considered in~\cite{mhammedi2022efficient} where the action $\vx_t$ must be in the feasible set $\mathcal{C}$, while in our setting $\vx_t$ can be chosen from a larger set $(1+\delta)\mathcal{C}$ in \textbf{Option I}. Second, their algorithm relied on an oracle that approximates the gauge function $\gamma_{\mathcal{C}}(\vw) \triangleq \inf\{\lambda \geq 0: \vw \in \lambda \mathcal{C}\}$ and its subgradient, which is further explicitly constructed using a membership oracle. Our oracle in Definition~\ref{def:gauge} is different but related, in the sense that its output $\gamma$ and $\vs$ may also be regarded as an approximation of the gauge function and its subgradient. It is also more general, since in \cite{mhammedi2022efficient} they assume that $\calC$ is full dimensional, i.e., $\vspan(\calC) = \reals^D$.  Finally, we focus on the specific set used in our Jacobian approximation update and offer a more refined regret analysis along with an efficient construction of the oracle.  
 \end{remark}
 \vspace{-.2em}
To shed light on the design of Algorithm~\ref{alg:projection_free_online_learning}, 
we remark that it can be regarded as a black-box reduction that transforms the original online learning problem over the feasible set $\calC$ into an auxiliary online learning problem on the larger set $B_R(0)$. Specifically, the auxiliary online learning problem is defined by surrogate loss functions $\tilde{\ell}_t (\vw) = \langle \tilde{\vg}_t,\vw\rangle$ for $0\leq t \leq T-1$, where $\tilde{\vg}_t$ is the surrogate gradient to be defined later. Instead of updating the iterates $\{\vx_t\}_{t\geq 0}$ in the original online learning problem directly, we will run online projected gradient descent on this auxiliary problem to obtain the iterates $\{\vw_t\}_{t\geq 0}$ (note that the projection onto $B_R(0)$ is easy to compute), and then generate the iterates $\{\vx_t\}_{t\geq 0}$ by calling $\SEP_{\calC}(\vw_t;\delta)$. 
More precisely, we initialize $\vw_0 \in \vspan(\calC)$, and as we shall prove in Lemma~\ref{lem:regret_reduction}, we can guarantee that $\vw_t \in \vspan(\calC)$ for any $t\geq 0$.
 Consider the iterate $\vw_t$ at round $t$. 
Since $\vw_t \in \vspan(\calC)$, $\SEP_{\calC}(\vw_t;\delta)$ is well-defined and let $\gamma_t>0$ and $\vs_t \in \reals^D$ be its output. Now we consider two cases depending on the value of $\gamma_t$. 
\vspace{-.1em}
\begin{enumerate}[(a)]
  \item In Case I where $\gamma_t \leq 1$, we set either $\vx_t = \vw_t$ in \textbf{Option I}, or $\vx_t = \frac{\vw_t}{1+\delta}$ in \textbf{Option II}. Further, denote by $\mP$ the orthogonal projection matrix associated with the subspace $\vspan(\calC)$, and we compute the subspace gradient $\vg_t = \mP\nabla \ell_t(\vx_t) \in \vspan(\calC)$. Then the surrogate gradient is chosen as $\tilde{\vg}_t = \vg_t$.
  \item Otherwise, in Case II where $\gamma_t > 1$, we set either $\vx_t = \frac{\vw_t}{\gamma_t}$ in \textbf{Option I}, or $\vx_t = \frac{\vw_t}{(1+\delta)\gamma_t}$ in \textbf{Option II}. We further compute the subspace gradient $\vg_t = \mP\nabla \ell_t(\vx_t)$ and define the surrogate gradient by $\tilde{\vg}_t = \vg_t + \max\{0, - \frac{1}{\gamma_t}\langle \vg_t,\vw_t \rangle\}\vs_t$.
\end{enumerate}
\vspace{-.1em}
Finally, we update $\vw_{t+1}$ following the standard online projected gradient descent with step size $\rho>0$: 
\vspace{-.1em}
\begin{equation}\label{eq:w_update}
  \vw_{t+1}  = \mathrm{\Pi}_{\mathcal{B}_R(0)}\bigl(\vw_t-\rho \nabla \tilde{\ell}_t(\vw_t)\bigr) = \frac{R}{\max\{\|\vw_t-\rho \tilde{\vg}_t\|_2,R\}} (\vw_t-\rho \tilde{\vg}_t).
\end{equation}
We note that the surrogate loss functions $\{\tilde{\ell}_t(\vw)\}_{t = 0}^{T-1}$ are constructed explicitly to guarantee that the immediate regret $\tilde{\ell}_t(\vw_t) - \tilde{\ell}_t(\vu)= \langle \tilde{\vg}_t, \vw_t- \vu \rangle$ serves as an (approximate) upper bound on $\ell_t(\vx_t) - \ell_t(\vu)$ for any $\vu \in \calC$. As a result, the regret of the original problem can be upper bounded by the regret of the auxiliary problem, which can be further bounded by standard analysis for online projected gradient descent. This is formalized in the following lemma. %

\begin{lemma}\label{lem:regret_reduction}
 Suppose the loss function $\ell_t$ is convex for any $t\geq 0$ and let $\{\vx_t\}_{t=0}^{T-1}$ and $\{\vw_t\}_{t=0}^{T-1}$ be the iterates generated by Algorithm~\ref{alg:projection_free_online_learning}. Then we have $\vw_t \in \vspan(\calC)$ for any $t \geq 0$. Moreover: 
  \begin{enumerate}[(a)]
    \item In \textbf{Option I}, we have $\vx_t \in (1+\delta)\calC$ for $t=0,1,\dots,T-1$. Also, for any $\vu\in \mathcal{C}$, it holds that 
    \begin{equation}
    \ell_t(\vx_t) - \ell_t(\vu) %
    \leq \langle \tilde{\vg}_t, \vw_t-\vu \rangle %
    \leq \frac{\|\vw_t-\vu\|^2_2}{2\rho}-\frac{\|\vw_{t+1}-\vu\|^2_2}{2\rho}+ \frac{\rho}{2}\|\tilde{\vg}_t\|_2^2,\label{eq:regret_reduction} %
\end{equation}
\begin{equation}\label{eq:surrogate_loss}
   \|\tilde{\vg}_t\| \leq \|\vg_t\|+|\langle \vg_t, \vx_t\rangle|\|\vs_t\|.
\end{equation}
\item In \textbf{Option II}, we have $\vx_t \in \calC$ for $t=0,1,\dots,T-1$. Also, for any $\vu\in \mathcal{C}$, it holds that 
  \begin{align}
  \ell_t(\vx_t) - \ell_t(\vu) 
  &\leq \langle \tilde{\vg}_t, \vw_t-\vu \rangle - \delta \langle \vg_t, \vx_t \rangle \label{eq:regret_reduction_II_1}\\
  &\leq \frac{\|\vw_t-\vu\|^2_2}{2\rho}-\frac{\|\vw_{t+1}-\vu\|^2_2}{2\rho}+ \frac{\rho}{2}\|\tilde{\vg}_t\|_2^2 - \delta \langle \vg_t, \vx_t \rangle,\label{eq:regret_reduction_II_2} %
\end{align}
\begin{equation}\label{eq:surrogate_loss_II}
  \|\tilde{\vg}_t\| \leq \|\vg_t\|+(1+\delta)|\langle \vg_t, \vx_t\rangle|\|\vs_t\|.
\end{equation}
  \end{enumerate}
\end{lemma}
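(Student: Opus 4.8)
The plan is to establish the four claims in sequence — the span invariance $\vw_t\in\vspan(\calC)$, feasibility of $\vx_t$, the per-round regret inequality, and the surrogate-gradient norm bound — and then obtain \eqref{eq:regret_reduction}, \eqref{eq:regret_reduction_II_1}, \eqref{eq:regret_reduction_II_2} by combining the per-round inequality with the standard one-step bound for online projected gradient descent. First, I would prove $\vw_t\in\vspan(\calC)$ by induction on $t$: the base case is the hypothesis on $\vw_0$; for the step, Definition~\ref{def:gauge} guarantees $\vs_t\in\vspan(\calC)$, and $\vg_t=\mP\nabla\ell_t(\vx_t)\in\vspan(\calC)$ since $\mP$ is the orthogonal projection onto $\vspan(\calC)$, so $\tilde{\vg}_t\in\vspan(\calC)$ in both Case I and Case II, and $\vw_{t+1}$ is a nonnegative scalar multiple of $\vw_t-\rho\tilde{\vg}_t\in\vspan(\calC)$. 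Feasibility of $\vx_t$ then follows directly from Definition~\ref{def:gauge}: in Case I, $\gamma_t\le 1$ forces $\vw_t\in(1+\delta)\calC$; in Case II, $\vw_t/\gamma_t\in(1+\delta)\calC$; and dividing additionally by $(1+\delta)$ under \textbf{Option II} places $\vx_t$ in $\calC$. I also record that $\vx_t$ is always a positive multiple of $\vw_t$, hence $\vx_t\in\vspan(\calC)$.

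The core step is the per-round inequality: for every $\vu\in\calC$, $\ell_t(\vx_t)-\ell_t(\vu)\le\langle\tilde{\vg}_t,\vw_t-\vu\rangle$ in \textbf{Option I}, and $\ell_t(\vx_t)-\ell_t(\vu)\le\langle\tilde{\vg}_t,\vw_t-\vu\rangle-\delta\langle\vg_t,\vx_t\rangle$ in \textbf{Option II}. Starting from convexity of $\ell_t$, $\ell_t(\vx_t)-\ell_t(\vu)\le\langle\nabla\ell_t(\vx_t),\vx_t-\vu\rangle$; since $\vx_t-\vu\in\vspan(\calC)$ and $\mP$ is the (self-adjoint) orthogonal projection onto $\vspan(\calC)$, I can replace $\nabla\ell_t(\vx_t)$ by $\vg_t=\mP\nabla\ell_t(\vx_t)$, obtaining $\langle\vg_t,\vx_t-\vw_t\rangle+\langle\vg_t,\vw_t-\vu\rangle$. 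In Case I this is easy: with \textbf{Option I} one has $\vx_t=\vw_t$ and $\tilde{\vg}_t=\vg_t$, so both statements are immediate; with \textbf{Option II} one has $\vx_t-\vw_t=-\delta\vx_t$ and $\tilde{\vg}_t=\vg_t$, which produces exactly the extra term $-\delta\langle\vg_t,\vx_t\rangle$. Case II is the delicate one. Here I would use three ingredients: (a) the identity $\vw_t=\gamma_t\vx_t$ under \textbf{Option I} (resp.\ $\vw_t=(1+\delta)\gamma_t\vx_t$ under \textbf{Option II}), which rewrites $\langle\vg_t,\vx_t-\vw_t\rangle$ cleanly; (b) the separating-hyperplane property $\langle\vs_t,\vw_t-\vu\rangle\ge\gamma_t-1>0$ for $\vu\in\calC$ from Definition~\ref{def:gauge}; and (c) the exact form of the correction $\tilde{\vg}_t=\vg_t+\max\{0,-\tfrac1{\gamma_t}\langle\vg_t,\vw_t\rangle\}\vs_t$. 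Combining (b) and (c) gives $\langle\vg_t,\vw_t-\vu\rangle\le\langle\tilde{\vg}_t,\vw_t-\vu\rangle-(\gamma_t-1)\max\{0,-\tfrac1{\gamma_t}\langle\vg_t,\vw_t\rangle\}$, and then the residual $\langle\vg_t,\vx_t-\vw_t\rangle$ minus this correction term collapses (after factoring out $\gamma_t-1>0$) to $a-\max\{0,a\}\le 0$ with $a=-\tfrac1{\gamma_t}\langle\vg_t,\vw_t\rangle$ in \textbf{Option I}, and to an analogous manifestly nonpositive quantity in \textbf{Option II} after using $\tfrac1{\gamma_t}\langle\vg_t,\vw_t\rangle=(1+\delta)\langle\vg_t,\vx_t\rangle$. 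This cancellation — the fact that the correction term is engineered precisely to absorb the infeasibility contribution $\langle\vg_t,\vx_t-\vw_t\rangle$ — is where I expect to spend the most care, and is the main obstacle of the proof.

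Finally, I would invoke the standard one-step projected-gradient-descent bound: since $\vu\in\calC\subseteq B_R(0)$ and the Euclidean projection onto $B_R(0)$ is nonexpansive, $\|\vw_{t+1}-\vu\|^2\le\|\vw_t-\rho\tilde{\vg}_t-\vu\|^2$, which upon expansion yields $\langle\tilde{\vg}_t,\vw_t-\vu\rangle\le\frac{\|\vw_t-\vu\|^2-\|\vw_{t+1}-\vu\|^2}{2\rho}+\frac{\rho}{2}\|\tilde{\vg}_t\|^2$. Chaining this with the per-round inequality from the previous paragraph gives \eqref{eq:regret_reduction} in \textbf{Option I} and \eqref{eq:regret_reduction_II_2} in \textbf{Option II}. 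For the norm bounds \eqref{eq:surrogate_loss} and \eqref{eq:surrogate_loss_II}: in Case I, $\tilde{\vg}_t=\vg_t$, so they hold trivially since the added term is nonnegative; in Case II, the triangle inequality gives $\|\tilde{\vg}_t\|\le\|\vg_t\|+\max\{0,-\tfrac1{\gamma_t}\langle\vg_t,\vw_t\rangle\}\|\vs_t\|$, and substituting $\vw_t=\gamma_t\vx_t$ (resp.\ $\vw_t=(1+\delta)\gamma_t\vx_t$) turns $\max\{0,-\tfrac1{\gamma_t}\langle\vg_t,\vw_t\rangle\}$ into $\max\{0,-\langle\vg_t,\vx_t\rangle\}\le|\langle\vg_t,\vx_t\rangle|$ (resp.\ into $\max\{0,-(1+\delta)\langle\vg_t,\vx_t\rangle\}\le(1+\delta)|\langle\vg_t,\vx_t\rangle|$), which completes the argument.
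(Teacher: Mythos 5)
Your proposal is correct and follows essentially the same route as the paper's proof: span invariance by induction, feasibility directly from Definition~\ref{def:gauge}, the per-round regret inequality via the separating hyperplane $\langle\vs_t,\vw_t-\vu\rangle\geq\gamma_t-1$ and the identity $\vw_t=\gamma_t\vx_t$ (or $(1+\delta)\gamma_t\vx_t$), and the standard nonexpansive-projection one-step bound. The only cosmetic difference is that you start from $\ell_t(\vx_t)-\ell_t(\vu)\leq\langle\vg_t,\vx_t-\vu\rangle$ and decompose upward, whereas the paper expands $\langle\tilde{\vg}_t,\vw_t-\vu\rangle$ downward; these are the same computation, and your remark that the \textbf{Option~II} cancellation yields the extra $-\delta\langle\vg_t,\vx_t\rangle$ term plus a nonpositive remainder is the right way to read it.
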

\begin{proof}
To begin with, we prove by induction that $\vw_t \in \vspan(\calC)$. By initialization, we have $\vw_0 \in \vspan(\calC)$. Now suppose $\vw_t \in \vspan(\calC)$ for some $t \geq 0$. Since $\vg_t =  \mP\nabla \ell_t(\vx_t) \in \vspan(\calC)$ and $\vs_t \in \vspan(\calC)$ by Definition~\ref{def:gauge}, we obtain that $\tilde{\vg}_t \in \vspan(\calC)$ in both \textbf{Options I} and \textbf{II}. Moreover, since $\vw_{t+1}$ is expressed as a linear combination of $\vw_t$ and $\tilde{\vg}_t$ according to \eqref{eq:w_update}, we obtain that $\vw_{t+1} \in \vspan(\calC)$. Hence, we conclude by induction that $\vw_t \in \vspan(\calC)$ for any $t\geq 0$.  

Next, we consider the result in Part (a) for \textbf{Option I}.  We distinguish two cases depending on the outcome of $\mathsf{SEP}(\vw_t;\delta)$. 
\begin{itemize}
  \item If $\gamma_t \leq 1$,  we have $\vw_t \in (1+\delta) \mathcal{C}$ by Definition~\ref{def:gauge}. Thus, we have $\vx_t=\vw_t\in (1+\delta) \mathcal{C}$ and $\tilde{\vg}_t = \vg_t$, which immediately implies \eqref{eq:surrogate_loss}. Moreover, since $\ell_t$ is convex, we have $\ell_t(\vx_t) - \ell_t(\vu) \leq \langle \nabla \ell_t(\vx_t), \vx_t - \vu \rangle$. Note that both $\vx_t$ and $\vu$ are in $\vspan(\calC)$ and recall that $\mP$ denotes the orthogonal projection matrix associated with $\vspan(\calC)$. Thus, we further have $\vx_t - \vu = \mP(\vx_t - \vu)$, which implies that $\langle \nabla \ell_t(\vx_t), \vx_t - \vu \rangle = \langle \nabla \ell_t(\vx_t), \mP(\vx_t - \vu )\rangle = \langle \mP\nabla \ell_t(\vx_t), \vx_t - \vu \rangle = \langle \vg_t, \vx_t - \vu \rangle$. 
  Therefore, we have $\ell_t(\vx_t) - \ell_t(\vu) \leq \langle \vg_t, \vx_t - \vu \rangle = \langle \tilde{\vg}_t, \vw_t-\vu \rangle$, which proves the first inequality in \eqref{eq:regret_reduction}. 
  \item Otherwise, if $\gamma_t>1$,  we have $\frac{\vw_t}{\gamma_t} \in (1+\delta)\mathcal{C}$ by Definition~\ref{def:gauge} and $\langle \vs_t, \vw_t-\vx \rangle \geq {\gamma_t-1}$,  $ \forall\vx\in \mathcal{C}$. In this case, Algorithm~\ref{alg:projection_free_online_learning} chooses $\vx_t  = \frac{\vw_t}{\gamma_t} \in (1+\delta)\mathcal{C}$ and $\tilde{\vg}_t = \vg_t+\max\{0, -\frac{1}{\gamma_t}\langle \vg_t, \vw_t \rangle\} \vs_t = \vg_t+\max\{0, -\langle \vg_t, \vx_t \rangle\} \vs_t$. To prove the first inequality in \eqref{eq:regret_reduction}, 
  first note that we also have $\ell_t(\vx_t) - \ell_t(\vu) \leq \langle \vg_t, \vx_t - \vu \rangle$ using similar arguments as above. Moreover, for any $\vu\in \mathcal{C}$,
  \begin{align*}
    \langle \tilde{\vg}_t, \vw_t-\vu \rangle &= \langle\vg_t+\max\{0, -\langle \vg_t, \vx_t \rangle\} \vs_t, \vw_t-\vu \rangle \\
    & = \langle \vg_t, \gamma_t\vx_t-\vu \rangle + \max\{0, -\langle \vg_t, \vx_t \rangle\} \langle \vs_t, \vw_t-\vu\rangle \\
    & \geq \langle \vg_t, \vx_t-\vu\rangle + (\gamma_t-1)\langle\vg_t,\vx_t\rangle + (\gamma_t-1)\max\{0, -\langle \vg_t, \vx_t \rangle\}  \\
    & \geq \langle \vg_t, \vx_t-\vu\rangle, 
  \end{align*}
  where we used $\vw_t = \gamma_t \vx_t$ in the second equality and $\langle \vs_t, \vw_t-\vu\rangle \geq \gamma_t -1$ in the first inequality. 
  Also, by the triangle inequality we obtain 
  \begin{equation*}
    \|\tilde{\vg}_t\|_2 =\|\vg_t+\max\{0, -\langle \vg_t, \vx_t \rangle\} \vs_t\|_2 %
    \leq  \|\vg_t\|_2+|\langle \vg_t, \vx_t \rangle|\|\vs_t\|_2,
  \end{equation*}
  which proves \eqref{eq:surrogate_loss}. 
\end{itemize}
Finally, from the update rule of $\vw_{t+1}$ in \eqref{eq:w_update}, for any $\vu\in \mathcal{C} \subset \mathcal{B}_R(0)$, we have 
$
  \langle\vw_t-\rho\tilde{\vg}_t-\vw_{t+1}, \vw_{t+1}-\vu\rangle \geq 0
$. This further implies that 
\begin{align}
  \langle \tilde{\vg}_t, \vw_{t}-\vu \rangle &\leq  \langle \tilde{\vg}_t, \vw_{t}-\vw_{t+1}\rangle+ \frac{1}{\rho}\langle \vw_t-\vw_{t+1}, \vw_{t+1}-\vu \rangle \nonumber\\
  & =  \langle \tilde{\vg}_t, \vw_{t}-\vw_{t+1}\rangle+ \frac{\|\vw_t-\vu\|_2^2}{2\rho}-\frac{\|\vw_{t+1}-\vu\|_2^2}{2\rho}-\frac{\|\vw_t-\vw_{t+1}\|_2^2}{2\rho} \nonumber\\
  &\leq \frac{1}{2\rho}\|\vw_t-\vu\|_2^2-\frac{1}{2\rho}\|\vw_{t+1}-\vu\|_2^2 +\frac{\rho}{2}\|\tilde{\vg}_t\|_2^2, \label{eq:regret_wt}
\end{align}
where we used the Young's inequality $\langle \tilde{\vg}_t, \vw_{t}-\vw_{t+1}\rangle \leq \frac{\|\vw_{t}-\vw_{t+1}\|^2}{2\rho} + \frac{\rho}{2}\|\tilde{\vg}_t\|^2$ in the last inequality. 
This proves the second inequality in \eqref{eq:regret_reduction}. 

\blue{Next, we consider Part (b) for \textbf{Option II}. Similarly, we distinguish two cases depending on the outcome of $\mathsf{SEP}(\vw_t;\delta)$. 
\begin{itemize}
  \item If $\gamma_t \leq 1$,  we have $\vw_t \in (1+\delta) \mathcal{C}$ by Definition~\ref{def:gauge}. According to \textbf{Option II} in Algorithm~\ref{alg:projection_free_online_learning}, we have $\vx_t= \frac{\vw_t}{1+\delta}\in \mathcal{C}$ and $\tilde{\vg}_t = \vg_t$, which immediately implies~\eqref{eq:surrogate_loss_II}. Using similar arguments as in \textbf{Option I}, we have $\ell_t(\vx_t) - \ell_t(\vu) \leq \langle \vg_t, \vx_t - \vu \rangle$. Moreover, since $\vw_t = (1+\delta)\vx_t$, we have $\langle {\vg}_t, \vw_t - \vu \rangle = \langle {\vg}_t, (1+\delta)\vx_t - \vu \rangle = \langle {\vg}_t, \vx_t-\vu \rangle + \delta \langle {\vg}_t, \vx_t\rangle$. Combining these two, we further have $\ell_t(\vx_t) - \ell_t(\vu) \leq \langle \vg_t, \vx_t - \vu \rangle \leq\langle {\vg}_t, \vw_t - \vu \rangle- \delta \langle {\vg}_t, \vx_t\rangle$. This proves the inequality in \eqref{eq:regret_reduction_II_1}. 
  \item Otherwise, if $\gamma_t>1$, By Definition~\ref{def:gauge} we have $ \frac{\vw_t}{\gamma_t} \in (1+\delta)\mathcal{C}$ and $\langle \vs_t, \vw_t-\vx \rangle \geq {\gamma_t-1}$  $ \forall\vx\in \mathcal{C}$. According to \textbf{Option II} in Algorithm~\ref{alg:projection_free_online_learning}, we have $\vx_t  = \frac{\vw_t}{\gamma_t(1+\delta)} \in \mathcal{C}$ and $\tilde{\vg}_t = \vg_t+\max\{0, -\frac{1}{\gamma_t}\langle \vg_t, \vw_t \rangle\} \vs_t = \vg_t+\max\{0, -(1+\delta)\langle \vg_t, \vx_t \rangle\} \vs_t$. 
  Following similar arguments as in \textbf{Option I}, we have $\ell_t(\vx_t) - \ell_t(\vu) \leq \langle \vg_t, \vx_t - \vu \rangle$. Moreover, for any $\vu\in \mathcal{C}$, 
  \begin{align*}
    \langle \tilde{\vg}_t, \vw_t-\vu \rangle &= \langle\vg_t+\max\{0, -(1+\delta)\langle \vg_t, \vx_t \rangle\} \vs_t, \vw_t-\vu \rangle \\
    & = \langle \vg_t, (1+\delta)\gamma_t\vx_t-\vu \rangle + \max\{0, -(1+\delta)\langle \vg_t, \vx_t \rangle\} \langle \vs_t, \vw_t-\vu\rangle \\
    & \geq \langle \vg_t, (1+\delta)\vx_t-\vu\rangle, 
  \end{align*}
  where we used $\langle \vs_t, \vw_t-\vu\rangle \geq \gamma_t -1$ in the first inequality. By rearranging the above inequality, we obtain that $\ell_t(\vx_t) - \ell_t(\vu) \leq \langle \vg_t, \vx_t - \vu \rangle \leq \langle \tilde{\vg}_t, \vw_t-\vu \rangle - \delta \langle \vg_t, \vx_t \rangle$, which proves~\eqref{eq:regret_reduction_II_1}.  
  Also, by the triangle inequality we obtain 
  \begin{equation*}
    \|\tilde{\vg}_t\|_2 =\|\vg_t+\max\{0, -(1+\delta)\langle \vg_t, \vx_t \rangle\} \vs_t\|_2 %
    \leq  \|\vg_t\|_2+(1+\delta)|\langle \vg_t, \vx_t \rangle|\|\vs_t\|_2,
  \end{equation*}
  which proves \eqref{eq:surrogate_loss_II}. 
\end{itemize}
Finally, the second inequality in \eqref{eq:regret_reduction_II_2} can be shown similarly as in \textbf{Option I}. 
 }
\end{proof}

Hence, our remaining task is to construct such an approximate separation oracle for the sets in \eqref{eq:feasible_set_min} and \eqref{eq:feasible_set}, which are given as the intersection of one or two convex compact sets and a linear subspace, each of which is relatively simple. %
Our key insight is that the approximate separation oracle is intersection-friendly, i.e., 
the separation oracle for $\mathcal{C}_1 \cap \mathcal{C}_2$ can be constructed using the individual oracles for $\mathcal{C}_1$ and $\mathcal{C}_2$, along with projection on the linear subspace~$\vspan(\calC_1\cap \calC_2)$. This is formalized in the next lemma. %

\begin{lemma}\label{lem:sep_intersect}
  Assume that $\mathcal{C} = \mathcal{C}_1 \cap \mathcal{C}_2 \cap \mathcal{L}$, where $0 \in \calC_1 \cap \calC_2$ and $\mathcal{L}= \vspan(\calC)$ is a linear subspace. Suppose we have access to $\mathsf{SEP}_{\mathcal{C}_1}(\vw; \delta)$, $\mathsf{SEP}_{\mathcal{C}_2}(\vw; \delta)$ and the orthogonal projection matrix $\mP_{\calL}$ associated with the subspace $\calL$. Then given inputs $\vw \in \calL$ and $\delta>0$, the oracle $\mathsf{SEP}_{\mathcal{C}}(\vw; \delta)$ can be constructed in the following ways: 
  \begin{enumerate}[(a)]
    \item  let $(\gamma_1,\vs_1) = \mathsf{SEP}_{\mathcal{C}_1}(\vw; \delta)$ and $(\gamma_2,\vs_2) = \mathsf{SEP}_{\mathcal{C}_2}(\vw; \delta)$.
    \item Let $i = \argmax_{i \in \{1,2\}} \gamma_i$ and then output $(\gamma,\vs) = (\gamma_i,\mP_{\calL}\vs_i)$.
  \end{enumerate}
\end{lemma}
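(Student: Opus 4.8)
The plan is to verify directly that the pair $(\gamma,\vs)$ returned by the construction satisfies Definition~\ref{def:gauge} for $\calC = \calC_1 \cap \calC_2 \cap \calL$, treating the two possible ranges of $\gamma$ separately. First I would record three preliminary facts. (i) Since $\calC \subseteq \calC_i$, we have $\vspan(\calC) \subseteq \vspan(\calC_i)$, so feeding $\vw \in \calL = \vspan(\calC)$ into each $\mathsf{SEP}_{\calC_i}$ is legitimate, and the returned vector $\vs = \mP_{\calL}\vs_i$ lies in $\calL = \vspan(\calC)$, as a separating vector must. (ii) The set $\calC$ is compact, convex, and contains the origin (since $0 \in \calC_1 \cap \calC_2$ and $0 \in \calL$), so $\mathsf{SEP}_{\calC}$ is well-posed; also $\calC_1,\calC_2$ are compact convex sets containing $0$, as their oracles presuppose. (iii) Because $\calL$ is a linear subspace, $(1+\delta)\calL = \calL$, and scaling commutes with intersection, so $(1+\delta)\calC_1 \cap (1+\delta)\calC_2 \cap \calL = (1+\delta)\calC$; this is the bridge that lets me turn membership statements for $\calC_1$ and $\calC_2$ into a membership statement for $\calC$.

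For the case $\gamma = \max\{\gamma_1,\gamma_2\} \le 1$, I would argue that $\gamma_1 \le 1$ and $\gamma_2 \le 1$, so Definition~\ref{def:gauge} applied to $\calC_1$ and to $\calC_2$ gives $\vw \in (1+\delta)\calC_1$ and $\vw \in (1+\delta)\calC_2$; combined with $\vw \in \calL$ and fact (iii), this yields $\vw \in (1+\delta)\calC$, which is exactly Case~I of the definition for $\calC$.

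For the case $\gamma > 1$, by symmetry I may assume the chosen index is $i = 1$, so $\gamma = \gamma_1 > 1$ and $\vs = \mP_{\calL}\vs_1$. I would dispatch the separating-hyperplane condition first: Definition~\ref{def:gauge} for $\calC_1$ gives $\langle \vs_1, \vw - \vx\rangle \ge \gamma_1 - 1 = \gamma - 1$ for all $\vx \in \calC_1$, hence for all $\vx \in \calC$; since $\vw$ and every such $\vx$ lie in $\calL$, we have $\vw - \vx = \mP_{\calL}(\vw - \vx)$, and self-adjointness of $\mP_{\calL}$ gives $\langle \mP_{\calL}\vs_1, \vw - \vx\rangle = \langle \vs_1, \vw - \vx\rangle \ge \gamma - 1$. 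It remains to check $\vw/\gamma \in (1+\delta)\calC$. Membership in $(1+\delta)\calC_1$ is immediate from the $\calC_1$-oracle, and $\vw/\gamma \in \calL$ is clear. The one genuinely delicate point — which I expect to be the main (if minor) obstacle — is that $\gamma$ was picked by the $\calC_1$-oracle and need not match $\gamma_2$, so I must still show $\vw/\gamma \in (1+\delta)\calC_2$; here I would use $\gamma \ge \max\{1,\gamma_2\}$ together with convexity of $(1+\delta)\calC_2$ and $0 \in (1+\delta)\calC_2$. Concretely, if $\gamma_2 \le 1$ then $\vw \in (1+\delta)\calC_2$ and $\vw/\gamma = \tfrac{1}{\gamma}\vw + \bigl(1 - \tfrac{1}{\gamma}\bigr)\cdot 0$, while if $\gamma_2 > 1$ then $\vw/\gamma_2 \in (1+\delta)\calC_2$ and $\vw/\gamma = \tfrac{\gamma_2}{\gamma}(\vw/\gamma_2) + \bigl(1 - \tfrac{\gamma_2}{\gamma}\bigr)\cdot 0$; either way $\vw/\gamma \in (1+\delta)\calC_2$. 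Combining with fact (iii), $\vw/\gamma \in (1+\delta)\calC$, which with the hyperplane bound establishes Case~II and finishes the proof.
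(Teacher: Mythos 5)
Your proof is correct and follows essentially the same route as the paper: reduce WLOG to the index with the larger $\gamma$, use the $\calC_1$-oracle's separating inequality together with the self-adjointness of $\mP_{\calL}$, and handle membership in $(1+\delta)\calC_2$ by a convex-combination-with-origin argument. One small point in your favor: when establishing $\vw/\gamma \in (1+\delta)\calC_2$ you explicitly split on whether $\gamma_2 \le 1$ or $\gamma_2 > 1$, whereas the paper's proof asserts ``$\vw/\gamma_2 \in (1+\delta)\calC_2$'' uniformly, which is only literally guaranteed by Definition~\ref{def:gauge} in the $\gamma_2 > 1$ branch; your version closes that (minor) gap without changing the substance of the argument.
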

\begin{proof}
Without loss of generality, we assume $\gamma_1 \geq \gamma_2$ and thus the output is given by $(\gamma,\vs) = (\gamma_1,\mP_{\calL}\vs_1)$. We consider two cases depending on whether $\gamma_1 \leq 1$ or not. 

In the first case where $\gamma = \gamma_1 \leq 1$, we have $\gamma_2 \leq \gamma_1 \leq 1$. According to the definitions of $\SEP_{\calC_1}(\vw;\delta)$ and $ \SEP_{\calC_2}(\vw;\delta)$, we have $ \vw \in (1+\delta)\calC_1$ and $\vw \in (1+\delta)\calC_2$. This implies that $\vw \in (1+\delta) (\calC_1 \cap \calC_2)$. Moreover, since we assume that $\vw \in \calL$, this leads to $ \vw \in (1+\delta) (\calC_1 \cap \calC_2 \cap \calL) = (1+\delta) \calC$. 

In the second case where $ \gamma = \gamma_1 > 1$, according to the definition of $\SEP_{\calC_1}(\vw;\delta)$ we have $\vw/\gamma_1 \in (1+\delta)\calC_1$ and $ \langle \vs_1, \vw-\vx \rangle \geq \gamma_1 - 1$ for all $\vx \in \calC_1$. Moreover, by the definition of $\SEP_{\calC_2}(\vw;\delta)$, we have $\vw/\gamma_2 \in (1+\delta)\calC_2$.
First, we show that $\vw/\gamma \in (1+\delta)\calC$. 
Since $\gamma_2 \leq \gamma_1$, we can write $ \vw/\gamma_1$ as a convex combination of the origin and $\vw/\gamma_2$, and thus it follows from the convexity of $\calC_2$ that $ \vw/\gamma_1 \in (1+\delta) \calC_2$. Similar to the arguments in the first case, we obtain $\vw/\gamma =  \vw/\gamma_1 \in (1+\delta) \calC$. 
Next, we will prove that $\langle \vs, \vw-\vx \rangle \geq \gamma-1$.  Since $\calC_1 \subset \calC$, we have $\vx \in \calC_1$ for any $\vx \in \calC$. Therefore,  it implies that $\langle \vs_1, \vw - \vx \rangle \geq \gamma_1-1 = \gamma-1$. Moreover, since $\vw \in \calL$ and $\vx \in \calL$, we have $\mP_{\calL}(\vw-\vx) = \vw-\vx$. Therefore, we conclude that $\langle \vs, \vw-\vx \rangle = \langle \mP_{\calL}\vs_1, \vw-\vx \rangle = \langle \vs_1, \mP_{\calL}(\vw-\vx) \rangle = \langle \vs_1, \vw-\vx \rangle \geq \gamma_1 -1 = \gamma-1$. 

Combining both cases, we observe that the procedure described in Lemma~\ref{lem:sep_intersect} indeed satisfies the definition in Definition~\ref{def:gauge}. This completes the proof. 
\end{proof}

If $\SEP_{\calC_1}$ and $\SEP_{\calC_2}$ can be computed efficiently and projecting onto $\calL$ is easy, Lemma~\ref{lem:sep_intersect} shows that constructing $\mathsf{SEP}_{\mathcal{C}}$ from these oracles incurs minimal overhead. This is indeed the case for our setting, as discussed in the following sections.

\subsection{Projection-free Jacobian approximation update}
\label{subsec:nonlinear_eqs}

\begin{subroutine}[!t]\small
  \caption{Online Learning Guided Jacobian Approximation Update}\label{alg:jacobian_approx}
  \begin{algorithmic}[1]
      \STATE \textbf{Input:} Initial matrix $\mB_0\in \mathcal{Z}$, the orthogonal projection matrix $\mP$, step size $\rho>0$, $\delta>0$, failure probabilities $\{q_t\}_{t=1}^{T-1}$, Lipschitz constant $L_1>0$ and strong monotonicity parameter $\mu \geq 0$
      \STATE \textbf{Initialize:} set $\mW_0 \leftarrow \frac{1}{L_1}(\mB-(L_1+\mu)\mI)$, $\mG_0 \leftarrow \frac{1}{L_1} \mP\nabla \ell_0(\mB_0)$ and $\tilde{\mG}_0 \leftarrow \mG_0$
      \STATE {Update $\mW_{1} \leftarrow \frac{\sqrt{d}(\mW_0 - \rho \tilde{\mG}_0)}{\max\{\sqrt{d},\|\mW_0 - \rho \tilde{\mG}_0\|_F\}}$ \COMMENT{Projection onto $\mathcal{B}_{\sqrt{d}}(0)$}\vspace{-.5em}
      \FOR{$t=1,\dots,T-1$}
      \STATE Query $(\gamma^{(1)}_t,\mS^{(1)}_t) \leftarrow \mathsf{ExtEvec}(\frac{\mW_t+\mW_t^\top}{2};\delta, \frac{q_t}{2} )$ and $(\gamma^{(2)}_t,\mS^{(2)}_t) \leftarrow \mathsf{MaxSvec}(\mW_t;\delta, \frac{q_t}{2} )$ %
      \STATE Let $i = \argmax_{i\in \{1,2\}} \gamma_t^{(i)}$, and set $\gamma_t \leftarrow \gamma_t^{(i)}$ and $\mS_t \leftarrow \mP \mS_t^{(i)}$
      \IF[Case I]{$\gamma_t \leq 1$}
        \STATE Set $\hat{\mB}_t \leftarrow \begin{cases} \mW_t & \text{if }\mu>0\;(\textbf{Option I})\\ \frac{\mW_t}{1+\delta} & \text{if }\mu=0\; (\textbf{Option II})\end{cases}$ and $\mB_t \leftarrow {L_1}\hat{\mB}_t+{(L_1+\mu)}\mI$
        \STATE Set $\mG_t \leftarrow \frac{1}{L_1}\mP\nabla \ell_t(\mB_t)$ and $\tilde{\mG}_t \leftarrow \mG_t$
      \ELSE[Case II]
        \STATE Set $\hat{\mB}_t \leftarrow \begin{cases} \frac{\mW_t}{\gamma_t} & \text{if }\mu>0\;(\textbf{Option I})\\ \frac{\mW_t}{(1+\delta)\gamma_t} & \text{if }\mu=0\; (\textbf{Option II})\end{cases}$ and $\mB_t \leftarrow {L_1}\hat{\mB}_t+{(L_1+\mu)}\mI$ 
        \STATE Set $\mG_t \leftarrow \frac{1}{L_1}\mP\nabla \ell_t(\mB_t)$ and $\tilde{\mG}_t \leftarrow \mG_t+\max\{0,-\frac{1}{\gamma_t}\langle \mG_t, \mW_t \rangle\} \mS_t$
      \ENDIF
      \STATE %
      Update 
      $\mW_{t+1} \leftarrow \frac{\sqrt{d}(\mW_t - \rho \tilde{\mG}_t)}{\max\{\sqrt{d},\|\mW_t - \rho \tilde{\mG}_t\|_F\}}$ \COMMENT{Projection onto $\mathcal{B}_{\sqrt{d}}(0)$}
      \ENDFOR}
  \end{algorithmic}
\end{subroutine}

Next, we present our online learning algorithm for updating $\{\mB_k\}_{k \geq 0}$. We follow the projection-free online learning algorithm outlined in Section~\ref{subsec:projection_free_online_learning} and apply it to the online learning problem in Section~\ref{subsec:convergence}.
\blue{Moreover, recall that $\calB$ denotes the set of indices where the line search subroutine backtracks, and we have $\ell_k(\mB)=0$ when $k\notin \mathcal{B}$. Thus, we can simply keep $\mB_{k+1}$ unchanged for these iterations (cf. Line~\ref{line:Hessian_approx_unchanged} in Algorithm~\ref{alg:Full_Equasi-Newton}). Suppose $\calB = \{k_0,k_1,\dots,k_{T-1}\}$, where $T \leq N$. With a slight abuse of notation, in the following, we relabel the indices in $\mathcal{B}$ as $t=0,\dots,T-1$.}

The remaining question is how to construct the approximate separation oracle, required by the projection-free scheme and defined in Definition~\ref{def:gauge}, for our specific set $\calZ$.
To start, we consider the nonlinear monotone equation in \eqref{eq:monotone}; we will further discuss the special cases in the next section.  
Recall that the feasible set~$\calZ$ for our online learning problem in Section~\ref{subsec:convergence} is given in \eqref{eq:feasible_set} with a linear subspace $\calL$. In the following, we assume that the orthogonal projection matrix $\mP$ associated with the linear subspace $\calL$ is given. Specifically, for a general nonlinear monotone equation, we have $\calL = \reals^d$ and the matrix $\mP$ is simply the identity matrix. Moreover, for those special cases discussed in Section~\ref{subsec:structures}, the linear subspace is specified at the end of Section~\ref{subsec:convergence} and we will discuss the corresponding projection matrix in Section~\ref{subsec:special_cases}. 
Since the approximate separation oracle in Definition~\ref{def:gauge} requires the feasible set $\calC$ to contain the origin, we translate and rescale $\mB$ via the transform $\hat{\mB} \mydef \frac{1}{L_1}(\mB-(L_1+\mu)\mI)$. We have the following result, which shows that after this one-to-one correspondence, $\hat{\mB}$ lies in a set $\calC$ centered at the origin defined below.    

\begin{lemma}\label{lem:transform}
   Recall the set $\calZ$ defined in \eqref{eq:feasible_set} and suppose that the linear subspace $\calL$ satisfies $\mI \in \calL$.  Define the set $\calC$ as the following: 
   \begin{equation}\label{eq:translated_set}
  {\mathcal{C}} \mydef \left\{\hat{\mB} \in \reals^{d\times d}: -\mI \preceq \frac{1}{2}(\hat{\mB} + \hat{\mB}^\top ) \preceq \mI , \ \|\hat{\mB}\|_{\op} \leq 3, \ \hat{\mB} \in \calL \right\}. 
  \end{equation}
    Let $\hat{\mB} \mydef \frac{1}{L_1}(\mB-(L_1+\mu)\mI)$. If $\mB \in \calZ$, then $\hat{\mB} \in \mathcal{C}$. Conversely, if $\hat{\mB} \in (1+\delta)\mathcal{C}$, then  $\frac{1}{2}(\mB + \mB^\top )\succeq (\mu -L_1\delta) \mI$ and $\|\mB\|_{\op} \leq 4L_1 + \mu+ 3\delta L_1$. 
\end{lemma}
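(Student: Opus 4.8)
The plan is to verify the two directions of Lemma~\ref{lem:transform} directly from the definitions, using the affine change of variables $\mB \mapsto \hat{\mB} = \frac{1}{L_1}(\mB - (L_1+\mu)\mI)$, equivalently $\mB = L_1 \hat{\mB} + (L_1+\mu)\mI$. The key observation is that this transformation interacts cleanly with the symmetric part: since $\frac{1}{2}(\mB+\mB^\top) = L_1 \cdot \frac{1}{2}(\hat{\mB}+\hat{\mB}^\top) + (L_1+\mu)\mI$, the constraint $\mu\mI \preceq \frac{1}{2}(\mB+\mB^\top) \preceq L_1\mI$ becomes $\mu\mI - (L_1+\mu)\mI \preceq L_1 \cdot \frac{1}{2}(\hat{\mB}+\hat{\mB}^\top) \preceq L_1\mI - (L_1+\mu)\mI$, i.e. $-L_1\mI \preceq L_1 \cdot \frac{1}{2}(\hat{\mB}+\hat{\mB}^\top) \preceq -\mu\mI$. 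Dividing by $L_1$ gives $-\mI \preceq \frac{1}{2}(\hat{\mB}+\hat{\mB}^\top) \preceq -\frac{\mu}{L_1}\mI \preceq \mI$ — a slightly stronger upper bound than what $\calC$ requires, so the first constraint of $\calC$ holds. For the operator-norm bound, I would use the triangle inequality: $\|\hat{\mB}\|_{\op} = \frac{1}{L_1}\|\mB - (L_1+\mu)\mI\|_{\op} \leq \frac{1}{L_1}(\|\mB\|_{\op} + L_1 + \mu) \leq \frac{1}{L_1}(L_1 + L_1 + \mu) = 2 + \frac{\mu}{L_1} \leq 3$, where I use $\|\mB\|_{\op} \leq L_1$ from the definition of $\calZ$ and $\mu \leq L_1$ (which follows since $\mu\mI \preceq \frac{1}{2}(\mB+\mB^\top)$ forces $\mu \leq \|\frac{1}{2}(\mB+\mB^\top)\|_{\op} \leq \|\mB\|_{\op} \leq L_1$). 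Finally, $\hat{\mB} \in \calL$ because $\mB \in \calL$, $\mI \in \calL$, and $\calL$ is a linear subspace. This establishes the forward direction $\mB \in \calZ \Rightarrow \hat{\mB} \in \calC$.

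For the converse, suppose $\hat{\mB} \in (1+\delta)\calC$, meaning $\hat{\mB} = (1+\delta)\mC$ for some $\mC \in \calC$, so $-(1+\delta)\mI \preceq \frac{1}{2}(\hat{\mB}+\hat{\mB}^\top) \preceq (1+\delta)\mI$ and $\|\hat{\mB}\|_{\op} \leq 3(1+\delta)$. Recovering $\mB = L_1\hat{\mB} + (L_1+\mu)\mI$, the symmetric part satisfies $\frac{1}{2}(\mB+\mB^\top) = L_1 \cdot \frac{1}{2}(\hat{\mB}+\hat{\mB}^\top) + (L_1+\mu)\mI \succeq -L_1(1+\delta)\mI + (L_1+\mu)\mI = (\mu - L_1\delta)\mI$, which is the claimed lower bound. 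For the operator norm, $\|\mB\|_{\op} \leq L_1\|\hat{\mB}\|_{\op} + (L_1+\mu) \leq 3L_1(1+\delta) + L_1 + \mu = 4L_1 + \mu + 3\delta L_1$, as claimed. This completes the converse.

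None of these steps is a serious obstacle — the lemma is essentially bookkeeping around an affine reparametrization. The one point requiring a little care is making sure the upper bound on $\frac{1}{2}(\hat{\mB}+\hat{\mB}^\top)$ in the forward direction is genuinely implied (it is, and is in fact tighter than needed, which is why $\calC$ can be stated with the clean bound $\preceq \mI$ rather than $\preceq -\frac{\mu}{L_1}\mI$), and checking that the operator-norm constant $3$ in the definition of $\calC$ is large enough to absorb $2 + \frac{\mu}{L_1}$ — which holds precisely because $\mu \leq L_1$. I would also note explicitly at the start that $\mu \leq L_1$ under Assumptions~\ref{assum:strong_monotone} and~\ref{assum:operator_lips} (via Lemma~\ref{lem:jacobian} or the elementary argument above), since this fact is used in both directions implicitly and in several later estimates.
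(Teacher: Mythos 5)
Your proof is correct and follows essentially the same route as the paper's: both directions are verified by unwinding the affine reparametrization $\mB = L_1\hat{\mB} + (L_1+\mu)\mI$ through the symmetric-part PSD constraints and the operator-norm triangle inequality. The only (welcome) addition beyond the paper's argument is that you make explicit the derivation of $\mu \leq L_1$ from the constraints of $\calZ$, a fact the paper invokes without justification.
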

\begin{proof}
  First, we show that $\mB\in \calZ$ implies $\hat{\mB} \in \calC$. By the definition of $\hat{\mB}$, we have  
$\frac{1}{2}(\hat{\mB} + \hat{\mB}^\top ) = \frac{1}{2L_1}(\mB + \mB^\top) - \frac{L_1+\mu}{L_1}\mI \succeq \frac{\mu}{L_1} \mI - \frac{L_1+\mu}{L_1}\mI = -\mI$. 
Moreover, we also have 
$\frac{1}{2}(\hat{\mB} + \hat{\mB}^\top ) \preceq  \frac{1}{2L_1}(\mB + \mB^\top) \preceq \mI$.
Combining these two, we obtain that $ -\mI \preceq \frac{1}{2}(\hat{\mB} + \hat{\mB}^\top ) \preceq \mI$. Additionally, we can bound $\|\hat{\mB}\|_{\op} \leq \frac{1}{L_1} (\|\mB\|_{\op} + (L_1+\mu)) \leq 3$ since $\|\mB\|_{\op} \leq L_1$ and $\mu \leq L_1$. Finally, note that $\hat{\mB}$ is a linear combination of $\mB$ and $\mI$. Since $\mI \in \calL$ and $\mB \in \calL$, we also have $\hat{\mB} \in \calL$. Hence, we conclude that $\hat{\mB} \in \calC$. 

For the other direction, assume $\hat{\mB} \in (1+\delta)\calC$ and note that $ \mB = L_1 \hat{\mB} + (L_1+\mu)\mI$. Since $\hat{\mB} \in (1+\delta)\calC$ implies that $\frac{1}{2}(\hat{\mB} + \hat{\mB}^\top) \succeq -(1+\delta)\mI$, we have 
$\frac{1}{2}(\mB + \mB^\top) \succeq \frac{L_1}{2}(\hat{\mB} + \hat{\mB}^\top) + (L_1 + \mu) \mI \succeq (\mu-L_1 \delta)\mI$.
Finally, $\hat{\mB} \in (1+\delta)\calC$ also implies that $\|\hat{\mB}\|_{\op} \leq 3(1+\delta)$, and thus it holds that $\|\mB\|_{\op}  \leq L_1\|\hat{\mB}\|_{\op} + (L_1+\mu) \leq 4L_1 + \mu+ 3\delta L_1$. This completes the proof.  
\end{proof}

Hence, we will work with the new set $\calC$ in our online learning algorithm. Moreover, note that we can write $\calC = \calC_1 \cap \calC_2 \cap \calL$, where we define 
  $\mathcal{C}_1 \mydef \{\hat{\mB} \in \mathbb{R}^{d\times d}:  -\mI \preceq \frac{1}{2}(\hat{\mB} + \hat{\mB}^\top )\preceq \mI \}$ and $\mathcal{C}_2 \mydef \{\hat{\mB} \in \mathbb{R}^{d\times d}: \|\hat{\mB}\|_{\op} \leq 3 \}$.  
By Lemma~\ref{lem:sep_intersect}, it suffices to construct the approximate separation oracles for $\calC_1$ and $\calC_2$, respectively. 

The separation oracle for $\calC_1$ is closely related to the problem of computing extreme eigenvalues and eigenvectors of a symmetric matrix. Specifically, given an input matrix $\mW \in \reals^{d\times d}$, let $\lambda_{\max} \in \reals$ and $\vv_{\max}\in \reals^d$  be the largest magnitude eigenvalue and its associated unit eigenvector of the symmetrized matrix $\bar{\mW} = \frac{1}{2}(\mW + \mW^\top)$, respectively.
We deduce that: (i) If $|\lambda_{\max}| \leq 1$, then this implies that $ -\mI \preceq \frac{1}{2}(\mW+\mW^\top) \preceq \mI$, which certifies $\mW \in \calC$. (ii) Otherwise, if $|\lambda_{\max}|>1$, then we set $\gamma = |\lambda_{\max}|$ and $\mS = \sign(\lambda_{\max})\vv_{\max}\vv_{\max}^\top \in \mathbb{S}^d$. We claim that the pair $(\gamma,\mS)$ satisfies the conditions in Definition~\ref{def:gauge}. Indeed, note that 
$ -\gamma \mI \preceq \frac{1}{2}(\mW+\mW^\top) \preceq \gamma \mI$
and thus $\frac{\mW}{\gamma} \in \calC$. Moreover, since the matrix $\mS$ is symmetric, it holds that $\langle \mS, \mW\rangle = \langle\mS, \bar{\mW} \rangle =  \sign(\lambda_{\max})\vv_{\max}^\top \bar{\mW} \vv_{\max} = |\lambda_{\max}|$. Similarly, for any $\hat{\mB} \in \calC$, we have $\langle \mS, \hat{\mB}\rangle = \langle \mS, \frac{1}{2}(\hat{\mB}+\hat{\mB}^\top)\rangle  \leq |\frac{1}{2}\vv_{\max}^\top (\hat{\mB}+\hat{\mB}^\top) \vv_{\max}| \leq 1$. 
Combining these two results, we obtain that $\langle \mS, \mW - \hat{\mB} \rangle \geq \gamma-1$. Hence, we can build the separation oracle for ${\calC}_1$ by computing the extreme eigenvalues and eigenvectors of a given symmetric matrix. 
However, computing the exact values of $\lambda_{\max}$ and $\vv_{\max}$ can be costly. Therefore, we propose to employ the randomized Lanczos method~\cite{kuczynski1992estimating} to compute the extreme eigenvalues and the corresponding eigenvectors inexactly. This leads to the randomized oracle, $\mathsf{ExtEvec}$, defined below. We defer its implementation details to Section~\ref{sec:implementation}. 

\begin{definition}\label{def:extevec}
  The oracle $\mathsf{ExtEvec}(\mW;\delta,q)$ takes $\mW \in \reals^{d\times d}$, $\delta>0$, and $q\in (0,1)$ as input and returns a scalar $\gamma>0$ and a matrix $\mS\in \mathbb{S}^d$. With probability at least $1-q$, the returned $\gamma$ and $\mS$ satisfy one of the following properties::
  \begin{itemize}
    \item Case I: $\gamma \leq 1$, which implies that
    $-(1+\delta)\mI \preceq \frac{1}{2}(\mW+\mW^\top) \preceq (1+\delta)\mI$;
    \item Case II: $\gamma>1$, which implies that %
    $-(1+\delta)\mI \preceq \frac{1}{2\gamma}(\mW+\mW^\top) \preceq (1+\delta)\mI$, 
    $\|\mS\|_F \leq 1$ and $\langle \mS,\mW-\hat{\mB}\rangle \geq \gamma -1$ for any $\hat{\mB}$ such that $-\mI \preceq \frac{1}{2}(\hat{\mB}+\hat{\mB}^\top) \preceq \mI$.
  \end{itemize}
\end{definition}
Note that $\mathsf{ExtEvec}$ is an approximate separation oracle for the set $\mathcal{C}_1$ in the sense of Definition~\ref{def:gauge} (with success probability at least $1-q$), and it also guarantees that $\|\mS\|_F \leq 1$ in Case II.

For the second set $\calC_2$, it turns out that it has a close relation to computing the maximum singular value and the corresponding singular vectors of the input matrix $\mW \in \reals^{d\times d}$. 
Specifically, let $\sigma_{\max}>0$ be the maximal singular value of $\mW$ and let $\vv,\vv' \in \reals^d$ be the associated left and right unit singular vectors. Since $\|\hat{\mB}\|_{\op} = \sigma_{\max}$, we can similarly deduce that: (i) If $\sigma_{\max} \leq 3$, then this certifies that ${\mW} \in \calC_2$. (ii) Otherwise, if $\sigma_{\max} >3$, then we set $\gamma = \frac{\sigma_{\max}}{3}$ and $\mS = \frac{1}{3}\vv'\vv^\top \in \reals^{d \times d}$. In this case, note that $\|\frac{\mW}{\gamma}\|_{\op} \leq 3$ and thus $\frac{\mW}{\gamma} \in \calC_2$. Moreover, note that $\langle \mS, \mW \rangle = \frac{1}{3}\vv^\top \mW \vv' = \frac{\sigma_{\max}}{3} = \gamma$ and $\langle \mS, \hat{\mB}\rangle = \frac{1}{3}\vv^\top \hat{\mB} \vv'\leq \frac{1}{3}\|\hat{\mB}\|_{\op} \leq 1$ for any $\hat{\mB} \in \calC_2$, we obtain that  $\langle \mS, \mW - \hat{\mB} \rangle \geq \gamma - 1$. Thus, this demonstrates that the separation oracle for $\calC_2$ can be constructed if we can compute inexactly the maximal singular value and its associated singular vectors for a given matrix $\mW$. 
Similarly, this can be efficiently implemented by using a randomized Lanczos algorithm, which underpins the $\mathsf{MaxSvec}$ oracle defined below. Detailed discussions are deferred to Section~\ref{sec:implementation}. %
\begin{definition}\label{def:MaxSvec}
  The oracle $\mathsf{MaxSvec}(\mW;\delta,q)$ takes $\mW \in \mathbb{R}^{d\times d}$, $\delta>0$, and $q\in (0,1)$ as input and returns a scalar $\gamma>0$ and a matrix $\mS\in \mathbb{R}^{d\times d}$. With probability at least $1-q$, the returned $\gamma$ and $\mS$ satisfy one of the following properties::
  \begin{itemize}
    \item Case I: $\gamma \leq 1$, which implies that $ \|\mW\|_{\op} \leq 3(1+\delta) $;
    \item Case II: $\gamma>1$, which implies that $ \|\mW/\gamma \|_{\op} \leq 3(1+\delta)$, $\|\mS\|_F \leq 1$ and $\langle \mS,\mW-\hat{\mB}\rangle \geq \gamma -1$ for any $\hat{\mB} \in \reals^{d\times d}$ such that $ \|\hat{\mB}\|_{\op}  \leq 3 $.
  \end{itemize}
\end{definition}
Similarly, we remark that $\mathsf{MaxSvec}$ is an approximate separation oracle for the set $\mathcal{C}_2$ in the sense of Definition~\ref{def:gauge} (with success probability at least $1-q$), and it also guarantees that $\|\mS\|_F \leq 1$ in Case II.

The oracles in Definitions~\ref{def:extevec} and~\ref{def:MaxSvec} provide the approximate separation oracles for $\calC_1$ and $\calC_2$, respectively. Using these two building blocks, we can construct the approximate separation oracle for $\calC$ by following the procedure in Lemma~\ref{lem:sep_intersect}. 
By instantiating Algorithm~\ref{alg:projection_free_online_learning}, we obtain the complete Jacobian approximation update given in Subroutine~\ref{alg:jacobian_approx}.

\subsection{Special cases: minimization, minimax optimization, and sparse nonlinear equations}
\label{subsec:special_cases}

Next, we explore the three special cases from Section~\ref{sec:prelims}, where $\nabla \vF$ exhibits additional structures. We then discuss modifications to our online learning algorithm to enforce these structures on the Jacobian approximation $\{\mB_k\}_{k\geq 0}$.
Specifically, these additional structures amount to choosing different linear subspaces $\calL$ in the feasible set defined in \eqref{eq:feasible_set}. Moreover, since they all satisfy the condition that $\mI \in \calL$, by Lemma~\ref{lem:transform}, the feasible set $\calC$ in our online learning algorithm is given by $\calC = \calC_1 \cap \calC_2 \cap \calL$  under the transformation $\hat{\mB} \mydef \frac{1}{L_1}(\mB-(L_1+\mu)\mI)$. We have discussed the construction of $\SEP_{\calC_1}$ and $\SEP_{\calC_2}$ in Section~\ref{subsec:nonlinear_eqs}, and 
according to Lemma~\ref{lem:sep_intersect}, we only need to specify how to project onto the linear subspace $\calL$.

\myalert{\blue{Minimization.}}
\blue{Consider the minimization problem in \eqref{eq:minimization}. In this case,
we have $\calL = \mathbb{S}^d$, the set of symmetric matrices. For any $\mW \in \reals^{d\times d}$, the projection onto $\calL$ can be computed as $\mP_{\calL}(\mW) = \frac{1}{2}\left(\mW+\mW^\top\right)$. In fact, note that 
since the Jacobians are symmetric, the feasible set $\calZ$ for our online learning problem in Section~\ref{subsec:convergence} can be simplified as in \eqref{eq:feasible_set_min}. 
Moreover, the resulting compact set $\calC$ in \eqref{eq:translated_set} can be simplified as ${\mathcal{C}} \mydef \left\{\hat{\mB} \in \mathbb{S}^d: -\mI \preceq \hat{\mB}\preceq \mI \right\}$. Therefore, the approximate separation oracle for $\calC$ can be directly given by $\mathsf{ExtEvec}$ in Definition~\ref{def:extevec} without relying on Lemma~\ref{lem:sep_intersect}. %
}

\myalert{Minimax optimization.}
In this case, we have $\calL = \{\hat{\mB}\in \reals^{d\times d}: \mJ{\hat\mB} =  \hat{\mB}^\top \mJ\}$, i.e, the set of $\mJ$-symmetric matrices. 
For any $\mW \in \reals^{d\times d}$, the projection onto the set of $\mJ$-symmetric matrices can be easily computed as $\mP_{\calL}(\mW) = \frac{1}{2}(\mW + \mJ \mW^\top \mJ)$. 

\myalert{Sparse nonlinear equations.}
In this case, we have $\calL = \{\mB \in \reals^{d\times d}: [\mB]_{ij} = 0,\;\forall (i,j)\notin \Omega, i\neq j\}$. Given an input matrix $\mW$, the projection onto $\calL$ corresponds to ``zeroing out'' the entries of $\mW$ that are not in $\Omega$. Formally, we have $\mP_{\calL}(\mW) = \tilde{\mW}$, where the matrix $\tilde{\mW}$ is defined as $[\tilde{\mW}]_{ij} = [\mW]_{ij}$ for $(i,j) \in \Omega$ or $ i=j$, and $[\tilde{\mW}]_{ij} =0$ otherwise.

\section{Implementation details of the oracles}
\label{sec:implementation}
By now, we have fully described our proposed QNPE method in Algorithm~\ref{alg:Full_Equasi-Newton}, except for the $\mathsf{LinearSolver}$ oracle, which is required in Subroutine~\ref{alg:ls}, and the $\mathsf{ExtEvec}$ and $\mathsf{MaxSvec}$ oracles, which are required in Subroutine~\ref{alg:jacobian_approx}. In this section, we close these gaps and fully discuss the implementation details of these oracles. 

\begin{subroutine}[!t]\small
  \caption{$\mathsf{LinearSolver}(\mA,\vb; \rho)$}\label{alg:CGLS}
  \begin{algorithmic}[1]
      \STATE \textbf{Input:} $\mA \in \reals^{d\times d}$, $\vb\in \reals^d$, $\rho> 0$
      \STATE \textbf{Initialize:} $\vs_0 \leftarrow 0$, $\vr_0 \leftarrow \vb$, $\vv_0 \leftarrow \mA^\top \vr_0$, $\vp_0 \leftarrow 
      \begin{cases}
        \vv_0  & \text{if }\mA\text{ is non-symmetric}\\
        \vr_0 & \text{if }\mA\text{ is symmetric} 
      \end{cases}$, 
      $\gamma_0 \leftarrow \vv_0^\top \vp_0$, $\vq_0 \leftarrow \vv_0$ if $\mA$ is symmetric
      \FOR{$k=0,\dots$}
      \IF{$\|\vr_k\|_2\leq {\rho}\|\vs_k\|_2$}
        \STATE \textbf{Return} $\vs_k$
      \ENDIF
      \STATE When $\mA$ is non-symmetric: \hspace{7.5em} When $\mA$ is symmetric:\\[-.5em]
      \begin{minipage}[t]{0.45\columnwidth}
        \STATE $\vq_k \leftarrow \mA \vp_k$ \tikzmark{top} \label{line:first_mvp}
      \STATE $ \alpha_k \leftarrow \gamma_{k}/\|\vq_k\|^2$
      \STATE $\vs_{k+1} \leftarrow \vs_{k}+\alpha_k\vp_k$
      \STATE $\vr_{k+1}\leftarrow \vr_{k}-\alpha_k\vq_k$
      \STATE $\vv_{k+1} \leftarrow \mA^\top \vr_{k+1}$ \label{line:second_mvp}
      \STATE \blue{$\gamma_{k+1} \leftarrow \|\vv_{k+1}\|^2$}
      \STATE $\beta_k \leftarrow \gamma_{k+1}/\gamma_{k}$
      \STATE \blue{$\vp_{k+1} \leftarrow \vv_{k+1} + \beta_k \vp_k$} \tikzmark{bottom} \tikzmark{right}
      \end{minipage}
      \begin{minipage}[t]{0.45\columnwidth}
        \makeatletter
        \setcounter{ALC@line}{7}
        \makeatother
        \STATE ($\vq_k = \mA \vp_k$) \tikzmark{top2}
      \STATE $ \alpha_k \leftarrow \gamma_{k}/\|\vq_k\|^2$
      \STATE $\vs_{k+1} \leftarrow \vs_{k}+\alpha_k\vp_k$
      \STATE $\vr_{k+1}\leftarrow \vr_{k}-\alpha_k\vq_k$
      \STATE $\vv_{k+1} \leftarrow \mA \vr_{k+1}$
      \STATE \blue{$\gamma_{k+1} \leftarrow \vv_{k+1}^\top \vr_{k+1}$}
      \STATE $\beta_k \leftarrow \gamma_{k+1}/\gamma_{k}$
      \STATE \blue{$\vp_{k+1} \leftarrow \vr_{k+1} + \beta_k \vp_k$} 
      \STATE \blue{$\vq_{k+1} = \vv_{k+1} + \beta_k \vq_{k}$} \label{line:update_of_q}
      \tikzmark{bottom2} \tikzmark{right2}
      \end{minipage}
      \ENDFOR
  \end{algorithmic}
  \AddNote{top}{bottom}{right}{\color{comment}\textit{\;CGLS}}
  \AddNote{top2}{bottom2}{right2}{\color{comment}\textit{\;Conjugate Residual}}
\end{subroutine}

\myalert{The $\mathsf{LinearSolver}$ oracle}. We first present an implementation of the $\mathsf{LinearSolver}$ oracle in Definition~\ref{def:linear_solver}. At a high level, we propose to run a conjugate gradient-type method to solve the linear system $\mA \vs = \vb$ with the initialization $\vs_0  = 0$, and we return the iterate $\vs_k$ once it satisfies $\|\mA \vs_k - \vb\| \leq \rho \|\vs_k\|$. The specific algorithm we choose depends on whether the input matrix $\mA$ is symmetric or not. Specifically, when $\mA$ is non-symmetric, we adopt the CGLS method~\cite{hestenes1952methods,paige1982lsqr}, which is analytically equivalent to applying the conjugate gradient method to the normal equation $\mA^\top \mA \vs = \mA^\top \vb$ under exact arithmetic, but is more numerically efficient. On the other hand, when $\mA$ is symmetric, we use the conjugate residual method~\cite{stiefel1955relaxationsmethoden,saad2003iterative}, which is designed to minimize the norm of the residual vector $\vr_k = \vb - \mA \vs_k$ over the Krylov subspace $\vspan\{\vb,\mA\vb,\dots,\mA^{k-1}\vb\}$. For completeness, the full algorithm is shown in Subroutine~\ref{alg:CGLS}. 
We also remark that in the conjugate residual method, the relation $\vq_k = \mA \vp_k$ holds, but the matrix-vector product is not computed explicitly. Instead, in Line~\ref{line:update_of_q} we compute $\vq_{k+1}$ directly from $\vv_{k+1}$ and $\vq_k$, without an additional matrix-vector product (noting that $\vv_{k+1} = \mA \vr_{k+1}$, $\vq_k = \mA \vp_k$, and $\vp_{k+1} = \vr_{k+1}+\beta_k \vp_k$). Thus, we observe that $\mathsf{LinearSolver}$ requires at most two matrix-vector products per iteration when $\mA$ is non-symmetric, and only one when $\mA$ is symmetric.

\myalert{The $\mathsf{ExtEvec}$ oracle}.
Next, we discuss implementing the $\mathsf{ExtEvec}$ oracle in Definitions~\ref{def:extevec}. As we mentioned in Section~\ref{subsec:nonlinear_eqs}, it is closely related to computing inexactly the extreme eigenvectors and the extreme eigenvalues of a given matrix. Thus, we build this oracle based on the classical Lanczos method with a random start, where the initial vector is chosen randomly and uniformly from the unit sphere (see, e.g., \cite{saad2011numerical,yurtsever2021scalable}.) 
To begin with, we recall a classical result in \cite{kuczynski1992estimating} on the convergence behavior of the Lanczos method. 
\begin{proposition}[{\cite[Theorem~4.2]{kuczynski1992estimating}}]\label{prop:lanczos}
  Consider a symmetric matrix $\mW$ and let $\lambda_1(\mW)$ and $\lambda_d(\mW)$ denote its largest and smallest eigenvalues, respectively. Then after $k$ iterations of the Lanczos method with a random start, we find unit vectors $\vu^{(1)}$ and $\vu^{(d)}$  such that 
  \begin{align*}
    \mathbb{P}(\langle\mW \vu^{(1)}, \vu^{(1)}\rangle\leq \lambda_1(\mW)-\epsilon(\lambda_1(\mW)-\lambda_d(\mW))) \leq 1.648\sqrt{d}e^{-\sqrt{\epsilon}(2k-1)},\\
    \mathbb{P}(\langle\mW \vu^{(d)}, \vu^{(d)}\rangle\geq \lambda_d(\mW)+\epsilon(\lambda_1(\mW)-\lambda_d(\mW))) \leq 1.648\sqrt{d}e^{-\sqrt{\epsilon}(2k-1)},
  \end{align*}
  As a corollary, to ensure that, with probability at least $1-q$, $ \langle\mW \vu^{(1)}, \vu^{(1)}\rangle> \lambda_1(\mW)-\epsilon(\lambda_1(\mW)-\lambda_d(\mW))$ and $\langle\mW \vu^{(d)}, \vu^{(d)}\rangle< \lambda_n(\mW)+\epsilon(\lambda_1(\mW)-\lambda_d(\mW))$,
  the number of iterations can be bounded by  $\lceil\frac{1}{4}\epsilon^{-1/2}\log(\frac{11d}{q^2})+\frac{1}{2} \rceil$.
\end{proposition}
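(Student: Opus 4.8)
The plan is to deduce Proposition~\ref{prop:lanczos} from the quantitative convergence guarantee for the randomized Lanczos method established by Kuczy\'nski and Wo\'zniakowski. First I would recall that after $k$ steps the Lanczos process with a random start builds the Krylov subspace $\mathcal{K}_k = \vspan\{\vq, \mW\vq, \dots, \mW^{k-1}\vq\}$ from a vector $\vq$ drawn uniformly from the unit sphere, and that $\vu^{(1)}$ (resp.\ $\vu^{(d)}$) is taken to be the maximizer (resp.\ minimizer) of the Rayleigh quotient $\vu \mapsto \langle \mW \vu, \vu\rangle$ over unit vectors in $\mathcal{K}_k$. The first tail bound is then exactly \cite[Theorem~4.2]{kuczynski1992estimating} applied to $\mW$. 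For the second, I would apply the same theorem to $-\mW$: since $\lambda_1(-\mW) = -\lambda_d(\mW)$ and $\lambda_d(-\mW) = -\lambda_1(\mW)$, the spectral spread $\lambda_1(-\mW) - \lambda_d(-\mW) = \lambda_1(\mW) - \lambda_d(\mW)$ is unchanged, and the Krylov subspace generated by $-\mW$ from $\vq$ coincides with $\mathcal{K}_k$; hence the maximizer of the Rayleigh quotient of $-\mW$ over $\mathcal{K}_k$ is precisely $\vu^{(d)}$, and the bound transfers after negation.

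For the corollary I would take a union bound over the two failure events, so that the probability that either $\langle \mW \vu^{(1)},\vu^{(1)}\rangle \le \lambda_1(\mW) - \epsilon(\lambda_1(\mW)-\lambda_d(\mW))$ or $\langle \mW \vu^{(d)},\vu^{(d)}\rangle \ge \lambda_d(\mW) + \epsilon(\lambda_1(\mW)-\lambda_d(\mW))$ is at most $2\cdot 1.648\,\sqrt{d}\, e^{-\sqrt{\epsilon}(2k-1)} = 3.296\sqrt{d}\, e^{-\sqrt{\epsilon}(2k-1)}$. Requiring this to be at most $q$ and solving for $k$ gives the condition $k \ge \tfrac12 + \tfrac{1}{2\sqrt{\epsilon}}\log\!\big(\tfrac{3.296\sqrt{d}}{q}\big)$. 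Finally, I would note that $3.296 = 2\cdot 1.648 \le \sqrt{11}$, so $\tfrac12\log\!\big(\tfrac{3.296\sqrt{d}}{q}\big) \le \tfrac12\log\!\big(\tfrac{\sqrt{11d}}{q}\big) = \tfrac14\log\!\big(\tfrac{11d}{q^2}\big)$; hence any integer $k \ge \lceil \tfrac14 \epsilon^{-1/2}\log(\tfrac{11d}{q^2}) + \tfrac12 \rceil$ satisfies the displayed condition, which is the stated iteration bound.

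Since the substantive content --- the dimension-dependent exponential tail bound for the Lanczos Rayleigh quotient --- is quoted verbatim from \cite{kuczynski1992estimating}, there is no genuine obstacle; the only care needed is in tracking the constants, namely the factor $2$ from the union bound and the comparison $2\cdot 1.648 \le \sqrt{11}$, so that the closed form $\lceil\tfrac14\epsilon^{-1/2}\log(\tfrac{11d}{q^2})+\tfrac12\rceil$ comes out cleanly. One small point worth stating explicitly in the write-up is that a \emph{single} random start produces both estimates $\vu^{(1)}$ and $\vu^{(d)}$ simultaneously from the common subspace $\mathcal{K}_k$, so a union bound --- rather than two independent runs --- is all that is required.
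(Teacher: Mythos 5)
Your derivation is correct and is exactly the reconstruction one would expect: the paper cites Kuczy\'nski--Wo\'zniakowski for the tail bounds without rederiving them, and the iteration count in the corollary follows, as you show, by (i) applying the cited theorem to $\mW$ and to $-\mW$ (using that the Krylov subspace is invariant under sign flip, so one random start gives both Rayleigh extremizers), (ii) a union bound over the two failure events, and (iii) rearranging $3.296\sqrt{d}\,e^{-\sqrt{\epsilon}(2k-1)}\leq q$ while noting $2\cdot 1.648 = 3.296 \leq \sqrt{11}$ so that $\tfrac{1}{2}\log(3.296\sqrt{d}/q)\leq\tfrac{1}{4}\log(11d/q^2)$. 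The accounting of constants is right and the argument is complete; the only cosmetic issue in the paper's statement is the typo $\lambda_n(\mW)$ for $\lambda_d(\mW)$, which you correctly treat as $\lambda_d$.
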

Now we are ready to describe our procedure detailed in Subroutine~\ref{alg:lanczos}. Specifically, given the input matrix $\mW \in \reals^{d\times d}$, we compute the symmetrized matrix $\bar{\mW} = \frac{1}{2}(\mW+\mW^\top)$ and run the Lanczos method for $N = \lceil\frac{1}{4}\sqrt{2(1+\frac{1}{\delta})}\log(\frac{11d}{q^2})+\frac{1}{2} \rceil$ iterations. This yields approximate unit eigenvectors $\vu^{(1)}$ and $\vu^{(d)}$, which correspond to the largest and the smallest eigenvalues of $\bar{\mW}$, respectively. Moreover, we define $\hat{\lambda}_1 = \langle \bar{\mW} \vu^{(1)}, \vu^{(1)} \rangle$ and $\hat{\lambda}_d = \langle \bar{\mW} \vu^{(d)}, \vu^{(d)} \rangle$ as the approximate largest and smallest eigenvalues of $\bar{\mW}$, respectively. Then we set $\gamma = \max\{\hat{\lambda}_1, \hat{\lambda}_d\}$. To construct the output pair $(\gamma, \mS)$ satisfying the conditions in Definition~\ref{def:extevec}, we distinguish two cases depending on the value of $\gamma$. In Case I where $\gamma \leq 1$, we return $\gamma$ and set $\mS = 0$. Otherwise, in Case II where $\gamma >1$, we return $\gamma$ along with the rank-one matrix $\mS$ given by $$\mS = \begin{cases}
  \vu^{(1)}(\vu^{(1)})^\top, & \text{if }\hat{\lambda}_1 \geq -\hat{\lambda}_d; \\
  -\vu^{(d)}(\vu^{(d)})^\top, & \text{otherwise.}
\end{cases}$$ 
In the following lemma, we prove the correctness of Subroutine~\ref{alg:lanczos}. 

\begin{subroutine}[!t]\small
  \caption{$\mathsf{ExtEvec}(\mW;\delta,q)$}\label{alg:lanczos}
  \begin{algorithmic}[1]
      \STATE \textbf{Input:} $\mW \in \reals^{d\times d}$, $\delta>0$, $q\in (0,1)$
      \STATE \textbf{Initialize:} sample $\vv_1\in \reals^d$ uniformly from the unit sphere, $\beta_1 \leftarrow 0$, $\vv_0\leftarrow 0$
      \STATE Set $\bar{\mW} \leftarrow \frac{1}{2}(\mW + \mW^\top)$ and set the number of iterations 
      $N \leftarrow \Bigl\lceil\frac{1}{4}\sqrt{2(1+\frac{1}{\delta})}\log(\frac{11d}{q^2})+\frac{1}{2} \rceil$
      \FOR{$k=1,\dots,N$ \tikzmark{top}}
      \STATE Set $\vw_k \leftarrow \bar{\mW} \vv_k-\beta_k \vv_{k-1}$ 
      \STATE Set $\alpha_k \leftarrow \langle \vw_k,\vv_k \rangle $ and $\vw_k \leftarrow \vw_k-\alpha_k\vv_k$
      \STATE Set $\beta_{k+1} \leftarrow \|\vw_k\|$ and $\vv_{k+1}\leftarrow \vw_k/\beta_{k+1}$
      \ENDFOR
    \STATE Form a tridiagonal matrix $\mT \leftarrow \mathsf{tridiag}(\beta_{2:N},\alpha_{1:N},\beta_{2:N})$
    \STATE \hspace{-1em}\COMMENT{Use the tridiagonal structure to compute eigenvectors of $\mT$}
    \STATE Compute $(\hat{\lambda}_1,\vz^{(1)}) \leftarrow \mathsf{MaxEvec}(\mT)$ and $(\hat{\lambda}_d, \vz^{(d)}) \leftarrow \mathsf{MinEvec}(\mT)$ \tikzmark{right}
    \STATE Set $\vu^{(1)} \leftarrow \sum_{k=1}^N z^{(1)}_k\vv_k$ and $\vu^{(d)} \leftarrow \sum_{k=1}^N z^{(d)}_k\vv_k$ \tikzmark{bottom}
    \STATE Set $\gamma \leftarrow \max\{\hat{\lambda}_1,-\hat{\lambda}_d\}$ 
    \IF{$\gamma \leq 1$}
    \STATE Return $\gamma$ and $\mS = 0$ \COMMENT{Case I: $\gamma\leq 1$, which implies $\|\mW\|_{\op} \leq 1+\delta$}
    \ELSIF{$\hat{\lambda}_1 \geq -\hat{\lambda}_d$}
      \STATE Return $\gamma$ and $\mS = \vu^{(1)}(\vu^{(1)})^\top$ \hspace{.75em}\COMMENT{Case II: $\gamma> 1$ and $\mS$ defines a separating hyperplane}
    \ELSE
      \STATE Return $\gamma$ and $\mS = -\vu^{(d)}(\vu^{(d)})^\top$ \COMMENT{Case II: $\gamma> 1$ and $\mS$ defines a separating hyperplane}
    \ENDIF
  \end{algorithmic}
  \AddNote{top}{bottom}{right}{\color{comment}\textit{\quad Lanczos method}}
\end{subroutine}
\begin{lemma}\label{lem:extevec_bound}
  Let $\gamma$ and $\mS$ be the output of $\mathsf{ExtEvec}(\mW;\delta,q)$ in Subroutine~\ref{alg:lanczos}. Then with probability at least $1-q$, they satisfy one of the properties given in Definition~\ref{def:extevec}.  
\end{lemma}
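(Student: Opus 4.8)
The plan is to separate the randomness from the algebra: condition on the single high-probability event that the Lanczos routine returns accurate extreme Ritz values, and then verify each outcome of Definition~\ref{def:extevec} by elementary eigenvalue estimates. Throughout, write $\bar{\mW}=\frac12(\mW+\mW^\top)$, let $\lambda_1\ge\cdots\ge\lambda_d$ be its eigenvalues, and let $\hat{\lambda}_1=\langle\bar{\mW}\vu^{(1)},\vu^{(1)}\rangle$ and $\hat{\lambda}_d=\langle\bar{\mW}\vu^{(d)},\vu^{(d)}\rangle$ be the Ritz values produced by Subroutine~\ref{alg:lanczos}, so that $\gamma=\max\{\hat{\lambda}_1,-\hat{\lambda}_d\}$ and $\hat{\lambda}_1\ge\hat{\lambda}_d$ (they are the largest and smallest eigenvalues of the same symmetric tridiagonal matrix $\mT$). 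The first bookkeeping step is to check that with $\epsilon\mydef\frac{\delta}{2(1+\delta)}$ the iteration count $N$ chosen in Subroutine~\ref{alg:lanczos} equals the bound $\lceil\frac14\epsilon^{-1/2}\log(\frac{11d}{q^2})+\frac12\rceil$ from the corollary of Proposition~\ref{prop:lanczos}; hence that corollary guarantees that, with probability at least $1-q$, the event $\mathcal{E}\mydef\{\hat{\lambda}_1>\lambda_1-\epsilon(\lambda_1-\lambda_d)\ \text{and}\ \hat{\lambda}_d<\lambda_d+\epsilon(\lambda_1-\lambda_d)\}$ occurs. Everything below is deterministic given $\mathcal{E}$.

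First I would convert $\mathcal{E}$ into clean two-sided bounds. Subtracting the two inequalities in $\mathcal{E}$ gives $(1-2\epsilon)(\lambda_1-\lambda_d)<\hat{\lambda}_1-\hat{\lambda}_d$; since $1-2\epsilon=\frac{1}{1+\delta}>0$, this rearranges to $\lambda_1-\lambda_d<(1+\delta)(\hat{\lambda}_1-\hat{\lambda}_d)$. Substituting back into $\mathcal{E}$ and using $\epsilon(1+\delta)=\frac{\delta}{2}$ yields
\[
\lambda_1<\hat{\lambda}_1+\tfrac{\delta}{2}(\hat{\lambda}_1-\hat{\lambda}_d),\qquad \lambda_d>\hat{\lambda}_d-\tfrac{\delta}{2}(\hat{\lambda}_1-\hat{\lambda}_d).
\]
These are the only consequences of the Lanczos analysis I will use.

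Next I would run the case split of Subroutine~\ref{alg:lanczos}. If $\gamma\le1$, then $\hat{\lambda}_1\le1$ and $\hat{\lambda}_d\ge-1$, so $0\le\hat{\lambda}_1-\hat{\lambda}_d\le2$, and the displayed bounds give $\lambda_1<1+\delta$ and $\lambda_d>-(1+\delta)$, i.e.\ $-(1+\delta)\mI\preceq\frac12(\mW+\mW^\top)\preceq(1+\delta)\mI$; this is Case I of Definition~\ref{def:extevec}, and nothing is required of $\mS$. If $\gamma>1$, I would consider the two subcases separately. When $\hat{\lambda}_1\ge-\hat{\lambda}_d$ we have $\gamma=\hat{\lambda}_1$ and $\mS=\vu^{(1)}(\vu^{(1)})^\top$, so $\|\mS\|_F=\|\vu^{(1)}\|^2=1$; moreover $\hat{\lambda}_1-\hat{\lambda}_d\le2\hat{\lambda}_1=2\gamma$ (using $-\hat{\lambda}_d\le\hat{\lambda}_1$) and $\hat{\lambda}_d\ge-\gamma$, so the displayed bounds yield $\lambda_1<\gamma(1+\delta)$ and $\lambda_d>-\gamma(1+\delta)$, which is the spectral condition $-(1+\delta)\mI\preceq\frac{1}{2\gamma}(\mW+\mW^\top)\preceq(1+\delta)\mI$ of Case II. For the separating hyperplane, since $\mS$ is symmetric, $\langle\mS,\mW\rangle=\langle\mS,\bar{\mW}\rangle=(\vu^{(1)})^\top\bar{\mW}\vu^{(1)}=\hat{\lambda}_1=\gamma$, while for any $\hat{\mB}$ with $-\mI\preceq\frac12(\hat{\mB}+\hat{\mB}^\top)\preceq\mI$ we get $\langle\mS,\hat{\mB}\rangle=(\vu^{(1)})^\top\tfrac12(\hat{\mB}+\hat{\mB}^\top)\vu^{(1)}\le1$; subtracting gives $\langle\mS,\mW-\hat{\mB}\rangle\ge\gamma-1$, completing Case II. The remaining subcase $\hat{\lambda}_1<-\hat{\lambda}_d$ is entirely symmetric: now $\gamma=-\hat{\lambda}_d$, $\mS=-\vu^{(d)}(\vu^{(d)})^\top$, one uses $\hat{\lambda}_1-\hat{\lambda}_d\le\hat{\lambda}_1+\gamma\le2\gamma$ together with $\hat{\lambda}_1\le\gamma$ and $\hat{\lambda}_1+\gamma\ge0$, and the separation follows from $\langle\mS,\mW\rangle=-(\vu^{(d)})^\top\bar{\mW}\vu^{(d)}=-\hat{\lambda}_d=\gamma$ and $\langle\mS,\hat{\mB}\rangle=-(\vu^{(d)})^\top\tfrac12(\hat{\mB}+\hat{\mB}^\top)\vu^{(d)}\le1$ (now from $\frac12(\hat{\mB}+\hat{\mB}^\top)\succeq-\mI$).

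I do not expect any genuine obstacle here: the only substantive input is Proposition~\ref{prop:lanczos}, and the rest is linear algebra. The one place to be attentive is the constant chasing — choosing $\epsilon=\frac{\delta}{2(1+\delta)}$ precisely so that $1-2\epsilon=\frac{1}{1+\delta}$ and $\epsilon(1+\delta)=\frac{\delta}{2}$, and then checking in each branch that the crude gap bound ($\hat{\lambda}_1-\hat{\lambda}_d\le2$ in Case I, $\le2\gamma$ in Case II) is exactly what converts the error term $\frac{\delta}{2}(\hat{\lambda}_1-\hat{\lambda}_d)$ into the permitted slack $\delta$ or $\delta\gamma$. A secondary point is to confirm the identities $\hat{\lambda}_1=\langle\bar{\mW}\vu^{(1)},\vu^{(1)}\rangle$ and $\hat{\lambda}_d=\langle\bar{\mW}\vu^{(d)},\vu^{(d)}\rangle$ in exact arithmetic, so that Proposition~\ref{prop:lanczos} applies verbatim; this follows from orthonormality of the Lanczos basis $V$ and the relation $\mT=V^\top\bar{\mW}V$.
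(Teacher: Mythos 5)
Your proposal is correct and follows essentially the same route as the paper's proof: choose $\epsilon=\frac{\delta}{2(1+\delta)}$ so that Proposition~\ref{prop:lanczos} applies with $N$ as in Subroutine~\ref{alg:lanczos}, subtract the two Lanczos guarantees to bound $\lambda_1(\bar{\mW})-\lambda_d(\bar{\mW})$ by $(1+\delta)(\hat{\lambda}_1-\hat{\lambda}_d)$, substitute back to obtain $\lambda_1\le\hat{\lambda}_1+\frac{\delta}{2}(\hat{\lambda}_1-\hat{\lambda}_d)$ and the symmetric bound for $\lambda_d$, and then verify the three branches via $\hat{\lambda}_1-\hat{\lambda}_d\le 2\gamma$ and the rank-one separating hyperplane $\mS$. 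The only cosmetic difference is that you handle Case~I with the crude bound $\hat{\lambda}_1-\hat{\lambda}_d\le 2$ rather than the paper's uniform $\le 2\gamma$, but both give the same conclusion.
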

\begin{proof}
  Note that in Subroutine~\ref{alg:lanczos}, we run the Lanczos method for $\Bigl\lceil \frac{1}{4}\epsilon^{-1/2}\log\frac{11d}{q^2}+\frac{1}{2}\Bigr\rceil$ iterations, where $\epsilon = \frac{\delta}{2(1+\delta)}$. 
  Thus, by Proposition~\ref{prop:lanczos}, with probability at least $1-q$ we have 
  \begin{align}
    \hat{\lambda}_1 \triangleq \langle\bar{\mW} \vu^{(1)}, \vu^{(1)}\rangle \geq  \lambda_1(\bar{\mW})-\frac{\delta}{2(1+\delta)}(\lambda_1(\bar{\mW})-\lambda_d(\bar{\mW})), \label{eq:lambda_1}\\ 
    \hat{\lambda}_d \triangleq \langle\bar{\mW} \vu^{(d)}, \vu^{(d)}\rangle \leq  \lambda_d(\bar{\mW})+\frac{\delta}{2(1+\delta)}(\lambda_1(\bar{\mW})-\lambda_d(\bar{\mW})). \label{eq:lambda_d}
  \end{align}
  Combining \eqref{eq:lambda_1} and \eqref{eq:lambda_d}, we obtain $\frac{\lambda_1(\bar{\mW})-\lambda_d(\bar{\mW})}{1+\delta}\leq \hat{\lambda}_1-\hat{\lambda}_d$, which further implies that $\lambda_1(\bar{\mW})-\lambda_d(\bar{\mW})\leq (1+\delta)(\hat{\lambda}_1-\hat{\lambda}_d)$. 
  By plugging the above inequality back into \eqref{eq:lambda_1} and \eqref{eq:lambda_d}, we further have 
  \begin{align}
    \lambda_1(\bar{\mW}) \leq \hat{\lambda}_1+\frac{\delta}{2(1+\delta)}(\lambda_1(\bar{\mW})-\lambda_d(\bar{\mW})) \leq \hat{\lambda}_1+\frac{\delta}{2}(\hat{\lambda}_1-\hat{\lambda}_d), \label{eq:upper_bound_lambda_1} \\
    \lambda_d(\bar{\mW}) \geq \hat{\lambda}_d-\frac{\delta}{2(1+\delta)}(\lambda_1(\bar{\mW})-\lambda_d(\bar{\mW})) \geq \hat{\lambda}_d-\frac{\delta}{2}(\hat{\lambda}_1-\hat{\lambda}_d). \label{eq:lower_bound_lambda_d}
  \end{align}
  Recall that $\gamma = \max\{\hat{\lambda}_1,-\hat{\lambda}_d\}$. By \eqref{eq:upper_bound_lambda_1} and \eqref{eq:lower_bound_lambda_d}, 
  we can further bound the eigenvalues of $\bar{\mW}$ by  
  \begin{equation}
    \lambda_1(\bar{\mW}) \leq \gamma +\frac{\delta}{2}\cdot 2\gamma ={(1+\delta)\gamma} \quad \text{and} \quad \lambda_d(\bar{\mW}) \geq -\gamma -\frac{\delta}{2}\cdot 2\gamma = -(1+\delta){\gamma}.
  \end{equation}
  Hence, we can see that $-(1+\delta)\gamma \mI\preceq \bar{\mW} = \frac{1}{2}(\mW + \mW^\top) \preceq (1+\delta)\gamma \mI$. 
  Now we distinguish three cases. 
  \begin{enumerate}[(a)]
    \item If $\gamma \leq 1$, then we are in \textbf{Case I} and the $\mathsf{ExtEvec}$ oracle outputs $\gamma$ and $\mS = 0$. In this case, we indeed have $-(1+\delta) \mI\preceq \frac{1}{2}(\mW + \mW^\top) \preceq (1+\delta) \mI$. 
    \item If $\gamma >1$ and $\hat{\lambda}_1 \geq -\hat{\lambda}_d$, then we are in \textbf{Case II} and the $\mathsf{ExtEvec}$ oracle returns $\gamma $ and $\mS = \vu^{(1)}(\vu^{(1)})^\top$. In this case, since $\max\{\lambda_1(\bar{\mW}),-\lambda_d(\bar{\mW})\} \leq \gamma(1+\delta)$, we have $-(1+\delta) \mI\preceq \frac{1}{2\gamma}(\mW + \mW^\top) \preceq (1+\delta) \mI$. Also, since $\vu^{(1)}$ is a unit vector, we have $\|\mS\|_F = \|\vu^{(1)}\|^2 = 1$. Finally, for any $\hat{\mB}$ such that $-\mI \preceq \frac{1}{2}(\hat{\mB}+\hat{\mB}^\top) \preceq \mI$, 
    we have 
    $\langle \mS,\mW-\hat{\mB}\rangle = \vu_1^\top{\mW} \vu_1-\vu_1^\top \hat{\mB} \vu_1 = \vu_1^\top{\bar{\mW}} \vu_1 -\frac{1}{2}\vu_1^\top (\hat{\mB}+\hat{\mB}^\top) \vu_1 \geq \hat{\lambda}_1-1 = \gamma-1$. Thus, $\gamma$ and $\mS$ satisfy all the properties in \textbf{Case II}. 
    \item If $\gamma >1$ and $-\hat{\lambda}_d \geq \hat{\lambda}_1$, then we are also in \textbf{Case II} and the $\mathsf{ExtEvec}$ oracle returns $\gamma$ and $\mS = -\vu^{(d)}(\vu^{(d)})^\top$. The rest follows similarly to the case above.
  \end{enumerate}
  This completes the proof.
\end{proof}
Finally, note that the Lanczos method computes one matrix-vector product in each iteration. Thus, we conclude that $\mathsf{ExtEvec}(\mW;\delta,q)$ requires $\bigO\left(\sqrt{1+\frac{1}{\delta}}\log(\frac{d}{q^2})\right)$ matrix-vector products in total. 

\myalert{The $\mathsf{MaxSvec}$ oracle}. Finally, we discuss the implementation of $\mathsf{MaxSvec}$ in Definition~\ref{def:MaxSvec}. As we noted in Section~\ref{sec:jacobian_online_learning}, this oracle has a close connection to the problem of computing the maximal singular value and its associated left and right singular vectors of a given matrix. Our key observation is the following fact, which converts a singular value problem for a non-symmetric matrix into an eigenvalue problem for a symmetric matrix. 
\begin{lemma}[{\cite[Section 8.6.1]{Golub2013Matrix}}]\label{lem:singular_to_eigen}
  Let $\mW\in \reals^{d\times d}$ be an arbitrary matrix. Define the augmented matrix $\tilde{\mW}$ by 
  \begin{equation}\label{eq:augmented}
    \tilde{\mW} = \begin{bmatrix} 0 & \mW \\ \mW^\top & 0
    \end{bmatrix} \in \mathbb{S}^{2d}. 
  \end{equation}
  \begin{itemize}
    \item Let $\lambda_1(\tilde{\mW})$ and $\lambda_{2d}(\tilde{\mW})$ denote its largest and smallest eigenvalues, respectively. Then $\lambda_1(\tilde{\mW})$ is the maximal singular value of $\mW$ and $\lambda_{2d}(\tilde{\mW}) = -\lambda_1(\tilde{\mW})$. 
    \item Let $\tilde{\vv} \in \reals^{2d}$ denote the eigenvector associated with $\lambda_1(\tilde{\mW})$ and partition it as $\tilde{\vv} = (\vv,\vv')$, where $\vv,\vv' \in \reals^d$. Then ${\vv}$ and ${\vv'}$ are the left and right singular vectors associated with the maximal singular value of $\mW$, respectively.
  \end{itemize}

\end{lemma}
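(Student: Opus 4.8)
The statement to prove is Lemma~\ref{lem:singular_to_eigen}, the classical reduction of a singular value problem for an arbitrary matrix $\mW \in \reals^{d\times d}$ to an eigenvalue problem for the symmetric augmented matrix $\tilde{\mW}$.

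\textbf{Plan.} The whole proof rests on connecting the singular value decomposition of $\mW$ to the spectral decomposition of $\tilde{\mW}$. First I would take a (full) SVD $\mW = \mU \mSigma \mV^\top$, where $\mU = [\vu_1,\dots,\vu_d]$ and $\mV = [\vv_1,\dots,\vv_d]$ are orthogonal and $\mSigma = \Diag(\sigma_1,\dots,\sigma_d)$ with $\sigma_1 \geq \dots \geq \sigma_d \geq 0$. The key computational step is to exhibit explicit eigenpairs of $\tilde{\mW}$: for each $i$, using $\mW \vv_i = \sigma_i \vu_i$ and $\mW^\top \vu_i = \sigma_i \vv_i$, one checks directly that
\begin{equation*}
  \tilde{\mW}\begin{bmatrix}\vu_i \\ \vv_i\end{bmatrix} = \begin{bmatrix}\mW \vv_i \\ \mW^\top \vu_i\end{bmatrix} = \sigma_i \begin{bmatrix}\vu_i \\ \vv_i\end{bmatrix}, \qquad \tilde{\mW}\begin{bmatrix}\vu_i \\ -\vv_i\end{bmatrix} = -\sigma_i\begin{bmatrix}\vu_i \\ -\vv_i\end{bmatrix}.
\end{equation*}
So $\pm\sigma_i$ are eigenvalues of $\tilde{\mW}$ with the displayed eigenvectors. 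Next I would argue these account for the entire spectrum: the $2d$ vectors $\frac{1}{\sqrt 2}(\vu_i, \vv_i)$ and $\frac{1}{\sqrt 2}(\vu_i, -\vv_i)$ for $i = 1,\dots,d$ are orthonormal (using orthonormality of the $\vu_i$'s and of the $\vv_i$'s, and that the cross terms $(\vu_i,\vv_i)\cdot(\vu_j,-\vv_j) = \vu_i^\top\vu_j - \vv_i^\top\vv_j = \delta_{ij}-\delta_{ij}=0$), hence form an orthonormal basis of $\reals^{2d}$, so the eigenvalues of $\tilde{\mW}$ are exactly $\{\pm\sigma_i\}_{i=1}^d$ counted with multiplicity.

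From this spectral picture the two bullet points follow immediately. Since $\sigma_1 = \max_i \sigma_i$ is the largest of all the $\pm\sigma_i$, we get $\lambda_1(\tilde{\mW}) = \sigma_1$, the maximal singular value of $\mW$; and the smallest is $-\sigma_1 = -\lambda_1(\tilde{\mW})$, giving $\lambda_{2d}(\tilde{\mW}) = -\lambda_1(\tilde{\mW})$. For the second bullet, the eigenvector associated with $\lambda_1(\tilde{\mW}) = \sigma_1$ is (up to normalization, and up to the usual non-uniqueness when $\sigma_1$ is a repeated singular value) $\tilde{\vv} = (\vu_1, \vv_1)$, whose two halves $\vv = \vu_1$ and $\vv' = \vv_1$ are precisely the left and right singular vectors of $\mW$ for $\sigma_1$. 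I would note the degenerate-multiplicity caveat in passing but it does not affect the statement, which only asserts that \emph{some} such partition works.

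\textbf{Main obstacle.} There is no real obstacle here — this is a textbook fact (cited from \cite{Golub2013Matrix}) and the proof is a direct verification. The only point requiring a little care is the completeness argument: one must confirm that the $2d$ constructed eigenvectors are linearly independent (orthonormal after scaling by $\frac{1}{\sqrt 2}$) so that no other eigenvalues of $\tilde{\mW}$ exist; and one should handle the edge case $\sigma_d = 0$ (then $\pm\sigma_d$ coincide, but the two vectors $(\vu_d,\vv_d)$ and $(\vu_d,-\vv_d)$ are still orthogonal, so the basis argument goes through unchanged) and the case of repeated singular values (where the individual eigenvectors are not unique, consistent with the ``a'' in the statement). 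These are all routine.
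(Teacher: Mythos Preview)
Your proof is correct and is the standard textbook argument. Note that the paper does not actually prove this lemma---it is stated with a citation to \cite{Golub2013Matrix} and used as a black box---so there is nothing to compare against; your SVD-based construction of the $2d$ orthonormal eigenvectors is exactly the classical derivation one finds in that reference.
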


In light of Lemma~\ref{lem:singular_to_eigen}, to construct the $\mathsf{MaxSvec}$ oracle, we can similarly apply the Lanczos method with a random start to compute inexactly the extreme eigenvectors and the extreme eigenvalues of the augmented matrix in \eqref{eq:augmented}. The procedure is detailed in Subroutine~\ref{alg:lanczos_singular}, and in the following lemma we prove its correctness.  

\begin{subroutine}[!t]\small
  \caption{$\mathsf{MaxSvec}(\mW;\delta,q)$}\label{alg:lanczos_singular}
  \begin{algorithmic}[1]
    \STATE \textbf{Input:} $\mW \in \mathbb{R}^{d\times d}$, $\delta>0$, $q\in (0,1)$
    \STATE Set $\tilde{\mW} \leftarrow \begin{bmatrix} 0 & \mW \\ \mW^\top & 0
    \end{bmatrix} \in \mathbb{S}^{2d}$ and the number of iterations $N  \leftarrow 
    \Bigl\lceil \frac{1}{4}\sqrt{2(1+\frac{1}{\delta})}\log\frac{22d}{q^2}+\frac{1}{2}\Bigr
    \rceil
    $
    
    \STATE Compute approximate largest eigenvalue $\tilde{\lambda}_1$ and its eigenvector $\tilde{\vv} \in \reals^{2d}$, by running the Lanczos method on $\tilde{\mW}$ for $N$ iterations \COMMENT{See Subroutine~\ref{alg:lanczos}}
    \STATE Partition $\tilde{\vv}$ as $\tilde{\vv} = [\vv,\vv']$ where $\vv, \vv' \in \reals^d$ 
    \STATE Set $\gamma \leftarrow \tilde{\lambda}_1/3$ 
    \IF{$\gamma \leq 1$}
    \STATE Return $\gamma$ and $\mS = 0$ \COMMENT{Case I: $\gamma\leq 1$, which implies $\|\mW\|_{\op} \leq 3(1+\delta)$}
    \ELSE
      \STATE Return $\gamma$ and $\mS = \frac{2}{3}\vv'\vv^\top \in \reals^{d\times d}$ \hspace{.75em}\COMMENT{Case II: $\gamma> 1$ and $\mS$ defines a separating hyperplane}
    \ENDIF
  \end{algorithmic}
\end{subroutine}

\begin{lemma}\label{lem:maxsvec_bound}
  Let $\gamma$ and $\mS$ be the output of $\mathsf{MaxSvec}(\mW;\delta,q)$ in Subroutine~\ref{alg:lanczos_singular}.  %
  Then with probability at least $1-q$, they satisfy the properties in Definition~\ref{def:MaxSvec}.  
\end{lemma}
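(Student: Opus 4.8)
The plan is to mirror the proof of Lemma~\ref{lem:extevec_bound} almost verbatim, using Lemma~\ref{lem:singular_to_eigen} to transfer the analysis of the Lanczos method from $\mW$ to its symmetric dilation $\tilde{\mW}$. First I would recall that $\mathsf{MaxSvec}$ runs the Lanczos method (via Subroutine~\ref{alg:lanczos}, invoked through the $\mathsf{ExtEvec}$-style machinery) on $\tilde{\mW} \in \mathbb{S}^{2d}$ for $N = \lceil \frac{1}{4}\sqrt{2(1+\frac{1}{\delta})}\log\frac{22d}{q^2}+\frac{1}{2}\rceil$ iterations, which equals $\lceil \frac{1}{4}\epsilon^{-1/2}\log\frac{11(2d)}{q^2}+\frac{1}{2}\rceil$ with $\epsilon = \frac{\delta}{2(1+\delta)}$ and ambient dimension $2d$. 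So Proposition~\ref{prop:lanczos} applies directly to $\tilde{\mW}$: with probability at least $1-q$, the approximate largest eigenvalue satisfies $\tilde\lambda_1 \geq \lambda_1(\tilde{\mW}) - \frac{\delta}{2(1+\delta)}(\lambda_1(\tilde{\mW})-\lambda_{2d}(\tilde{\mW}))$. Here I would use the crucial structural fact from Lemma~\ref{lem:singular_to_eigen} that $\lambda_{2d}(\tilde{\mW}) = -\lambda_1(\tilde{\mW})$, so $\lambda_1(\tilde{\mW})-\lambda_{2d}(\tilde{\mW}) = 2\lambda_1(\tilde{\mW}) = 2\sigma_{\max}(\mW)$, and hence $\tilde\lambda_1 \geq \sigma_{\max}(\mW)(1 - \frac{\delta}{1+\delta}) = \frac{\sigma_{\max}(\mW)}{1+\delta}$, which rearranges to $\sigma_{\max}(\mW) \leq (1+\delta)\tilde\lambda_1$.

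Next I would condition on this success event and verify the two cases. Since $\gamma = \tilde\lambda_1/3$, the bound $\sigma_{\max}(\mW) \leq (1+\delta)\tilde\lambda_1 = 3(1+\delta)\gamma$ holds. In Case~I ($\gamma \leq 1$), this gives $\|\mW\|_{\op} = \sigma_{\max}(\mW) \leq 3(1+\delta)$, as required. In Case~II ($\gamma > 1$), the same inequality gives $\|\mW/\gamma\|_{\op} \leq 3(1+\delta)$. For the separating hyperplane, $\mS = \frac{2}{3}\vv'\vv^\top$ where $\tilde{\vv} = [\vv,\vv']$ is the approximate top eigenvector of $\tilde{\mW}$ returned by Lanczos; since $\tilde{\vv}$ is a unit vector, $\|\vv\|^2 + \|\vv'\|^2 = 1$, so $\|\mS\|_F = \frac{2}{3}\|\vv\|\|\vv'\| \leq \frac{2}{3}\cdot\frac{1}{2} = \frac{1}{3} \leq 1$ by AM-GM. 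For the inner-product condition, I would compute $\langle \mS, \mW\rangle = \frac{2}{3}\vv^\top \mW \vv'$ and relate it to the Rayleigh quotient $\tilde{\vv}^\top \tilde{\mW}\tilde{\vv} = 2\vv^\top\mW\vv' = \tilde\lambda_1$, so $\langle \mS,\mW\rangle = \frac{\tilde\lambda_1}{3} = \gamma$; meanwhile for any $\hat{\mB}$ with $\|\hat{\mB}\|_{\op} \leq 3$ we bound $\langle \mS, \hat{\mB}\rangle = \frac{2}{3}\vv^\top\hat{\mB}\vv' \leq \frac{2}{3}\|\vv\|\|\hat{\mB}\|_{\op}\|\vv'\| \leq \frac{2}{3}\cdot\frac{1}{2}\cdot 3 = 1$. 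Subtracting, $\langle \mS, \mW - \hat{\mB}\rangle \geq \gamma - 1$, matching Definition~\ref{def:MaxSvec}.

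The main subtlety — and the only place requiring care beyond transcription — is the bookkeeping of the factor-of-$3$ rescaling (the set $\calC_2$ uses radius $3$, not $1$), making sure the normalization $\mS = \frac{2}{3}\vv'\vv^\top$ and $\gamma = \tilde\lambda_1/3$ produces exactly the $\langle \mS,\mW\rangle = \gamma$ identity rather than an off-by-constant version; I would double-check the Rayleigh quotient identity $\tilde{\vv}^\top\tilde{\mW}\tilde{\vv} = 2\vv^\top\mW\vv'$ arising from the block structure of $\tilde{\mW}$, since the cross-term appears twice. The rest is a routine probability-of-success bookkeeping argument, identical in structure to Lemma~\ref{lem:extevec_bound}; and as with $\mathsf{ExtEvec}$, one matrix-vector product with $\tilde{\mW}$ (which costs two products with $\mW$ or $\mW^\top$) per Lanczos iteration gives a total of $\bigO(\sqrt{1+\frac{1}{\delta}}\log\frac{d}{q^2})$ matrix-vector products.
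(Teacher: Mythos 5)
Your proposal is correct and follows essentially the same route as the paper's proof: invoking Proposition~\ref{prop:lanczos} on the dilation $\tilde{\mW}$, exploiting $\lambda_{2d}(\tilde{\mW}) = -\lambda_1(\tilde{\mW})$ from Lemma~\ref{lem:singular_to_eigen} to get $\|\mW\|_{\op} \leq 3(1+\delta)\gamma$, and then verifying both cases via the Rayleigh-quotient identity $\tilde{\vv}^\top\tilde{\mW}\tilde{\vv} = 2\vv^\top\mW\vv'$ and the AM-GM bound $\|\vv\|\|\vv'\| \leq \tfrac{1}{2}$. The computations, including the factor-of-3 bookkeeping, match the paper's argument step for step.
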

\begin{proof}
  Note that in Subroutine~\ref{alg:lanczos_singular}, we run the Lanczos method for $\Bigl\lceil \frac{1}{4}\epsilon^{-1/2}\log\frac{22d}{q^2}+\frac{1}{2}\Bigr\rceil$ iterations, where $\epsilon = \frac{\delta}{2(1+\delta)}$. 
  Thus, by Proposition~\ref{prop:lanczos}, with probability at least $1-q$ we can find a unit vector $\tilde{\vv} \in \reals^{2d}$ such that 
  $\tilde{\lambda}_1 \triangleq \langle \tilde{\mW} \tilde{\vv}, \tilde{\vv}\rangle \geq  \lambda_1(\tilde{\mW})-\frac{\delta}{2(1+\delta)}(\lambda_1(\tilde{\mW})-\lambda_{2d}(\tilde{\mW}))$, %
  Moreover, notice that $\lambda_{2d}(\tilde{\mW}) = -\lambda_1(\tilde{\mW})$ by Lemma~\ref{lem:singular_to_eigen}. Hence, the above further implies that $\tilde{\lambda}_1 \geq \frac{1}{1+\delta} \lambda_1(\tilde{\mW})$, which is equivalent to $\lambda_1(\tilde{\mW}) \leq (1+\delta)\tilde{\lambda}$. 
  Moreover, note that $\lambda_1(\tilde{\mW}) = \|\mW\|_{\op}$ and $\gamma = \tilde{\lambda}_1/3$. Hence, the above is further equivalent to $\|\mW\|_{\op} \leq 3(1+\delta)\gamma$. 
  Now we distinguish two cases. 
  \begin{enumerate}[(a)]
    \item If $\gamma \leq 1$, then we are in \textbf{Case I} and the $\mathsf{MaxSvec}$ oracle outputs $\gamma$ and $\mS = 0$. In this case, we indeed have $\|\mW\|_\op \leq  3(1+\delta)\gamma \leq 3(1+\delta)$. 
    \item If $\gamma >1$, then we are in \textbf{Case II} and the $\mathsf{MaxSvec}$ oracle returns $\gamma $ and $\mS = \frac{2}{3}\vv'\vv^\top$. 
    In this case, since $\|\mW\|_\op \leq 3(1+\delta)\gamma$, we have $\|\mW/\gamma\|_{\op} \leq 3(1+\delta)$. Also, since $\tilde{\vv}$ is a unit vector, we have $\|\mS\|_F = \frac{2}{3} \|\vv\|\|\vv'\| \leq \frac{1}{3}(\|\vv\|^2 + \|\vv'\|^2) = \frac{1}{3}\|\tilde{\vv}\|^2 =\frac{1}{3}$. Finally, it holds that $\langle \mS, \mW \rangle = \frac{2}{3} \vv^\top \mW \vv' = \frac{1}{3}\tilde{\vv}^\top \tilde{\mW} \tilde{\vv} = \frac{\tilde{\lambda}_1}{3} = \gamma$. Moreover, for any $\hat{\mB}$ such that $\|\hat{\mB}\|_\op \leq 3$, we have
    $\langle \mS,\hat{\mB}\rangle =  \frac{2}{3}\vv^\top \hat{\mB} \vv' \leq \frac{2}{3}\|\vv\| \|\mB\|_{\op} \|\vv'\| \leq 2\|\vv'\| \|\vv\| \leq \|\vv\|^2 + \|\vv'\|^2 = 1$. 
    Hence, we obtain $\langle \mS,\mW-\hat{\mB}\rangle \geq \gamma -1$. This shows that $\gamma$ and $\mS$ satisfy all the properties in \textbf{Case II}. 
  \end{enumerate}
  This completes the proof.
\end{proof}

Finally, similar to the $\mathsf{ExtEvec}$ oracle, 
we note that $\mathsf{MaxSvec}(\mW;\delta,q)$ requires $\bigO\left(\sqrt{1+\frac{1}{\delta}}\log(\frac{d}{q^2})\right)$ matrix-vector products in total.

\section{Convergence rates and complexity analysis of QNPE}
\label{sec:complexity}

So far, we described our proposed QNPE method in Algorithm~\ref{alg:Full_Equasi-Newton} and discussed how the step size $\eta_k$ is selected by Subroutine~\ref{alg:ls} and the Jacobian approximation~$\mB_k$ is updated according to Subroutine~\ref{alg:jacobian_approx}. Moreover, we discussed the required $\mathsf{LinearSolver}$, $\mathsf{ExtEvec}$ and $\mathsf{MaxSvec}$ oracles  in Subroutines~\ref{alg:CGLS}, \ref{alg:lanczos}, and \ref{alg:lanczos_singular}, respectively.  
Next, we will establish the convergence rate and characterize the computational cost of QNPE for the strongly monotone setting (Section~\ref{subsec:strongly_monotone}) and the monotone setting (Section~\ref{subsec:monotone}). 

\subsection{Strongly monotone setting}
\label{subsec:strongly_monotone}

\subsubsection{Convergence rate analysis}
In this section, we present our main convergence result when $\vF$ is strongly monotone. 
\begin{theorem}\label{thm:main}
Suppose Assumptions~\ref{assum:strong_monotone},~\ref{assum:operator_lips} and~\ref{assum:jacobian_lips} hold. Let $\{\vz_k\}$ and $\{\hat{\vz}_k\}$ be the iterates generated by Algorithm~\ref{alg:Full_Equasi-Newton} using the line search scheme in Subroutine~\ref{alg:ls}, where \blue{$\alpha_1, \alpha_2 \in (0,\frac{1}{2})$, $\beta\in(0,1)$,}
and $\sigma_0 \geq \frac{\alpha_2\beta}{7.5L_1}$. In addition, the Jacobian approximation matrices are updated in Subroutine~\ref{alg:jacobian_approx} with \textbf{Option I}, where {$\rho = \frac{1}{121}$}, $\delta_t = {\frac{\mu}{2L_1}}$, and $q_t = \frac{p}{2.5(t+1)\log^2(t+1)}$ for $t \geq 1$. With probability at least $1-p$, the following holds:  
  \begin{enumerate}[(a)]
    \item (Linear convergence) For any $k\geq 0$, we have
        $ \frac{\|\vz_{k+1}-\vz^*\|^2}{\|\vz_k-\vz^*\|^2 }\leq  \left(1+\frac{4\alpha_2\beta\mu}{15L_1}\right)^{-1}$.
    \item (Superlinear convergence) We have $\lim_{k\rightarrow \infty}\frac{\|\vz_{k+1}-\vz^*\|^2}{\|\vz_k-\vz^*\|^2 } = 0$. Furthermore, define the absolute constant $C$ and the quantity $M$ by  
    \begin{align}\label{eq:def_c1}
      C &= \frac{(1+\alpha_1)^2}{2(1-\alpha_1)^2\beta^2 (1-(\alpha_1+\alpha_2))}, \\
       M &= \frac{121\|\mB_0-\nabla^2f(\vz^*)\|_F^2}{L_1^2}+ \left(C+2+\frac{15L_1}{2\alpha_2 \beta \mu}\right)\frac{L_2^2\|\vz_0-\vz^*\|^2}{L_1^2}. \label{eq:def_M}
    \end{align}
    Then for any $k\geq 0$, we have 
    \begin{equation*}
      {%
      \frac{\|\vz_k-\vz^*\|^2}{\|\vz_0-\vz^*\|^2} \leq   \Biggl(1+ 
       \frac{(1-\beta)\alpha_2\beta\mu}{L_1} \min\left\{ \frac{2}{15}k, \sqrt{\frac{k}{M}}\right\}
      \Biggr)^{-k}%
      .}
    \end{equation*}
  \end{enumerate}
\end{theorem}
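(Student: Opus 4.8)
The plan is to combine the three key ingredients that have been prepared: the recursion from Proposition~\ref{prop:HPE}(a), the step size lower bound from Lemma~\ref{lem:step size_lb} (together with the displacement comparison $\|\tilde{\vz}_k - \vz_k\| \le \frac{1+\alpha_1}{\beta(1-\alpha_1)}\|\hat{\vz}_k - \vz_k\|$), and the online learning regret bound from Section~\ref{subsec:projection_free_online_learning} applied to the loss $\ell_k(\mB) = \|\vu_k - \mB\vs_k\|^2/\|\vs_k\|^2$. Part (a) is the easy half: starting from Lemma~\ref{lem:step size_lb}, one needs a constant lower bound $\eta_k \ge c/L_1$, which follows by bounding the approximation error $\|\vu_k - \mB_k\vs_k\|/\|\vs_k\| \le \|\vF(\tilde{\vz}_k)-\vF(\vz_k)\|/\|\vs_k\| + \|\mB_k\vs_k\|/\|\vs_k\| \le L_1 + \|\mB_k\|_{\op}$, using Assumption~\ref{assum:operator_lips} and the boundedness $\|\mB_k\|_{\op} = \bigO(L_1)$ guaranteed by the feasible set $\calZ$ (here one must also track the $\delta_t = \mu/(2L_1)$ slack from \textbf{Option I}, invoking Lemma~\ref{lem:transform}, so that $\mB_k$ stays $\bigO(L_1)$-bounded and $\frac12(\mB_k+\mB_k^\top) \succeq \tfrac{\mu}{2}\mI$ — the latter being exactly the hypothesis needed for the second conclusion of Lemma~\ref{lem:step size_lb}). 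Combined with the condition $\sigma_0 \ge \frac{\alpha_2\beta}{7.5 L_1}$, this gives $\eta_k \ge \frac{4\alpha_2\beta}{15 L_1}$ for every $k$ (whether or not the line search backtracks), and plugging into Proposition~\ref{prop:HPE}(a) yields (a).

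For part (b), the plan is to bound $\sum_{k=0}^{N-1} 1/\eta_k$ via Lemma~\ref{lem:stepsize_bnd}, which reduces everything to controlling the cumulative loss $\sum_{k\in\calB}\ell_k(\mB_k)$. Decompose it as $\sum_{k\in\calB}\ell_k(\mB_k) = \sum_{k\in\calB}\ell_k(\nabla\vF(\vz^*)) + \reg$, where the regret term is handled by the projection-free online learning machinery (Lemma~\ref{lem:regret_reduction} with \textbf{Option I}, the surrogate-gradient bound \eqref{eq:surrogate_loss}, and the $\SEP_{\calC}$ construction from Lemmas~\ref{lem:sep_intersect}, \ref{lem:extevec_bound}, \ref{lem:maxsvec_bound}) — this produces a bound of order $\bigO(\|\mB_0 - \nabla\vF(\vz^*)\|_F^2 + $ constant$)$ after choosing $\rho = 1/121$ and summing the per-round $\|\tilde\mG_t\|^2$ terms, with the failure probabilities $q_t$ chosen so $\sum_t q_t \le p$ by a union bound. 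The comparator term $\sum_{k\in\calB}\ell_k(\nabla\vF(\vz^*)) = \sum_{k\in\calB}\|\vF(\tilde{\vz}_k)-\vF(\vz_k) - \nabla\vF(\vz^*)(\tilde{\vz}_k-\vz_k)\|^2/\|\tilde{\vz}_k-\vz_k\|^2$ is bounded using Assumption~\ref{assum:jacobian_lips}: by the fundamental theorem of calculus, $\|\vF(\tilde{\vz}_k)-\vF(\vz_k) - \nabla\vF(\vz^*)\vs_k\| \le L_2 (\max\{\|\tilde{\vz}_k - \vz^*\|,\|\vz_k - \vz^*\|\})\|\vs_k\|$, so each term is $\bigO(L_2^2 \|\vz_k - \vz^*\|^2)$ (after using the displacement comparison to pass from $\tilde{\vz}_k$ back to $\hat{\vz}_k$ and then to $\vz_k$). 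Then $\sum_{k}\|\vz_k - \vz^*\|^2 \le \|\vz_0 - \vz^*\|^2 \sum_k (1 + \frac{4\alpha_2\beta\mu}{15L_1})^{-k} = \bigO(\frac{L_1}{\alpha_2\beta\mu}\|\vz_0-\vz^*\|^2)$ by part (a) — this geometric summation is exactly where the $\frac{15 L_1}{2\alpha_2\beta\mu}$ factor in the definition \eqref{eq:def_M} of $M$ comes from. Collecting, $\sum_{k\in\calB}\ell_k(\mB_k) = \bigO(L_1^2 M)$ with $M$ as in \eqref{eq:def_M}.

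Feeding this into Lemma~\ref{lem:stepsize_bnd} gives $\sum_{k=0}^{N-1} 1/\eta_k \le \frac{1}{(1-\beta)\sigma_0} + \frac{1}{(1-\beta)\alpha_2\beta}\sqrt{N \cdot \bigO(L_1^2 M)} = \bigO(\frac{L_1}{(1-\beta)\alpha_2\beta}\sqrt{NM})$, and then \eqref{eq:after_jensen} yields $\frac{\|\vz_N - \vz^*\|^2}{\|\vz_0-\vz^*\|^2} \le (1 + \frac{2\mu N}{\sum 1/\eta_k})^{-N} \le (1 + \frac{(1-\beta)\alpha_2\beta\mu}{L_1}\sqrt{N/M}\cdot c)^{-N}$ for the appropriate absolute constant $c$; taking the minimum with the linear rate from part (a) (which also feeds into $\sum 1/\eta_k \le \frac{15 L_1 N}{4\alpha_2\beta\mu}$, giving the $\frac{2}{15}k$ branch) produces the stated bound, and the superlinear limit follows since $\sqrt{k/M}\to\infty$. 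The main obstacle I anticipate is the bookkeeping in the regret analysis: one must carefully verify that the online learning algorithm's surrogate gradients $\tilde\mG_t$ have norms bounded by an absolute constant (using $\|\mS_t\|_F \le 1$ from the oracle definitions, the $\bigO(1)$-Lipschitzness of the rescaled loss on the rescaled feasible set $\calC$, and the fact that $\ell_t$ evaluated at the bounded iterates stays bounded), that the telescoping in \eqref{eq:regret_reduction} closes with the right constant $\rho = 1/121$, and that the high-probability failure events from the randomized Lanczos oracles across all $T \le N$ backtracking rounds are absorbed into the single budget $p$ — this is delicate because $N$ is not known in advance, which is why the $q_t$ are chosen with the summable $\frac{1}{(t+1)\log^2(t+1)}$ weights.
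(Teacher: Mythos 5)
Your proof plan is essentially the paper's own argument. Part (a): boundedness of $\mB_k$ via Lemma~\ref{lem:transform} with $\delta=\mu/(2L_1)$ (so $\frac12(\mB_k+\mB_k^\top)\succeq\frac{\mu}{2}\mI$ and $\|\mB_k\|_{\op}\leq 6.5L_1$), which combined with Lemma~\ref{lem:step size_lb} gives $\eta_k\geq\frac{\alpha_2\beta}{7.5L_1}$ by induction, and then Proposition~\ref{prop:HPE}(a). Part (b): the same three-step structure — a small-loss regret bound $\sum_t\ell_t(\mB_t)\leq 121\|\mB_0-\mH\|_F^2+2\sum_t\ell_t(\mH)$ from Lemma~\ref{lem:regret_reduction} with $\rho=1/121$ and the $\SEP_\calC$ machinery, a comparator bound on $\sum_t\ell_t(\nabla\vF(\vz^*))$ from Assumption~\ref{assum:jacobian_lips} plus part (a), then Lemma~\ref{lem:stepsize_bnd} and the Jensen step \eqref{eq:after_jensen}.

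A few small points of divergence worth flagging. First, in bounding the comparator loss you per-term estimate $\ell_t(\mH^*)=\bigO(L_2^2\|\vz_t-\vz^*\|^2)$ by passing $\|\tilde\vz_t-\vz_t\|\to\|\hat\vz_t-\vz_t\|\to\|\vz_t-\vz^*\|$ and then summing the geometric series; the paper instead keeps $\sum_t\|\vs_t\|^2$ and $\sum_t\|\vz_t-\vz^*\|^2$ separate, bounding the first by the telescoping argument of Lemma~\ref{lem:sum_of_squares} rather than the geometric decay. Both work, but your route inflates the $C$-term in $M$ by a factor $\bigO(L_1/(\alpha_2\beta\mu))$, so it would not reproduce the exact constant $M$ of \eqref{eq:def_M} (the asymptotic order is unchanged). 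Second, two arithmetic slips: the step size lower bound is $\eta_k\geq\frac{\alpha_2\beta}{7.5L_1}=\frac{2\alpha_2\beta}{15L_1}$, not $\frac{4\alpha_2\beta}{15L_1}$ — the extra $2$ enters later through $2\eta_k\mu$ in Proposition~\ref{prop:HPE}(a) — and the expression $\sum 1/\eta_k\leq\frac{15L_1N}{4\alpha_2\beta\mu}$ contains a spurious $\mu$. Third, the $\frac{2}{15}k$ branch of the final $\min$ is not obtained by "taking the minimum with the rate from part (a)" (that bound is actually stronger); it comes from the constant term $\frac{1}{(1-\beta)\sigma_0}$ in Lemma~\ref{lem:stepsize_bnd}, combined with $a+b\leq 2\max\{a,b\}$ before substituting into \eqref{eq:after_jensen}. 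These are bookkeeping issues, not gaps in the argument, and your high-level identification of what drives each piece (the union bound on $q_t$, the role of $\|\mS_t\|_F\leq 1$, the self-bounding property of $\ell_t$) is accurate.
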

Theorem~\ref{thm:main} presents two global convergence rates for QNPE. Specifically, Part~(a) shows that it converges linearly and matches the rate of EG, leading to a complexity bound of $\bigO(\frac{L_1}{\mu} \log\frac{1}{\epsilon})$. \blue{According to the lower bound results \cite{Nemirovsky1983Problem,zhang2022lower}, this complexity bound is optimal up to constants in the regime where the number of iterations $k$ is $\mathcal{O}(d)$. }
Additionally, Part~(b) shows an explicit superlinear rate of $(1 + \frac{\mu}{L_1}\sqrt{\frac{k}{M}})^k$, where $ M = \bigO(\frac{1}{L_1^2}\|\mB_0-\nabla \vF(\vz^*)\|^2_F + \frac{L_2^2}{L_1\mu} \|\vz_0-\vz^*\|^2)$. This rate outperforms the linear rate in Part (a) when $k\geq M$. 
By Lemma~\ref{lem:jacobian}, it holds that $\|\mB_0 - \nabla \vF(\vz^*)\|^2 \leq L_1^2 d$, and thus in the worst case $M = \bigO(d)$. 
Therefore, after at most $\bigO(d)$ iterations, our convergence rate is provably faster than the linear rate by EG. To the best of our knowledge, our result is the first to show any acceleration of a quasi-Newton method compared with EG in the regime where $ k =\Omega(d)$.

Now we move to the proof of Theorem~\ref{thm:main}. First of all, by using a simple union bound, we can show that 
 the  $\mathsf{ExtEvec}$ and $\mathsf{MaxSvec}$ oracles used in Subroutine~\ref{alg:jacobian_approx} are successful in all rounds with probability at least $1-p$. 
 Specifically, 
since both  oracles have a failure probability of $ \frac{q_t}{2} = \frac{p}{5(t+1)\log^2(t+1)}$ in the $t$-th round, we can use the union bound to upper bound the total failure probability by 
\begin{equation*}
  \sum_{t=1}^{T-1} q_t %
   \leq  \frac{p}{2.5}\sum_{t=2}^{\infty} \frac{1}{t \log^2 t} \leq \frac{p}{2.5}\left(\frac{1}{2\log^2 2}+\int_{2}^{+\infty}\frac{1}{t\log^2t}\,dt\right) \leq p.
\end{equation*}
 Thus, throughout the proof, we assume every call of $\mathsf{ExtEvec}$ and $\mathsf{MaxSvec}$ is successful.

\begin{proof}[Proof of Theorem~\ref{thm:main}(a)]

We first prove the linear convergence rate in (a). To begin with, note that Subroutine~\ref{alg:jacobian_approx} with \textbf{Option I} guarantees that  $\hat{\mB}_k \in (1+\frac{\mu}{2L_1}) \mathcal{C}$ by Lemma~\ref{lem:regret_reduction}.  As a corollary of Lemma~\ref{lem:transform}, we have the following result showing the boundedness of the Jacobian approximation matrix $\mB_k$.

\begin{corollary}\label{lem:feasible_Bk}
  Let $\delta_t = \frac{\mu}{2L_1}$ for all $t\geq 0$. Then for any $k \geq 0$, we have $\frac{1}{2}(\mB_k + \mB_k^\top )\succeq \frac{\mu}{2} \mI$ and $\|\mB_k\|_{\op} \leq 6.5L_1$. 
\end{corollary}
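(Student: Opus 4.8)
The plan is to apply Lemma~\ref{lem:transform} directly, using the feasibility guarantee for $\hat{\mB}_k$ from Lemma~\ref{lem:regret_reduction}(a) (Option I). First I would observe that since we run Subroutine~\ref{alg:jacobian_approx} with \textbf{Option I}, Lemma~\ref{lem:regret_reduction}(a) tells us that the iterate $\hat{\mB}_k$ produced at each round lies in the relaxed feasible set $(1+\delta_t)\calC$, where $\calC$ is the translated-and-rescaled set defined in \eqref{eq:translated_set}. Here I am using that Subroutine~\ref{alg:jacobian_approx} is exactly the instantiation of Algorithm~\ref{alg:projection_free_online_learning} with the approximate separation oracle for $\calC = \calC_1 \cap \calC_2 \cap \calL$ built from $\mathsf{ExtEvec}$ and $\mathsf{MaxSvec}$ (via Lemma~\ref{lem:sep_intersect}), and that we are conditioning on the event that all oracle calls succeed.

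Next I would plug in the specific choice $\delta_t = \frac{\mu}{2L_1}$ into the second part of Lemma~\ref{lem:transform}. Since $\hat{\mB}_k \in (1+\delta_t)\calC$ with $\delta_t = \frac{\mu}{2L_1}$, and since $\mB_k = L_1 \hat{\mB}_k + (L_1+\mu)\mI$ by construction in Subroutine~\ref{alg:jacobian_approx}, Lemma~\ref{lem:transform} gives
\[
  \tfrac{1}{2}(\mB_k + \mB_k^\top) \succeq (\mu - L_1\delta_t)\mI = \left(\mu - L_1\cdot\tfrac{\mu}{2L_1}\right)\mI = \tfrac{\mu}{2}\mI,
\]
which is the first claimed bound. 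For the operator-norm bound, Lemma~\ref{lem:transform} yields
\[
  \|\mB_k\|_{\op} \leq 4L_1 + \mu + 3\delta_t L_1 = 4L_1 + \mu + \tfrac{3\mu}{2} \leq 4L_1 + \tfrac{5}{2}\mu \leq 6.5 L_1,
\]
where the last step uses $\mu \leq L_1$ (which holds by Lemma~\ref{lem:jacobian}, or more directly since $\mu$-strong monotonicity and $L_1$-Lipschitzness of $\vF$ force $\mu \leq L_1$).

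I do not anticipate a genuine obstacle here, as the statement is essentially a bookkeeping corollary. The one point requiring a small amount of care is handling the initial index: Subroutine~\ref{alg:jacobian_approx} sets $\mB_0 \in \calZ$ directly (as an input) and only invokes the online-learning update for $t \geq 1$, so for $k=0$ the bounds follow from $\mB_0 \in \calZ$ and the definition of $\calZ$ in \eqref{eq:feasible_set} (which already imposes $\frac12(\mB_0+\mB_0^\top)\succeq\mu\mI \succeq \frac{\mu}{2}\mI$ and $\|\mB_0\|_{\op}\le L_1 \le 6.5L_1$), while for $k\geq 1$ we use the argument above. I would state this case split explicitly to be safe.
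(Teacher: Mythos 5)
Your proposal is correct and follows the same route the paper intends: apply Lemma~\ref{lem:regret_reduction}(a) (Option I) to get $\hat{\mB}_k\in(1+\delta_t)\calC$, then plug $\delta_t=\frac{\mu}{2L_1}$ and $\mu\le L_1$ into Lemma~\ref{lem:transform}; your explicit handling of $k=0$ via $\mB_0\in\calZ$ is a small detail the paper leaves implicit but is consistent with its argument.
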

Combining this with Lemma~\ref{lem:step size_lb}, we obtain the following universal lower bound on the step size $\eta_k$. 
\begin{lemma}\label{lem:stepsize_const_bound}
  In Subroutine~\ref{alg:ls}, choose $\sigma_0 \geq \frac{\alpha_2\beta}{7.5L_1}$. For any $k\geq 0$, we have $\eta_k \geq \frac{\alpha_2\beta}{7.5L_1}$. 
\end{lemma}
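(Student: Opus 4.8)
<br>

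The plan is to prove the universal lower bound $\eta_k \geq \frac{\alpha_2\beta}{7.5 L_1}$ by a case analysis on whether the line search at iteration $k$ backtracks or accepts the initial trial step size. First I would handle the case $k \in \calB$, where the line search backtracks. By the first part of Lemma~\ref{lem:step size_lb}, we have $\eta_k > \frac{\alpha_2 \beta \|\tilde{\vz}_k - \vz_k\|}{\|\vF(\tilde{\vz}_k) - \vF(\vz_k) - \mB_k(\tilde{\vz}_k - \vz_k)\|}$, so it suffices to upper bound the numerator's denominator: the relative approximation error $\frac{\|\vF(\tilde{\vz}_k) - \vF(\vz_k) - \mB_k(\tilde{\vz}_k - \vz_k)\|}{\|\tilde{\vz}_k - \vz_k\|}$. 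By the triangle inequality this is at most $\frac{\|\vF(\tilde{\vz}_k) - \vF(\vz_k)\|}{\|\tilde{\vz}_k - \vz_k\|} + \|\mB_k\|_{\op}$. For the first term, Assumption~\ref{assum:operator_lips} ($L_1$-Lipschitzness of $\vF$) gives $\frac{\|\vF(\tilde{\vz}_k) - \vF(\vz_k)\|}{\|\tilde{\vz}_k - \vz_k\|} \leq L_1$. For the second term, Corollary~\ref{lem:feasible_Bk} (which applies because $\delta_t = \frac{\mu}{2L_1}$ as required) gives $\|\mB_k\|_{\op} \leq 6.5 L_1$. Hence the relative approximation error is at most $7.5 L_1$, and therefore $\eta_k > \frac{\alpha_2 \beta}{7.5 L_1}$.

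Next I would handle the case $k \notin \calB$, where the line search accepts the initial trial step size, so $\eta_k = \sigma_k$. Here I need to show $\sigma_k \geq \frac{\alpha_2 \beta}{7.5 L_1}$ for all $k \geq 0$. This follows by induction on $k$. The base case $\sigma_0 \geq \frac{\alpha_2 \beta}{7.5 L_1}$ is the hypothesis of the lemma. For the inductive step, recall that $\sigma_{k+1} = \eta_k / \beta$. If $\eta_k = \sigma_k$ (i.e., $k \notin \calB$), then $\sigma_{k+1} = \sigma_k / \beta \geq \sigma_k \geq \frac{\alpha_2\beta}{7.5L_1}$ since $\beta \in (0,1)$. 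If instead $k \in \calB$, then by the first case above $\eta_k > \frac{\alpha_2\beta}{7.5L_1}$, so $\sigma_{k+1} = \eta_k/\beta > \frac{\alpha_2\beta}{7.5L_1 \beta} = \frac{\alpha_2}{7.5L_1} \geq \frac{\alpha_2\beta}{7.5L_1}$. In either subcase the induction is maintained. Combining the two cases, $\eta_k \geq \frac{\alpha_2\beta}{7.5L_1}$ for all $k \geq 0$.

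I do not anticipate a serious obstacle here; the main subtlety is simply assembling the pieces in the right order — specifically, noting that the bound $\|\mB_k\|_{\op} \leq 6.5 L_1$ from Corollary~\ref{lem:feasible_Bk} is exactly what makes $L_1 + 6.5 L_1 = 7.5 L_1$ work out, and that the induction for the non-backtracking case needs the backtracking-case bound as an input (so the two cases must be interleaved rather than treated entirely separately). One should also double-check that Lemma~\ref{lem:step size_lb}'s first inequality is stated without requiring the positive-semidefiniteness condition on $\mB_k$ (it is — that condition is only needed for the \emph{second} conclusion of that lemma, relating $\|\tilde{\vz}_k - \vz_k\|$ to $\|\hat{\vz}_k - \vz_k\|$, which we do not need here), so the argument goes through cleanly with only Lipschitzness of $\vF$ and boundedness of $\|\mB_k\|_{\op}$.
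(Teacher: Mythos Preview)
Your proposal is correct and follows essentially the same approach as the paper: first bound $\eta_k$ for $k\in\calB$ via Lemma~\ref{lem:step size_lb}, the triangle inequality, Assumption~\ref{assum:operator_lips}, and Corollary~\ref{lem:feasible_Bk} to get the $7.5L_1$ constant, and then run an induction using $\sigma_{k+1}=\eta_k/\beta$ for the non-backtracking iterations. The only cosmetic difference is that the paper inducts directly on $\eta_k$ while you induct on $\sigma_k$; the content is the same.
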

\begin{proof}
  Recall that $\calB = \{k: \eta_k < \sigma_k\}$.
  We first establish that $\eta_k \geq \frac{\alpha_2\beta}{7.5L_1}$ for $k\in \mathcal{B}$. To see this, suppose $k\in \mathcal{B}$ and recall from Lemma~\ref{lem:step size_lb} that
\begin{equation}\label{eq:step size_lb_appen}
  \eta_k > \frac{\alpha_2 \beta\|{\tilde{\vz}_k}-\vz_k\|}{\|\vF({\tilde{\vz}_k})-\vF(\vz_k)-\mB_k({\tilde{\vz}_k}-\vz_k)\|}.
\end{equation}
  Moreover, by using the triangle inequality, we have 
  $\|\vF({\tilde{\vz}_k})-\vF(\vz_k)-\mB_k({\tilde{\vz}_k}-\vz_k)\| \leq \|\vF({\tilde{\vz}_k})-\vF(\vz_k)\| + \|\mB_k({\tilde{\vz}_k}-\vz_k)\|$.
  By using Assumption~\ref{assum:operator_lips}, we have $\|\vF({\tilde{\vz}_k})-\vF(\vz_k)\| \leq L_1\|\tilde{\vz}_k-\vz_k\|$. Moreover, since $\|\mB_k\|_{\op} \leq 6.5L_1$ by Corollary~\ref{lem:feasible_Bk}, we also get $\|\mB_k({\tilde{\vz}_k}-\vz_k)\| \leq 6.5 \|{\tilde{\vz}_k}-\vz_k\|$. 
  Thus, this implies that $\|\vF({\tilde{\vz}_k})-\vF(\vz_k)-\mB_k({\tilde{\vz}_k}-\vz_k)\| \leq 7.5\|{\tilde{\vz}_k}-\vz_k\|$,  
  which leads to $\eta_k > \frac{\alpha_2\beta}{7.5L_1}$ from \eqref{eq:step size_lb_appen}. 

  Now we can prove that $\eta_k \geq \frac{\alpha_2\beta}{7.5L_1}$ for all $k\geq 0$ by induction. To show that this holds for $k=0$, we distinguish two cases. If $0\notin \mathcal{B}$, then we have $\eta_0 = \sigma_0 > \frac{\alpha_2\beta}{7.5L_1}$ by our choice of $\sigma_0$. Otherwise, if $0\in \mathcal{B}$, then it directly follows from our result in the previous paragraph. Moreover, assume that $\eta_{l-1} \geq \frac{\alpha_2\beta}{7.5L_1}$ where $l\geq 1$. Similarly, we again distinguish two cases: if $l\notin \mathcal{B}$, then we have $\eta_l = \sigma_l = \frac{\eta_{l-1}}{\beta} >\frac{\alpha_2}{7.5L_1}>\frac{\alpha_2\beta}{7.5L_1}$; otherwise, if $l\in \mathcal{B}$, it follows from the result above that $\eta_l \geq \frac{\alpha_2\beta}{7.5L_1}$. This completes the induction. 
\end{proof}
In light of Lemma~\ref{lem:stepsize_const_bound}, it follows from Proposition~\ref{prop:HPE}(a) that  $\|\vz_{k+1}-\vz^*\|^2 \leq \|\vz_k-\vz^*\|^2 (1+2\eta_k \mu)^{-1} \leq \|\vz_k-\vz^*\|^2 \left(1+\frac{4\alpha_2\beta\mu}{15L_1}\right)^{-1}$. 
This proves the linear convergence result in (a). 
\end{proof}

Next, we prove the superlinear convergence rate in (b). Our starting point is the observations in \eqref{eq:after_jensen} and \eqref{eq:goal}, which demonstrate that it is sufficient to prove an upper bound on the cumulative loss $\sum_{k=0}^{N-1} \ell_k(\mB_k)$. 
Moreover, since $\ell_k(\mB_k) = 0$ when $k \notin \calB$ by the definition in \eqref{eq:loss_of_jacobian}, it is equivalent to bounding the sum $\sum_{k \in \calB} \ell_k(\mB_k)$.  
\blue{Recall that we relabel the indices in $\mathcal{B}$ by $t=0,\dots,T-1$ with $T \leq N$. %
} 

\begin{proof}[Proof of Theorem~\ref{thm:main}(b)] We consider the following three steps.

\vspace{0em}\noindent\textbf{Step 1:} Using regret analysis, we bound the cumulative loss $\sum_{t=0}^{T-1}\ell_{t}(\mB_{t})$ incurred by our online learning algorithm in Subroutine~\ref{alg:jacobian_approx}. 
To begin with, we present the following lemma showing a self-bounding property of the loss function $\ell_t$. {It is similar to the standard inequality $\frac{1}{2L_1} \|\nabla g (\vx)\|^2 \leq g(\vx) - g^* $ for a $L_1$-smooth function $g$, where $g^*$ denotes the minimum of $g$.} This will be the key to proving a constant upper bound on the cumulative loss incurred by Subroutine~\ref{alg:jacobian_approx}.
\begin{lemma}\label{lem:loss}
  Recall the loss function $\ell_k$ defined in \eqref{eq:loss_of_jacobian}. For $k\in \mathcal{B}$, we have
  \begin{equation}\label{eq:gradient}
    \nabla \ell_k(\mB) = - \frac{2(\vu_k-\mB\vs_k)\vs_k^\mathsf{T}}{\|\vs_k\|^2}.
  \end{equation}
  Moreover, for any $\mB\in \reals^{d\times d}$, it holds that 
    \begin{equation}\label{eq:gradient_norm_bound}
        \|\nabla \ell_k(\mB)\|_F \leq \|\nabla \ell_k(\mB)\|_* = 2\sqrt{\ell_k(\mB)},
    \end{equation} 
    where $\|\cdot\|_F$ and $\|\cdot\|_{*}$ denote the Frobenius norm and the nuclear norm, respectively.  
\end{lemma}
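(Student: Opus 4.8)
The goal is to prove Lemma~\ref{lem:loss}, which computes the gradient of the loss function $\ell_k(\mB) = \frac{\|\vu_k - \mB\vs_k\|^2}{\|\vs_k\|^2}$ and establishes the self-bounding inequality $\|\nabla \ell_k(\mB)\|_F \leq \|\nabla \ell_k(\mB)\|_* = 2\sqrt{\ell_k(\mB)}$.

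\textbf{Plan.} First I would compute the gradient directly. For $k \in \mathcal{B}$, write $\ell_k(\mB) = \frac{1}{\|\vs_k\|^2}\langle \vu_k - \mB\vs_k, \vu_k - \mB\vs_k \rangle$. Treating this as a function of the matrix $\mB$, the differential is $d\ell_k = \frac{1}{\|\vs_k\|^2}\cdot 2\langle \vu_k - \mB\vs_k, -(d\mB)\vs_k \rangle = -\frac{2}{\|\vs_k\|^2}\langle (\vu_k - \mB\vs_k)\vs_k^\top, d\mB \rangle$, using the identity $\langle \va, (d\mB)\vb \rangle = \langle \va\vb^\top, d\mB \rangle$ for the Frobenius inner product. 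Hence $\nabla \ell_k(\mB) = -\frac{2(\vu_k - \mB\vs_k)\vs_k^\top}{\|\vs_k\|^2}$, which is \eqref{eq:gradient}.

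\textbf{Self-bounding inequality.} The gradient is a rank-one matrix of the form $\nabla \ell_k(\mB) = \va \vb^\top$ where $\va = -\frac{2(\vu_k - \mB\vs_k)}{\|\vs_k\|^2}$ and $\vb = \vs_k$. For any rank-one matrix $\va\vb^\top$, both the Frobenius norm and the nuclear norm equal $\|\va\|\|\vb\|$ (the only nonzero singular value is $\|\va\|\|\vb\|$). Therefore $\|\nabla \ell_k(\mB)\|_F = \|\nabla \ell_k(\mB)\|_* = \|\va\|\|\vb\| = \frac{2\|\vu_k - \mB\vs_k\|}{\|\vs_k\|^2}\cdot \|\vs_k\| = \frac{2\|\vu_k - \mB\vs_k\|}{\|\vs_k\|} = 2\sqrt{\ell_k(\mB)}$, since $\ell_k(\mB) = \frac{\|\vu_k - \mB\vs_k\|^2}{\|\vs_k\|^2}$. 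This gives \eqref{eq:gradient_norm_bound}, noting that the inequality $\|\cdot\|_F \leq \|\cdot\|_*$ holds in general (and here with equality since the gradient is rank one).

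\textbf{Main obstacle.} There is essentially no obstacle here — this is a routine computation. The only points requiring a moment's care are (i) being careful with the Frobenius inner product convention when differentiating a matrix-valued quadratic, and (ii) recalling that for rank-one matrices the Frobenius and nuclear norms coincide (both equal the single nonzero singular value), which makes the chain of equalities work. I would present the gradient computation in one or two lines and the norm identity in one line, then conclude.

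\begin{proof}
Fix $k \in \mathcal{B}$. Writing the Frobenius inner product as $\langle \mA, \mB \rangle = \Tr(\mA^\top \mB)$, for any direction $\mD \in \reals^{d\times d}$ we have
\begin{equation*}
  \frac{d}{d\tau}\Big|_{\tau=0} \ell_k(\mB + \tau \mD) = \frac{1}{\|\vs_k\|^2}\cdot 2\langle \vu_k - \mB\vs_k, -\mD\vs_k\rangle = -\frac{2}{\|\vs_k\|^2}\langle (\vu_k-\mB\vs_k)\vs_k^\top, \mD\rangle,
\end{equation*}
where in the last step we used the identity $\langle \va, \mD \vb\rangle = \langle \va \vb^\top, \mD\rangle$. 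Since this holds for all $\mD$, we conclude that $\nabla \ell_k(\mB) = -\frac{2(\vu_k-\mB\vs_k)\vs_k^\top}{\|\vs_k\|^2}$, which proves \eqref{eq:gradient}.

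Next, observe that $\nabla \ell_k(\mB)$ is a rank-one matrix of the form $\va\vb^\top$ with $\va = -\frac{2(\vu_k-\mB\vs_k)}{\|\vs_k\|^2}$ and $\vb = \vs_k$. A rank-one matrix $\va\vb^\top$ has a single nonzero singular value equal to $\|\va\|\|\vb\|$, so both its Frobenius norm and its nuclear norm equal $\|\va\|\|\vb\|$. Therefore,
\begin{equation*}
  \|\nabla \ell_k(\mB)\|_F = \|\nabla \ell_k(\mB)\|_* = \|\va\|\|\vb\| = \frac{2\|\vu_k-\mB\vs_k\|}{\|\vs_k\|^2}\cdot\|\vs_k\| = \frac{2\|\vu_k-\mB\vs_k\|}{\|\vs_k\|} = 2\sqrt{\ell_k(\mB)},
\end{equation*}
where the last equality uses the definition $\ell_k(\mB) = \frac{\|\vu_k-\mB\vs_k\|^2}{\|\vs_k\|^2}$. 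This establishes \eqref{eq:gradient_norm_bound} and completes the proof.
\end{proof}
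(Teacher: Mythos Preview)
Your proof is correct and follows essentially the same approach as the paper: compute the gradient directly and then use that a rank-one matrix $\va\vb^\top$ has a single nonzero singular value $\|\va\|\|\vb\|$. The only minor difference is that you observe the Frobenius norm also equals $\|\va\|\|\vb\|$ for rank-one matrices (so the inequality in \eqref{eq:gradient_norm_bound} is actually an equality), whereas the paper invokes the general inequality $\|\cdot\|_F \leq \|\cdot\|_*$ and only computes the nuclear norm explicitly.
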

\begin{proof}
  The expression in \eqref{eq:gradient} follows from the direct calculation. The first inequality in \eqref{eq:gradient_norm_bound} follows from the fact that $\|\mA\|_F \leq \|\mA\|_*$ for any matrix $\mA\in \reals^{d\times d}$. For the equality, note that 
  \begin{align*}
    \|\nabla \ell_k(\mB)\|_*   = \frac{2}{\|\vs_k\|^2}  \|\vu_k-\mB\vs_k\|\|\vs_k\|  = \frac{2\|\vu_k-\mB\vs_k\|}{\|\vs_k\|} = 2\sqrt{\ell_k(\mB)}, 
  \end{align*}
  where we used the fact that the rank-one matrix $\va\vb^\top$ has only one nonzero singular value $\|\va\|\|\vb\|$. 
\end{proof}

In particular, by exploiting the self-bounding property of the loss function $\ell_t$, we prove a ``small-loss bound'' \cite{srebro2010smoothness} in the following lemma, where the cumulative loss of the learner is bounded by that of a fixed action in the competitor set. 
  \begin{lemma}
  \label{lem:small_loss}
  In Subroutine~\ref{alg:jacobian_approx}, choose \textbf{Option I} with $\rho = \frac{1}{121}$ and $\delta = \frac{\mu}{2L_1}$. 
  For any ${\mH}\in \mathcal{{Z}}$, we have
    $\sum_{t=0}^{T-1} \ell_t({\mB}_t)  \leq 121\|\mB_0-{\mH}\|_F^2+ 2 \sum_{t=0}^{T-1} \ell_t({\mH})$.
\end{lemma}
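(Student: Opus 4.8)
The plan is to recognize Subroutine~\ref{alg:jacobian_approx} as an instance of the projection-free online learning scheme (Algorithm~\ref{alg:projection_free_online_learning}) with \textbf{Option I}, applied to the rescaled variable $\hat{\mB} = \frac{1}{L_1}(\mB - (L_1+\mu)\mI)$ living in the set $\calC$ from Lemma~\ref{lem:transform}, and then invoke the regret bound in Lemma~\ref{lem:regret_reduction}(a) together with the self-bounding property from Lemma~\ref{lem:loss}. The key point is that the rescaling changes the loss by a factor of $L_1^2$: if we define $\tilde\ell_t(\hat\mB) \mydef \ell_t(L_1\hat\mB + (L_1+\mu)\mI)$, then $\nabla \tilde\ell_t(\hat\mB) = L_1 \nabla\ell_t(\mB)$ and the quantities $\mW_t, \mG_t = \frac{1}{L_1}\mP\nabla\ell_t(\mB_t)$ in Subroutine~\ref{alg:jacobian_approx} are exactly the $\vw_t, \vg_t$ of Algorithm~\ref{alg:projection_free_online_learning} run on $\tilde\ell_t$ over $\calC$ with radius $R = \sqrt{d}$ (since $\calC \subset B_{\sqrt d}(0)$ in Frobenius norm, as $\|\hat\mB\|_F \le \sqrt d \|\hat\mB\|_\op \le 3\sqrt d$; one should double check the exact radius constant but any $O(\sqrt d)$ constant absorbs into $\rho$'s choice — in fact the paper's $\rho=\frac1{121}$ suggests they track constants, so I'd verify $\|\hat\mB\|_F\le\sqrt d$ uses the subspace-projected gradient or a tighter bound, but regardless the structure is the same).

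First I would apply Lemma~\ref{lem:regret_reduction}(a): summing \eqref{eq:regret_reduction} over $t = 0,\dots,T-1$ telescopes the first two terms, giving for any $\hat\mH \in \calC$ (equivalently any $\mH \in \calZ$ via the correspondence),
\[
  \sum_{t=0}^{T-1} \tilde\ell_t(\hat\mB_t) - \sum_{t=0}^{T-1}\tilde\ell_t(\hat\mH) \le \frac{\|\mW_0 - \hat\mH\|_F^2}{2\rho} + \frac{\rho}{2}\sum_{t=0}^{T-1}\|\tilde\mG_t\|_F^2,
\]
where $\tilde\mG_t$ is the surrogate gradient. Translating back, $\tilde\ell_t(\hat\mB_t) = \ell_t(\mB_t)$ and $\tilde\ell_t(\hat\mH) = \ell_t(\mH)$, and $\|\mW_0 - \hat\mH\|_F = \frac{1}{L_1}\|\mB_0 - \mH\|_F$. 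The next step is to bound $\|\tilde\mG_t\|_F$. By \eqref{eq:surrogate_loss}, $\|\tilde\mG_t\|_F \le \|\mG_t\|_F + |\langle \mG_t, \hat\mB_t\rangle|\,\|\mS_t\|_F$. Here $\|\mS_t\|_F \le 1$ (guaranteed by the $\mathsf{ExtEvec}$ and $\mathsf{MaxSvec}$ oracles in Case II, and $\mS_t = 0$ in Case I), $\|\hat\mB_t\|_F = O(\sqrt d)$ is bounded, and $\mG_t = \frac{1}{L_1}\mP\nabla\ell_t(\mB_t)$ so $\|\mG_t\|_F \le \frac{1}{L_1}\|\nabla\ell_t(\mB_t)\|_F \le \frac{2}{L_1}\sqrt{\ell_t(\mB_t)}$ by Lemma~\ref{lem:loss}. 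Combining these, $\|\tilde\mG_t\|_F \le \frac{C'}{L_1}\sqrt{\ell_t(\mB_t)}$ for some constant $C'$ — \emph{this is where the self-bounding property is essential}: it converts $\sum\|\tilde\mG_t\|_F^2$ into $\frac{(C')^2}{L_1^2}\sum_t \ell_t(\mB_t)$, which is a multiple of the learner's own cumulative loss.

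The main obstacle is chasing the constants so that the $\sum_t\|\tilde\mG_t\|^2$ term, after multiplication by $\rho/2$, can be absorbed into the left-hand side with a coefficient of $1/2$ (to get the factor $2$ in front of $\sum_t \ell_t(\mH)$ after rearranging). Concretely, one gets
\[
  \sum_{t=0}^{T-1}\ell_t(\mB_t) \le \frac{\|\mB_0 - \mH\|_F^2}{2\rho L_1^2} + \sum_{t=0}^{T-1}\ell_t(\mH) + \frac{\rho (C')^2}{2 L_1^2}\sum_{t=0}^{T-1}\ell_t(\mB_t),
\]
and for this to rearrange to the claimed bound one needs $\frac{\rho(C')^2}{2L_1^2} \cdot \frac{1}{1 - \rho(C')^2/(2L_1^2)}$-type bookkeeping, i.e., one needs $\rho$ small enough (relative to $(C')^2/L_1^2$ — note $C'$ itself scales with $L_1$ through the $\|\hat\mB_t\|$ and the $\frac{1}{L_1}$ in $\mG_t$, so the $L_1$ dependence should cancel and $C'$ is a pure constant) that $1 - \rho(C')^2/2 \ge 1/2$, giving the factor $2$ and the factor $\frac{1}{\rho} = 121$ in $\frac{1}{2\rho L_1^2}\cdot\frac{1}{1-\rho(C')^2/2}\cdot\|\mB_0-\mH\|^2$. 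I would work out that $C' = 11$ (so $(C')^2 = 121$ and $\rho = 1/121$ gives exactly $\rho(C')^2 = 1$... hmm, that gives $1 - 1/2 = 1/2$ only if the coefficient is $\rho(C')^2/2 = 1/2$, consistent), reconciling with $\rho = \frac{1}{121}$ and the final constant $121$; the delicate part is verifying the numeric value of $C'$ from $\|\hat\mB_t\|_\op$, $\|\mS_t\|_F$, and the norm inequalities, which requires care but no deep idea. Finally I'd note that $\hat\mB_t \in (1+\delta)\calC$ by Lemma~\ref{lem:regret_reduction}(a) with $\delta = \frac{\mu}{2L_1}$, so $\mB_t$ stays in the enlarged feasible region (consistent with Corollary~\ref{lem:feasible_Bk}), which is needed for $\|\hat\mB_t\|$ to be bounded by the stated constant.
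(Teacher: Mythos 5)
Your high-level plan is the right one and matches the paper's: recognize Subroutine~\ref{alg:jacobian_approx} as Algorithm~\ref{alg:projection_free_online_learning} with \textbf{Option I} applied to the rescaled variables, invoke Lemma~\ref{lem:regret_reduction}(a), telescope, use the self-bounding property from Lemma~\ref{lem:loss} to turn $\sum_t\|\tilde{\mG}_t\|_F^2$ into a multiple of $\sum_t\ell_t(\mB_t)$, and choose $\rho$ so that this term can be absorbed on the left with coefficient $\tfrac{1}{2}$. The bookkeeping of how $\rho=\tfrac{1}{121}$ and the factor $2$ come out is also correct.

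There is, however, a genuine gap in how you bound $|\langle\mG_t,\hat{\mB}_t\rangle|$. You pair the Frobenius norm of $\mG_t$ with the Frobenius norm of $\hat{\mB}_t$, writing $\|\hat{\mB}_t\|_F = O(\sqrt{d})$, which yields $C' = O(\sqrt{d})$. This destroys the dimension-free constant: to absorb $\tfrac{\rho}{2}\sum_t\|\tilde{\mG}_t\|_F^2$ you would then need $\rho = O(1/d)$, which would blow up the $\tfrac{1}{2\rho}\|\mW_0-\hat{\mH}\|_F^2$ term by a factor of $d$, not the claimed $121$. The fix — which the paper uses and which you gesture at near the end by mentioning $\|\hat{\mB}_t\|_{\op}$ but never actually deploy — is to pair with Hölder's inequality in the nuclear/operator norm duality: $|\langle\mG_t,\hat{\mB}_t\rangle| \leq \|\mG_t\|_*\|\hat{\mB}_t\|_{\op} \leq 4.5\|\mG_t\|_*$, where $\|\hat{\mB}_t\|_{\op} \leq 3(1+\delta) \leq 4.5$ from \textbf{Option I}. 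The reason this is compatible with the self-bounding step is that $\nabla\ell_t(\mB)$ is a \emph{rank-one} matrix, which is exactly what the equality in Lemma~\ref{lem:loss}, $\|\nabla\ell_t(\mB)\|_* = 2\sqrt{\ell_t(\mB)}$, is encoding; combined with $\|\mG_t\|_F \leq \|\mG_t\|_*$ one gets $\|\tilde{\mG}_t\|_F \leq 5.5\|\mG_t\|_* \leq \tfrac{11}{L_1}\sqrt{\ell_t(\mB_t)}$, i.e.\ $C'=11$ with no $d$ dependence. So the nuclear-norm observation in Lemma~\ref{lem:loss} is not a minor detail that "requires care but no deep idea"; it is precisely the structural fact that makes the constant dimension-free, and your argument as written would not recover it.
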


\begin{proof}
Recall that $\calZ$ is contained in the linear subspace $\calL$ (see \eqref{eq:feasible_set}) and $\mP$ denotes the orthogonal projection matrix associated with $\calL$.    
By letting $\vx_t = \hat{\mB}_t$, $\vu = \hat{\mH} \triangleq \frac{1}{L_1}(\mH-(L_1+\mu)\mI)$, 
$\vg_t = \mG_t \triangleq \frac{1}{L_1}\mP \nabla \ell_t(\mB_t)$, $\tilde{\vg}_t = \tilde{\mG}_t$, $\vw_t = \mW_t$ in Lemma~\ref{lem:regret_reduction}, we obtain:  
\begin{enumerate}[(i)]
  \item $\hat{\mB}_t \in (1+\delta)\mathcal{C}$, which means $\|\hat{\mB}_t\|_{\op} \leq 3(1+\delta)\leq 4.5$ since $\delta = \frac{\mu}{2L_1}\leq \frac{1}{2}$.
  \item It holds that 
  \begin{align}
    \langle \mG_t, \hat{\mB}_t-\hat{\mH} \rangle &\leq \frac{1}{2\rho}\|\mW_t-\hat{\mH}\|_F^2-\frac{1}{2\rho}\|\mW_{t+1}-\hat{\mH}\|_F^2+\frac{\rho}{2}\|\tilde{\mG}_t\|_F^2, \label{eq:linearized_loss_matrix} \\
    \|\tilde{\mG}_t\|_F &\leq \|\mG_t\|_F + |\langle \mG_t, \hat{\mB}_t\rangle|\|\mS_t\|_F. \label{eq:surrogate_gradient_bound}
  \end{align}
\end{enumerate}
First, note that $\|\mS_t\|_F \leq 1$ by Definition~\ref{def:extevec} and
$|\langle \mG_t, \hat{\mB}_t\rangle| \leq \|\mG_t\|_* \|\hat{\mB}_t\|_{\op} \leq 4.5 \|\mG_t\|_*$. %
Together with \eqref{eq:surrogate_gradient_bound}, we get 
\begin{equation}\label{eq:surrogate_bound_loss}
\|\tilde{\mG}_t\|_F \leq \|\mG_t\|_F+4.5\|\mG_t\|_* \leq 5.5\|\mG_t\|_* \leq \frac{5.5}{L_1}\|\nabla \ell_t(\mB_t)\|_* \leq \frac{11}{L_1}\sqrt{\ell_t({\mB}_t)},
\end{equation}
where we used $\mG_t = \frac{1}{L_1}\mP \nabla \ell_t(\mB_t)$ and Lemma~\ref{lem:loss} in the last inequality. 
Furthermore, since $\ell_t$ is convex and $\mB_t,\mH \in \calL$, we have 
\begin{equation}\label{eq:convexity_subspace}
    \ell_t({\mB}_t) - \ell_t(\mH) \!\leq\! \langle \nabla \ell_t(\mB_t), {\mB}_t-\mH \rangle \!=\! \langle \nabla \ell_t(\mB_t), \mP\left({\mB}_t-\mH\right) \rangle \!=\! L_1^2 \langle \mG_t, \hat{\mB}_t-\hat{\mH} \rangle,
\end{equation}
where we used $\mG_t = \frac{1}{L_1}\mP\nabla \ell_t(\mB_t)$, $\hat{\mB}_t \triangleq \frac{1}{L_1}(\mB_t-{(L_1+\mu)}\mI)$, and $\hat{\mH} \triangleq \frac{1}{L_1}(\mH-{(L_1+\mu)}\mI)$. 
Therefore, by \eqref{eq:linearized_loss_matrix} and \eqref{eq:surrogate_bound_loss} we get 
\begin{align*}
\ell_t({\mB}_t) - \ell_t(\mH) &\leq \frac{L_1^2}{2\rho}\|\mW_t-\hat{\mH}\|_F^2-\frac{L_1^2}{2\rho}\|\mW_{t+1}-\hat{\mH}\|_F^2+\frac{\rho L_1^2}{2} \|\tilde{\mG}_t\|_F^2\\
&\leq \frac{L_1^2}{2\rho}\|\mW_t-\hat{\mH}\|_F^2-\frac{L_1^2}{2\rho}\|\mW_{t+1}-\hat{\mH}\|_F^2+60.5\rho\ell_t({\mB}_t). %
\end{align*}
Since $\rho = \frac{1}{121}$, by rearranging and simplifying terms in the above inequality, we obtain
$\ell_t({\mB}_t) \leq 2\ell_t({\mH})+ {121 L_1^2}\|\mW_t-\hat{\mH}\|_F^2-{121 L_1^2}\|\mW_{t+1}-\hat{\mH}\|_F^2$.
By summing the above inequality from $t=0$ to $T-1$, we further have 
\begin{equation*}
  \sum_{t=0}^{T-1} \ell_t({\mB}_t)  \leq {121 L_1^2}\|\mW_0-\hat{\mH}\|_F^2+ 2 \sum_{t=0}^{T-1} \ell_t({\mH}) = 121\|\mB_0-{\mH}\|_F^2+ 2 \sum_{t=0}^{T-1} \ell_t({\mH}),
\end{equation*}
where the last equality is due to ${\mW}_0 \triangleq \frac{1}{L_1}(\mB_0-{(L_1+\mu)}\mI)$ and $\hat{\mH} \triangleq \frac{1}{L_1}(\mH-{(L_1+\mu)}\mI)$. This completes the proof. 
\end{proof}

Note that in Lemma~\ref{lem:small_loss}, we have the freedom to choose any competitor $\mH$ in the set $\mathcal{Z}$. To further obtain an explicit bound, a natural choice would be $\mH^* \triangleq \nabla \vF(\vz^*)$, which leads to our next step.

\vspace{0em}\noindent \textbf{Step 2:} We upper bound the cumulative loss $\sum_{t=0}^{T-1} \ell_t({\mH}^*)$. The proof relies crucially on Assumption~\ref{assum:jacobian_lips} as well as the linear convergence result we proved in (a).
To begin with, we prove the following useful lemma. 
\begin{lemma}\label{lem:sum_of_squares}
 If Assumption~\ref{assum:strong_monotone} holds, then $\sum_{k=0}^{N-1} \|\hat{\vz}_{k}\!-\!\vz_k\|^2 \leq \frac{1}{1-(\alpha_1+\alpha_2)}\|\vz_0\!-\!\vz^*\|^2$. 
\end{lemma}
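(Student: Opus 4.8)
\textbf{Proof proposal for Lemma~\ref{lem:sum_of_squares}.}

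The plan is to extract the telescoping inequality already hidden inside Lemma~\ref{lem:one_step} and sum it. Recall from \eqref{eq:one_step_strongly} in the proof of Proposition~\ref{prop:HPE}(a) that, upon setting $\vz = \vz^*$ and using strong monotonicity, we obtained
\[
  0 \leq \frac{1}{2}\|\vz_k- \vz^*\|^2 - \frac{1+2\eta_k\mu}{2}\|\vz_{k+1} - \vz^*\|^2 - \frac{1-\alpha}{2}\bigl(\|\hat{\vz}_k - \vz_k\|^2 + \|\hat{\vz}_k-\vz_{k+1}\|^2\bigr),
\]
where $\alpha = \alpha_1+\alpha_2 < 1$. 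First I would drop the term $-\|\hat{\vz}_k-\vz_{k+1}\|^2$ (it is non-positive) and also use $1+2\eta_k\mu \geq 1$ to replace the coefficient of $\|\vz_{k+1}-\vz^*\|^2$ by $1$, which only weakens the inequality. This yields the clean per-iteration bound
\[
  \frac{1-\alpha}{2}\|\hat{\vz}_k - \vz_k\|^2 \leq \frac{1}{2}\|\vz_k- \vz^*\|^2 - \frac{1}{2}\|\vz_{k+1} - \vz^*\|^2 .
\]

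Then I would sum this inequality from $k=0$ to $k=N-1$. The right-hand side telescopes to $\frac{1}{2}\|\vz_0-\vz^*\|^2 - \frac{1}{2}\|\vz_N-\vz^*\|^2 \leq \frac{1}{2}\|\vz_0-\vz^*\|^2$. Dividing through by $\frac{1-\alpha}{2}$ gives
\[
  \sum_{k=0}^{N-1}\|\hat{\vz}_k - \vz_k\|^2 \leq \frac{1}{1-\alpha}\|\vz_0-\vz^*\|^2 = \frac{1}{1-(\alpha_1+\alpha_2)}\|\vz_0-\vz^*\|^2,
\]
which is exactly the claimed bound.

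There is essentially no obstacle here: the lemma is a direct corollary of Lemma~\ref{lem:one_step} with $\vz=\vz^*$, and the only things to be careful about are (i) invoking strong monotonicity via $\langle \vF(\hat{\vz}_k), \hat{\vz}_k-\vz^*\rangle \geq \mu\|\hat{\vz}_k-\vz^*\|^2 \geq 0$ to discard the left-hand side of \eqref{eq:one_step}, and (ii) noting that $\alpha_1+\alpha_2 < 1$ so the coefficient $\frac{1-\alpha}{2}$ is strictly positive and the division is legitimate. The monotonicity of $\{\|\vz_k-\vz^*\|\}$ (already established in Proposition~\ref{prop:HPE}(a)) is not even strictly needed — only non-negativity of $\|\vz_N-\vz^*\|^2$ — but it can be cited for cleanliness. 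One minor point worth stating explicitly in the write-up: the bound holds uniformly in $N$, so in particular one may pass to $N\to\infty$ if desired.
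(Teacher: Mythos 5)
Your proof is correct and follows essentially the same route as the paper: start from the per-iteration inequality \eqref{eq:one_step_strongly}, discard the non-positive $\|\hat{\vz}_k-\vz_{k+1}\|^2$ term, weaken $1+2\eta_k\mu$ to $1$, telescope, and divide by $\frac{1-(\alpha_1+\alpha_2)}{2}$. Nothing to add.
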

\begin{proof}
  Recall the inequality~\eqref{eq:one_step_strongly} in the proof of Proposition~\ref{prop:HPE}. Since $\frac{1}{2}\|\vz_{k+1}-\vz^*\|^2 \!\leq \frac{1+2\eta_k\mu}{2}\|\vz_{k+1}-\vz^*\|^2$, we can further derive that 
  \begin{equation}\label{eq:one_step_strongly_coro}
    \frac{1-\alpha_1-\alpha_2}{2}(\|\hat{\vz}_k - \vz_k\|^2 + \|\hat{\vz}_k-\vz_{k+1}\|^2) \leq \frac{1}{2}\|\vz_k- \vz^*\|^2 - \frac{1}{2}\|\vz_{k+1} - \vz^*\|^2.
\end{equation} 
Since $\alpha_1+\alpha_2 < 1$, we further have $\frac{1-\alpha_1-\alpha_2}{2} \|\hat{\vz}_k - \vz_k\|^2 \leq \frac{1-\alpha_1-\alpha_2}{2}(\|\hat{\vz}_k - \vz_k\|^2 + \|\hat{\vz}_k-\vz_{k+1}\|^2) \leq \frac{1}{2}\|\vz_k- \vz^*\|^2 - \frac{1}{2}\|\vz_{k+1} - \vz^*\|^2$. By summing the above inequality from $k=0$ to $k=N-1$, we obtain that 
\begin{equation*}
  \frac{1-\alpha_1-\alpha_2}{2} \sum_{k=0}^{N-1} \|\hat{\vz}_k - \vz_k\|^2 \leq \frac{1}{2}\|\vz_0-\vz^*\|^2 - \frac{1}{2}\|\vz_N-\vz^*\|^2 \leq \frac{1}{2}\|\vz_0-\vz^*\|^2. 
\end{equation*}
Thus, we arrive at Lemma~\ref{lem:sum_of_squares} by dividing both sides by $\frac{1-\alpha_1-\alpha_2}{2}$. 
\end{proof}

With the help of Lemma~\ref{lem:sum_of_squares}, we are ready to upper bound $\sum_{t=0}^{T-1} \ell_t({\mH}^*)$. 
  \begin{lemma}\label{lem:comparator}
  Recall the constant $C$ defined in \eqref{eq:def_c1}. 
  Then 
    $\sum_{t=0}^{T-1} \ell_t(\mH^*) \leq \left(C+2+\frac{15L_1}{2\alpha_2 \beta \mu}\right)L_2^2\|\vz_0-\vz^*\|^2$.
  \end{lemma}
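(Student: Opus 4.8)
\textbf{Proof plan for Lemma~\ref{lem:comparator}.}

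The plan is to bound each individual loss $\ell_t(\mH^*) = \frac{\|\vu_t - \mH^*\vs_t\|^2}{\|\vs_t\|^2}$ where $\vu_t = \vF(\tilde{\vz}_t) - \vF(\vz_t)$, $\vs_t = \tilde{\vz}_t - \vz_t$, and $\mH^* = \nabla \vF(\vz^*)$. The numerator $\|\vF(\tilde{\vz}_t) - \vF(\vz_t) - \nabla \vF(\vz^*)(\tilde{\vz}_t - \vz_t)\|$ is exactly the linearization error of $\vF$ at $\vz^*$ evaluated along the segment from $\vz_t$ to $\tilde{\vz}_t$. First I would invoke Assumption~\ref{assum:jacobian_lips} (or its relaxed form at $\vz^*$ noted in the Remark after Assumption~\ref{assum:jacobian_lips}): writing $\vF(\tilde{\vz}_t) - \vF(\vz_t) = \int_0^1 \nabla \vF(\vz_t + \tau(\tilde{\vz}_t - \vz_t))(\tilde{\vz}_t - \vz_t)\,d\tau$, one gets $\|\vu_t - \nabla\vF(\vz^*)\vs_t\| \le L_2 \max\{\|\tilde{\vz}_t - \vz^*\|, \|\vz_t - \vz^*\|\}\|\vs_t\|$, hence $\ell_t(\mH^*) \le L_2^2 \max\{\|\tilde{\vz}_t - \vz^*\|^2, \|\vz_t - \vz^*\|^2\}$. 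So it remains to bound $\sum_{t} \max\{\|\tilde{\vz}_t - \vz^*\|^2, \|\vz_t - \vz^*\|^2\}$ over $t$ ranging over the backtracking indices $\mathcal{B}$.

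Next I would split $\|\tilde{\vz}_t - \vz^*\|^2 \le 2\|\tilde{\vz}_t - \vz_t\|^2 + 2\|\vz_t - \vz^*\|^2$, and use the second part of Lemma~\ref{lem:step size_lb} — valid since Corollary~\ref{lem:feasible_Bk} gives $\frac{1}{2}(\mB_t + \mB_t^\top) \succeq \frac{\mu}{2}\mI$ — to get $\|\tilde{\vz}_t - \vz_t\| \le \frac{1+\alpha_1}{\beta(1-\alpha_1)}\|\hat{\vz}_t - \vz_t\|$, so $\|\tilde{\vz}_t - \vz_t\|^2 \le \frac{(1+\alpha_1)^2}{\beta^2(1-\alpha_1)^2}\|\hat{\vz}_t - \vz_t\|^2$. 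Summing over all iterations and applying Lemma~\ref{lem:sum_of_squares}, $\sum_k \|\hat{\vz}_k - \vz_k\|^2 \le \frac{1}{1-(\alpha_1+\alpha_2)}\|\vz_0 - \vz^*\|^2$, the $\|\hat{\vz}_t - \vz_t\|^2$-terms contribute $\frac{2(1+\alpha_1)^2}{\beta^2(1-\alpha_1)^2(1-(\alpha_1+\alpha_2))}\|\vz_0-\vz^*\|^2 = 4C\|\vz_0-\vz^*\|^2$, matching the constant $C$ from \eqref{eq:def_c1}. For the $\|\vz_t - \vz^*\|^2$-terms, I would combine the linear convergence rate from Part~(a), namely $\|\vz_{k+1} - \vz^*\|^2 \le (1 + \frac{4\alpha_2\beta\mu}{15L_1})^{-1}\|\vz_k - \vz^*\|^2$, with the geometric series bound $\sum_{k\ge 0}(1+r)^{-k} = \frac{1+r}{r} \le 1 + \frac{1}{r}$ with $r = \frac{4\alpha_2\beta\mu}{15L_1}$; this yields $\sum_{k\ge 0}\|\vz_k - \vz^*\|^2 \le (1 + \frac{15L_1}{4\alpha_2\beta\mu})\|\vz_0 - \vz^*\|^2$. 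Careful bookkeeping of the constants — a factor $2$ from the $\max$ bound, a factor $2$ from the $\|\tilde\vz_t-\vz_t\|^2$ split, and the $\|\vz_t-\vz^*\|^2$ occurrences appearing with total coefficient bounded by the stated $2 + \frac{15L_1}{2\alpha_2\beta\mu}$ — is what produces the precise coefficient $(C + 2 + \frac{15L_1}{2\alpha_2\beta\mu})L_2^2$. (One should double-check whether the linear rate's geometric sum contributes $\frac{15L_1}{4\alpha_2\beta\mu}$ or $\frac{15L_1}{2\alpha_2\beta\mu}$ after the factor-of-2 from the $\max$; the paper's stated constant suggests the intended accounting absorbs both terms of the $\max$ split into the $\|\vz_t-\vz^*\|^2$ bucket, i.e. $\ell_t(\mH^*) \le 2L_2^2\|\tilde\vz_t - \vz_t\|^2 + 2L_2^2\|\vz_t - \vz^*\|^2$ combined differently, or bounds $\max$ by the sum.)

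The main obstacle I anticipate is purely the constant-tracking: getting the precise coefficient $(C + 2 + \frac{15L_1}{2\alpha_2\beta\mu})$ rather than something slightly larger requires choosing the right way to bound $\max\{a,b\}$ (by $a + b$ versus by $2\max$) and the right grouping of terms, since a naive application of the triangle-inequality split gives $4C$ from the displacement terms but the statement has coefficient $C$ — so the intended argument must bound $\ell_t(\mH^*)$ more tightly, presumably via $\ell_t(\mH^*) \le L_2^2\|\tilde\vz_t - \vz^*\|^2$ (using only the endpoint, not the max) together with $\|\tilde\vz_t - \vz^*\|^2 \le \|\hat\vz_t - \vz_t\|^2 \cdot(\text{something}) + \ldots$, or by using that $\tilde\vz_t$ lies on a segment and bounding the integral error by $\|\tilde\vz_t-\vz^*\|\|\vz_t-\vz^*\|$-type cross terms. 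I would reconcile this by carefully re-deriving the linearization bound, likely writing $\|\vu_t - \mH^*\vs_t\| \le L_2\big(\tfrac12\|\tilde\vz_t-\vz^*\| + \tfrac12\|\vz_t-\vz^*\|\big)\|\vs_t\|$ via the midpoint of the integration interval, then $\ell_t(\mH^*) \le \tfrac{L_2^2}{2}(\|\tilde\vz_t-\vz^*\|^2 + \|\vz_t-\vz^*\|^2)$, and finally $\|\tilde\vz_t - \vz^*\|^2 \le 2\|\tilde\vz_t-\vz_t\|^2 + 2\|\vz_t-\vz^*\|^2$, so that $\sum_t \ell_t(\mH^*) \le L_2^2\sum_t\|\tilde\vz_t-\vz_t\|^2 + \tfrac{3L_2^2}{2}\sum_t\|\vz_t-\vz^*\|^2$; plugging in the two summation bounds then gives exactly $C + \tfrac32(1 + \tfrac{15L_1}{4\alpha_2\beta\mu}) \le C + 2 + \tfrac{15L_1}{2\alpha_2\beta\mu}$ after crude rounding, which is the claimed bound. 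Everything else is a routine assembly of Lemmas~\ref{lem:step size_lb}, \ref{lem:sum_of_squares}, Corollary~\ref{lem:feasible_Bk}, and Theorem~\ref{thm:main}(a).
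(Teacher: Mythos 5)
Your general strategy is the right one — fundamental theorem of calculus plus the Jacobian Lipschitz condition to bound $\ell_t(\mH^*)$, then Lemma~\ref{lem:step size_lb} and Lemma~\ref{lem:sum_of_squares} for the $\|\vs_t\|$-part of the sum, and linear convergence from Theorem~\ref{thm:main}(a) for the $\|\vz_t-\vz^*\|$-part. But neither of your two proposed linearization bounds closes the constant, and your reconciliation at the end contains an arithmetic slip.

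Specifically: you correctly derive $\sum_{k\in\calB}\|\tilde{\vz}_k-\vz_k\|^2 \le 2C\|\vz_0-\vz^*\|^2$ from Lemma~\ref{lem:step size_lb} and Lemma~\ref{lem:sum_of_squares} (your ``$=4C$'' line is a separate typo; the correct value is $2C$). In your second attempt you arrive at $\ell_t(\mH^*) \le L_2^2\|\tilde\vz_t-\vz_t\|^2 + \tfrac{3}{2}L_2^2\|\vz_t-\vz^*\|^2$; plugging in the two summation bounds therefore gives $\bigl(2C + \tfrac{3}{2} + \tfrac{45L_1}{8\alpha_2\beta\mu}\bigr)L_2^2\|\vz_0-\vz^*\|^2$, not the $C + \tfrac{3}{2}(1+\tfrac{15L_1}{4\alpha_2\beta\mu})$ you wrote — the $\|\tilde\vz_t-\vz_t\|^2$ bucket contributes $2C$, not $C$, and $2C$ is generically strictly larger than $C$, so this is not the claimed bound. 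The max-based bound is even looser.

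The missing idea is to keep $\|\vs_t\|$ explicit in the linearization error rather than passing through $\|\tilde\vz_t-\vz^*\|$. From $\bar{\mH}_t = \int_0^1\nabla\vF(\vz_t+\lambda\vs_t)\,d\lambda$ and Assumption~\ref{assum:jacobian_lips}, use the plain triangle inequality $\|\vz_t+\lambda\vs_t-\vz^*\|\le\|\vz_t-\vz^*\|+\lambda\|\vs_t\|$ and integrate to obtain $\|\bar{\mH}_t-\mH^*\|_{\op}\le L_2\bigl(\|\vz_t-\vz^*\|+\tfrac12\|\vs_t\|\bigr)$. Then $\ell_t(\mH^*)\le L_2^2\bigl(\|\vz_t-\vz^*\|+\tfrac12\|\vs_t\|\bigr)^2 \le 2L_2^2\|\vz_t-\vz^*\|^2 + \tfrac{L_2^2}{2}\|\vs_t\|^2$ by Young. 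The coefficient $\tfrac12$ in front of $\|\vs_t\|^2$ is the key: multiplied by the $2C$ bound on $\sum\|\vs_t\|^2$ it gives exactly $C$, and the $2L_2^2$ factor on $\sum\|\vz_t-\vz^*\|^2 \le (1+\tfrac{15L_1}{4\alpha_2\beta\mu})\|\vz_0-\vz^*\|^2$ gives exactly $2+\tfrac{15L_1}{2\alpha_2\beta\mu}$, with no rounding. Routing through $\|\tilde\vz_t-\vz^*\|^2$ and then splitting via $\|\tilde\vz_t-\vz^*\|^2\le 2\|\vs_t\|^2+2\|\vz_t-\vz^*\|^2$ doubles the $\|\vs_t\|^2$ coefficient to $1$ and loses the factor.
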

  \begin{proof}
    Recall the definition of $\ell_t$ in \eqref{eq:loss_of_jacobian}. 
    By the fundamental theorem of calculus, we can write $\vu_t = \vF({\tilde{\vz}_t})-\vF(\vz_t) = \bar{\mH}_t (\tilde{\vz}_t-\vz_t)$, where $\bar{\mH}_t = \int_{0}^1 \nabla \vF(\vz_t+ \lambda \vs_t) \,d\lambda$. Moreover, using the triangle inequality, we have 
\begin{align*}
  \|\bar{\mH}_t-\mH^*\|_{\op} &\leq \int_{0}^1 \|\nabla \vF(\vz_t+ \lambda \vs_t)-\nabla \vF(\vz^*)\|_{\op}\, d\lambda \\
  &\leq L_2\int_{0}^1 \|\vz_t-\lambda \vs_t + \vz^*\|\, d\lambda \\
  &\leq L_2\int_{0}^1 (\|\vz_t-\vz^*\| + \lambda \|\vs_t\|)\, d\lambda = L_2\Bigl(\|\vz_t-\vz^*\|+\frac{1}{2}\|\vs_t\|\Bigr),
\end{align*}
where we used Assumption~\ref{assum:jacobian_lips} in the second inequality. 
Therefore, we have $\|\vu_t-\mH^*\vs_t\| = \|(\bar{\mH}_t-\mH^*)\vs_t\| \leq \|\bar{\mH}_t-\mH^*\|_{\op}\|\vs_t\| \leq L_2\|\vs_t\|\Bigl(\|\vz_t-\vz^*\|+\frac{1}{2}\|\vs_t\|\Bigr)$. This further implies that 
\begin{align}
  \sum_{t=0}^{T-1} \ell_t(\mH^*) = \sum_{t=0}^{T-1} \frac{\|\vu_t-\mH^*\vs_t\|^2}{\|\vs_t\|^2} &\leq {L_2^2}\sum_{t=0}^{T-1}\Bigl(\|\vz_t-\vz^*\|+\frac{1}{2}\|\vs_t\|\Bigr)^2 \nonumber\\ 
  &\leq \frac{L_2^2}{2} \sum_{t=0}^{T-1} \|\vs_t\|^2 + 2 L_2^2\sum_{t=0}^{T-1} \|{\vz}_t-\vz^*\|^2. \label{eq:comparator_1}
\end{align}
\blue{Note that by our notation, we have $\sum_{t=0}^{T-1} \|\vs_t\|^2 = \sum_{k \in \calB} \|\vs_k\|^2 = \sum_{k \in \calB} \|\tilde{\vz}_k - \vz_k\|^2 $ and $\sum_{t=0}^{T-1} \|{\vz}_t-\vz^*\|^2 = \sum_{k\in \calB} \|{\vz}_k-\vz^*\|^2$.}
To bound the first sum $\sum_{k \in \calB} \|\tilde{\vz}_k - \vz_k\|^2$, we use the last result in Lemma~\ref{lem:step size_lb} and the inequality in Lemma~\ref{lem:sum_of_squares} to get 
\begin{equation}\label{eq:comparator_2}
  \begin{aligned}
    \sum_{k\in \calB} \|\tilde{\vz}_k-\vz_k\|^2  \leq \frac{(1+\alpha_1)^2}{\beta^2 (1-\alpha_1)^2} \sum_{k\in \calB} \|\hat{\vz}_k-\vz_k\|^2 
    &\leq \frac{(1+\alpha_1)^2}{\beta^2 (1-\alpha_1)^2} \sum_{k=0}^{N-1} \|\hat{\vz}_k-\vz_k\|^2 \\
      &\leq \frac{(1+\alpha_1)^2\|\vz_0-\vz^*\|^2}{(1-\alpha_1)^2\beta^2 (1-(\alpha_1+\alpha_2))} = 2C \|\vz_0-\vz^*\|^2. 
  \end{aligned}
\end{equation}
To bound the second sum $\sum_{k\in \calB} \|{\vz}_k-\vz^*\|^2$, we use the linear convergence result in Part (a) of Theorem~\ref{thm:main}:
\begin{equation}\label{eq:comparator_3}
 \begin{aligned}
  \sum_{k\in \calB} \|{\vz}_k-\vz^*\|^2 \leq \sum_{k=0}^{N-1} \|\vz_k-\vz^*\|^2 &\leq \|\vz_0-\vz^*\|^2\sum_{k=0}^{N-1} \left(1+\frac{4\alpha_2\beta\mu}{15L_1}\right)^{-k}\\ &\leq  \|\vz_0-\vz^*\|^2\left(1+\frac{15L_1}{4\alpha_2 \beta \mu}\right).
 \end{aligned}
\end{equation}
Lemma~\ref{lem:comparator} follows immediately from \eqref{eq:comparator_1}, \eqref{eq:comparator_2}, and \eqref{eq:comparator_3}. 
  \end{proof}

\noindent\textbf{Step 3}: Combining Lemmas~\ref{lem:comparator} and~\ref{lem:small_loss},  we obtain a constant upper bound on the cumulative loss as
$\sum_{t=0}^{T-1} \ell_t({\mB}_t) \leq 121\|\mB_0-{\mH^*}\|_F^2+ \left(C+2+\frac{15L_1}{2\alpha_2 \beta \mu}\right)L_2^2\|\vz_0-\vz^*\|^2 = L_1^2 M$, where we used the definition of $M$ in \eqref{eq:def_M}. 
Together with the fact that $\sigma_0 \geq \frac{\alpha_2\beta}{7.5L_1}$, it follows from  Lemma~\ref{lem:stepsize_bnd} that
\begin{equation*}
  \sum_{i=0}^{k-1}\frac{1}{\eta_i} \leq \frac{7.5L_1}{(1-\beta)\alpha_2\beta}+ \frac{L_1\sqrt{kM}}{(1-\beta)\alpha_2\beta} \leq 
  \frac{2L_1 \max\{7.5, \sqrt{kM}\}}{(1-\beta)\alpha_2\beta}. 
\end{equation*}
This leads to the superlinear convergence result in (b) by Proposition~\ref{prop:HPE} and the observation in \eqref{eq:after_jensen}.
\end{proof}

\subsubsection{Characterizing the computational cost}

In the following theorem, we characterize the number of operator evaluations required by our QNPE method.

\begin{theorem}[Operator evaluaiton complexity]\label{thm:line_search}
  After $N$ iterations, QNPE requires at most $3N+\log_{1/\beta}(\frac{7.5\sigma_0 L_1}{\alpha_2})$ total operator evaluations. 
\end{theorem}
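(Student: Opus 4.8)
\textbf{Proof plan for Theorem~\ref{thm:line_search}.}
The plan is to account for every operator evaluation $\vF(\cdot)$ that QNPE performs over $N$ iterations by separating the cost into two sources: (i) the evaluations that are charged once per iteration regardless of line search behavior, and (ii) the extra evaluations caused by backtracking steps, which I will bound globally using a potential argument on the step sizes $\{\eta_k\}$. First I would inspect Subroutine~\ref{alg:ls} and Algorithm~\ref{alg:Full_Equasi-Newton} to itemize exactly where $\vF$ is queried: at the $k$-th iteration we need $\vF(\vz_k)$ once (this is the input $\vg$ to the line search), and then for \emph{each} trial step size tested in the while-loop we need $\vF(\hat{\vz}_+)$ to check the condition on Line~\ref{line:check_condition}. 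In addition, $\vF(\hat{\vz}_k)$ is needed for the extragradient-type update \eqref{eq:extragradient_mixing}, but note that $\hat{\vz}_k$ equals the last accepted $\hat{\vz}_+$, so $\vF(\hat{\vz}_k)$ was already computed during the line search and is reused. So the per-iteration bookkeeping is: $1$ evaluation for $\vF(\vz_k)$, plus $m_k$ evaluations where $m_k \ge 1$ is the number of trial step sizes tested at iteration $k$ (the last of which also serves as $\vF(\hat{\vz}_k)$). Actually $\vF(\vz_k) = \vF(\hat{\vz}_{k-1})$ only fails because $\vz_k \ne \hat{\vz}_{k-1}$ in general (see \eqref{eq:extragradient_mixing}), so $\vF(\vz_k)$ is genuinely a new evaluation; hence iteration $k$ costs $1 + m_k$ evaluations. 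Summing, the total is $N + \sum_{k=0}^{N-1} m_k$.

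Next I would control $\sum_{k=0}^{N-1} m_k$. Writing $\sigma_k = \eta_{k-1}/\beta$ for $k \ge 1$ and recalling that backtracking multiplies the trial step by $\beta$ each time, the number of trials at iteration $k$ satisfies $\eta_k = \sigma_k \beta^{m_k - 1}$, i.e. $m_k = 1 + \log_{1/\beta}(\sigma_k/\eta_k) = 1 + \log_{1/\beta}(\eta_{k-1}/(\beta \eta_k))$ for $k \ge 1$, and $m_0 = 1 + \log_{1/\beta}(\sigma_0/\eta_0)$. This is a telescoping structure: $\sum_{k=0}^{N-1} m_k = N + \log_{1/\beta}(\sigma_0/\eta_0) + \sum_{k=1}^{N-1}\log_{1/\beta}(\eta_{k-1}/(\beta\eta_k)) = N + \log_{1/\beta}(\sigma_0) - \log_{1/\beta}(\eta_{N-1}) + (N-1)\log_{1/\beta}(1/\beta)$. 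Wait — the $\log_{1/\beta}(1/\beta) = 1$ term contributes $N-1$, which would give a bound like $3N + \cdots$; combining, $\sum_k m_k = N + (N-1) + \log_{1/\beta}(\sigma_0/\eta_{N-1}) = 2N - 1 + \log_{1/\beta}(\sigma_0/\eta_{N-1})$. Then the total operator evaluations is $N + \sum_k m_k = 3N - 1 + \log_{1/\beta}(\sigma_0/\eta_{N-1})$. To finish, I invoke Lemma~\ref{lem:stepsize_const_bound} (valid under the stated hypotheses $\sigma_0 \ge \frac{\alpha_2\beta}{7.5 L_1}$), which gives $\eta_{N-1} \ge \frac{\alpha_2\beta}{7.5 L_1}$, hence $\log_{1/\beta}(\sigma_0/\eta_{N-1}) \le \log_{1/\beta}\!\bigl(\tfrac{7.5\sigma_0 L_1}{\alpha_2\beta}\bigr) = \log_{1/\beta}\!\bigl(\tfrac{7.5\sigma_0 L_1}{\alpha_2}\bigr) + 1$, which yields the claimed bound $3N + \log_{1/\beta}(\tfrac{7.5\sigma_0 L_1}{\alpha_2})$ after absorbing the $\pm 1$ constants.

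The main obstacle I anticipate is getting the per-iteration accounting exactly right — in particular, being careful about which $\vF$-values can be cached and reused across the line search and the subsequent update, and whether $\vF(\vz_k)$ double-counts anything from iteration $k-1$. The telescoping itself is routine once the relation $m_k = 1 + \log_{1/\beta}(\sigma_k/\eta_k)$ and the warm-start rule $\sigma_k = \eta_{k-1}/\beta$ are in hand; the only subtlety is that the "$+1$" from $\log_{1/\beta}(1/\beta)$ accumulated over $N-1$ iterations is precisely what turns a naive "$2N$" into the "$3N$" in the statement, so I must not drop it. I would also double-check the edge case where the line search accepts the initial trial step ($k \notin \calB$, $m_k = 1$): the telescoping formula still holds since then $\eta_k = \sigma_k$ and $\log_{1/\beta}(\sigma_k/\eta_k) = 0$, consistent with $m_k = 1$. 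Finally I would note that Lemma~\ref{lem:stepsize_const_bound} is the only place the structural assumptions (strong monotonicity, Lipschitzness, the bound on $\|\mB_k\|_{\op}$) enter, everything else being a deterministic counting argument valid for any run of the algorithm.
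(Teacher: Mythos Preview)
Your proposal is correct and follows essentially the same approach as the paper: count $1+m_k$ evaluations per iteration (with $m_k$ the number of line-search trials), use $m_k = 1 + \log_{1/\beta}(\sigma_k/\eta_k)$ together with the warm-start rule $\sigma_k = \eta_{k-1}/\beta$ to telescope $\sum_k m_k$ down to $2N-1+\log_{1/\beta}(\sigma_0/\eta_{N-1})$, and then invoke Lemma~\ref{lem:stepsize_const_bound} to lower-bound $\eta_{N-1}$. Your handling of the constants (in particular the extra ``$+1$'' from the $\beta$ in the denominator) is in fact slightly more careful than the paper's own write-up.
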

\begin{proof}
  Let $l_k$ denote the number of line search steps in iteration $k$. %
  We first note that $\eta_k = \sigma_k \beta^{l_k-1}$ by our line search subroutine, which implies $l_k = \log_{1/\beta}(\sigma_k/\eta_k)+1$. Thus, the total number of line search steps after $N$ iterations can be bounded by 
    \begin{align}
      \sum_{k=0}^{N-1} l_k = \sum_{k=0}^{N-1} \left(\log_{1/\beta}\frac{\sigma_k}{\eta_k}+1\right) &= N+ \log_{1/\beta}\frac{\sigma_0}{\eta_0}+\sum_{k=1}^{N-1} \log_{1/\beta}\frac{\sigma_k}{\eta_k} \nonumber\\
      &= N+ \log_{1/\beta}\frac{\sigma_0}{\eta_0} + \sum_{k=1}^{N-1} \log_{1/\beta} \frac{\eta_{k-1}}{\beta \eta_k} \label{eq:using_sigma}\\
      &= 2N-1+ \log_{1/\beta}\frac{\sigma_0}{\eta_0} + \sum_{k=1}^{N-1} \log_{1/\beta} \frac{\eta_{k-1}}{\eta_k}  \nonumber\\
      & = 2N-1+ \log_{1/\beta}\frac{\sigma_0}{\eta_{N-1}}, \label{eq:bound_on_calls}
    \end{align} 
    where we used the fact that $\sigma_k = \eta_{k-1}/\beta$ for $k\geq 1$ in \eqref{eq:using_sigma}. Since we have $\eta_{N-1} \geq \frac{\alpha_2\beta}{7.5L_1}$ by Lemma~\ref{lem:stepsize_const_bound},
    we further have $\sum_{k=0}^{N-1} l_k \leq 2N -1 + \log_{1/\beta}(\frac{7.5\sigma_0 L_1}{\alpha_2})$ 
    from~\eqref{eq:bound_on_calls}. Note that each line search step consists of one operator evaluation. Additionally, in each iteration of Algorithm~\ref{alg:Full_Equasi-Newton}, we also need to evaluate $\vF(\vz_k)$. Thus, we conclude that the total number of gradient evaluations is bounded by $3N+\log_{1/\beta}(\frac{7.5\sigma_0 L_1}{\alpha_2})$.  
  \end{proof}

  Theorem~\ref{thm:line_search} shows that
  the total number of operator evaluations is upper bounded by $3N + \bigO(\log(\sigma_0 L_1))$. Thus, when $N$ is sufficiently large, the average number of operator evaluations per iteration can be bounded by a constant close to 3.
   
  In the next theorem, we further characterize the total number of matrix-vector products required in the subroutines. 

  \begin{theorem}[Matrix-vector product complexity]
    \label{thm:computational_cost}
    Let $N_\epsilon$ denote the minimum number of iterations required by Algorithm~\ref{alg:Full_Equasi-Newton} to find an $\epsilon$-accurate solution according to Theorem~\ref{thm:main}. %
    \begin{enumerate}[(a)]
      \item For Problems~\eqref{eq:monotone} and~\eqref{eq:minimax}, QNPE requires ${\bigO\left(N_{\epsilon}{\frac{L_1}{\mu}}\log \left(\frac{L_1 \|\vz_0-\vz^*\|^2}{\mu \epsilon}\right)\right)}$ matrix-vector products for $\mathsf{LinearSolver}$. Moreover, it requires ${\bigO (N_\epsilon \sqrt{\frac{L_1}{\mu}} \log (\frac{dN_{\epsilon}^2}{p^2}))}$ matrix-vector products for $\mathsf{ExtEvec}$ and $\mathsf{MaxSvec}$. 
      \item For Problem~\eqref{eq:minimization},
      QNPE requires ${\bigO\left(N_{\epsilon}\sqrt{\frac{L_1}{\mu}}\log \left(\frac{L_1 \|\vz_0-\vz^*\|^2}{\mu \epsilon}\right)\right)}$ matrix-vector products for $\mathsf{LinearSolver}$. Moreover, it requires computing ${\bigO (N_\epsilon \sqrt{\frac{L_1}{\mu}} \log (\frac{dN_{\epsilon}^2}{p^2}))}$ matrix-vector products for $\mathsf{ExtEvec}$. %
    \end{enumerate}
  \end{theorem}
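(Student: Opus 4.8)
The plan is to obtain both bounds by multiplying, for each oracle, the number of times it is invoked over the first $N_\epsilon$ iterations by the number of matrix--vector products it consumes per invocation. For the call counts: $\mathsf{LinearSolver}$ is invoked exactly once per backtracking step of Subroutine~\ref{alg:ls}, and reusing the telescoping computation from the proof of Theorem~\ref{thm:line_search} the total number of backtracking steps is at most $2N_\epsilon-1+\log_{1/\beta}(7.5\sigma_0 L_1/\alpha_2)=\bigO(N_\epsilon)$. The oracles $\mathsf{ExtEvec}$ and $\mathsf{MaxSvec}$ are each invoked once per index $t\in\{0,\dots,T-1\}$, i.e.\ once per iteration in $\calB$, and since $|\calB|=T\le N_\epsilon$ each is called $\bigO(N_\epsilon)$ times; for Problem~\eqref{eq:minimization} the feasible set reduces to $\calC=\{\hat\mB\in\sS^d:-\mI\preceq\hat\mB\preceq\mI\}$, so only $\mathsf{ExtEvec}$ is needed.

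For the per-call cost of $\mathsf{LinearSolver}$, each call solves $\mA\vs=\vb$ with $\mA=\mI+\eta_+\mB_k$ and $\vb=-\eta_+\vF(\vz_k)$. By Corollary~\ref{lem:feasible_Bk} we have $\tfrac12(\mB_k+\mB_k^\top)\succeq\tfrac{\mu}{2}\mI$ and $\|\mB_k\|_{\op}\le 6.5L_1$, so for any unit vector $\vv$, $\|\mA\vv\|\ge\vv^\top\mA\vv\ge 1+\tfrac{\eta_+\mu}{2}$ and $\|\mA\|_{\op}\le 1+6.5\eta_+L_1$; since $t\mapsto\tfrac{1+6.5tL_1}{1+t\mu/2}$ is increasing, $\kappa(\mA)\le 13L_1/\mu$ \emph{uniformly in} $\eta_+$ (and in the minimization case $\mA$ is symmetric positive definite with $\kappa(\mA)=\bigO(L_1/\mu)$). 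The CGLS residual obeys $\|\vr_j\|\le 2\bigl(\tfrac{\kappa(\mA)-1}{\kappa(\mA)+1}\bigr)^j\|\vr_0\|$, and the conjugate residual iterate $\|\vr_j\|\le 2\bigl(\tfrac{\sqrt{\kappa(\mA)}-1}{\sqrt{\kappa(\mA)}+1}\bigr)^j\|\vr_0\|$; next I would convert the stopping test $\|\vr_j\|\le\rho\|\vs_j\|$ with $\rho=\alpha_1\sqrt{1+\eta_+\mu}$ into a relative-residual target by noting $\rho<\sigma_{\min}(\mA)$, $\|\vs_j-\vs^*\|\le\|\vr_j\|/\sigma_{\min}(\mA)$, and $\|\vs^*\|\ge\|\vb\|/\sigma_{\max}(\mA)$, so that once $\|\vr_j\|/\|\vr_0\|\le\rho/(2\sigma_{\max}(\mA))$ the iterate satisfies $\|\vs_j\|\ge\tfrac12\|\vs^*\|$ and hence the test. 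This gives $\bigO(\kappa(\mA)\log(\sigma_{\max}(\mA)/\rho))$ inner iterations (non-symmetric, two matrix--vector products each) or $\bigO(\sqrt{\kappa(\mA)}\log(\sigma_{\max}(\mA)/\rho))$ (symmetric, one each). Finally, to bound $\log(\sigma_{\max}(\mA)/\rho)=\bigO(\log(\eta_+L_1/\alpha_1))$ I would use the a priori estimate $\eta_k=\bigO(\|\vz_0-\vz^*\|^2/(\mu\epsilon))$ for all $k\le N_\epsilon-1$: by Proposition~\ref{prop:HPE}(a), $\eta_k\ge\|\vz_0-\vz^*\|^2/(2\mu\epsilon)$ would force $\|\vz_{k+1}-\vz^*\|^2\le\epsilon$, contradicting the minimality of $N_\epsilon$ unless $k+1\ge N_\epsilon$; since $\eta_+\le\sigma_k=\eta_{k-1}/\beta$, this yields $\log(\sigma_{\max}(\mA)/\rho)=\bigO\bigl(\log\tfrac{L_1\|\vz_0-\vz^*\|^2}{\mu\epsilon}\bigr)$. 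Multiplying by $\bigO(N_\epsilon)$ calls gives the $\mathsf{LinearSolver}$ bounds in (a) and (b).

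For the per-call cost of $\mathsf{ExtEvec}$ and $\mathsf{MaxSvec}$, I would invoke the remarks following Lemmas~\ref{lem:extevec_bound} and~\ref{lem:maxsvec_bound}: each call performs $\bigO(\sqrt{1+1/\delta}\,\log(d/q^2))$ matrix--vector products. Substituting the parameters of Theorem~\ref{thm:main}, $\delta=\delta_t=\mu/(2L_1)$ gives $\sqrt{1+1/\delta_t}=\bigO(\sqrt{L_1/\mu})$ (using $\mu\le L_1$), and $q=q_t=p/(2.5(t+1)\log^2(t+1))$ gives $\log(d/q_t^2)=\bigO(\log(d(t+1)^2/p^2))=\bigO(\log(dN_\epsilon^2/p^2))$ for $t\le N_\epsilon-1$. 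Hence each call costs $\bigO(\sqrt{L_1/\mu}\,\log(dN_\epsilon^2/p^2))$, and multiplying by the $\bigO(N_\epsilon)$ calls gives the stated bound, for both oracles in (a) and for $\mathsf{ExtEvec}$ alone in (b).

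The main obstacle I anticipate is not the conditioning estimate itself---$\kappa(\mI+\eta_+\mB_k)=\bigO(L_1/\mu)$ independently of $\eta_+$ is a short observation once noticed---but the control of the logarithmic factor in the $\mathsf{LinearSolver}$ count: both the accuracy parameter $\rho$ and the right-hand side norm scale with the a priori unbounded step size $\eta_+$, so one must separately establish the upper bound $\eta_k=\bigO(\|\vz_0-\vz^*\|^2/(\mu\epsilon))$ from the linear rate, and then carefully translate the relative stopping criterion $\|\vr_j\|\le\rho\|\vs_j\|$ into a standard relative-residual target, which needs a lower bound on $\|\vs_j\|$ valid only after the CG/CGLS iterate is sufficiently close to the exact solution.
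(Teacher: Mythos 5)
Your proposal is correct and delivers the same bounds, but via two noticeably different arguments at the technical level. First, for the $\log$ factor in the $\mathsf{LinearSolver}$ count, the paper keeps the varying part $\log(1+\eta_{k-1}\mu)$ intact, bounds the sum $\sum_k \log(1+2\eta_{k-1}\mu)\,l_k$ crudely by $\bigl(\sum_k \log(1+2\eta_{k-1}\mu)\bigr)\bigl(\sum_k l_k\bigr)$, and then controls $\sum_k \log(1+2\eta_k\mu)$ via the telescoping product $\prod_{k=0}^{N_\epsilon-2}(1+2\eta_k\mu)\le \|\vz_0-\vz^*\|^2/\epsilon$, which follows directly from Proposition~\ref{prop:HPE}(a) and the definition of $N_\epsilon$. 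You instead derive a uniform \emph{pointwise} upper bound $\eta_k=\bigO(\|\vz_0-\vz^*\|^2/(\mu\epsilon))$ for $k\le N_\epsilon-2$ from minimality of $N_\epsilon$, then multiply the per-call estimate by $\bigO(N_\epsilon)$ calls. Both yield the same $\bigO$ bound; yours is conceptually simpler, while the paper's sum-level bound is in principle tighter but the tightness is discarded anyway when it factorizes the double sum. Second, for turning the relative stopping test $\|\vr_j\|\le\rho\|\vs_j\|$ into a relative-residual target, the paper relies on the strict monotonicity $\|\vs_j\|>\|\vs_{j-1}\|$ of CGLS/CR iterates (Proposition~\ref{prop:CGLS}(b)) to get $\|\vs_j\|\ge\|\vs_1\|$ and an explicit lower bound on $\|\vs_1\|$, whereas you avoid invoking that monotonicity property entirely by using $\|\vs_j\|\ge\|\vs^*\|-\|\vs_j-\vs^*\|$, $\|\vs_j-\vs^*\|\le\|\vr_j\|/\sigma_{\min}(\mA)$, and $\rho<\sigma_{\min}(\mA)$ (which holds here since $\alpha_1<1$ and $\sigma_{\min}(\mA)\ge 1+\eta_+\mu/2\ge\sqrt{1+\eta_+\mu}$). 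Your route is more elementary and would transfer to any minimal-residual Krylov solver, at the cost of a slightly larger (but logarithmically absorbed) target threshold. The remaining pieces (the call counts, the uniform $\kappa(\mA)\le 13L_1/\mu$ observation, the $\mathsf{ExtEvec}$/$\mathsf{MaxSvec}$ accounting with $\delta_t=\mu/(2L_1)$, and the symmetric-case reduction to $\mathsf{ExtEvec}$ only with the $\sqrt{\kappa}$ improvement) match the paper.
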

  For the nonlinear equation in \eqref{eq:monotone} and the minimax problem in \eqref{eq:minimax}, 
  Theorem~\ref{thm:computational_cost}(a) demonstrates that the total number of matrix-vector products can be bounded by  $\tilde{\mathcal{O}}(\frac{L_1}{\mu}N_{\epsilon})$, ignoring logarithmic factors. Moreover, for the minimization problem in~\eqref{eq:minimization}, Theorem~\ref{thm:computational_cost}(b) shows that the dependence on the condition number can be further improved from $\frac{L_1}{\mu}$ to $\sqrt{\frac{L_1}{\mu}}$.

  To prove Theorem~\ref{thm:computational_cost}, we first present the following proposition regarding the convergence of Subroutine~\ref{alg:CGLS}.

  \begin{proposition}\label{prop:CGLS}
    Let $\{\vs_k\}_{k\geq 0}$ and $\{\vr_k\}_{k\geq 0}$ be generated by Subroutine~\ref{alg:CGLS}. Then the following holds:
    \begin{enumerate}[(a)]
      \item If $\mA$ is non-symmetric, define $\kappa(\mA) = \sqrt{\frac{\lambda_{\mathrm{max}}(\mA^\top \mA)}{\lambda_{\mathrm{min}}(\mA^\top \mA)}}$ and we have 
      $\|\vr_k\| \leq 2\left(\frac{{\kappa(\mA)}-1}{{\kappa(\mA)}+1}\right)^k\|\vr_0\|$. %
      \blue{If $\mA$ is symmetric, define $\kappa(\mA) = \frac{\lambda_{\mathrm{max}}(\mA)}{\lambda_{\mathrm{min}}(\mA)}$ and we have 
      $\|\vr_k\| \leq 2\left(\frac{\sqrt{\kappa(\mA)}-1}{\sqrt{\kappa(\mA)}+1}\right)^k\|\vr_0\|$.
      }
      \item {We have $\|\vs_k\| > \|\vs_{k-1}\|$ for all $k\geq 1$.} 
    \end{enumerate}
  \end{proposition}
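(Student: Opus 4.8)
The plan is to treat the two parts separately, since Part (a) is a restatement of classical convergence estimates for Krylov-subspace methods and Part (b) is the genuinely new (if elementary) observation about monotonicity of the iterate norms.

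For Part (a), I would invoke the standard optimality characterizations of the two methods. When $\mA$ is non-symmetric, the CGLS iterate $\vs_k$ (initialized at $\vs_0 = 0$) minimizes $\|\vb - \mA \vs\|$ over the Krylov subspace $\mathcal{K}_k(\mA^\top \mA, \mA^\top \vb)$, exactly as conjugate gradient applied to the normal equations $\mA^\top \mA \vs = \mA^\top \vb$ does; hence $\vr_k = \vb - \mA\vs_k$ satisfies $\|\vr_k\| = \min_{p \in \mathcal{P}_k,\, p(0)=1} \|p(\mA\mA^\top)\vr_0\|$, and the classical Chebyshev-polynomial argument (see \cite{saad2003iterative}) gives $\|\vr_k\| \le 2\left(\frac{\sqrt{\kappa_2}-1}{\sqrt{\kappa_2}+1}\right)^k \|\vr_0\|$ where $\kappa_2 = \kappa(\mA^\top\mA) = \kappa(\mA)^2$ in the notation of the statement; since $\kappa(\mA) = \sqrt{\kappa(\mA^\top\mA)}$, this is precisely $2\left(\frac{\kappa(\mA)-1}{\kappa(\mA)+1}\right)^k\|\vr_0\|$. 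When $\mA$ is symmetric (and, by the feasible-set constraints, positive definite), Subroutine~\ref{alg:CGLS} runs the conjugate residual method, whose iterate minimizes $\|\vr_k\|$ over $\mathcal{K}_k(\mA,\vb)$; the same Chebyshev bound with $\kappa(\mA) = \lambda_{\max}(\mA)/\lambda_{\min}(\mA)$ yields the stated rate. I would present this compactly, citing the textbook results rather than re-deriving the Chebyshev estimate.

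For Part (b), I would argue directly from the recurrences. Since $\vs_k = \sum_{i=0}^{k-1}\alpha_i \vp_i$ with $\vs_0 = 0$, it suffices to show $\|\vs_{k}\|^2 - \|\vs_{k-1}\|^2 > 0$, i.e. $2\alpha_{k-1}\langle \vs_{k-1}, \vp_{k-1}\rangle + \alpha_{k-1}^2\|\vp_{k-1}\|^2 > 0$. Because each $\alpha_{k-1} = \gamma_{k-1}/\|\vq_{k-1}\|^2 > 0$ (the $\gamma$'s being squared norms, or $\mA$-weighted inner products of residuals, which are positive as long as the method has not yet converged), it is enough to show $\langle \vs_{k-1}, \vp_{k-1}\rangle \ge 0$. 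I would prove by induction that in fact $\langle \vp_i, \vp_j\rangle \ge 0$ for all $i,j$ — or, more cleanly, that $\langle \vs_{k-1}, \vp_{k-1}\rangle \ge 0$ — using the update $\vp_{k} = \vv_{k} + \beta_{k-1}\vp_{k-1}$ with $\beta_{k-1} = \gamma_{k}/\gamma_{k-1} > 0$, together with the conjugacy/orthogonality relations built into the algorithm (in the CGLS case $\langle \vv_k, \vp_j\rangle = 0$ for $j<k$, analogously for conjugate residual). Expanding $\vs_{k-1} = \sum_{i<k-1}\alpha_i\vp_i$ and $\vp_{k-1} = \vv_{k-1} + \beta_{k-2}\vp_{k-2}$ and peeling off terms, the orthogonality of $\vv_{k-1}$ to the previous $\vp_i$'s kills the $\vv_{k-1}$ contribution and leaves $\beta_{k-2}\langle \vs_{k-1}, \vp_{k-2}\rangle$, which combined with the $\alpha_{k-2}\|\vp_{k-2}\|^2$ term and the inductive hypothesis gives positivity. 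I would set this up as a clean induction on $k$.

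The main obstacle I anticipate is Part (b): getting the bookkeeping of the conjugacy relations exactly right so that the cross terms genuinely have the right sign, and handling the CGLS and conjugate-residual cases in a unified way (their inner-product structures differ — one is $\mA^\top\mA$-conjugacy, the other $\mA$-conjugacy of the search directions combined with $\mA^2$-conjugacy). A safe route is to do the two cases separately, each as a short induction, rather than forcing a common argument; the positivity of all the scalars $\alpha_k, \beta_k, \gamma_k$ (which follows from the definiteness guaranteed by Corollary~\ref{lem:feasible_Bk}, ensuring $\mI + \eta\mB$ has positive-definite symmetric part, so $\mA^\top\mA$ resp. $\mA$ is positive definite on the relevant subspace) is the one ingredient I would state explicitly up front.
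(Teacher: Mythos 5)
Your Part (a) is essentially the same as the paper's proof: the paper simply cites \cite{bjoerck1996numerical} and \cite{greenbaum1997iterative} for these Chebyshev-type bounds, and your sketch correctly identifies how the CGLS bound in the statement follows from the CG bound on the normal equations via $\|\vr_k\| = \|\vs_k-\vs^*\|_{\mA^\top\mA}$ and $\kappa(\mA)=\sqrt{\kappa(\mA^\top\mA)}$.

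Part (b) is where you diverge from the paper (which just cites \cite{steihaug1983conjugate} for CGLS and \cite{Fong2011} for conjugate residual), and it is here that there is a genuine gap in the CR case. Your induction hinges on writing $\vp_k = \vv_k + \beta_{k-1}\vp_{k-1}$ and then using the orthogonality $\langle \vv_k, \vp_j\rangle = 0$ for $j<k$ to kill the cross term. That is exactly the CGLS structure ($\vv_k = \mA^\top\vr_k$ does satisfy this orthogonality and does appear in the $\vp$-recursion), and your argument there reproduces Steihaug's proof applied to the normal equations. But the conjugate residual update in Subroutine~\ref{alg:CGLS} is $\vp_{k+1} \leftarrow \vr_{k+1} + \beta_k \vp_k$, not $\vp_{k+1}\leftarrow \vv_{k+1} + \beta_k\vp_k$. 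For CR, the quantity satisfying $\langle\cdot,\vp_j\rangle = 0$ for $j<k$ is $\vv_k = \mA\vr_k$, while the quantity actually appearing in the recursion is $\vr_k$ — and $\langle \vr_k,\vp_j\rangle = 0$ does \emph{not} hold for CR (the CR residuals are $\mA$-orthogonal, not Euclidean-orthogonal, and are not orthogonal to previous search directions). So when you try to expand $\langle \vs_{k-1},\vp_{k-1}\rangle$ and peel off the $\vr_{k-1}$ contribution, you are left with a term $\langle\vs_{k-1},\vr_{k-1}\rangle$ whose sign is not controlled by any of the built-in orthogonality relations. You did flag "getting the bookkeeping of the conjugacy relations right" as the main obstacle, but the issue is not bookkeeping — the orthogonality you need simply isn't there for CR, and the monotonicity of $\|\vs_k\|$ for CR/MINRES is usually proved via the Lanczos tridiagonal representation, as in \cite{Fong2011}, rather than by the conjugate-direction argument that works for CG and CGLS.
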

  \begin{proof}
  When $\mA$ is non-symmetric, Subroutine~\ref{alg:CGLS} employs the CGLS method. See \cite[Chapter 7.4.2]{bjoerck1996numerical} for the proof of (a) and \cite[Theorem 2.1]{steihaug1983conjugate} for the proof of (b). When $\mA$ is symmetric, Subroutine~\ref{alg:CGLS} employs the conjugate residual method. See \cite[{Section 3.1}]{greenbaum1997iterative} for the proof of Part (a) and \cite[{Theorem 2.1.6}]{Fong2011} for the proof of Part (b). 
  \end{proof}

  As a corollary of Proposition~\ref{prop:CGLS}, we can upper bound the total number of matrix-vector products when Subroutine~\ref{alg:CGLS} returns for given inputs $\mA$, $\vb$, and $\rho$. 
  \begin{lemma}\label{lem:conjugate_residual}
    Given the inputs $\mA \in \reals^{d\times d}$, $\vb \in \reals^d$, and $\rho>0$. When Subroutine~\ref{alg:CGLS} returns, the total number of matrix-vector product evaluations can be bounded by  
     $ 2\kappa(\mA) \log \left(\frac{2\lambda_{\max}(\mA^\top\mA)}{\rho\sqrt{\lambda_{\min}(\mA^\top\mA)}}\right)$ if $\mA$ is non-symmetric, and $\sqrt{\kappa(\mA)} \log \left(\frac{2\lambda_{\max}(\mA)}{\rho}\right)$ if $\mA$ is symmetric. 
  \end{lemma}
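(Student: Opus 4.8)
The plan is to combine the residual convergence rate from Proposition~\ref{prop:CGLS}(a) with the termination criterion of Subroutine~\ref{alg:CGLS} (namely, the loop exits once $\|\vr_k\| \le \rho \|\vs_k\|$), together with a uniform lower bound on $\|\vs_k\|$ to convert the residual bound into an iteration-count bound. First I would treat the symmetric case. From Proposition~\ref{prop:CGLS}(a), $\|\vr_k\| \le 2\bigl(\tfrac{\sqrt{\kappa(\mA)}-1}{\sqrt{\kappa(\mA)}+1}\bigr)^k \|\vr_0\|$, and since $\vs_0 = 0$ and $\vr_0 = \vb$, Proposition~\ref{prop:CGLS}(b) gives $\|\vs_k\| \ge \|\vs_1\|$ for all $k \ge 1$. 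So it suffices to force $\|\vr_k\| \le \rho \|\vs_1\|$, which holds as soon as $2\bigl(\tfrac{\sqrt{\kappa}-1}{\sqrt{\kappa}+1}\bigr)^k \|\vb\| \le \rho \|\vs_1\|$. Using the standard estimate $\bigl(\tfrac{\sqrt{\kappa}-1}{\sqrt{\kappa}+1}\bigr)^k \le e^{-2k/\sqrt{\kappa}}$ and taking logarithms, this is guaranteed once $k \ge \tfrac{\sqrt{\kappa}}{2}\log\!\bigl(\tfrac{2\|\vb\|}{\rho\|\vs_1\|}\bigr)$; since each iteration uses one matrix-vector product (the conjugate residual branch computes $\vq_{k+1}$ from $\vv_{k+1}$ and $\vq_k$ without a new product), the count matches the claimed $\sqrt{\kappa(\mA)}\log\bigl(\tfrac{2\lambda_{\max}(\mA)}{\rho}\bigr)$ after I bound $\|\vb\|/\|\vs_1\|$.

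The key quantitative step is therefore to show $\|\vb\|/\|\vs_1\| \le \lambda_{\max}(\mA)$ (or the analogous bound in the non-symmetric case). In the symmetric case, $\vs_1 = \alpha_0 \vp_0 = \alpha_0 \vr_0 = \tfrac{\gamma_0}{\|\vq_0\|^2}\vb$ with $\gamma_0 = \vv_0^\top \vp_0 = \vr_0^\top \mA \vr_0$ and $\vq_0 = \mA \vr_0 = \mA\vb$, so $\|\vs_1\| = \tfrac{\vb^\top \mA \vb}{\|\mA\vb\|^2}\|\vb\|$. Hence $\tfrac{\|\vb\|}{\|\vs_1\|} = \tfrac{\|\mA\vb\|^2}{\vb^\top\mA\vb} \le \lambda_{\max}(\mA^\top\mA)/\lambda_{\min}(\mA)$ in general, but since we only invoke this subroutine with $\mA = \mI + \eta\mB$ where $\tfrac12(\mB+\mB^\top)\succeq 0$, the matrix is positive definite with $\vb^\top\mA\vb \ge \tfrac{1}{\lambda_{\max}(\mA)}\|\mA^{1/2}\vb\|^2$-type estimates available; more directly $\|\mA\vb\|^2 \le \lambda_{\max}(\mA)\, \vb^\top \mA^\top\mA\vb \cdot (\cdots)$ — I would instead just bound $\|\mA\vb\|^2 = \vb^\top\mA^2\vb \le \lambda_{\max}(\mA)\,\vb^\top\mA\vb$ when $\mA\succ 0$, giving $\|\vb\|/\|\vs_1\| \le \lambda_{\max}(\mA)$ cleanly. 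For the non-symmetric case, the same argument runs through the normal equations: CGLS is CG on $\mA^\top\mA\vs = \mA^\top\vb$, so $\|\vs_1\|$ relates to $\lambda_{\max}(\mA^\top\mA)$ and $\lambda_{\min}(\mA^\top\mA)$, and the residual rate carries the factor $\kappa(\mA) = \sqrt{\lambda_{\max}(\mA^\top\mA)/\lambda_{\min}(\mA^\top\mA)}$; combining gives the stated $2\kappa(\mA)\log\bigl(\tfrac{2\lambda_{\max}(\mA^\top\mA)}{\rho\sqrt{\lambda_{\min}(\mA^\top\mA)}}\bigr)$, and each CGLS iteration costs two matrix-vector products, explaining the leading factor of $2$.

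The main obstacle I anticipate is pinning down the ratio $\|\vb\|/\|\vs_1\|$ (equivalently $\|\vr_0\|/\|\vs_1\|$) with exactly the constants in the statement, since it requires unwinding one step of the iteration and using positive-definiteness of $\mA$ rather than just the generic CG/CGLS convergence theory — the rest is a routine "geometric decay plus logarithm" computation. I would structure the write-up as: (1) invoke Proposition~\ref{prop:CGLS}(b) to replace $\|\vs_k\|$ by $\|\vs_1\|$ in the stopping test; (2) compute $\vs_1$ explicitly and bound $\|\vr_0\|/\|\vs_1\|$ by $\lambda_{\max}(\mA)$ (symmetric) or the normal-equation analogue (non-symmetric); (3) plug into the residual bound of Proposition~\ref{prop:CGLS}(a), use $\bigl(\tfrac{\sqrt{\kappa}-1}{\sqrt{\kappa}+1}\bigr) \le e^{-2/\sqrt{\kappa}}$ (resp.\ $\tfrac{\kappa-1}{\kappa+1}\le e^{-2/\kappa}$), solve for $k$, and multiply by the per-iteration matrix-vector cost ($1$ in the symmetric branch, $2$ in the non-symmetric branch).
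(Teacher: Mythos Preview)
Your proposal is correct and follows essentially the same approach as the paper: reduce the stopping test $\|\vr_k\|\le\rho\|\vs_k\|$ to $\|\vr_k\|\le\rho\|\vs_1\|$ via Proposition~\ref{prop:CGLS}(b), compute $\vs_1$ explicitly to bound $\|\vr_0\|/\|\vs_1\|$ by $\lambda_{\max}(\mA)$ (symmetric) or $\lambda_{\max}(\mA^\top\mA)/\sqrt{\lambda_{\min}(\mA^\top\mA)}$ (non-symmetric), and then invert the geometric residual decay of Proposition~\ref{prop:CGLS}(a). The only cosmetic differences are that the paper writes the symmetric $\|\vs_1\|$ bound via $\mA^{1/2}$ rather than your $\vb^\top\mA^2\vb\le\lambda_{\max}(\mA)\,\vb^\top\mA\vb$ (equivalent, since $\mA\succ 0$ in all calls), and uses the cruder inequality $\log\!\bigl(\tfrac{\kappa+1}{\kappa-1}\bigr)\ge\tfrac{1}{\kappa}$ in place of your $\tfrac{\kappa-1}{\kappa+1}\le e^{-2/\kappa}$, which just costs a harmless factor of~$2$ in the iteration count.
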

  \begin{proof}
    We first consider the case where $\mA$ is non-symmetric. From the update rule of~Subroutine~\ref{alg:CGLS}, we can compute that $\vs_1 = \frac{\|\mA^\top \vb\|^2}{\|\mA \mA^\top\vb\|^2} \mA^\top\vb$, which implies that 
    \begin{equation*}
    \|\vs_1\| = \frac{\|\mA^\top\vb\|^3}{\|\mA \mA^\top\vb\|^2} \geq \frac{\|\mA^\top \vb\|}{\lambda_{\max}(\mA \mA^\top )} \geq  \frac{\sqrt{\lambda_{\min}(\mA\mA^\top)}\|\vb\|}{\lambda_{\max}(\mA \mA^\top)} = \frac{\sqrt{\lambda_{\min}(\mA\mA^\top)}\|\vr_0\|}{\lambda_{\max}(\mA \mA^\top)}.
    \end{equation*}
    Since $\|\vs_k\|$ is strictly increasing (cf. Proposition~\ref{prop:CGLS}(b)), we have $\|\vs_k\| \geq \|\vs_1\| = \frac{\sqrt{\lambda_{\min}(\mA\mA^\top)}\|\vr_0\|}{\lambda_{\max}(\mA \mA^\top)}$ for any $k \geq 1$. Thus, if $\|\vr_k\|\leq \rho \frac{\sqrt{\lambda_{\min}(\mA\mA^\top)}\|\vr_0\|}{\lambda_{\mathrm{max}}(\mA \mA^\top)}$, we obtain that $\|\vr_k\|\leq \frac{\rho\sqrt{\lambda_{\min}(\mA\mA^\top)} \|\vr_0\|}{\lambda_{\mathrm{max}}(\mA \mA^\top)} \leq \rho \|\vs_k\|_2$. Moreover, using Proposition~\ref{prop:CGLS}, we obtain that $\|\vr_k\|_2 \leq \rho \|\vs_k\|_2$ if 
    \begin{equation*}
      2\left(\frac{{\kappa(\mA)}-1}{{\kappa(\mA)}+1}\right)^k \leq \frac{\rho \sqrt{\lambda_{\mathrm{min}}(\mA^\top\mA)}}{\lambda_{\mathrm{max}}(\mA^\top\mA)} \quad\Leftrightarrow\quad 
      k \geq \frac{\log \left(\frac{2\lambda_{\max}(\mA^\top\mA)}{\rho\sqrt{\lambda_{\min}(\mA^\top\mA)}}\right)}{\log\left(\frac{{\kappa(\mA)}+1}{{\kappa(\mA)}-1}\right)}.
    \end{equation*}
    Since $\log(x) \geq (x-1)/x$ for all $x >0$, we have  $\log\left(\frac{{\kappa(\mA)}+1}{{\kappa(\mA)}-1}\right) \geq \frac{2}{{\kappa(\mA)}+1} \geq \frac{1}{{\kappa(\mA)}}$. Finally, note that for a non-symmetric matrix $\mA$, Subroutine~\ref{alg:CGLS} requires two matrix-vector products per iteration. This completes the proof in the non-symmetric case.
    
    Next, we consider the case where $\mA$ is symmetric. From the update rule of Subroutine~\ref{alg:CGLS}, we can compute that $\vs_1 = \frac{\vb^\top \mA \vb}{\|\mA\vb\|_2^2} \vb$, which implies
    \begin{equation*}
      \|\vs_1\| = \|\vb\|\cdot \frac{\|\mA^{1/2}\vb\|^2}{(\mA^{1/2}\vb)^\top \mA (\mA^{1/2}\vb)} \geq \frac{\|\vb\|}{\lambda_{\max}(\mA)} = \frac{\|\vr_0\|}{\lambda_{\max}(\mA)}.
    \end{equation*}
    Since $\|\vs_k\|$ is strictly increasing (cf. Proposition~\ref{prop:CGLS}(b)), we have $\|\vs_k\| \geq \|\vs_1\| = \frac{\|\vr_0\|}{\lambda_{\max}(\mA)}$ for any $k \geq 1$. Thus, if $\|\vr_k\|\leq \rho \frac{\|\vr_0\|_2}{\lambda_{\mathrm{max}}(\mA) }$, we obtain that $\|\vr_k\|_2\leq \rho \frac{\|\vr_0\|_2}{\lambda_{\mathrm{max}}(\mA) }\leq \rho \|\vs_k\|_2$. Combining this with Proposition~\ref{prop:CGLS}, we similarly obtain  $\|\vr_k\|_2 \leq \rho \|\vs_k\|_2$ if 
    \begin{equation*}
      2\left(\frac{\sqrt{\kappa(\mA)}-1}{\sqrt{\kappa(\mA)}+1}\right)^k \leq \frac{\rho}{\lambda_{\mathrm{max}}(\mA)} \quad\Leftarrow\quad 
      k \geq {\sqrt{\kappa(\mA)}\log \left(\frac{2\lambda_{\max}(\mA)}{\rho}\right)}
    \end{equation*}
    This completes the proof.
  \end{proof}

Using Lemma~\ref{lem:conjugate_residual}, we are ready to prove Theorem~\ref{thm:computational_cost}. 
\begin{proof}[Proof of Theorem~\ref{thm:computational_cost}] 
First, we consider Problems~\eqref{eq:monotone} and \eqref{eq:minimax}. In this case, the Jacobian approximation matrices $\{\mB_k\}$ are non-symmetric. Consider the $k$-th iteration. Note that in each call of $\mathsf{LinearSolver}$ in Subroutine~\ref{alg:ls}, the inputs are given by $\mA = \mI + \eta_{+} \mB_k$ and $\rho = \alpha_1 \sqrt{1+\eta_+\mu}$, with $\eta_{+} \leq \sigma_k$. 
Moreover, recall from Corollary~\ref{lem:feasible_Bk} that $\frac{1}{2}(\mB_k + \mB_k^\top )\succeq \frac{\mu}{2} \mI$ and $\|\mB_k\|_{\op} \leq 6.5L_1$. 
Thus, we have $ \sqrt{\lambda_{\max}(\mA^\top \mA)} = \|\mA\|_{\op} \leq 1 + \eta_{+}\|\mB_k\|_{\op}\leq 1+6.5\eta_{+} L_1$. Moreover, we also have $ \frac{1}{2}(\mA + \mA^\top) \succeq (1+\eta_+\frac{\mu}{2}) \mI$, which further implies $ \lambda_{\min}( \mA^\top \mA) \geq (1+\eta_+\frac{\mu}{2})^2$ by Lemma~\ref{lem:eig_singular} in Appendix~\ref{appen:supporting}. 
Hence, we can conclude that 
$\kappa(\mA)  = \sqrt{\frac{\lambda_{\max}(\mA^\top \mA)}{\lambda_{\min}(\mA^\top \mA)}} \leq \frac{1+6.5\eta_{+} L_1}{1+\eta_+\frac{\mu}{2}} \leq \frac{13 L_1}{\mu}$. 
Furthermore, by Lemma~\ref{lem:conjugate_residual}, the number of matrix-vector product evaluations in each call of $\mathsf{LinearSolver}$ can be bounded by 
    \begin{align*}
      \overline{\mathsf{MV}}_k &\leq 2\kappa(\mA) \log \left(\frac{2\lambda_{\max}(\mA^\top\mA)}{\rho\sqrt{\lambda_{\min}(\mA^\top\mA)}}\right) \\
       &\leq {\frac{13L_1}{\mu}} \log \left(\frac{2}{\alpha_1} \frac{(1+6.5 \eta_+L_1)^2}{ \sqrt{1+\eta_+ \mu}(1+\eta_+\mu/2)} \right)\\
      &\leq {\frac{13L_1}{\mu}} \log \left(\frac{2}{\alpha_1}\frac{(1+6.5 \eta_+L_1)^2}{(1+\eta_+\mu/2)^2}\right) + {\frac{13L_1}{\mu}} \log \left(\frac{(1+\eta_+\mu/2)^2}{\sqrt{1+\eta_+ \mu}(1+\eta_+\mu/2)}\right) \\
      &\leq {\frac{26L_1}{\mu}} \log \left(\sqrt{\frac{2}{\alpha_1}}\frac{13 L_1}{\mu}\right) + {\frac{13L_1}{2\mu}} \log \left(1+\frac{\eta_+\mu}{2}\right)
      . 
    \end{align*}
    Moreover, since we have $\eta_{+} \leq \sigma_k = \eta_{k-1}/\beta$ for $k\geq 1$, we further get 
    \begin{align*}
      \overline{\mathsf{MV}}_k &\leq {\frac{26L_1}{\mu}} \log \left(\sqrt{\frac{2}{\alpha_1}}\frac{13 L_1}{\mu}\right) + {\frac{13L_1}{2\mu}} \log \left(1+\frac{\eta_{k-1}\mu}{2\beta}\right) \\
      &\leq {\frac{26L_1}{\mu}} \log \left(\sqrt{\frac{2}{\alpha_1 \beta}}\frac{13 L_1}{\mu}\right) + {\frac{13L_1}{2\mu}} \log \left(1+\frac{\eta_{k-1}\mu}{2}\right), 
    \end{align*}
    where the last inequality is due to the fact that $\log \left(1+\frac{\eta_{k-1}\mu}{2\beta}\right) = \log \left(\beta+\frac{\eta_{k-1}\mu}{2}\right) + \log \frac{1}{\beta} \leq \log \left(1+\frac{\eta_{k-1}\mu}{2}\right) + \log \frac{1}{\beta}$. 
    Let $l_k$ denote the number of line search steps in iteration $k$, and then we can bound the total number of matrix-vector products by $\sum_{k=0}^{N_\epsilon-1} l_k \cdot \overline{\mathsf{MV}}_k$. 
    Moreover, from the proof of Theorem~\ref{thm:line_search}, we know that $l_k = \log_{1/\beta}(\frac{\sigma_k}{\eta_k})+1$.
    For $k=0$, we have 
    $l_0 \leq \log_{1/\beta}\Bigl(\frac{\sigma_0}{\eta_0}\Bigr)+1 \leq \log_{1/\beta}\Bigl(\frac{\sigma_0L_1}{\alpha_2\beta}\Bigr)+1$,
    and 
    $\overline{\mathsf{MV}}_0 \leq {\frac{26L_1}{\mu}} \log \left(\sqrt{\frac{2}{\alpha_1}}\frac{13 L_1}{\mu}\right) + {\frac{13L_1}{2\mu}} \log \left(1+\frac{\sigma_0\mu}{2}\right)$,
    where we used that $\eta_0 >\frac{\alpha_2 \beta}{7.5 L_1}$ by Lemma~\ref{lem:stepsize_const_bound}. 
    Furthermore, %
    we first show that  
    \begin{equation}\label{eq:stepsize_upper_bound}
      \prod_{k=0}^{N_\epsilon-2} (1+2\eta_k\mu) \leq \frac{\|\vz_0-\vz^*\|^2}{\epsilon}.
    \end{equation}
    To see this, note that by Proposition~\ref{prop:HPE}, we have $\|\vz_{N}-\vz^*\|^2 \leq \|\vz_0-\vz^*\|^2 \prod_{k=0}^{N-1} (1+2\eta_k\mu)^{-1}$. Then \eqref{eq:stepsize_upper_bound} follows from the fact that $N_{\epsilon}$ is the minimum number of iterations to achieve $\|\vz_N-\vz^*\|^2 \leq \epsilon$. Thus, the total number of matrix-vector products $\mathsf{MV}_{\mathrm{tol}} := \sum_{k=1}^{N_\epsilon-1} l_k \cdot \overline{\mathsf{MV}}_k$ can be bounded by: 
    \begin{align*}
      \mathsf{MV}_{\mathrm{tol}}
      &\leq {\frac{26L_1}{\mu}} \log \left(\sqrt{\frac{2}{\alpha_1 \beta}}\frac{13 L_1}{\mu}\right)\sum_{k=1}^{N_\epsilon-1} l_k + {\frac{13L_1}{2\mu}} \sum_{k=1}^{N_\epsilon-1} \log(1+2{\eta_{k-1}\mu}) \cdot l_k \\
      &\leq {\frac{26L_1}{\mu}} \log \left(\sqrt{\frac{2}{\alpha_1 \beta}}\frac{13 L_1}{\mu}\right) \sum_{k=1}^{N_\epsilon-1} l_k + {\frac{13L_1}{2\mu}}\sum_{k=1}^{N_\epsilon-1} \log(1+2{\eta_{k-1}\mu}) \cdot \sum_{k=1}^{N_\epsilon-1} l_k \\
      &\leq  {\frac{26L_1}{\mu}}\log \left(\frac{13 \sqrt{2} L_1 \|\vz_0-\vz^*\|^2}{\sqrt{\alpha_1\beta }\mu \epsilon}\right) \cdot \left(2N_{\epsilon}+\log_{1/\beta} \frac{7.5 \sigma_0 L_1}{\alpha_2} \right),
    \end{align*}
    where we used \eqref{eq:stepsize_upper_bound} and Theorem~\ref{thm:line_search} in the last inequality. Hence, we conclude that $\textsf{LinearSolver}$ requires ${\bigO\left(N_{\epsilon}{\frac{L_1}{\mu}}\log \left(\frac{L_1 \|\vz_0-\vz^*\|^2}{\mu \epsilon}\right)\right)}$ matrix-vector products in total.  

    Next, we consider Problem~\eqref{eq:minimization}. Note that in this case, the Jacobian approximation matrices $\{\mB_k\}$ are symmetric and positive definite. We follow similar arguments as in the non-symmetric case. Consider the $k$-th iteration and in each call of $\mathsf{LinearSolver}$, the inputs are given by $\mA = \mI + \eta_{+} \mB_k$ and $\rho = \alpha_1 \sqrt{1+\eta_+\mu}$, with $\eta_{+} \leq \sigma_k$. 
    Therefore, we can bound  $\frac{\lambda_{\max}(\mA)}{\lambda_{\min}(\mA)} = \frac{1+\eta_+\lambda_{\max}(\mB_k)}{1+\eta_+\lambda_{\min}(\mB_k)} \leq \frac{\lambda_{\max}(\mB_k)}{\lambda_{\min}(\mB_k)}$. Moreover, Corollary~\ref{lem:feasible_Bk} shows that $\frac{\mu}{2}\mI \preceq \mB_k \preceq 6.5L_1 \mI$, and this further implies that $\kappa(\mA) \leq \frac{13 L_1}{\mu}$. 
    Hence, by Lemma~\ref{lem:conjugate_residual}, the number of matrix-vector product evaluations in each call of $\mathsf{LinearSolver}$ can be bounded by 
    \begin{align*}
      \overline{\mathsf{MV}}_k &\leq \sqrt{\kappa(\mA)} \log \left(\frac{2\lambda_{\max}(\mA)}{\rho}\right)\\
      &\leq \sqrt{\frac{13L_1}{\mu}} \log \left(\frac{2(1+6.5\eta_+L_1)}{\alpha_1\sqrt{1+\eta_+ \mu}}\right) \\
      &\leq \sqrt{\frac{13L_1}{\mu}} \log \left(\frac{2(1+6.5\eta_+L_1)}{\alpha_1(1+\eta_+ \mu)}\right) + \frac{1}{2}\sqrt{\frac{13L_1}{\mu}} \log \left(1+\eta_+ \mu\right). 
    \end{align*}
    Moreover, since $\frac{1+6.5\eta_+L_1}{1+\eta_+ \mu} \leq \frac{6.5 L_1}{\mu}$ and $\eta_{+} \leq \sigma_k = \frac{\eta_{k-1}}{\beta}$ for $k\geq 1$, we further get
    \begin{align*}
      \overline{\mathsf{MV}}_k &\leq \sqrt{\frac{13L_1}{\mu}} \log \left(\frac{13L_1}{\alpha_1\mu}\right) + \frac{1}{2}\sqrt{\frac{13L_1}{\mu}} \log \left(1+ \frac{\eta_{k-1}\mu}{\beta}\right)  \\
      &\leq \sqrt{\frac{13L_1}{\mu}} \log \left(\frac{13 L_1}{\alpha_1 \sqrt{\beta}\mu}\right) + \frac{1}{2}\sqrt{\frac{13L_1}{\mu}} \log \left(1+{\eta_{k-1}\mu}\right). 
    \end{align*}
    Recall that $l_k = \log_{1/\beta}(\frac{\sigma_k}{\eta_k})+1$ denote the number of line search steps in iteration $k$, and the total number of matrix-vector products can be bounded by $\sum_{k=0}^{N_\epsilon-1} l_k \cdot \overline{\mathsf{MV}}_k$. Similar to the non-symmetric setting, for $k=0$, we have $l_0 \leq \log_{1/\beta}\Bigl(\frac{\sigma_0L_1}{\alpha_2\beta}\Bigr)+1$ and $\overline{\mathsf{MV}}_0  \leq \sqrt{\frac{13L_1}{\mu}} \log \left(\frac{13L_1}{\alpha_1\mu}\right) + \frac{1}{2}\sqrt{\frac{13L_1}{\mu}} \log \left(1+ {\sigma_0\mu}\right)$. Furthermore, 
    the total number of matrix-vector products $\mathsf{MV}_{\mathrm{tol}} := \sum_{k=1}^{N_\epsilon-1} l_k \cdot \overline{\mathsf{MV}}_k$ can be bounded by:
    \begin{align*}
      \mathsf{MV}_{\mathrm{tol}}
      &\leq \sqrt{\frac{13L_1}{\mu}} \log \left(\frac{13 L_1}{\alpha_1 \sqrt{\beta}\mu}\right)\sum_{k=1}^{N_{\epsilon}-1} l_k + \sqrt{\frac{13L_1}{4\mu}}  \sum_{k=1}^{N_\epsilon-1} \log(1+2{\eta_{k-1}\mu}) \cdot l_k \\
      &\leq \sqrt{\frac{13L_1}{\mu}} \log \left(\frac{13 L_1}{\alpha_1 \sqrt{\beta}\mu}\right) \sum_{k=1}^{N_\epsilon-1} l_k + \sqrt{\frac{13L_1}{4\mu}}\sum_{k=1}^{N_\epsilon-1} \log(1+2{\eta_{k-1}\mu}) \cdot \sum_{k=1}^{N_\epsilon-1} l_k \\
      &\leq  \sqrt{\frac{13L_1}{\mu}}\log \left(\frac{13 L_1 \|\vz_0-\vz^*\|^2}{\alpha_1\sqrt{\beta }\mu \epsilon}\right) \cdot \left(2N_{\epsilon}+\log_{1/\beta} \frac{7.5 \sigma_0 L_1}{\alpha_2} \right),
    \end{align*}
    where we used \eqref{eq:stepsize_upper_bound} and Theorem~\ref{thm:line_search} in the last inequality. Hence, we obtain that the $\textsf{LinearSolver}$ oracle requires ${\bigO\left(N_{\epsilon}\sqrt{\frac{L_1}{\mu}}\log \left(\frac{L_1 \|\vz_0-\vz^*\|^2}{\mu \epsilon}\right)\right)}$ matrix-vector products in total.  

    Finally, we bound the number of matrix-vector products for $\mathsf{ExtEvec}$ and $\mathsf{MaxSvec}$ in all cases. Since the parameters are selected as $\delta = \frac{\mu}{2L_1}$ and $q_t = \frac{p}{2.5(t+1)\log^2(t+1)} \leq \frac{p}{2.5N_{\epsilon}\log^2(N_{\epsilon})}$, it follows from Lemmas~\ref{lem:extevec_bound} and~\ref{lem:maxsvec_bound} that the total number of matrix-vector products is bounded by 
    $\bigO\left(\sum_{t=1}^T \sqrt{\frac{1+\delta_t}{\delta_t}}\log\frac{d}{q_t^2} \right) = \bigO\left(N_\epsilon \sqrt{\frac{L_1}{\mu}} \log (\frac{dN_{\epsilon}^2}{p^2})\right)$.
    The proof is complete.
\end{proof}

\subsection{Monotone setting}
\label{subsec:monotone}

\subsubsection{Convergence rate analysis}

Next, we present our convergence result when $\vF$ is only monotone.
\begin{theorem}\label{thm:main_monotone}
  Suppose Assumptions~\ref{assum:monotone}, ~\ref{assum:operator_lips} and~\ref{assum:jacobian_lips} hold. Let $\{\vz_k\}$ and $\{\hat{\vz}_k\}$ be the iterates generated by Algorithm~\ref{alg:Full_Equasi-Newton} using the line search scheme in Subroutine~\ref{alg:ls}, where $\alpha_1, \alpha_2 \in (0, \frac{1}{2})$, $\beta \in (0,1)$, and $\sigma_0 \geq  \frac{\alpha_2\beta}{5L_1}$.
  In addition, the Jacobian approximation matrices are updated in Subroutine~\ref{alg:jacobian_approx} with \textbf{Option II}, where {$\rho = \frac{1}{121}$}, $\delta_t = {\frac{1}{2(t+1)^{\nicefrac{1}{4}}}}$, and $q_t = \frac{p}{2.5(t+1)\log^2(t+1)}$ for $t \geq 1$. With probability at least $1-p$, the following holds:  
    \begin{enumerate}[(a)]
      \item For any $k\geq 0$, we have $\|\vz_{k+1}-\vz^*\| \leq \|\vz_k-\vz^*\|$. Moreover, define the averaged iterate $\bar{\vz}_k$ by $\bar{\vz}_k = \frac{\sum_{i=0}^{k-1} \eta_i \hat{\vz}_i}{\sum_{i=0}^{k-1} \eta_i}$. Then for any compact set $\calD \subset \reals^d$, we have
      $\mathrm{Gap}(\bar{\vz}_k; \calD) \leq \frac{5 L_1 \max_{\vz \in \calD}\|\vz_0-\vz\|^2}{2\alpha_2\beta k}$. 
      \item Moreover, recall the definition of $C$ in \eqref{eq:def_c1} and define $Q_1$ and $Q_2$ as 
      \begin{align}
        Q_1 &= {9\sqrt{2}\frac{\|\mB_0-{\nabla \vF(\vz_0)}\|_F}{L_1}} + {\sqrt{2C}\frac{L_2 \| \vz_0-\vz^*\|}{L_1}}, \label{eq:def_Q1} \\
         Q_2 &= \sqrt{\frac{648\sqrt{2} d L_2\|\vz_0-\vz^*\|}{\sqrt{1-\alpha_1-\alpha_2}L_1}} + 6\sqrt{2}.  \label{eq:def_Q2}
      \end{align}
      Then for any $k\geq 0$, we have 
      \begin{equation*}
        \mathrm{Gap}(\bar{\vz}_k; \calD)\leq  \frac{L_1\max_{\vz \in \calD}\|\vz_0-\vz\|^2}{2(1-\beta)\alpha_2 \beta} \left(\frac{5}{k^2} + \frac{Q_1}{k^{1.5}} + \frac{Q_2}{k^{1.25}} \right).  
      \end{equation*}
    \end{enumerate}
  \end{theorem}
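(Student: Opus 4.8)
The plan is to follow the same three-level template as the strongly monotone analysis — (i) a uniform lower bound on the step sizes, (ii) a regret bound for the online Jacobian update, (iii) feeding that bound into the HPE estimate — but with \emph{dynamic} regret in place of static regret, since the natural competitor is now the time-varying Jacobian $\mH_t=\nabla\vF(\vz_{k_t})$ rather than a single matrix. For \emph{Part (a)}: since Subroutine~\ref{alg:jacobian_approx} is run with \textbf{Option II}, Lemma~\ref{lem:regret_reduction}(b) gives $\hat{\mB}_k\in\calC$ for every $k$, so Lemma~\ref{lem:transform} with $\mu=0$ yields $\tfrac12(\mB_k+\mB_k^\top)\succeq 0$ and $\|\mB_k\|_{\op}\le 4L_1$ (the monotone analog of Corollary~\ref{lem:feasible_Bk}). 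Combining this with Assumption~\ref{assum:operator_lips} and the first estimate of Lemma~\ref{lem:step size_lb} gives $\|\vF(\tilde{\vz}_k)-\vF(\vz_k)-\mB_k(\tilde{\vz}_k-\vz_k)\|\le 5L_1\|\tilde{\vz}_k-\vz_k\|$, hence $\eta_k>\tfrac{\alpha_2\beta}{5L_1}$ for $k\in\calB$; the induction of Lemma~\ref{lem:stepsize_const_bound} promotes this to $\eta_k\ge\tfrac{\alpha_2\beta}{5L_1}$ for all $k\ge0$ using $\sigma_0\ge\tfrac{\alpha_2\beta}{5L_1}$, and then (a) follows from Proposition~\ref{prop:HPE}(b) since $\sum_{i=0}^{k-1}\eta_i\ge\tfrac{\alpha_2\beta}{5L_1}k$.

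For \emph{Part (b)}, relabel $\calB=\{k_0,\dots,k_{T-1}\}$ as $t=0,\dots,T-1$, set $\mH_t=\nabla\vF(\vz_{k_t})$ and $\hat{\mH}_t=\tfrac1{L_1}(\mH_t-L_1\mI)\in\calC$. Applying Lemma~\ref{lem:regret_reduction}(b) with $\vx_t=\hat{\mB}_t,\ \vw_t=\mW_t,\ \vg_t=\mG_t,\ \vu=\hat{\mH}_t$, summing, using the convexity identity~\eqref{eq:convexity_subspace} ($\ell_t(\mB_t)-\ell_t(\mH_t)\le L_1^2\langle\mG_t,\hat{\mB}_t-\hat{\mH}_t\rangle$), the bounds $\|\mW_t\|_F\le\sqrt d$ and $\|\hat{\mH}_t\|_{\op}\le2$ on the telescoped comparator-drift terms, and the self-bounding property of $\ell_t$ (Lemma~\ref{lem:loss}) with~\eqref{eq:surrogate_loss_II} exactly as in the proof of Lemma~\ref{lem:small_loss} (so that $\tfrac{\rho L_1^2}{2}\|\tilde{\mG}_t\|_F^2\le\tfrac12\ell_t(\mB_t)$ with $\rho=\tfrac1{121}$, and $L_1^2\delta_t|\langle\mG_t,\hat{\mB}_t\rangle|\le 6L_1\delta_t\sqrt{\ell_t(\mB_t)}$), I obtain
\begin{equation*}
  \tfrac12\sum_{t=0}^{T-1}\ell_t(\mB_t)\;\le\;\sum_{t=0}^{T-1}\ell_t(\mH_t)+\tfrac{L_1^2}{2\rho}\|\mW_0-\hat{\mH}_0\|_F^2+\tfrac{3\sqrt d\,L_1^2}{\rho}P_T+6L_1\sum_{t=0}^{T-1}\delta_t\sqrt{\ell_t(\mB_t)},\qquad P_T\mydef\sum_{t\ge1}\|\hat{\mH}_t-\hat{\mH}_{t-1}\|_F.
\end{equation*}

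To control the right-hand side I estimate: (1) $\ell_t(\mH_t)\le\tfrac{L_2^2}{4}\|\vs_t\|^2$ by the fundamental theorem of calculus and Assumption~\ref{assum:jacobian_lips}, so the second estimate of Lemma~\ref{lem:step size_lb} and the argument of Lemma~\ref{lem:sum_of_squares} (which holds verbatim with $\mu=0$) give $\sum_t\ell_t(\mH_t)\le\tfrac{CL_2^2}{2}\|\vz_0-\vz^*\|^2$; (2) the path length obeys $P_T\le\tfrac{\sqrt d\,L_2}{L_1}\sum_t\|\vz_{k_t}-\vz_{k_{t-1}}\|\le\tfrac{\sqrt d\,L_2}{L_1}\sum_{j=0}^{N-1}\|\vz_{j+1}-\vz_j\|\le\tfrac{\sqrt d\,L_2}{L_1}\sqrt N\,\big(\sum_j\|\vz_{j+1}-\vz_j\|^2\big)^{1/2}$, where the last sum is $O\!\big(\tfrac{1}{1-\alpha_1-\alpha_2}\|\vz_0-\vz^*\|^2\big)$ again via Lemma~\ref{lem:sum_of_squares}; here the monotone contraction $\|\vz_k-\vz^*\|\le\|\vz_0-\vz^*\|$ from Part (a) is what keeps $\{\nabla\vF(\vz_{k_t})\}$ bounded (hence $\|\hat{\mH}_t\|_{\op}\le2$). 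Writing $S\mydef\sum_t\ell_t(\mB_t)$, using $\sum_{t=0}^{T-1}\delta_t^2\le\tfrac12\sqrt T$ for $\delta_t=\tfrac12(t+1)^{-1/4}$ and $\sum_t\delta_t\sqrt{\ell_t(\mB_t)}\le\sqrt{\sum_t\delta_t^2}\sqrt S$, the display becomes a scalar quadratic $\tfrac12 S\le A+3\sqrt2\,L_1 T^{1/4}\sqrt S$ with $A=O\!\big(\|\mB_0-\nabla\vF(\vz_0)\|_F^2+CL_2^2\|\vz_0-\vz^*\|^2+\tfrac{dL_1L_2\|\vz_0-\vz^*\|}{\sqrt{1-\alpha_1-\alpha_2}}\sqrt N\big)$; solving it and using $T\le N$ yields $\tfrac{\sqrt S}{L_1}\le Q_1+Q_2N^{1/4}$ with $Q_1,Q_2$ as in~\eqref{eq:def_Q1}--\eqref{eq:def_Q2}. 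Finally, Lemma~\ref{lem:stepsize_bnd} gives $\sum_{k=0}^{N-1}\tfrac1{\eta_k}\le\tfrac1{(1-\beta)\sigma_0}+\tfrac{\sqrt{NS}}{(1-\beta)\alpha_2\beta}$; substituting into~\eqref{eq:after_Cauchy_Schwarz}, using $\sigma_0\ge\tfrac{\alpha_2\beta}{5L_1}$ and $\sqrt S\le L_1(Q_1+Q_2N^{1/4})$, and taking the maximum over $\vz\in\calD$ produces exactly the stated bound.

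The hard part is the dynamic-regret estimate and the path-length bound. Two issues require care. First, \textbf{Option II} — needed so that the $\hat{\mB}_t$, and hence the $\mB_t$, are \emph{exactly} feasible (which keeps $\|\mB_t\|_{\op}$ bounded and makes $\mathrm{Gap}$ usable at the averaged iterate) — introduces the perturbation term $6L_1\sum_t\delta_t\sqrt{\ell_t(\mB_t)}$; a constant $\delta_t$ would inflate it to $\Theta(L_1\sqrt T\sqrt S)$ and kill the improvement, so $\delta_t$ must decay, and balancing it against the per-round Lanczos cost $O(\sqrt{1/\delta_t})$ singles out $\delta_t\asymp t^{-1/4}$. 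Second, and more fundamentally, $P_T$ can only be bounded by $O\!\big(\sqrt d\,\sqrt N\,L_2\|\vz_0-\vz^*\|/L_1\big)$: one loses a $\sqrt N$ factor in passing from the $\ell^2$-summability of $\{\|\vz_{j+1}-\vz_j\|\}$ (all that Lemma~\ref{lem:one_step}/Proposition~\ref{prop:HPE}(b) provide) to the $\ell^1$-sum that enters the dynamic regret. It is precisely this $N^{1/2}$ loss in $P_T$ — propagated through $\sqrt S$ and then through the $1/N^2$ prefactor in~\eqref{eq:after_Cauchy_Schwarz} — that degrades the rate from the $O(1/k^{1.5})$ one might hope for by analogy with NPE to the $O(\sqrt d/k^{1.25})$ claimed here.
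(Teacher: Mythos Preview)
Your proposal is correct and follows essentially the same route as the paper: dynamic regret against $\mH_t=\nabla\vF(\vz_{k_t})$, comparator loss $\sum_t\ell_t(\mH_t)\le\tfrac{CL_2^2}{2}\|\vz_0-\vz^*\|^2$, path-length control via $\sum_k\|\vz_{k+1}-\vz_k\|^2\le\tfrac{2}{1-\alpha_1-\alpha_2}\|\vz_0-\vz^*\|^2$, and the decaying $\delta_t\asymp t^{-1/4}$ to tame the Option~II perturbation. The only technical difference is in how the implicit term $6L_1\sum_t\delta_t\sqrt{\ell_t(\mB_t)}$ is handled: the paper resolves it \emph{per round} via the elementary inequality $x\le A+B\sqrt x\Rightarrow x\le 2A+B^2$ (Lemma~\ref{lem:x_sqrtx}) and then sums, whereas you sum first and apply Cauchy--Schwarz to get a single quadratic in $\sqrt S$; both yield the same $O(L_1^2\sqrt T)$ contribution.

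One small correction: your aside that ``the monotone contraction $\|\vz_k-\vz^*\|\le\|\vz_0-\vz^*\|$ is what keeps $\{\nabla\vF(\vz_{k_t})\}$ bounded'' is a misattribution. The bound $\|\hat{\mH}_t\|_{\op}\le 2$ (equivalently $\|\nabla\vF(\vz_{k_t})\|_{\op}\le L_1$) holds at \emph{every} point by Lemma~\ref{lem:jacobian}, directly from Assumption~\ref{assum:operator_lips}; no iterate-level information is needed. The contraction is not used anywhere in the argument---the path-length bound comes from the telescoping $\ell^2$-summability in Lemma~\ref{lem:one_step}, not from monotone decrease of $\|\vz_k-\vz^*\|$.
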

  Similar to the strongly monotone setting, Theorem~\ref{thm:main_monotone} demonstrates two global convergence rates for QNPE. Part (a) shows that QNPE converges at least at a rate of $\bigO\left(\frac{1}{k}\right)$, which matches the rate of EG and is known to be optimal in the regime when the number of iterations $k$ is $\bigO(d)$~\cite{ouyang2021lower}. Moreover, Part (b) shows that QNPE achieves a rate of $\bigO\left(\frac{1}{k^2} + \frac{Q_1}{k^{1.5}} + \frac{Q_2}{k^{1.25}}\right)$, where $Q_1 = \bigO\left(\frac{\|\mB_0 - \nabla \vF(\vz_0)\|_F +L_2 \|\vz_0-\vz^*\|}{L_1}\right)$ and $Q_2 = \bigO\left(\sqrt{\frac{d L_2\|\vz_0-\vz^*\|}{L_1}}\right)$. Since Lemma~\ref{lem:jacobian} implies that $\|\mB_0 - \nabla \vF(\vz_0)\|_F \leq L_1\sqrt{d}$, in the worst case, $ Q_1 = \bigO(\sqrt{d})$. Hence, the leading term is $\bigO(\frac{\sqrt{d}}{k^{1.25}})$, which outperforms the rate in Part (a) when $k = \Omega(d^2)$. To the best of our knowledge, this is the first result demonstrating a theoretical advantage of quasi-Newton methods over EG for solving monotone nonlinear equations. 

  The rest of this section is devoted to the proof of Theorem~\ref{thm:main_monotone}. Using the same union bound argument as in Theorem~\ref{thm:main}, we assume that every call of $\mathsf{ExtEvec}$ and $\mathsf{MaxSvec}$ is successful, which holds with probability at least $1-p$. 

  \begin{proof}[Proof of Theorem~\ref{thm:main_monotone}(a)]
    To begin with, note that Subroutine~\ref{alg:jacobian_approx} with \textbf{Option II} guarantees that  $\hat{\mB}_k \in \mathcal{C}$ by Lemma~\ref{lem:regret_reduction}.  As a corollary of Lemma~\ref{lem:transform}, we have 
    $\frac{1}{2}(\mB_k + \mB_k^\top) \succeq 0$ and $\|\mB_k\|_{\op} \leq 4L_1$ for all $k\geq 0$. Thus, following the same arguments as in Lemma~\ref{lem:stepsize_const_bound}, we can show that $\eta_k \geq \frac{\alpha_2 \beta}{5L_1}$ by induction.  
    This leads to $\sum_{k=0}^{N-1} \eta_k \geq \frac{\alpha_2\beta}{5 L_1} N$, and we obtain from Proposition~\ref{prop:HPE}(b) that $\gap(\bar{\vz}_N; \calD) \leq \frac{\max_{\vz \in \calD}\,\|\vz_0-\vz\|^2}{2 \sum_{k=0}^{N-1} \eta_k} \leq \frac{5 L_1\|\vz_0-\vz\|^2}{2\alpha_2\beta N}$. This proves the first convergence rate in (a). 
  \end{proof}

  Next, we prove the second convergence rate in (b). Similar to the proof of Theorem~\ref{thm:main}, our starting point is the observations in~\eqref{eq:after_Cauchy_Schwarz} and~\eqref{eq:goal}, which implies that it is sufficient to prove an upper bound on the cumulative loss $\sum_{k=0}^{N-1} \ell_k(\mB_k)$. Again, recall that $\ell_k(\mB_k) = 0$ when $k \notin \calB$ by the definition in \eqref{eq:loss_of_jacobian}, 
  and we relabel the indices in $\calB$ by $t = 0, \dots, T-1$ with $T \leq N$. 

  \begin{proof}[Proof of Theorem~\ref{thm:main_monotone}(b)]
    
  We consider the following three steps.

\vspace{.1em}
\noindent\textbf{Step 1:} 
First, we bound the cumulative loss $\sum_{t=0}^{T-1}\ell_t(\mB_t)$ incurred by our online learning algorithm in Subroutine~\ref{alg:jacobian_approx}. 
However, different from the proof in Theorem~\ref{thm:main}, we will upper bound the \emph{dynamic regret} instead of the static regret. This is because, in the strongly monotone setting, the iterates are guaranteed to converge at least linearly to the optimal solution $\vz^*$ (Theorem~\ref{thm:main}(a)). This results in less variation in the loss functions $\{\ell_t\}_{t=0}^{T-1}$, and in particular we can show that $\sum_{t=0}^{T-1} \ell_t(\mH^*)$ is bounded (Lemma~\ref{lem:comparator}). In contrast, without linear convergence, we need to consider a time-varying sequence $\{\mH_t\}_{t=0}^{T-1}$ to control the cumulative loss. Specifically, we present a ``small-loss bound'' similar to Lemma~\ref{lem:small_loss} in the following lemma. %

\begin{lemma}
  \label{lem:dynamic_small_loss}
  In Subroutine~\ref{alg:jacobian_approx}, choose \textbf{Option II} with $\rho = \frac{1}{81}$ and $\delta_t = \frac{1}{2(t+1)^{\nicefrac{1}{4}}}$. 
  For any sequence of matrices $\{\mH_t\}_{t=0}^{T-1}$ such that $\mH_t \in \calZ$, we have 
    $\sum_{t=0}^{T-1} \ell_t({\mB}_t)  \leq 4\sum_{t=0}^{T-1} \ell_t({\mH_t}) + 162\|\mB_0-{\mH_0}\|_F^2  + 648\sqrt{d}L_1\sum_{t=0}^{T-2} \|\mH_{t+1} - \mH_t\|_{F} + 72L_1^2\sqrt{T}$.
\end{lemma}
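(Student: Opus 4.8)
The plan is to mirror the proof of Lemma~\ref{lem:small_loss}, but invoking the dynamic-regret version of the reduction in Lemma~\ref{lem:regret_reduction}(b) (Option~II) instead of the static one, and then paying the extra price for a moving competitor sequence together with the extra $-\delta_t\langle \mG_t,\hat{\mB}_t\rangle$ term that Option~II introduces. First I would set up the notation exactly as in Lemma~\ref{lem:small_loss}: apply Lemma~\ref{lem:regret_reduction}(b) with $\vx_t=\hat{\mB}_t$, $\vu=\hat{\mH}_t\triangleq\frac{1}{L_1}(\mH_t-(L_1+\mu)\mI)=\frac{1}{L_1}(\mH_t-L_1\mI)$ (since $\mu=0$), $\vg_t=\mG_t\triangleq\frac{1}{L_1}\mP\nabla\ell_t(\mB_t)$, $\tilde{\vg}_t=\tilde{\mG}_t$, $\vw_t=\mW_t$. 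This gives, for each $t$,
\begin{equation*}
  \ell_t(\mB_t)-\ell_t(\mH_t)\leq L_1^2\Bigl(\tfrac{1}{2\rho}\|\mW_t-\hat{\mH}_t\|_F^2-\tfrac{1}{2\rho}\|\mW_{t+1}-\hat{\mH}_t\|_F^2+\tfrac{\rho}{2}\|\tilde{\mG}_t\|_F^2-\delta_t\langle\mG_t,\hat{\mB}_t\rangle\Bigr),
\end{equation*}
using the convexity/subspace identity \eqref{eq:convexity_subspace}. Because Option~II guarantees $\hat{\mB}_t\in\calC$, we have $\|\hat{\mB}_t\|_\op\leq 3$, hence $|\langle\mG_t,\hat{\mB}_t\rangle|\leq 3\|\mG_t\|_*$ and, via \eqref{eq:surrogate_loss_II} with $1+\delta_t\leq\frac{3}{2}$, a bound of the shape $\|\tilde{\mG}_t\|_F\leq\|\mG_t\|_F+\frac92\|\mG_t\|_*\|\mS_t\|_F\leq\frac{11}{2}\|\mG_t\|_*\leq\frac{11}{2L_1}\|\nabla\ell_t(\mB_t)\|_*=\frac{11}{L_1}\sqrt{\ell_t(\mB_t)}$, exactly as in \eqref{eq:surrogate_bound_loss}; and $-\delta_t\langle\mG_t,\hat{\mB}_t\rangle\leq 3\delta_t\|\mG_t\|_*\leq\frac{6\delta_t}{L_1}\sqrt{\ell_t(\mB_t)}\leq\frac{6\delta_t}{L_1}\cdot\frac{\sqrt{\ell_t(\mB_t)}}{1}$. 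Here I will use Young's inequality $\frac{6\delta_t}{L_1}\sqrt{\ell_t(\mB_t)}\leq\frac{\ell_t(\mB_t)}{2L_1^2}+18\delta_t^2$ so that the $\ell_t(\mB_t)$ term can be absorbed on the left, and $\sum_t\delta_t^2=\frac14\sum_{t=0}^{T-1}(t+1)^{-1/2}=O(\sqrt T)$ produces the $72L_1^2\sqrt{T}$ tail (after reconciling constants with the slightly relaxed $\rho$). Collecting: with $\rho$ small enough ($\rho=\frac{1}{81}$), $\frac{\rho L_1^2}{2}\|\tilde{\mG}_t\|_F^2\leq\frac{121\rho}{2}\ell_t(\mB_t)\leq\frac{3}{4}\ell_t(\mB_t)$ or so, leaving a coefficient strictly below $1$ on $\ell_t(\mB_t)$; rearranging yields $\ell_t(\mB_t)\leq 4\ell_t(\mH_t)+162L_1^2\bigl(\|\mW_t-\hat{\mH}_t\|_F^2-\|\mW_{t+1}-\hat{\mH}_t\|_F^2\bigr)+72L_1^2\delta_t^2$ (constants to be fixed in the final write-up).

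The genuinely new step compared with Lemma~\ref{lem:small_loss} is the telescoping of $\sum_t\bigl(\|\mW_t-\hat{\mH}_t\|_F^2-\|\mW_{t+1}-\hat{\mH}_t\|_F^2\bigr)$ when $\hat{\mH}_t$ varies with $t$. The standard trick is to write
\begin{align*}
  \sum_{t=0}^{T-1}\bigl(\|\mW_t-\hat{\mH}_t\|_F^2-\|\mW_{t+1}-\hat{\mH}_t\|_F^2\bigr)
  &=\|\mW_0-\hat{\mH}_0\|_F^2-\|\mW_T-\hat{\mH}_{T-1}\|_F^2\\
  &\quad+\sum_{t=1}^{T-1}\bigl(\|\mW_t-\hat{\mH}_t\|_F^2-\|\mW_t-\hat{\mH}_{t-1}\|_F^2\bigr),
\end{align*}
and bound each difference in the last sum by $\|\hat{\mH}_t-\hat{\mH}_{t-1}\|_F\cdot\|2\mW_t-\hat{\mH}_t-\hat{\mH}_{t-1}\|_F$. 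Since $\mW_t$ lives in the Frobenius ball of radius $\sqrt{d}$ (that is the projection set in Subroutine~\ref{alg:jacobian_approx}) and $\hat{\mH}_t\in\calC$ has $\|\hat{\mH}_t\|_F\leq\sqrt{d}\|\hat{\mH}_t\|_\op\leq 3\sqrt{d}$ — actually $\|\hat{\mH}_t\|_F\leq\sqrt d$ after the rescaling, I will double-check the exact radius — the factor $\|2\mW_t-\hat{\mH}_t-\hat{\mH}_{t-1}\|_F$ is $O(\sqrt{d})$, so $\sum_{t=1}^{T-1}(\cdots)\leq c\sqrt{d}\sum_{t=1}^{T-1}\|\hat{\mH}_t-\hat{\mH}_{t-1}\|_F=\frac{c\sqrt d}{L_1}\sum_{t=0}^{T-2}\|\mH_{t+1}-\mH_t\|_F$. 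Multiplying back by $162 L_1^2$ gives the $648\sqrt{d}L_1\sum_t\|\mH_{t+1}-\mH_t\|_F$ term and $162\|\mB_0-\mH_0\|_F^2$ from the $\|\mW_0-\hat{\mH}_0\|_F^2$ boundary term (using $\mW_0=\frac{1}{L_1}(\mB_0-L_1\mI)$, $\hat{\mH}_0=\frac{1}{L_1}(\mH_0-L_1\mI)$, so $L_1^2\|\mW_0-\hat{\mH}_0\|_F^2=\|\mB_0-\mH_0\|_F^2$), dropping the nonpositive $-\|\mW_T-\hat{\mH}_{T-1}\|_F^2$.

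The main obstacle I anticipate is bookkeeping of the numerical constants so that the coefficient on $\ell_t(\mB_t)$ on the right stays strictly below $1$ after absorbing both $\frac{\rho L_1^2}{2}\|\tilde{\mG}_t\|_F^2$ and the Young-split of $-\delta_t\langle\mG_t,\hat{\mB}_t\rangle$ — the statement advertises a final coefficient $4$ on $\sum_t\ell_t(\mH_t)$, which forces the left-hand coefficient to be $\geq\frac14$, so the slack is tight and one must be careful that $\|\hat{\mB}_t\|_\op\leq 3$ (not $4.5$ as in Option~I, since here there is no $(1+\delta)$ inflation on the action) and that $\delta_t\leq\frac12$ for all $t\geq0$. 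A secondary subtlety is that Lemma~\ref{lem:dynamic_small_loss} is stated with $\rho=\frac{1}{81}$ whereas Theorem~\ref{thm:main_monotone} instantiates Subroutine~\ref{alg:jacobian_approx} with $\rho=\frac{1}{121}$; I will note that the bound only requires $\rho$ small enough and remains valid (with the stated constants) for $\rho=\frac{1}{121}$ as well, so no inconsistency arises when the lemma is applied. Everything else — the self-bounding property $\|\nabla\ell_t(\mB)\|_F\leq\|\nabla\ell_t(\mB)\|_*=2\sqrt{\ell_t(\mB)}$ (Lemma~\ref{lem:loss}), $\|\mS_t\|_F\leq1$ (Definitions~\ref{def:extevec}–\ref{def:MaxSvec}), and the per-step inequality from Lemma~\ref{lem:regret_reduction}(b) — is imported verbatim.
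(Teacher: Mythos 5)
Your plan is essentially the paper's proof: invoke Lemma~\ref{lem:regret_reduction}(b), use $\|\hat{\mB}_t\|_{\op}\leq 3$ and $\|\mS_t\|_F\leq 1$ to bound the surrogate gradient via the self-bounding property of $\ell_t$ (Lemma~\ref{lem:loss}), absorb the $\|\tilde{\mG}_t\|_F^2$ term, handle the extra $-\delta_t\langle\mG_t,\hat{\mB}_t\rangle$ term, and telescope the moving-competitor sequence $\{\hat{\mH}_t\}$ paying a path-length term that is $O(\sqrt{d})$ times $\sum_t\|\mH_{t+1}-\mH_t\|_F$ because all points live in a ball of radius $O(\sqrt{d})$. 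The reindexing you propose for the telescope ($\|\mW_0-\hat{\mH}_0\|^2-\|\mW_T-\hat{\mH}_{T-1}\|^2+\sum_{t\geq 1}(\|\mW_t-\hat{\mH}_t\|^2-\|\mW_t-\hat{\mH}_{t-1}\|^2)$) is exactly the paper's; the paper bounds each summand by factoring $a^2-b^2$ while you use the inner-product identity $\|a-b\|^2-\|a-c\|^2=\langle b-c,\,b+c-2a\rangle$ with Cauchy--Schwarz, which is equivalent and gives the same $O(\sqrt d)\,\|\hat{\mH}_t-\hat{\mH}_{t-1}\|_F$ per step.

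The one place where your route genuinely differs from the paper is the treatment of the $\delta_t\sqrt{\ell_t(\mB_t)}$ cross term. The paper first rearranges to an implicit bound $\ell_t(\mB_t)\leq 2\ell_t(\mH_t)+81L_1^2(\cdots)+12L_1\delta_t\sqrt{\ell_t(\mB_t)}$ and then invokes the elementary fact (their Lemma~\ref{lem:x_sqrtx}) that $x\leq A+B\sqrt{x}$ implies $x\leq 2A+B^2$, directly producing $4\ell_t(\mH_t)+162L_1^2(\cdots)+144L_1^2\delta_t^2$. You instead apply Young's inequality upstream to split $\delta_t\sqrt{\ell_t(\mB_t)}$ and then try to absorb everything on the left, which is a perfectly workable alternative. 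However, note that the specific Young split you wrote, $\tfrac{6\delta_t}{L_1}\sqrt{\ell_t(\mB_t)}\leq\tfrac{\ell_t(\mB_t)}{2L_1^2}+18\delta_t^2$, puts a full $\tfrac12\ell_t(\mB_t)$ on the right; together with the $\tfrac12\ell_t(\mB_t)$ (or, with your $\tfrac{11}{L_1}$ surrogate bound and $\rho=\tfrac1{81}$, the $\approx\tfrac34\ell_t(\mB_t)$) coming from $\tfrac{\rho L_1^2}{2}\|\tilde{\mG}_t\|_F^2$, the total coefficient on $\ell_t(\mB_t)$ on the right reaches or exceeds $1$, and the rearrangement becomes vacuous. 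You flagged the slack as tight, and indeed it is: you need a weighted Young, e.g.\ $6L_1\delta_t\sqrt{\ell_t(\mB_t)}\leq\tfrac14\ell_t(\mB_t)+36L_1^2\delta_t^2$, so that the left-hand coefficient is $\tfrac14$, which then multiplies out to exactly $4\ell_t(\mH_t)+162L_1^2(\cdots)+144L_1^2\delta_t^2$ and matches the paper; the $72L_1^2\sqrt{T}$ tail then comes from $\sum_t\delta_t^2\leq\tfrac12\sqrt{T}$. Your observation about the $\rho=\tfrac1{81}$ (lemma) vs.\ $\rho=\tfrac1{121}$ (theorem) mismatch, and the question of whether the surrogate-gradient bound should carry $4.5$ or $5.5$ times $\|\mG_t\|_*$, are genuine bookkeeping wrinkles already present in the paper itself; the structure of the argument is unaffected.
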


\begin{proof}
Recall that $\calZ$ is contained in the linear subspace $\calL$ (see \eqref{eq:feasible_set}) and $\mP$ denotes the orthogonal projection matrix associated with $\calL$.
By letting $\vx_t = \hat{\mB}_t$, $\vu = \hat{\mH}_t \triangleq \frac{1}{L_1}(\mH_t-L_1\mI)$, $\vg_t = \mG_t \triangleq \frac{1}{L_1}\mP \nabla \ell_t(\mB_t)$, $\tilde{\vg}_t = \tilde{\mG}_t$, $\vw_t = \mW_t$ in Lemma~\ref{lem:regret_reduction}, we obtain:  
\begin{enumerate}[(i)]
  \item $\hat{\mB}_t \in \mathcal{C}$, which means $\|\hat{\mB}_t\|_{\op} \leq 3$.
  \item It holds that 
  \begin{align}
    \!\!\!\!\!\!\!\!\!\langle \mG_t, \hat{\mB}_t-\hat{\mH}_t \rangle & \!\leq\! \frac{\|\mW_t-\hat{\mH}_t\|_F^2}{2\rho}\!-\!\frac{\|\mW_{t+1}\!-\!\hat{\mH}_t\|_F^2}{2\rho}\!+\!\frac{\rho}{2}\|\tilde{\mG}_t\|_F^2 \!-\! \delta_t \langle \mG_t, \hat{\mB}_t \rangle, \label{eq:linearized_loss_matrix_monotone} \\
    \|\tilde{\mG}_t\|_F &\leq \|\mG_t\|_F + (1+\delta_t)|\langle \mG_t, \hat{\mB}_t\rangle|\|\mS_t\|_F . \label{eq:surrogate_gradient_bound_monotone}
  \end{align}
\end{enumerate}
First, note that $\|\mS_t\|_F \leq 1$ by Definition~\ref{def:extevec} and
$|\langle \mG_t, \hat{\mB}_t\rangle| \leq \|\mG_t\|_* \|\hat{\mB}_t\|_{\op} \leq 3 \|\mG_t\|_*$. %
By using $\delta_t = \frac{1}{2{(t+1)^{\nicefrac{1}{4}}}} \leq \frac{1}{2}$ together with \eqref{eq:surrogate_gradient_bound_monotone}, we get 
\begin{equation}\label{eq:surrogate_bound_loss_monotone}
\|\tilde{\mG}_t\|_F \leq \|\mG_t\|_F+3(1+\delta_t)\|\mG_t\|_* \leq 4.5\|\mG_t\|_* \leq \frac{9}{L_1}\sqrt{\ell_t({\mB}_t)},
\end{equation}
where we used $\mG_t = \frac{1}{L_1}\mP \nabla \ell_t(\mB_t)$ and Lemma~\ref{lem:loss} in the last inequality. 
Furthermore, since $\ell_t$ is convex and $\mB_t,\mH_t \in \calL$, we have $\ell_t({\mB}_t) - \ell_t(\mH_t) \leq L_1^2 \langle \mG_t, \hat{\mB}_t-\hat{\mH}_t \rangle$ as shown in \eqref{eq:convexity_subspace}.  
Therefore, by \eqref{eq:linearized_loss_matrix_monotone} and \eqref{eq:surrogate_bound_loss_monotone} we get $\ell_t({\mB}_t) - \ell_t(\mH_t) \leq \frac{L_1^2}{2\rho}\|\mW_t-\hat{\mH}_t\|_F^2-\frac{L_1^2}{2\rho}\|\mW_{t+1}-\hat{\mH}_t\|_F^2+\frac{\rho L_1^2}{2} \|\tilde{\mG}_t\|_F^2 +3L_1^2\delta_t\|\mG_t\|_{*} \leq \frac{L_1^2}{2\rho}\|\mW_t-\hat{\mH}_t\|_F^2-\frac{L_1^2}{2\rho}\|\mW_{t+1}-\hat{\mH}_t\|_F^2+40.5\rho\ell_t({\mB}_t) + 6L_1 \delta_t \sqrt{\ell_t({\mB}_t)}$. 
Since $\rho = \frac{1}{81}$, by rearranging and simplifying terms in the above inequality, we obtain
\begin{equation}\label{eq:implicit_loss_bound}
\ell_t({\mB}_t) \leq 2\ell_t({\mH_t})+ {81 L_1^2}\|\mW_t-\hat{\mH}_t\|_F^2-{81 L_1^2}\|\mW_{t+1}-\hat{\mH}_t\|_F^2 + 12L_1 \delta_t\sqrt{\ell_t({\mB}_t)}.
\end{equation}
We observe that the above upper bound on $\ell_t(\mB_t)$ is implicit since it appears on both sides of \eqref{eq:implicit_loss_bound}. To derive an explicit upper bound, we apply the following lemma. 
\begin{lemma}\label{lem:x_sqrtx}
  If the real number $x$ satisfies $x \leq A+B\sqrt{x}$, then we have $x \leq 2A+B^2$. 
\end{lemma}
\begin{proof}
  By using the assumption, we have $\left(\sqrt{x}-\frac{B}{2}\right)^2 \leq A+\frac{B^2}{4}$. 
  Hence, we obtain that $x \leq \left(\sqrt{A+\frac{B^2}{4}}+\frac{B}{2}\right)^2 \leq 2A+B^2$.
\end{proof}
By applying Lemma~\ref{lem:x_sqrtx} to \eqref{eq:implicit_loss_bound} with $A = 2\ell_t({\mH_t})+ {81 L_1^2}\|\mW_t-\hat{\mH}_t\|_F^2-{81 L_1^2}\|\mW_{t+1}-\hat{\mH}_t\|_F^2$ and $B =12L_1 \delta_t$, we get  
\begin{equation}\label{eq:upper_bound_on_ellt_dynamic}
  \ell_t(\mB_t) \leq 4\ell_t({\mH_t})+ {162 L_1^2}\left(\|\mW_t-\hat{\mH}_t\|_F^2-\|\mW_{t+1}-\hat{\mH}_t\|_F^2\right) + 144 L_1^2 \delta_t^2. 
\end{equation}
Furthermore, note that $\|\mW_{t+1}-\hat{\mH}_{t+1}\|_F^2-\|\mW_{t+1}-\hat{\mH}_t\|_F^2=(\|\mW_{t+1}-\hat{\mH}_{t+1}\|_F+\|\mW_{t+1}-\hat{\mH}_t\|_F)(\|\mW_{t+1}-\hat{\mH}_{t+1}\|_F-\|\mW_{t+1}-\hat{\mH}_t\|_F) \leq 4\sqrt{d} \|\hat{\mH}_{t+1}-\hat{\mH}_t\|_F = \frac{4\sqrt{d}}{L_1} \|{\mH}_{t+1}-{\mH}_t\|_F$,  
where in the last inequality we used that $\hat{\mH}_{t},\hat{\mH}_{t+1},\mW_{t+1}\in \mathcal{B}_{\sqrt{d}}(0)$ and the triangle inequality. 
  Therefore, we can write 
  $\sum_{t=0}^{T-1}\bigl(\|\mW_t-\hat{\mH}_t\|_F^2-\|\mW_{t+1}-\hat{\mH}_t\|_F^2\bigr) \leq \|\mW_0 - \hat{\mH}_0\|_F^2 + \sum_{t=0}^{T-2}\bigl(\|\mW_{t+1}-\hat{\mH}_{t+1}\|_F^2-\|\mW_{t+1}-\hat{\mH}_t\|_F^2\bigr) \leq \|\mW_0 - \hat{\mH}_0\|_F^2 + \frac{4\sqrt{d}}{L_1} \sum_{t=0}^{T-2} \|\mH_{t+1}-\mH_t\|_F$. 
By summing the inequality in \eqref{eq:upper_bound_on_ellt_dynamic} from $t=0$ to $T-1$, we obtain that $\sum_{t=0}^{T-1} \ell_t({\mB}_t) \leq 4 \sum_{t=0}^{T-1} \ell_t({\mH_t})+ {162 L_1^2}\|\mW_0-\hat{\mH}_0\|_F^2 + {648\sqrt{d}}L_1\sum_{t=0}^{T-2}  \|{\mH}_{t+1}-{\mH}_t\|_F + 144 L_1^2 \sum_{t=0}^{T-1} \delta_t^2$.  
Finally, note that $\mW_0 - \hat{\mH}_0 = \frac{1}{L_1}(\mB_0 - \mH_0)$ and $\sum_{t=0}^{T-1} \delta_t^2 = \frac{1}{4}\sum_{t=1}^T \frac{1}{t^{\nicefrac{1}{2}}} \leq \frac{1}{2}\sqrt{T}$. 
This completes the proof by substituting these results into the above inequality. 
\end{proof}

Note that in Lemma~\ref{lem:dynamic_small_loss}, we have the freedom to choose any competitor sequence $\{\mH_t\}_{t\geq 0}$ in the set $\mathcal{Z}$. To further obtain an explicit bound, we propose to select $\mH_t = \nabla \vF(\vz_t)$ for $t=0,\dots,T-1$, which leads to our next step.

\textbf{Step 2:} For the choice of $\mH_t = \nabla^2 F(\vz_t)$ for all $t\geq 0$, we upper bound the cumulative loss $\sum_{t=0}^{T-1} \ell_t({\mH}_t)$ and the path-length $\sum_{t=0}^{T-2} \|\mH_{t+1}-\mH_t\|_F$. To begin with, we present the following lemma. 

\begin{lemma}\label{lem:sum_of_squares_monotone}
  If Assumption~\ref{assum:monotone} holds, then $\sum_{k=0}^{N-1} \|\hat{\vz}_{k}-\vz_k\|^2 \!\leq\! \frac{\|\vz_0-\vz^*\|^2}{1-(\alpha_1+\alpha_2)}$ and $\sum_{k=0}^{N-1} \|{\vz}_{k+1}-\vz_k\|^2 \leq \frac{2\|\vz_0-\vz^*\|^2}{1-(\alpha_1+\alpha_2)}$. 
\end{lemma}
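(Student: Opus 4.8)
The plan is to mimic the proof of Lemma \ref{lem:sum_of_squares} (the strongly monotone counterpart), now specializing to $\mu = 0$ and additionally extracting a bound on $\sum_k \|\vz_{k+1} - \vz_k\|^2$. The starting point is inequality \eqref{eq:one_step_monotone} from the proof of Proposition \ref{prop:HPE}(b), which reads
\[
0 \leq \tfrac{1}{2}\|\vz_k - \vz^*\|^2 - \tfrac{1}{2}\|\vz_{k+1} - \vz^*\|^2 - \tfrac{1-\alpha}{2}\bigl(\|\hat{\vz}_k - \vz_k\|^2 + \|\hat{\vz}_k - \vz_{k+1}\|^2\bigr),
\]
with $\alpha = \alpha_1 + \alpha_2 < 1$. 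Rearranging gives $\tfrac{1-\alpha}{2}\bigl(\|\hat{\vz}_k - \vz_k\|^2 + \|\hat{\vz}_k - \vz_{k+1}\|^2\bigr) \leq \tfrac{1}{2}\|\vz_k - \vz^*\|^2 - \tfrac{1}{2}\|\vz_{k+1} - \vz^*\|^2$.

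For the first bound, I would drop the nonnegative term $\|\hat{\vz}_k - \vz_{k+1}\|^2$, telescope the sum from $k=0$ to $N-1$, use $\|\vz_N - \vz^*\|^2 \geq 0$, and divide by $\tfrac{1-\alpha}{2}$ to obtain $\sum_{k=0}^{N-1}\|\hat{\vz}_k - \vz_k\|^2 \leq \tfrac{1}{1-\alpha}\|\vz_0 - \vz^*\|^2$. For the second bound, the key extra ingredient is the triangle-inequality / parallelogram step: $\|\vz_{k+1} - \vz_k\|^2 \leq 2\|\vz_{k+1} - \hat{\vz}_k\|^2 + 2\|\hat{\vz}_k - \vz_k\|^2$, so that $\|\vz_{k+1} - \vz_k\|^2 \leq 2\bigl(\|\hat{\vz}_k - \vz_k\|^2 + \|\hat{\vz}_k - \vz_{k+1}\|^2\bigr)$. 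Summing this over $k$ and invoking the telescoped bound $\sum_{k=0}^{N-1}\bigl(\|\hat{\vz}_k - \vz_k\|^2 + \|\hat{\vz}_k - \vz_{k+1}\|^2\bigr) \leq \tfrac{1}{1-\alpha}\|\vz_0 - \vz^*\|^2$ yields $\sum_{k=0}^{N-1}\|\vz_{k+1} - \vz_k\|^2 \leq \tfrac{2}{1-\alpha}\|\vz_0 - \vz^*\|^2$, which is exactly the claimed bound since $\alpha = \alpha_1 + \alpha_2$.

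This lemma is essentially routine — there is no real obstacle, since all the work was already done in establishing \eqref{eq:one_step_monotone}; the only thing to be careful about is keeping the constant $2$ in the right place when passing from $\|\hat{\vz}_k - \vz_k\|^2 + \|\hat{\vz}_k - \vz_{k+1}\|^2$ to $\|\vz_{k+1} - \vz_k\|^2$, and noting that both $\|\hat{\vz}_k - \vz_k\|^2$ and $\|\hat{\vz}_k - \vz_{k+1}\|^2$ appear on the left side of \eqref{eq:one_step_monotone} with the same coefficient, so their sum telescopes cleanly. The payoff comes later: these bounds feed into Step 2 of the proof of Theorem \ref{thm:main_monotone}(b), where $\sum_k\|\vz_{k+1}-\vz_k\|^2$ is needed to control the path-length $\sum_t\|\mH_{t+1}-\mH_t\|_F = \sum_t\|\nabla\vF(\vz_{t+1})-\nabla\vF(\vz_t)\|_F$ via the Jacobian Lipschitz assumption (Assumption \ref{assum:jacobian_lips}), and $\sum_k\|\hat{\vz}_k-\vz_k\|^2$ controls $\sum_t \ell_t(\mH_t)$ just as in Lemma \ref{lem:comparator}.
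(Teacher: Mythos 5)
Your proof is correct and follows essentially the same route as the paper: rearrange \eqref{eq:one_step_monotone}, telescope, drop the $\|\hat{\vz}_k-\vz_{k+1}\|^2$ terms for the first bound, and invoke $\|\vz_{k+1}-\vz_k\|^2 \le 2\bigl(\|\hat{\vz}_k-\vz_k\|^2+\|\hat{\vz}_k-\vz_{k+1}\|^2\bigr)$ together with the same telescoped sum for the second. No gaps.
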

\begin{proof}
  Recall the inequality~\eqref{eq:one_step_monotone} in the proof of Proposition~\ref{prop:HPE} and that $\alpha = \alpha_1 + \alpha_2 < 1$. By rearranging the terms, we can further derive that 
  \begin{equation*}
    \frac{1-\alpha}{2}(\|\hat{\vz}_k - \vz_k\|^2 + \|\hat{\vz}_k-\vz_{k+1}\|^2) \leq \frac{1}{2}\|\vz_k- \vz^*\|^2 - \frac{1}{2}\|\vz_{k+1} - \vz^*\|^2.
\end{equation*} 
By summing the above inequality from $k=0$ to $k=N-1$, we obtain that 
\begin{equation}\label{eq:sum_of_squares_last}
  \frac{1-\alpha}{2} \sum_{k=0}^{N-1} (\|\hat{\vz}_k - \vz_k\|^2 + \|\hat{\vz}_k-\vz_{k+1}\|^2) 
  \leq \frac{1}{2}\|\vz_0-\vz^*\|^2.
\end{equation}
Since $\alpha < 1$, by dropping the second non-negative term from the left-hand side of~\eqref{eq:sum_of_squares_last},  this immediately leads to
the first inequality in Lemma~\ref{lem:sum_of_squares_monotone}.
 In addition, note that $\|\vz_{k}-\vz_{k+1}\|^2 = \|\vz_k-\hat{\vz}_{k} + \hat{\vz}_k-\vz_{k+1}\|^2 \leq 2\left(\|\vz_k-\hat{\vz}_{k}\|^2 + \|\hat{\vz}_k-\vz_{k+1}\|^2\right)$. 
 Thus, from~\eqref{eq:sum_of_squares_last}, we also have 
\begin{equation*}
  \sum_{k=0}^{N-1} \|\vz_{k}-\vz_{k+1}\|^2 \leq 2\sum_{k=0}^{N-1} \left(\|\hat{\vz}_{k}-\vz_k\|^2 +  \|\hat{\vz}_k-\vz_{k+1}\|^2 \right)\leq \frac{2\|\vz_0-\vz^*\|^2}{1-\alpha},
\end{equation*}
which proves the second inequality in Lemma~\ref{lem:sum_of_squares_monotone}.
The proof is now complete.
\end{proof}

With the help of Lemma~\ref{lem:sum_of_squares_monotone}, we are set to upper bound the cumulative loss $\sum_{t=0}^{T-1} \ell_t({\mH}_t)$ and the path-length $\sum_{t=0}^{T-2} \|\mH_{t+1}-\mH_t\|_F$.

  \begin{lemma}\label{lem:comparator_dynamic}
  Recall that $\mH_t = \nabla F(\vz_t)$ for $t=0,\dots,T-1$, and the constant $C$ is defined in \eqref{eq:def_c1}. Also, let $N$ be the total number of iterations. Then $\sum_{t=0}^{T-1} \ell_t(\mH_t) \leq \frac{C L_2^2}{2} \|\vz_0-\vz^*\|^2$ and $\sum_{t=0}^{T-2}  \|{\mH}_{t+1}-{\mH}_t\|_F \leq   L_2\sqrt{\frac{2d N}{1-(\alpha_1+\alpha_2)}}\|\vz_0-\vz^*\|$. 
  \end{lemma}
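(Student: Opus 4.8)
The plan is to prove the two bounds in Lemma~\ref{lem:comparator_dynamic} separately, both relying on Assumption~\ref{assum:jacobian_lips} (Lipschitzness of the Jacobian) together with Lemma~\ref{lem:sum_of_squares_monotone} and the second part of Lemma~\ref{lem:step size_lb}. For the first bound, I would mimic closely the argument used to prove Lemma~\ref{lem:comparator} in the strongly monotone case, but with $\mH_t = \nabla \vF(\vz_t)$ in place of the fixed $\mH^* = \nabla \vF(\vz^*)$. Writing $\vu_t = \vF(\tilde{\vz}_t) - \vF(\vz_t) = \bar{\mH}_t \vs_t$ with $\bar{\mH}_t = \int_0^1 \nabla \vF(\vz_t + \lambda \vs_t)\,d\lambda$ and $\vs_t = \tilde{\vz}_t - \vz_t$, the triangle inequality and Assumption~\ref{assum:jacobian_lips} give $\|\bar{\mH}_t - \mH_t\|_{\op} \le L_2 \int_0^1 \lambda \|\vs_t\|\,d\lambda = \tfrac{1}{2} L_2 \|\vs_t\|$. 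Hence $\ell_t(\mH_t) = \|\vu_t - \mH_t \vs_t\|^2/\|\vs_t\|^2 \le \tfrac{1}{4} L_2^2 \|\vs_t\|^2$, so $\sum_{t=0}^{T-1} \ell_t(\mH_t) \le \tfrac{1}{4} L_2^2 \sum_{k \in \calB} \|\tilde{\vz}_k - \vz_k\|^2$. Then I would invoke the second inequality in Lemma~\ref{lem:step size_lb}, namely $\|\tilde{\vz}_k - \vz_k\| \le \tfrac{1+\alpha_1}{\beta(1-\alpha_1)} \|\hat{\vz}_k - \vz_k\|$ (valid since $\tfrac{1}{2}(\mB_k + \mB_k^\top) \succeq 0$ in the monotone case), to write $\sum_{k \in \calB} \|\tilde{\vz}_k - \vz_k\|^2 \le \tfrac{(1+\alpha_1)^2}{\beta^2(1-\alpha_1)^2} \sum_{k=0}^{N-1} \|\hat{\vz}_k - \vz_k\|^2 \le \tfrac{(1+\alpha_1)^2}{\beta^2(1-\alpha_1)^2} \cdot \tfrac{\|\vz_0 - \vz^*\|^2}{1 - (\alpha_1+\alpha_2)} = 2C\|\vz_0 - \vz^*\|^2$ by the first bound in Lemma~\ref{lem:sum_of_squares_monotone} and the definition of $C$ in \eqref{eq:def_c1}. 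Combining, $\sum_{t=0}^{T-1} \ell_t(\mH_t) \le \tfrac{L_2^2}{4} \cdot 2C \|\vz_0 - \vz^*\|^2 = \tfrac{C L_2^2}{2} \|\vz_0 - \vz^*\|^2$, as claimed.

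For the second bound — the path length $\sum_{t=0}^{T-2} \|\mH_{t+1} - \mH_t\|_F$ — I would first pass from the Frobenius norm to the operator norm at the cost of a $\sqrt{d}$ factor: $\|\mA\|_F \le \sqrt{d}\,\|\mA\|_{\op}$ for any $\mA \in \reals^{d \times d}$ (using $\mathrm{rank} \le d$). Thus $\|\mH_{t+1} - \mH_t\|_F = \|\nabla \vF(\vz_{k_{t+1}}) - \nabla \vF(\vz_{k_t})\|_F \le \sqrt{d}\,\|\nabla \vF(\vz_{k_{t+1}}) - \nabla \vF(\vz_{k_t})\|_{\op} \le \sqrt{d}\,L_2 \|\vz_{k_{t+1}} - \vz_{k_t}\|$ by Assumption~\ref{assum:jacobian_lips}, where $\calB = \{k_0 < k_1 < \dots < k_{T-1}\}$. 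The remaining task is to bound $\sum_{t=0}^{T-2} \|\vz_{k_{t+1}} - \vz_{k_t}\|$. The natural move is to telescope through the intermediate indices: $\|\vz_{k_{t+1}} - \vz_{k_t}\| \le \sum_{j=k_t}^{k_{t+1}-1} \|\vz_{j+1} - \vz_j\|$, and since the intervals $[k_t, k_{t+1})$ are disjoint and contained in $\{0, \dots, N-1\}$, summing over $t$ gives $\sum_{t=0}^{T-2} \|\vz_{k_{t+1}} - \vz_{k_t}\| \le \sum_{k=0}^{N-1} \|\vz_{k+1} - \vz_k\|$. Then by Cauchy–Schwarz and the second inequality in Lemma~\ref{lem:sum_of_squares_monotone}, $\sum_{k=0}^{N-1} \|\vz_{k+1} - \vz_k\| \le \sqrt{N \sum_{k=0}^{N-1} \|\vz_{k+1} - \vz_k\|^2} \le \sqrt{N \cdot \tfrac{2 \|\vz_0 - \vz^*\|^2}{1 - (\alpha_1 + \alpha_2)}}$. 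Putting this together yields $\sum_{t=0}^{T-2} \|\mH_{t+1} - \mH_t\|_F \le \sqrt{d}\,L_2 \sqrt{\tfrac{2dN}{1-(\alpha_1+\alpha_2)}}\|\vz_0 - \vz^*\|$ — wait, I must be careful: the $\sqrt{d}$ comes out once, giving $L_2 \sqrt{d}\cdot\sqrt{\tfrac{2N}{1-(\alpha_1+\alpha_2)}}\|\vz_0-\vz^*\| = L_2 \sqrt{\tfrac{2dN}{1-(\alpha_1+\alpha_2)}}\|\vz_0-\vz^*\|$, which matches the stated bound.

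I expect the proof to be largely routine, but there are two points that need care. The first is the correct handling of the relabeling $\calB \to \{0, \dots, T-1\}$: I must make sure the path-length sum over $t$ is genuinely telescoped through the full sequence of iterates $\{\vz_k\}_{k=0}^{N-1}$ and not just the backtracking ones, and that the disjointness-of-intervals argument is stated cleanly so the factor doesn't blow up. The second, and the real subtlety, is verifying that the hypothesis $\tfrac{1}{2}(\mB_k + \mB_k^\top) \succeq 0$ needed to apply the second part of Lemma~\ref{lem:step size_lb} actually holds for the iterates produced by Subroutine~\ref{alg:jacobian_approx} with \textbf{Option II}; this is exactly what the corollary of Lemma~\ref{lem:transform} in the proof of Theorem~\ref{thm:main_monotone}(a) provides ($\tfrac{1}{2}(\mB_k + \mB_k^\top) \succeq 0$ and $\|\mB_k\|_{\op} \le 4L_1$), so I would cite that. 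Once those bookkeeping points are in place, the estimates follow directly from the Jacobian-Lipschitz assumption, the norm comparisons, Cauchy–Schwarz, and the two inequalities of Lemma~\ref{lem:sum_of_squares_monotone}.
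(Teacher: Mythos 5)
Your proposal matches the paper's proof essentially line for line: the same fundamental-theorem-of-calculus decomposition of $\vu_t$ with the $\frac{L_2}{2}\|\vs_t\|$ bound, the same invocation of the second part of Lemma~\ref{lem:step size_lb} combined with Lemma~\ref{lem:sum_of_squares_monotone} to reach $2C\|\vz_0-\vz^*\|^2$, and for the path length the same chain $\|\cdot\|_F \le \sqrt{d}\|\cdot\|_{\op}$, then Jacobian-Lipschitz, then telescoping $\|\vz_{k_{t+1}}-\vz_{k_t}\|$ through the full iterate sequence, and Cauchy--Schwarz. Your explicit remark that the positive-semidefiniteness $\frac{1}{2}(\mB_k+\mB_k^\top)\succeq 0$ (from the corollary of Lemma~\ref{lem:transform} in the monotone setting) is what licenses the second part of Lemma~\ref{lem:step size_lb} is a detail the paper leaves implicit, but your argument is correct and complete.
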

  \begin{proof}
    By the fundamental theorem of calculus, we have $\vu_t = \vF({\tilde{\vz}_t})-\vF(\vz_t) = \bar{\mH}_t \vs_t$, where $\bar{\mH}_t = \int_{0}^1 \nabla \vF(\vz_t+ \lambda \vs_t) \,d\lambda$. 
    Moreover, using the triangle inequality, we have $\|\bar{\mH}_t-\mH_t\|_{\op} 
    \leq  \int_{0}^1 \left\|\nabla \vF(\vz_t+ \lambda \vs_t) - \nabla \vF(\vz_t) \right\|_{\op}\,d\lambda %
    \leq \int_{0}^1 L_2\lambda\|\vs_t\| \,d\lambda = \frac{L_2}{2} \|\vs_t\|$, 
    where we used Assumption~\ref{assum:jacobian_lips} in the second inequality. Furthermore, we can bound $\ell_t(\mH_t) =  \frac{\|\vu_t-\mH_t\vs_t\|^2}{\|\vs_t\|^2} = \frac{\|(\bar{\mH}_t-\mH_t)\vs_t\|^2}{\|\vs_t\|^2} \leq \frac{\|\bar{\mH}_t-\mH_t\|^2_{\op}\|\vs_t\|^2}{\|\vs_t\|^2} = \|\bar{\mH}_t-\mH_t\|^2_{\op}$. 
    Combining the two inequalities above, we get  
    $\sum_{t=0}^{T-1} \ell_t(\mH_t) \leq\sum_{t=0}^{T-1} \|\bar{\mH}_t-\mH_t\|^2_{\op} \leq  \frac{L_2^2}{4}\sum_{t=0}^{T-1} \|\vs_t\|^2$. 
    Note that by our notation, $\sum_{t=0}^{T-1} \|\vs_t\|^2 = \sum_{k \in \calB} \|\vs_k\|^2 = \sum_{k \in \calB} \|\tilde{\vz}_k - \vz_k\|^2$. Using the same arguments as in \eqref{eq:comparator_2} of Lemma~\ref{lem:comparator}, we have $\sum_{k\in \calB} \|\tilde{\vz}_k-\vz_k\|^2  \leq 2C \|\vz_0-\vz^*\|^2$. This proves that $\sum_{t=0}^{T-1} \ell_t(\mH_t) \leq \frac{CL_2^2}{2} \|\vz_0-\vz^*\|^2$. 

    Next, using the fact that $\|\mA\|_F \leq \sqrt{d} \|\mA\|_{\op}$ for any matrix $\mA \in \reals^{d\times d}$, we have 
    \begin{equation*}
      \sum_{t=0}^{T-2} \|\mH_{t+1}-\mH_t\|_F \leq \sqrt{d} \sum_{t=0}^{T-2} \|\mH_{t+1} - \mH_t\|_{\op} \leq \sqrt{d} L_2 \sum_{t=0}^{T-2} \|\vz_{t+1} - \vz_t\|,
    \end{equation*}
    where we used Assumption~\ref{assum:jacobian_lips} in the last inequality.
    Note that by our notation, we denote the indices in $\calB$ as $\{k_0,k_1,\dots,k_{T-1}\}$, and $\vz_t$ is a shorthand for $\vz_{k_t}$. Thus, the sum $\sum_{t=0}^{T-2} \|\vz_{t+1} - \vz_t\|$ in the above inequality becomes $\sum_{t=0}^{T-2} \|\vz_{k_{t+1}} - \vz_{k_t}\|$, and by the triangle inequality we have $ \|\vz_{k_{t+1}} - \vz_{k_t}\| \leq \sum_{k = k_t}^{k_{t+1}-1} \|\vz_{k+1} - \vz_k\|$. 
    Hence, combining this with the second inequality in Lemma~\ref{lem:sum_of_squares_monotone}, we further have $\sum_{t=0}^{T-2} \|\vz_{k_{t+1}} - \vz_{k_t}\| \leq \sum_{k=0}^{N-1} \|\vz_{k+1}-\vz_k\|  \leq \sqrt{N \sum_{k=0}^{N-1} \|\vz_{k+1}-\vz_k\|^2}  \leq  \sqrt{\frac{2 N}{1-(\alpha_1+\alpha_2)}}\|\vz_0-\vz^*\|$,
    where we used Cauchy-Schwarz inequality in the last inequality. 
  \end{proof}

  \noindent\textbf{Step 3}: Combining Lemma~\ref{lem:comparator_dynamic} and  Lemma~\ref{lem:dynamic_small_loss},  we obtain that $\sum_{t=0}^{T-1} \ell_t({\mB}_t) \leq {2CL_2^2} \|\vz_0-\vz^*\|^2 + 162\|\mB_0-{\mH_0}\|_F^2  + \frac{648\sqrt{2}d L_1L_2\|\vz_0-\vz^*\|}{\sqrt{1-(\alpha_1+\alpha_2)}} \sqrt{N} + 72L_1^2\sqrt{N}
  $. Given the definitions of $Q_1$ and $Q_2$ in \eqref{eq:def_Q1} and \eqref{eq:def_Q2}, the above upper bound can be further simplified as $\sum_{t=0}^{T-1} \ell_t({\mB}_t) \leq L_1^2 Q_1^2 + L_1^2Q_2^2 \sqrt{N}$. 
Together with the fact that $\sigma_0 = \frac{\alpha_2\beta}{5L_1}$, if follows from Lemma~\ref{lem:stepsize_bnd} that 
\begin{equation*}
  \sum_{k=0}^{N-1} \frac{1}{\eta_k} \leq \frac{5L_1}{(1-\beta)\alpha_2 \beta} + \frac{\sqrt{N(L_1^2 Q_1^2 + L_1^2Q_2^2 \sqrt{N})}}{(1-\beta)\alpha_2\beta} \leq  \frac{L_1(5+Q_1 \sqrt{N}+Q_2 N^{\nicefrac{3}{4}})}{(1-\beta)\alpha_2 \beta}.
\end{equation*}
In light of Proposition~\ref{prop:HPE}(b) and the observation in \eqref{eq:after_Cauchy_Schwarz}, we obtain the second rate in Part (b) of Theorem~\ref{thm:main_monotone}.  
\end{proof}

\subsubsection{Charaterizing the computational cost}
In this section, we characterize the computation cost of QNPE in the monotone setting. Since the proof techniques are similar to those used in the strongly monotone setting, we summarize the computational cost in the following theorem and defer the proof to Appendix~\ref{appen:computational_cost_monotone}. 
\begin{theorem}
  \label{thm:computational_cost_monotone}
  Let $N_\epsilon$ denote the minimum number of iterations required by Algorithm~\ref{alg:Full_Equasi-Newton} to find an $\epsilon$-accurate solution according to Theorem~\ref{thm:main_monotone}. %
  \begin{enumerate}[(a)]
    \item QNPE requires at most $3N_\epsilon+\log_{1/\beta}(\frac{5\sigma_0 L_1}{\alpha_2})$ operator evaluations. 

    \item For Problems~\eqref{eq:monotone} and~\eqref{eq:minimax}, QNPE requires ${\bigO\left(N_{\epsilon} + \frac{L_1\max_{\vz \in \calD}\|\vz_0-\vz\|^2}{\epsilon} + \sigma_0 L_1\right)}$ matrix-vector products for $\mathsf{LinearSolver}$. Moreover, it requires $\bigO (N_\epsilon^{1.125} \log (\frac{dN_{\epsilon}^2}{p^2}))$ matrix-vector products for $\mathsf{ExtEvec}$ and $\mathsf{MaxSvec}$. 
    \item For Problem~\eqref{eq:minimization},
    QNPE requires ${\bigO\left(N_{\epsilon} + \sqrt{N_{\epsilon}\frac{L_1\max_{\vz\in \calD}\|\vz_0-\vz\|^2}{2\epsilon}} + \sqrt{\sigma_0 L_1}\right)}$ matrix-vector products for $\mathsf{LinearSolver}$. Moreover, it requires $\bigO (N_\epsilon^{1.125} \log (\frac{dN_{\epsilon}^2}{p^2}))$ matrix-vector products for $\mathsf{ExtEvec}$. %
  \end{enumerate}
\end{theorem}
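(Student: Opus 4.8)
\textbf{Proof plan for Theorem~\ref{thm:computational_cost_monotone}.}
The overall strategy mirrors the proofs of Theorems~\ref{thm:line_search} and~\ref{thm:computational_cost}, with the complication that in the monotone setting the step sizes satisfy only a uniform lower bound $\eta_k \geq \frac{\alpha_2\beta}{5L_1}$ (established in the proof of Theorem~\ref{thm:main_monotone}(a)), rather than growing as in the strongly monotone case. For Part~(a), I would repeat the line-search counting argument from Theorem~\ref{thm:line_search} verbatim: with $l_k$ the number of backtracking steps at iteration $k$, the telescoping identity gives $\sum_{k=0}^{N_\epsilon-1} l_k = 2N_\epsilon - 1 + \log_{1/\beta}(\sigma_0/\eta_{N_\epsilon-1})$, and plugging in $\eta_{N_\epsilon-1} \geq \frac{\alpha_2\beta}{5L_1}$ yields $\sum l_k \leq 2N_\epsilon - 1 + \log_{1/\beta}(\frac{5\sigma_0 L_1}{\alpha_2})$. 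Adding the one evaluation of $\vF(\vz_k)$ per iteration gives the claimed $3N_\epsilon + \log_{1/\beta}(\frac{5\sigma_0 L_1}{\alpha_2})$.

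For Parts~(b) and~(c), I would bound the cost of each $\mathsf{LinearSolver}$ call as in Theorem~\ref{thm:computational_cost}. Using the analogue of Corollary~\ref{lem:feasible_Bk} valid in the monotone case (namely $\frac{1}{2}(\mB_k+\mB_k^\top)\succeq 0$ and $\|\mB_k\|_{\op}\leq 4L_1$, as noted in the proof of Theorem~\ref{thm:main_monotone}(a)), the input matrix $\mA = \mI + \eta_+\mB_k$ satisfies $\|\mA\|_{\op} \leq 1 + 4\eta_+ L_1$ and (by Lemma~\ref{lem:eig_singular}) $\lambda_{\min}(\mA^\top\mA)\geq 1$, so $\kappa(\mA) \leq 1 + 4\eta_+ L_1$. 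Since $\eta_+ \leq \sigma_k = \eta_{k-1}/\beta$, the per-call cost is $\bigO(\eta_{k-1}L_1 \log(\eta_{k-1}L_1/\rho))$ in the non-symmetric case (with a square-root improvement $\sqrt{\kappa(\mA)}$ in the symmetric case). Multiplying by $l_k$ and summing, the key quantity to control is $\sum_{k} \eta_{k-1} l_k$; since $l_k = \log_{1/\beta}(\sigma_k/\eta_k)+1$ and $\sum l_k = \bigO(N_\epsilon + \log(\sigma_0 L_1))$ by Part~(a), while $\sum_k \eta_k$ is tied to the accuracy via Proposition~\ref{prop:HPE}(b) (i.e. $\sum_{k=0}^{N_\epsilon-1}\eta_k \leq \frac{\max_{\vz\in\calD}\|\vz_0-\vz\|^2}{2\epsilon}$ up to the definition of $N_\epsilon$), one obtains a bound of the form $\bigO(N_\epsilon + \frac{L_1\max_{\vz\in\calD}\|\vz_0-\vz\|^2}{\epsilon} + \sigma_0 L_1)$ for the non-symmetric setting and $\bigO(N_\epsilon + \sqrt{N_\epsilon \frac{L_1\max_{\vz\in\calD}\|\vz_0-\vz\|^2}{\epsilon}} + \sqrt{\sigma_0 L_1})$ for the symmetric (minimization) setting, the latter using Cauchy--Schwarz on $\sum_k \sqrt{\eta_{k-1}}\, l_k$.

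For the $\mathsf{ExtEvec}$ and $\mathsf{MaxSvec}$ cost, I would invoke Lemmas~\ref{lem:extevec_bound} and~\ref{lem:maxsvec_bound}, which state that each call costs $\bigO(\sqrt{1+1/\delta_t}\,\log(d/q_t^2))$ matrix-vector products. With $\delta_t = \frac{1}{2(t+1)^{1/4}}$ we get $\sqrt{1+1/\delta_t} = \bigO(t^{1/8})$, and with $q_t = \frac{p}{2.5(t+1)\log^2(t+1)}$ we get $\log(d/q_t^2) = \bigO(\log(dN_\epsilon^2/p^2))$ uniformly in $t \leq T \leq N_\epsilon$; summing $\sum_{t=1}^{N_\epsilon} t^{1/8} = \bigO(N_\epsilon^{9/8}) = \bigO(N_\epsilon^{1.125})$ gives the stated $\bigO(N_\epsilon^{1.125}\log(dN_\epsilon^2/p^2))$ bound. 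The main obstacle, as in Theorem~\ref{thm:computational_cost}, is carefully managing the product $l_k \cdot \overline{\mathsf{MV}}_k$ and relating $\sum_k \eta_k$ (or $\sum_k \sqrt{\eta_k}$) to the accuracy $\epsilon$ through the definition of $N_\epsilon$; the extra wrinkle here compared to the strongly monotone case is that there is no geometric decay to exploit, so the bounds come out polynomial in $1/\epsilon$ rather than logarithmic, and one must be slightly careful that the $\delta_t \to 0$ schedule does not blow up the separation-oracle cost beyond the $N_\epsilon^{1.125}$ rate.
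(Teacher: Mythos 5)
Part~(a) and the $\mathsf{ExtEvec}$/$\mathsf{MaxSvec}$ cost bound are handled correctly and follow the same route as the paper: the telescoping line-search count plus the uniform lower bound $\eta_k\geq\frac{\alpha_2\beta}{5L_1}$ for~(a), and summing $\bigO(\delta_t^{-1/2}\log(d/q_t^2))=\bigO(t^{1/8}\log(dN_\epsilon^2/p^2))$ for the Lanczos oracles.

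However, your treatment of the $\mathsf{LinearSolver}$ cost in Parts~(b) and~(c) would not deliver the stated bound, for two reasons. First, you try to reuse Lemma~\ref{lem:conjugate_residual} (the condition-number bound from the strongly monotone section), which gives a per-call cost of order $\kappa(\mA)\log\bigl(\tfrac{\lambda_{\max}(\mA^\top\mA)}{\rho\sqrt{\lambda_{\min}(\mA^\top\mA)}}\bigr)$. In the monotone setting $\mA=\mI+\eta_+\mB_k$ has $\kappa(\mA)=\bigO(\eta_+L_1)$ but $\eta_+$ is no longer bounded above (it can grow like $1/\epsilon$), so this introduces an unavoidable $\log(\eta_+L_1)$ factor that the theorem does not allow. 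The paper instead proves a new ingredient tailored to this regime (Proposition~\ref{lem:convergence_CR} and Lemma~\ref{lem:mv_bound_monotone}): because the termination criterion $\|\vr_k\|\leq\rho\|\vs_k\|$ requires only constant relative accuracy, the dimension-free \emph{sublinear} convergence rate of CGLS/conjugate residual ($\|\vr_k\|\leq\tfrac{\sqrt{2}\|\mA\|_{\op}\|\vs^*\|}{k+1}$ in the non-symmetric case, $\|\vr_k\|\leq\tfrac{\|\mA\|_{\op}\|\vs^*\|}{(k+1)^2}$ in the symmetric case) suffices and yields a per-call cost of $\bigO(\|\mA\|_{\op})$ (resp.\ $\bigO(\sqrt{\|\mA\|_{\op}})$) with no logarithm. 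Without this substitution the target bound is simply out of reach.

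Second, your aggregation step ``multiplying by $l_k$'' and then controlling $\sum_k\eta_{k-1}l_k$ (or $\sum_k\sqrt{\eta_{k-1}}\,l_k$) is the wrong quantity, and it is not clear it can even be bounded: $l_k$ and $\eta_{k-1}$ are correlated in a way that does not telescope. The paper's per-iteration accounting is finer: within the line search, the trial step sizes are $\eta_{k-1}\beta^{i-2}$ for $i=1,\dots,l_k$, so the per-call costs form a geometric series, and the per-iteration total is $\bigO\bigl(l_k+\eta_{k-1}L_1\bigr)$ in the non-symmetric case (resp.\ $\bigO\bigl(l_k+\sqrt{\eta_{k-1}L_1}\bigr)$ in the symmetric case) --- crucially, the $\eta_{k-1}$-term is \emph{not} multiplied by $l_k$. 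Summing over $k$, $\sum_k l_k$ is controlled by Part~(a), and $\sum_k\eta_{k-1}\leq\tfrac{\max_{\vz\in\calD}\|\vz_0-\vz\|^2}{2\epsilon}+\beta\sigma_0$ follows from the definition of $N_\epsilon$ via Proposition~\ref{prop:HPE}(b) (with Cauchy--Schwarz giving $\sum_k\sqrt{\eta_{k-1}L_1}\leq\sqrt{N_\epsilon L_1\sum_k\eta_{k-1}}$ for Part~(c)). You need both the polynomial CG rate and the geometric-series decomposition of the line search; your proposal has neither.
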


As in the case of strongly monotone problems (cf. Theorem~\ref{thm:line_search}), Theorem~\ref{thm:computational_cost_monotone} shows that the average number of operator evaluations per iteration is bounded by a constant close to 3 for sufficiently large $N$. 
Moreover, note that Theorem~\ref{thm:computational_cost_monotone} implies that $N_{\epsilon} = \bigO\left(\min\{\frac{1}{\epsilon}, \frac{d^{\nicefrac{2}{5}}}{\epsilon^{\nicefrac{4}{5}}}\}\right)$. Hence, for the nonlinear equation in \eqref{eq:monotone} and the minimax problem in \eqref{eq:minimax}, 
Theorem~\ref{thm:computational_cost_monotone}(a) demonstrates that the total number of matrix-vector products can be bounded by  $\mathcal{O}(\frac{1}{\epsilon})$ and $\tilde{\bigO}\left(\min\{\frac{1}{\epsilon^{\nicefrac{9}{8}}}, \frac{d^{\nicefrac{9}{20}}}{\epsilon^{\nicefrac{9}{10}}}\}\right)$ for $\mathsf{LinearSolver}$ and $\mathsf{ExtEvec}$ and $\mathsf{MaxSvec}$, respectively. Moreover, for the minimization problem in~\eqref{eq:minimization}, Theorem~\ref{thm:computational_cost}(b) shows that the total number of matrix-vector products can be bounded by $\bigO\left(\min \{\frac{1}{\epsilon}, \frac{d^{\nicefrac{1}{5}}}{\epsilon^{\nicefrac{9}{10}}}\}\right)$ and $\tilde{\bigO}\left(\min\{\frac{1}{\epsilon^{\nicefrac{9}{8}}}, \frac{d^{\nicefrac{9}{20}}}{\epsilon^{\nicefrac{9}{10}}}\}\right)$ for $\mathsf{LinearSolver}$ and $\mathsf{ExtEvec}$, respectively. In particular, the computational cost is dominated by the latter in this case. 

\section{Conclusion}
\label{sec:conclusion}
We proposed a novel quasi-Newton proximal extragradient method for solving smooth and monotone nonlinear equations, particularly relevant to unconstrained minimization optimization and minimax optimization. 
We also demonstrated how to exploit structures in the Jacobian matrices, such as symmetry and sparsity, leading to more efficient implementation. 
In the strongly monotone setting, we established a global linear convergence rate of $(1+ (\frac{\mu}{30L_1}))^{-k}$, comparable to the extragradient method, and an explicit superlinear convergence rate of $(1+\Omega(\sqrt{k}))^{-k}$. 
Moreover, in the monotone setting, we demonstrated a global convergence rate of $\mathcal{O}\left(\min\{\frac{1}{k},\frac{\sqrt{d}}{k^{1.25}}\}\right)$, which is also superior to the convergence rate of $\bigO(\frac{1}{k})$ by the extragradient method. 
Our results are the first to show a better global complexity bound for using a quasi-Newton method over first-order methods such as the extragradient method, without requiring access to the operator's Jacobian.

\newpage 
\appendix
\section*{Appendix}

\section{The line search scheme termination}
\label{appen:terminate}

In our backtracking line search scheme, we repeatedly reduce the step size ${\eta}_+$ by a factor of $\beta$ until we find a pair $({\eta}_+,\hat{\vz}_+)$ that satisfies the condition in \eqref{eq:step size_condition} (refer also to Lines~\ref{line:check_condition} and~\ref{line:decreasing_stepsize} in Subroutine~\ref{alg:ls}). The following lemma demonstrates that once the step size ${\eta}_+$ falls below a specific threshold, the pair $({\eta}_+,\hat{\vz}_+)$ will meet both conditions in \eqref{eq:x_plus_update} and \eqref{eq:step size_condition}. Consequently, this ensures that Subroutine~\ref{alg:ls} will terminate within a finite number of steps.

\begin{lemma}\label{lem:ls_terminates}
  Suppose Assumption~\ref{assum:operator_lips} holds. If ${\eta}_+ <\frac{\alpha_2}{L_1+\|\mB\|_{\op}}$ and $\hat{\vz}_+$ is computed according to \eqref{eq:linear_solver_update}, then the pair $({\eta}_+,\hat{\vz}_+)$ satisfies the conditions in \eqref{eq:x_plus_update} and \eqref{eq:step size_condition}. 
\end{lemma}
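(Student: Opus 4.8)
## Proof Proposal for Lemma~\ref{lem:ls_terminates}

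The plan is to verify the two conditions \eqref{eq:x_plus_update} and \eqref{eq:step size_condition} separately, exploiting that $\hat{\vz}_+$ is produced by the $\mathsf{LinearSolver}$ oracle. The first condition, \eqref{eq:x_plus_update}, is immediate: by the defining property of $\mathsf{LinearSolver}$ in Definition~\ref{def:linear_solver}, the output $\vs_+$ satisfies $\|(\mI+\eta_+\mB)\vs_+ + \eta_+\vg\| \leq \alpha_1\sqrt{1+\eta_+\mu}\,\|\vs_+\|$, and since $\hat{\vz}_+ = \vz + \vs_+$ this is exactly $\|\hat{\vz}_+ - \vz + \eta_+(\vg + \mB(\hat{\vz}_+ - \vz))\| \leq \alpha_1\sqrt{1+\eta_+\mu}\,\|\hat{\vz}_+ - \vz\|$. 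So the real work is condition \eqref{eq:step size_condition}.

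For \eqref{eq:step size_condition}, I would start from the algebraic identity
\begin{equation*}
  \hat{\vz}_+ - \vz + \eta_+\vF(\hat{\vz}_+) = \bigl(\hat{\vz}_+ - \vz + \eta_+(\vg + \mB(\hat{\vz}_+ - \vz))\bigr) + \eta_+\bigl(\vF(\hat{\vz}_+) - \vF(\vz) - \mB(\hat{\vz}_+ - \vz)\bigr),
\end{equation*}
recalling $\vg = \vF(\vz)$. Applying the triangle inequality and then bounding the first term by \eqref{eq:x_plus_update}, we get
\begin{equation*}
  \|\hat{\vz}_+ - \vz + \eta_+\vF(\hat{\vz}_+)\| \leq \alpha_1\sqrt{1+\eta_+\mu}\,\|\hat{\vz}_+ - \vz\| + \eta_+\|\vF(\hat{\vz}_+) - \vF(\vz)\| + \eta_+\|\mB(\hat{\vz}_+ - \vz)\|.
\end{equation*}
Now I would use Assumption~\ref{assum:operator_lips} to bound $\|\vF(\hat{\vz}_+) - \vF(\vz)\| \leq L_1\|\hat{\vz}_+ - \vz\|$ and the operator-norm bound $\|\mB(\hat{\vz}_+ - \vz)\| \leq \|\mB\|_{\op}\|\hat{\vz}_+ - \vz\|$, which together give
\begin{equation*}
  \|\hat{\vz}_+ - \vz + \eta_+\vF(\hat{\vz}_+)\| \leq \bigl(\alpha_1\sqrt{1+\eta_+\mu} + \eta_+(L_1 + \|\mB\|_{\op})\bigr)\|\hat{\vz}_+ - \vz\|.
\end{equation*}
Using $\eta_+ < \frac{\alpha_2}{L_1 + \|\mB\|_{\op}}$ we have $\eta_+(L_1 + \|\mB\|_{\op}) < \alpha_2 \leq \alpha_2\sqrt{1+\eta_+\mu}$, so the coefficient is strictly less than $(\alpha_1 + \alpha_2)\sqrt{1+\eta_+\mu}$, which is precisely \eqref{eq:step size_condition}.

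There is no genuine obstacle here; the only point requiring a moment's care is the factor $\sqrt{1+\eta_+\mu} \geq 1$, which lets me upgrade the bare constant $\alpha_2$ into $\alpha_2\sqrt{1+\eta_+\mu}$ so that both terms carry the same $\sqrt{1+\eta_+\mu}$ factor and combine cleanly into $(\alpha_1+\alpha_2)\sqrt{1+\eta_+\mu}$. For the termination consequence, I would then note that the backtracking scheme multiplies $\eta_+$ by $\beta \in (0,1)$ each round, and $\|\mB\|_{\op}$ is a fixed finite quantity for the current iteration, so after finitely many steps $\eta_+$ drops below the threshold $\frac{\alpha_2}{L_1 + \|\mB\|_{\op}}$, at which point the while-loop condition fails and Subroutine~\ref{alg:ls} returns.
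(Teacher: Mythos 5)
Your proof is correct and follows essentially the same route as the paper's: the same identity decomposing $\hat{\vz}_+ - \vz + \eta_+\vF(\hat{\vz}_+)$ into the $\mathsf{LinearSolver}$ residual plus $\eta_+(\vF(\hat{\vz}_+)-\vF(\vz)-\mB(\hat{\vz}_+-\vz))$, the same use of the Lipschitz and operator-norm bounds to control the second term by $\eta_+(L_1+\|\mB\|_{\op})\|\hat{\vz}_+-\vz\|$, and the same observation that $\alpha_2 \leq \alpha_2\sqrt{1+\eta_+\mu}$ to assemble the final bound. The closing remark about termination is a valid consequence and matches the surrounding discussion in Appendix~\ref{appen:terminate}.
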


\begin{proof}
  Since $\hat{\vz}_+$ follows the update rule in \eqref{eq:linear_solver_update}, 
  by Definition~\ref{def:linear_solver}, the pair $({\eta}_+,\hat{\vz}_+)$ will always satisfy the condition in \eqref{eq:x_plus_update}. 
  Hence, it is sufficient to prove that the condition in \eqref{eq:step size_condition} also holds. 
  Recall that $\vg$ is defined as $\vg = \vF(\vz)$. By Assumption~\ref{assum:operator_lips}, the operator $\vF$ is $L_1$-Lipschitz and thus it follow that  
  $\|\vF(\hat{\vz}_+)-\vg\| = \|\vF(\hat{\vz}_+)-\vF(\vz)\| \leq L_1 \|\hat{\vz}_+ - \vz\|$. 
  Further, applying the triangle inequality yields 
  \begin{equation*}
      \|\vF(\hat{\vz}_+)-\vg-\mB(\hat{\vz}_+-\vz)\| \leq \|\vF(\hat{\vz}_+)-\vg\| + \|\mB(\hat{\vz}_+-\vz)\| \leq (L_1  + \|\mB\|_{\op}) \|\hat{\vz}_+ - \vz\|.
  \end{equation*}
  Hence, if ${\eta}_+ \leq \frac{\alpha_2}{L_1+\|\mB\|_{\op}}$, we have 
     \begin{equation}\label{eq:approx_error}
         {\eta}_+ \|\vF(\hat{\vz}_+)-\vg-\mB(\hat{\vz}_+-\vz)\| \leq \alpha_2 \|\hat{\vz}_+ - \vz\|.
     \end{equation}
  Finally, by using the triangle inequality again,  we combine \eqref{eq:x_plus_update} and \eqref{eq:approx_error} to establish that  
  \begin{align*}
      \|\hat{\vz}_{+}-\vz+{\eta}_+ \vF(\hat{\vz}_{+})\| &= \|\hat{\vz}_{+}-\vz+{{\eta}_+}(\vg+\mB({\hat{\vz}_{+}}-\vz)) + {\eta}_+(\vF(\hat{\vz}_+)-\vg-\mB(\hat{\vz}_+-\vz))\| \\
      &\leq \|\hat{\vz}_{+}-\vz+{{\eta}_+}(\vg+\mB({\hat{\vz}_{+}}-\vz))\| + \|{\eta}_+(\vF(\hat{\vz}_+)-\vg-\mB(\hat{\vz}_+-\vz))\| \\
      &\leq \alpha_1 \sqrt{1+\eta_+ \mu}\|\hat{\vz}_+-\vz\|+\alpha_2 \|\hat{\vz}_+-\vz\| \\
      &\leq (\alpha_1+\alpha_2)\sqrt{1+\eta_+ \mu}\|\hat{\vz}_+-\vz\|, 
  \end{align*}
  which confirms that the condition in \eqref{eq:step size_condition} is satisfied. This completes the proof. 
  \end{proof}

\section{A supporting lemma for Theorem~\texorpdfstring{\ref{thm:computational_cost}}{6.9}}
\label{appen:supporting}
\begin{lemma}\label{lem:eig_singular}
  Suppose $ \mA \in \reals^{d\times d}$ satisfies that $ \frac{1}{2}(\mA + \mA^\top) \succeq \lambda \mI$. Then we have $ \lambda_{\min}(\mA^\top \mA) \geq \lambda^2$. 
\end{lemma}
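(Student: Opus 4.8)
The plan is to use the variational (Rayleigh quotient) characterization of the smallest eigenvalue of $\mA^\top \mA$ together with the hypothesis on the symmetric part of $\mA$. Recall that $\mA^\top\mA$ is symmetric positive semidefinite with
\[
  \lambda_{\min}(\mA^\top\mA) \;=\; \min_{\vv \in \reals^d,\ \|\vv\| = 1} \vv^\top \mA^\top \mA \vv \;=\; \min_{\|\vv\|=1}\|\mA\vv\|^2,
\]
so it suffices to show $\|\mA\vv\| \geq \lambda$ for every unit vector $\vv$ (and then square; in the regime of interest $\lambda \geq 1 > 0$, so squaring is legitimate).

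First I would fix an arbitrary unit vector $\vv$. By the Cauchy--Schwarz inequality, $\|\mA\vv\| = \|\mA\vv\|\,\|\vv\| \geq \langle \mA\vv, \vv\rangle = \vv^\top \mA \vv$. Since $\vv^\top \mA \vv$ is a scalar, it equals its own transpose, hence $\vv^\top \mA \vv = \tfrac{1}{2}\vv^\top(\mA + \mA^\top)\vv$. Applying the hypothesis $\tfrac{1}{2}(\mA + \mA^\top) \succeq \lambda \mI$ then gives $\vv^\top \mA \vv \geq \lambda\|\vv\|^2 = \lambda$. Chaining these inequalities yields $\|\mA\vv\| \geq \lambda$ for every unit $\vv$, and squaring (using $\lambda \geq 0$) gives $\|\mA\vv\|^2 \geq \lambda^2$. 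Taking the minimum over all unit vectors $\vv$ gives $\lambda_{\min}(\mA^\top\mA) \geq \lambda^2$, as claimed.

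There is essentially no hard step here; the only point requiring a word of care is the sign of $\lambda$, since the bound $\|\mA\vv\| \geq \lambda$ can only be squared to $\|\mA\vv\|^2 \geq \lambda^2$ when $\lambda \geq 0$. In the application to the proof of Theorem~\ref{thm:computational_cost}, this lemma is invoked with $\mA = \mI + \eta_+ \mB_k$ and $\lambda = 1 + \eta_+\mu/2 \geq 1$, so the nonnegativity of $\lambda$ holds and the conclusion applies directly.
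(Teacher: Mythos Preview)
Your proof is correct and follows essentially the same idea as the paper's: both combine Cauchy--Schwarz with the hypothesis on the symmetric part to show that the smallest singular value of $\mA$ is at least $\lambda$. The only cosmetic difference is that the paper introduces the left and right singular vectors $\vu,\vv$ for $\sigma_{\min}$ and bounds $\vv^\top\mA\vv = \sigma_{\min}\vu^\top\vv \le \sigma_{\min}$, whereas you work directly with the variational characterization $\lambda_{\min}(\mA^\top\mA)=\min_{\|\vv\|=1}\|\mA\vv\|^2$ and prove $\|\mA\vv\|\ge\lambda$ for every unit $\vv$; your route is marginally more direct since it avoids naming the SVD. Your remark about needing $\lambda\ge 0$ is well taken and applies equally to the paper's proof.
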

\begin{proof}
  Let $\sigma_{\min}$ be the minimum singular value of $ \mA$ and let $ \vu,\vv \in \reals^d$ be the corresponding left and right singular vectors, respectively, where $\|\vu\| = 1$ and $\|\vv\|=1$.  Therefore, we have $\mA \vv = \sigma_{\min} \vu$ and $\mA^\top \vu = \sigma_{\min} \vv$. Moreover, we have 
  \begin{equation*}
    \vv^\top \mA \vv = \sigma_{\min} \vu^\top \vv \leq \frac{1}{2}\sigma_{\min} (\|\vu\|^2+\|\vv\|^2) \leq \sigma_{\min}.
  \end{equation*} 
  Furthermore, we also have $ \vv^\top \mA \vv = \frac{1}{2}\vv^\top (\mA+\mA^\top) \vv \geq \lambda \|\vv\|^2 = \lambda$. This proves that $\sigma_{\min} \geq \lambda$. Since $\mA$ is non-singular, we have $ \lambda_{\min}(\mA^\top \mA) = \sigma_{\min}^2 = \lambda^2$. This completes the proof. 
\end{proof}

\section{Proof of Theorem~\texorpdfstring{\ref{thm:computational_cost_monotone}}{6.17}} 
\label{appen:computational_cost_monotone}
In this section, we present the proof of Theorem~\ref{thm:computational_cost_monotone}. To begin with, we prove the result in Part (a), which follows similarly to Theorem~\ref{thm:line_search}.  

\begin{proof}[Proof of Theorem~\ref{thm:computational_cost_monotone}(a)]
  Recall that $l_k$ denoted the number of line search steps in iteration $k$. Using the same arguments as in the proof of Theorem~\ref{thm:line_search}, the total number of line search steps after $N$ iterations can be bounded by $\sum_{k=0}^{N-1} l_k = 2N-1+ \log_{1/\beta}\frac{\sigma_0}{\eta_{N-1}}$. Since we have $\eta_{N-1} \geq \frac{\alpha_2\beta}{5L_1}$ as shown in the proof of Theorem~\ref{thm:main_monotone}(a), we obtain that $\sum_{k=0}^{N-1} l_k = 2N-1+ \log_{1/\beta}\frac{5\sigma_0 L_1}{\alpha_2\beta}$. Counting the additional operator evaluation for $\vF(\vz_k)$ in each iteration, we prove the result in Part (a). 
\end{proof}

In order to characterize the number of matrix-vector products, we present the following proposition regarding the convergence of Subroutine~\ref{alg:CGLS} in the monotone setting. 
\begin{proposition}\label{lem:convergence_CR}
  Let $\vs^*$ be any optimal solution of $\mA\vs^* = \vb$. Moreover, let $\{\vs_k\}_{k\geq 0}$ and $\{\vr_k\}_{k\geq 0}$ be generated by Subroutine~\ref{alg:CGLS}. If $\mA$ is non-symmetric, then we have $\|\vr_k\|_2 \leq \frac{\sqrt{2}\|\mA\|_{\op}\|\vs^*\|_2}{k+1}$. Otherwise, if $\mA$ is symmetric, then $\|\vr_k\|_2\leq \frac{\|\mA\|_{\op}\|\vs^*\|_2}{(k+1)^2}$. 

\end{proposition}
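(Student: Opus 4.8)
\textbf{Proof proposal for Proposition~\ref{lem:convergence_CR}.}

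The plan is to reduce the claimed residual bounds to standard polynomial approximation estimates for the conjugate gradient family, using the fact that the iterates $\vs_k$ produced by Subroutine~\ref{alg:CGLS} are optimal over a Krylov subspace. For the non-symmetric case, recall that CGLS is mathematically equivalent to applying conjugate gradient to the normal equations $\mA^\top \mA \vs = \mA^\top \vb$; hence $\vs_k$ minimizes $\|\mA\vs - \vb\|_2$ over the shifted Krylov subspace $\vs_0 + \vspan\{\mA^\top\vb, (\mA^\top\mA)\mA^\top\vb, \dots\}$, and since $\vs_0 = 0$ this means $\vr_k = \vb - \mA\vs_k = p_k(\mA\mA^\top)\vb$ for some polynomial $p_k$ of degree $\le k$ with $p_k(0) = 1$. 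Writing $\vb = \mA\vs^*$ (for $\vs^*$ any solution; note $\vb \in \mathrm{range}(\mA)$ so such $\vs^*$ exists) and using the SVD of $\mA$, one gets $\|\vr_k\|_2 = \|\mA q_k(\mA^\top\mA)\vs^*\|_2$ for the associated polynomial, so $\|\vr_k\|_2 \le \|\vs^*\|_2 \cdot \max_{\sigma \in [0,\sigma_{\max}]} \sigma |q_k(\sigma^2)|$ where the max is over singular values. Choosing $q_k$ to be the optimal degree-$k$ polynomial for the quantity $\sigma\,q_k(\sigma^2)$ on $[0,\sigma_{\max}]$ — a scaled Chebyshev-type polynomial on $[0, \sigma_{\max}^2]$ vanishing appropriately — yields the bound $\max_\sigma \sigma|q_k(\sigma^2)| \le \frac{c\,\sigma_{\max}}{k+1}$; tracking the constant carefully gives $\sqrt 2$. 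This is the classical ``$O(1/k)$ rate of CG on a consistent system without a spectral gap'' estimate, and the $\sqrt 2$ constant can be extracted from the shifted Chebyshev construction as in, e.g., the LSQR/CGLS literature.

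For the symmetric case, Subroutine~\ref{alg:CGLS} runs the conjugate residual method, which by construction minimizes $\|\vr_k\|_2 = \|\vb - \mA\vs_k\|_2$ over the Krylov subspace $\vspan\{\vb, \mA\vb, \dots, \mA^{k}\vb\}$ (with $\vs_0 = 0$). Thus $\vr_k = p_k(\mA)\vb$ with $\deg p_k \le k$ and $p_k(0) = 1$, and writing $\vb = \mA\vs^*$ gives $\|\vr_k\|_2 = \|\mA p_k(\mA)\vs^*\|_2 \le \|\vs^*\|_2 \max_{\lambda \in \mathrm{spec}(\mA)} |\lambda\, p_k(\lambda)|$. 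Since in our application $\mA = \mI + \eta_+ \mB_k$ is positive definite (by Corollary~\ref{lem:feasible_Bk}, $\frac12(\mB_k + \mB_k^\top) \succeq 0$, so all eigenvalues of $\mA$ have positive real part; and $\mA$ symmetric implies $\mA \succ 0$), the relevant interval is $[0, \lambda_{\max}(\mA)] = [0, \|\mA\|_{\op}]$, and I minimize $\max_{\lambda \in [0, \Lambda]} |\lambda p_k(\lambda)|$ over degree-$k$ polynomials with $p_k(0)=1$. The extremal choice here is $\lambda p_k(\lambda) = \frac{\Lambda}{(k+1)^2}\,U_k$-type construction (related to the Chebyshev polynomial of the second kind, or equivalently the derivative-of-Chebyshev construction used for the minimal-residual method on $[0,\Lambda]$), giving $\max |\lambda p_k(\lambda)| \le \frac{\|\mA\|_{\op}}{(k+1)^2}$. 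This is the standard ``MINRES/CR attains $O(1/k^2)$ on a positive-definite operator whose spectrum touches zero'' estimate.

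The main obstacle I anticipate is pinning down the exact constants ($\sqrt 2$ in the non-symmetric case and $1$ with the $(k+1)^2$ denominator in the symmetric case) rather than just the asymptotic order. These require invoking the precise extremal polynomial bounds: for the non-symmetric/CGLS case, the shifted-and-scaled Chebyshev polynomial estimate $\min_{p(0)=1,\deg p \le k} \max_{t \in [0,\Lambda^2]} \sqrt{t}\,|p(t)| \le \frac{\sqrt 2\,\Lambda}{k+1}$ (see the convergence analysis of LSQR/CGLS on consistent systems), and for the conjugate residual case the analogous bound $\min_{p(0)=1,\deg p \le k}\max_{\lambda\in[0,\Lambda]}|\lambda p(\lambda)| \le \frac{\Lambda}{(k+1)^2}$. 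I would cite these from the references already invoked in the paper for Subroutine~\ref{alg:CGLS} (e.g. \cite{paige1982lsqr, bjoerck1996numerical} for CGLS and \cite{Fong2011, greenbaum1997iterative} for the conjugate residual / LSMR-type bounds), so that the proof consists of: (i) identifying $\vr_k$ as $p_k(\cdot)\vb$ from the Krylov-optimality of the method, (ii) substituting $\vb = \mA\vs^*$ and diagonalizing, and (iii) plugging in the extremal polynomial bound. No lengthy computation is needed beyond this reduction.
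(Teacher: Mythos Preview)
Your proposal is correct and follows essentially the same approach as the paper. The paper's proof also reduces to standard Krylov-optimality results: for the non-symmetric case it notes that CGLS is CG on the normal equations and cites the sublinear CG bound $\|\mA\vs_k-\vb\|^2\le \frac{2\lambda_{\max}(\mA^\top\mA)\|\vs_0-\vs^*\|^2}{(k+1)^2}$ from \cite{d2021acceleration}, and for the symmetric case it cites the conjugate residual bound from \cite{nemirovski1995information}; your sketch simply unpacks the polynomial-approximation argument underlying those cited results.
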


\begin{proof}
  When $\mA$ is non-symmetric, Subroutine~\ref{alg:CGLS} executes the CGLS algorithm on the least-squares problem $\|\mA \vs - \vb\|^2$. Moreover, note that CGLS is analytically equivalent to applying the conjugate gradient method on the normal equation $\mA^\top \mA \vs = \mA^\top \vb$. Thus, it follows from standard results on conjugate gradient methods that (see, e.g., \cite[Section B.2]{d2021acceleration}): $\|\mA \vs_k - \vb\|^2 \leq \frac{{2\lambda_{\max}(\mA^\top \mA)}\|\vs_0-\vs^*\|_2^2}{(k+1)^2}$. 
  By taking the square roots of both sides and noting that $\vs_0 = 0$, we obtain the first result in Proposition~\ref{lem:convergence_CR}. 
  When $\mA$ is symmetric, Subroutine~\ref{alg:CGLS} executes the conjugate residual method and
  the second result in Proposition~\ref{lem:convergence_CR} follows from \cite[Chapter 12.4]{nemirovski1995information}. 
\end{proof}

As a corollary of Proposition~\ref{lem:convergence_CR}, we upper bound the total number of matrix-vector products in Subroutine~\ref{alg:ls} for given inputs $\mA$, $\vb$, and $\rho$. 
\begin{lemma}\label{lem:mv_bound_monotone}
  Given the inputs $\mA \in \reals^{d\times d}$, $\vb\in \reals^d$, and $\rho>0$. Suppose that $\frac{1}{2}(\mA + \mA^\top) \succeq \mI$. When Subroutine~\ref{alg:ls} returns, the total number of matrix-vector product evaluations can be bounded by $\frac{2\sqrt{2}(\rho+1)}{\rho}\|\mA\|_{\op}$ if $\mA$ is non-symmetric, and $\sqrt{\frac{(\rho+1)\|\mA\|_{\op}}{\rho}}$ if $\mA$ is symmetric.  
\end{lemma}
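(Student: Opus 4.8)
The plan is to mimic closely the proof of Lemma~\ref{lem:conjugate_residual} in the strongly monotone setting, but now replacing the linear (geometric) convergence bounds for CGLS and the conjugate residual method (Proposition~\ref{prop:CGLS}) with the sublinear bounds from Proposition~\ref{lem:convergence_CR}, which are the relevant ones when $\mA$ is only well-conditioned in the sense $\frac{1}{2}(\mA+\mA^\top)\succeq \mI$ rather than $\succeq \frac{\mu}{2}\mI$ with $\mu$ appearing in the condition number. The key structural fact I would reuse is that $\|\vs_k\|$ is strictly increasing along the iterations (Proposition~\ref{prop:CGLS}(b) applies here too, as it does not depend on any conditioning assumption), so it suffices to lower bound $\|\vs_1\|$ and then find the smallest $k$ for which the residual bound of Proposition~\ref{lem:convergence_CR} drops below $\rho \|\vs_1\|$.

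First I would treat the non-symmetric case. As in Lemma~\ref{lem:conjugate_residual}, from the update rule of Subroutine~\ref{alg:CGLS} we have $\vs_1 = \frac{\|\mA^\top\vb\|^2}{\|\mA\mA^\top\vb\|^2}\mA^\top\vb$, hence $\|\vs_1\| = \frac{\|\mA^\top\vb\|^3}{\|\mA\mA^\top\vb\|^2} \geq \frac{\|\mA^\top\vb\|}{\lambda_{\max}(\mA\mA^\top)} \geq \frac{\sqrt{\lambda_{\min}(\mA\mA^\top)}\,\|\vb\|}{\lambda_{\max}(\mA\mA^\top)}$. Now the assumption $\frac{1}{2}(\mA+\mA^\top)\succeq \mI$ gives, via Lemma~\ref{lem:eig_singular} (with $\lambda=1$), that $\lambda_{\min}(\mA^\top\mA)\geq 1$, so $\|\vs_1\| \geq \frac{\|\vb\|}{\lambda_{\max}(\mA^\top\mA)} = \frac{\|\vr_0\|}{\|\mA\|_{\op}^2}$. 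Since $\|\vs_k\|$ is increasing, $\|\vs_k\|\geq \frac{\|\vr_0\|}{\|\mA\|_{\op}^2}$ for all $k\geq 1$. Also $\|\vs^*\| = \|\mA^{-1}\vb\|\leq \|\vb\| = \|\vr_0\|$ since $\|\mA^{-1}\|_{\op}\leq 1$ (as $\lambda_{\min}(\mA^\top\mA)\geq 1$). Plugging into Proposition~\ref{lem:convergence_CR}: $\|\vr_k\| \leq \frac{\sqrt{2}\|\mA\|_{\op}\|\vs^*\|}{k+1} \leq \frac{\sqrt{2}\|\mA\|_{\op}\|\vr_0\|}{k+1}$, and we want this $\leq \rho\|\vs_k\|$, for which it is enough that $\frac{\sqrt{2}\|\mA\|_{\op}\|\vr_0\|}{k+1}\leq \rho \frac{\|\vr_0\|}{\|\mA\|_{\op}^2}$, i.e. $k+1 \geq \frac{\sqrt{2}\|\mA\|_{\op}^3}{\rho}$. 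Hmm — this yields a cubic dependence on $\|\mA\|_{\op}$, which does not match the claimed bound $\frac{2\sqrt{2}(\rho+1)}{\rho}\|\mA\|_{\op}$, so I would need to be more careful: rather than lower-bounding $\|\vs_k\|$ by $\|\vs_1\|$ crudely, I should compare directly against $\|\vs^*\|$. The sharper route is to use $\|\vs_k - \vs^*\|\leq \|\mA^{-1}\|_{\op}\|\vr_k\| \leq \|\vr_k\|$ together with the monotonicity of $\|\vs_k\|$ and the termination condition. Actually the cleanest fix: note that once $\|\vr_k\| \leq \frac{\rho}{\rho+1}\|\vs^*\|$, we have $\|\vs_k\|\geq \|\vs^*\|-\|\vs_k-\vs^*\|\geq \|\vs^*\|-\|\vr_k\|\geq \frac{1}{\rho+1}\|\vs^*\|$, wait that needs $\|\vs_k-\vs^*\|\leq\|\vr_k\|$ which holds, but I also need $\|\vs_k\|$ close to $\|\vs^*\|$ from below which is not guaranteed since $\|\vs_k\|$ is increasing toward $\|\vs^*\|$... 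Let me instead argue: if $\|\vr_k\|\leq\frac{\rho}{\rho+1}\|\vs^*\|$ then $\|\vr_k\|\leq\frac{\rho}{\rho+1}(\|\vs_k\|+\|\vs_k-\vs^*\|)\leq\frac{\rho}{\rho+1}(\|\vs_k\|+\|\vr_k\|)$, which rearranges to $\|\vr_k\|(1-\frac{\rho}{\rho+1})\leq\frac{\rho}{\rho+1}\|\vs_k\|$, i.e. $\|\vr_k\|\leq\rho\|\vs_k\|$, so the termination condition fires. Then by Proposition~\ref{lem:convergence_CR} it suffices that $\frac{\sqrt{2}\|\mA\|_{\op}\|\vs^*\|}{k+1}\leq\frac{\rho}{\rho+1}\|\vs^*\|$, i.e. $k+1\geq\frac{\sqrt{2}(\rho+1)}{\rho}\|\mA\|_{\op}$, and since the non-symmetric case uses two matrix-vector products per iteration, the total is at most $\frac{2\sqrt{2}(\rho+1)}{\rho}\|\mA\|_{\op}$, as claimed.

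For the symmetric case I would run the identical argument: the conjugate residual method gives $\|\vs_k-\vs^*\|\leq\|\mA^{-1}\|_{\op}\|\vr_k\|\leq\|\vr_k\|$ (again using $\lambda_{\min}(\mA)\geq 1$ from $\frac{1}{2}(\mA+\mA^\top)=\mA\succeq\mI$ in the symmetric case), so the same elementary rearrangement shows the termination condition $\|\vr_k\|\leq\rho\|\vs_k\|$ holds once $\|\vr_k\|\leq\frac{\rho}{\rho+1}\|\vs^*\|$. By the second bound of Proposition~\ref{lem:convergence_CR}, $\|\vr_k\|\leq\frac{\|\mA\|_{\op}\|\vs^*\|}{(k+1)^2}$, so it suffices that $(k+1)^2\geq\frac{(\rho+1)\|\mA\|_{\op}}{\rho}$, i.e. $k+1\geq\sqrt{\frac{(\rho+1)\|\mA\|_{\op}}{\rho}}$; since the symmetric branch uses only one matrix-vector product per iteration, the stated bound $\sqrt{\frac{(\rho+1)\|\mA\|_{\op}}{\rho}}$ follows. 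The main obstacle I anticipate is exactly the bookkeeping I stumbled over above: getting the dependence on $\|\mA\|_{\op}$ to be linear (resp. square-root) rather than cubic (resp. higher) requires comparing $\|\vr_k\|$ to $\|\vs^*\|$ directly through $\|\vs_k-\vs^*\|\leq\|\vr_k\|$ rather than lower-bounding $\|\vs_k\|$ by $\|\vs_1\|$; the monotonicity of $\|\vs_k\|$ from Proposition~\ref{prop:CGLS}(b) then only needs to guarantee that the iterate at the termination step still satisfies the required inequality, which it does since $\|\vs_k\|$ only grows. Everything else is routine substitution.
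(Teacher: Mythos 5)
Your final argument is correct and is essentially the paper's own proof. After abandoning the initial attempt to lower-bound $\|\vs_k\|$ via $\|\vs_1\|$ (which indeed gives a cubic dependence on $\|\mA\|_{\op}$), you land on exactly the paper's two steps: (i) show that $\|\vr_k\|\leq\frac{\rho}{\rho+1}\|\vs^*\|$ forces the termination criterion $\|\vr_k\|\leq\rho\|\vs_k\|$, via the key inequality $\|\vs_k-\vs^*\|\leq\|\vr_k\|$ and the triangle inequality; and (ii) invoke Proposition~\ref{lem:convergence_CR} to find the smallest $k$ achieving this, then count one or two matrix-vector products per iteration. The only cosmetic difference is how you obtain $\|\vs_k-\vs^*\|\leq\|\vr_k\|$: you go through $\|\mA^{-1}\|_{\op}\leq 1$ (via Lemma~\ref{lem:eig_singular} and $\sigma_{\min}(\mA)\geq 1$), whereas the paper derives it directly from Cauchy--Schwarz and $(\vs_k-\vs^*)^\top\mA(\vs_k-\vs^*)\geq\|\vs_k-\vs^*\|^2$; both are sound. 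Also note that neither your final argument nor the paper's actually needs the monotonicity of $\|\vs_k\|$ from Proposition~\ref{prop:CGLS}(b), so that part of your commentary can be dropped.
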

\begin{proof}
  First of all, we shall prove that Subroutine~\ref{alg:ls} terminates when $\|\vr_k\|_2 \leq \frac{\rho}{\rho+1}\|\vs^*\|_2$. To see this, note that $\|\vs^*\|_2 \leq 
  \|\vs_k\|_2 + \|\vs_k-\vs^*\|_2$ by the triangle inequality. 
  Also, since $\frac{1}{2}(\mA + \mA^\top ) \succeq \mI$, we have 
  $\|\mA(\vs_k-\vs^*)\|_2\|\vs_k-\vs^*\|_2\geq (\vs_k-\vs^*)^\top \mA (\vs_k-\vs^*) \geq \|\vs_k-\vs^*\|_2^2$, which implies that $\|\vr_k\|_2 = \|\mA(\vs_k-\vs^*)\|_2 \geq \|\vs_k-\vs^*\|_2$. Hence, combining the two inequalities leads to $\|\vs^*\|_2 \leq \|\vs_k\|_2 + \|\vr_k\|_2$. 
  Thus, if $\|\vr_k\|_2 \leq \frac{\rho}{\rho+1}\|\vs^*\|_2$, we obtain that $\|\vr_k\|_2 \leq \frac{\rho}{\rho+1}(\|\vs_k\|_2 + \|\vr_k\|_2)$, which is equivalent to $\|\vr_k\|_2 \leq \rho \|\vs_k\|_2$ and hence the termination criterion of Subroutine~\ref{alg:ls} is satisfied.
  
  When $\mA$ is non-symmetric, by Proposition~\ref{lem:convergence_CR}, Subroutine~\ref{alg:ls} returns if $\frac{\sqrt{2}\|\mA\|_{\op}\|\vs^*\|_2}{k+1} \leq \frac{\rho}{\rho+1}\|\vs^*\|_2$, which is equivalent to $k \geq \frac{\sqrt{2}(\rho+1)}{\rho}\|\mA\|_{\op} - 1$. Since Subroutine~\ref{alg:ls} requires two matrix-vector products per iteration for a non-symmetric matrix $\mA$, the total number of matrix-vector products can be bounded by $\frac{2\sqrt{2}(\rho+1)}{\rho}\|\mA\|_{\op}$. Similarly, when $\mA$ is symmetric, it follows from Proposition~\ref{lem:convergence_CR} that Subroutine~\ref{alg:ls} returns if $\frac{{\|\mA\|_{\op}}\|\vs^*\|_2}{(k+1)^2} \leq \frac{\rho}{\rho+1}\|\vs^*\|_2$, which is equivalent to $k \geq \sqrt{\frac{(\rho+1)\|\mA\|_{\op}}{\rho}}- 1$. Since Subroutine~\ref{alg:ls} requires one matrix-vector product per iteration for a symmetric matrix $\mA$, the total number of matrix-vector products can be bounded by $\sqrt{\frac{(\rho+1)\|\mA\|_{\op}}{\rho}}$.
\end{proof}

Now we are ready to prove the remaining of Theorem~\ref{thm:computational_cost_monotone}. 
\begin{proof}[Proof of Theorem~\ref{thm:computational_cost_monotone}(b) and (c)]
First, we consider Problems~\eqref{eq:monotone} and~\eqref{eq:minimax}, where the Jacobian approximation matrices $\{\mB_k\}$ is non-symmetric. Consider the $k$-th iteration of Algorithm~\ref{alg:Full_Equasi-Newton} and let $l_k$ denote the total number of line search steps in Subroutine~\ref{alg:ls}. 
For notational convenience, we set $\eta_{-1} = \beta \sigma_0$. Note that at the $i$-th line search step ($i\geq 1$), we call the $\mathsf{LinearSolver}$ oracle with 
$\mA = \mI+\eta_{k-1} \beta^{i-2} \mB_k$ and $\rho = \alpha_1$. Moreover, recall from the proof of Theorem~\ref{thm:main_monotone}(a)that $\frac{1}{2}(\mB_k + \mB_k^\top) \succeq 0$ and $\|\mB_k\|_{\op} \leq 4L_1$.  Since $\frac{1}{2}(\mI+\eta_{k-1} \beta^{i-2} \mB_k + (\mI+\eta_{k-1} \beta^{i-2} \mB_k)^\top ) \succeq \mI$ and $\|\mI+\eta_{k-1} \beta^{i-2} \mB_k\|_{\op} \leq 1 + \eta_{k-1} \beta^{i-2} \|\mB_k\|_{\op} \leq 1 + 4\eta_{k-1} \beta^{i-2} L_1$, by using Lemma~\ref{lem:mv_bound_monotone}, we can bound the total number of matrix-vector products at the $k$-th iteration by 
\begin{equation*}
  \mathsf{MV}_k \leq \sum_{i=1}^{l_k}\frac{2\sqrt{2}(\alpha_1+ 1)}{\alpha_1} (1+4\eta_{k-1} \beta^{i-2} L_1) \leq \frac{2\sqrt{2}(\alpha_1 + 1)}{\alpha_1} l_k  + \frac{8\sqrt{2}(\alpha_1 + 1)}{\alpha_1\beta(1-\beta)} \eta_{k-1} L_1.
\end{equation*}
Thus, the total number of matrix-vector products is bounded by 
\begin{equation}\label{eq:mv_bound_monotone_last_1}
  \sum_{k=0}^{N_{\epsilon}-1} \mathsf{MV}_k \leq \frac{2\sqrt{2}(\alpha_1 + 1)}{\alpha_1} \sum_{k=0}^{N_{\epsilon}-1} l_k + \frac{8\sqrt{2}(\alpha_1 + 1) L_1}{\alpha_1\beta(1-\beta)}  \sum_{k=0}^{N_{ \epsilon}-1}\eta_{k-1}.
\end{equation}
By Theorem~\ref{thm:computational_cost_monotone}(a), we have $\sum_{k=0}^{N_{\epsilon}-1} l_k \leq 3N_{\epsilon} + \log_{1/\beta}(\frac{5\sigma_0 L_1}{\alpha_2})$.
Further, we can show that $\sum_{k=0}^{N_{\epsilon}-2} \eta_k \leq \frac{\max_{\vz \in \calD}\|\vz_0-\vz\|^2}{2\epsilon}$ by the definition of $N_{\epsilon}$. To see this, by Proposition~\ref{prop:HPE}, it holds that $\gap(\bar{\vz}_N; \calD) \leq \frac{\max_{\vz \in \calD}\,\|\vz_0-\vz\|^2}{2 \sum_{k=0}^{N-1} \eta_k}$. Moreover, since $N_{\epsilon}$ is the minimum number of iterations to achieve $\gap(\bar{\vz}_N; \calD) \leq \epsilon$, we must have $\gap(\bar{\vz}_{N_{\epsilon}-1}; \calD) > \epsilon$ and this implies the desired result. Hence, we further obtain from \eqref{eq:mv_bound_monotone_last_1} that 
\begin{equation*}
  \begin{aligned}
    \sum_{k=0}^{N_{\epsilon}-1} \mathsf{MV}_k &\leq \frac{6\sqrt{2}(\alpha_1 + 1)}{\alpha_1}N_{\epsilon} + \frac{2\sqrt{2}(\alpha_1 + 1)}{\alpha_1}\log_{1/\beta}(\frac{5\sigma_0 L_1}{\alpha_2}) \\
    &\phantom{{}\leq{}}+ \frac{4\sqrt{2}(\alpha_1 + 1) L_1 \max_{\vz \in \calD}\,\|\vz_0-\vz\|^2}{\alpha_1\beta(1-\beta) \epsilon} + \frac{8\sqrt{2}(\alpha_1 + 1) L_1 \sigma_0}{\alpha_1(1-\beta)}. 
  \end{aligned}
\end{equation*}

Next, we consider Problem~\eqref{eq:minimization}, where the Jacobian approximation matrices $\{\mB_k\}$ is symmetric and positive semi-definite. We follow similar arguments as in the non-symmetric case. Consider the $k$-th iteration of Algorithm~\ref{alg:Full_Equasi-Newton}. Recall that at the $i$-th line search step ($i\geq 1$), we call the $\mathsf{LinearSolver}$ oracle with 
$\mA = \mI+\eta_{k-1} \beta^{i-2} \mB_k$ and $\rho = \alpha_1$. Since $0 \preceq \mB_k \preceq 4L_1 \mI$, by using Lemma~\ref{lem:mv_bound_monotone}, we can bound the total number of matrix-vector products at the $k$-th iteration by 
\begin{align*}
  \mathsf{MV}_k \leq  \sum_{i = 1}^{l_k} \sqrt{\frac{(\alpha_1+1)(1+4\eta_{k-1}\beta^{i-2}L_1)}{\alpha_1}} &\leq \sum_{i=1}^{l_k} \sqrt{\frac{\alpha_1+1}{\alpha_1}} + \sum_{i=1}^{l_k} 2\sqrt{\frac{\alpha_1+1}{\alpha_1}}\sqrt{\eta_{k-1}L_1} \beta^{\frac{i}{2}-1} \\
  &\leq \sqrt{\frac{\alpha_1+1}{\alpha_1}} l_k + \frac{2\sqrt{\alpha_1+1}}{\sqrt{\alpha_1\beta}(1-\sqrt{\beta})}\sqrt{\eta_{k-1}L_1}. 
  \end{align*}
  Hence, the total number of matrix-vector products is bounded by 
\begin{equation*}
  \sum_{k=0}^{N_{\epsilon}-1} \mathsf{MV}_k \leq \sqrt{\frac{\alpha_1+1}{\alpha_1}} \sum_{k=0}^{N_{\epsilon}-1} l_k + \frac{2\sqrt{(\alpha_1+1)L_1}}{\sqrt{\alpha_1\beta}(1-\sqrt{\beta})} \sum_{k=0}^{N_{\epsilon}-1} \sqrt{\eta_{k-1}} .  
\end{equation*}
Again, by Theorem~\ref{thm:computational_cost_monotone}(a), we have $\sum_{k=0}^{N_{\epsilon}-1} l_k \leq 3N_{\epsilon} + \log_{1/\beta}(\frac{5\sigma_0 L_1}{\alpha_2})$.
Moreover, it follows from the definition of $N_{\epsilon}$ that $\sum_{k=0}^{N_{\epsilon}-2} \sqrt{\eta_k} \leq \sqrt{N_{\epsilon}} \sqrt{\sum_{k=0}^{N_\epsilon-2} \eta_k} \leq  \sqrt{N_{\epsilon}\frac{\max_{\vz \in \calD}\|\vz_0-\vz\|^2}{2\epsilon}}$. Hence, we obtain  
\begin{align*}
  \sum_{k=0}^{N_{\epsilon}-1} \textsf{MV}_k &\leq \sqrt{\frac{\alpha_1+1}{\alpha_1}} \Bigl(3N_{\epsilon} + \log_{\nicefrac{1}{\beta}}(\frac{5\sigma_0 L_1}{\alpha_2})\Bigr) + \frac{2\sqrt{(\alpha_1+1)}}{\sqrt{\alpha_1\beta}(1-\sqrt{\beta})} \sqrt{N_{\epsilon}\frac{L_1\max_{\vz \in \calD}\|\vz_0-\vz\|^2}{2\epsilon}} \\
  &\phantom{{}={}}  + \frac{2\sqrt{(\alpha_1+1)\sigma_0 L_1}}{\sqrt{\alpha_1}(1-\sqrt{\beta})}.  
\end{align*}

Finally, we bound the number of matrix-vector products for $\mathsf{ExtEvec}$ and $\mathsf{MaxSvec}$ in all cases. Since the parameters are selected as $\delta_t = {\frac{1}{2(t+1)^{\nicefrac{1}{4}}}}$ and $q_t = \frac{p}{2.5(t+1)\log^2(t+1)} \leq \frac{p}{2.5N_{\epsilon}\log^2(N_{\epsilon})}$, it follows from Lemmas~\ref{lem:extevec_bound} and~\ref{lem:maxsvec_bound} that the total number of matrix-vector products can be bounded by 
    $\bigO\left(\sum_{t=1}^T \sqrt{\frac{1+\delta_t}{\delta_t}}\log\frac{d}{q_t^2} \right) = \bigO\left(\sum_{t=1}^T (t+1)^{\nicefrac{1}{8}} \log (\frac{dN_{\epsilon}^2}{p^2})\right) = \bigO\left(N_{\epsilon}^{1.125} \log (\frac{dN_{\epsilon}^2}{p^2})\right)$.
    The proof is complete.
  \end{proof}
\newpage
\printbibliography

\end{document}